\documentclass{amsart}

\usepackage{enumerate, amsmath, amsfonts, amssymb, amsthm, wasysym, graphics, graphicx, url, hyperref, hypcap, a4wide, pdflscape, multido, xargs, colortbl, multicol, multirow, calc, shuffle, hvfloat, dsfont, mathrsfs, float, pifont, xparse}
\hypersetup{colorlinks=true, citecolor=darkblue, linkcolor=darkblue}
\usepackage{tikz}
\usepackage[dvipsnames]{xcolor}
\usepackage{pgfplots}
\usetikzlibrary{trees, decorations, decorations.markings, shapes, arrows, matrix, calc, fit, intersections, patterns, angles, cd, backgrounds,pgfplots.fillbetween}
\usepackage{tikz-qtree}
\usepackage{comment}
\usepackage{etex}
\usepackage{ulem}
\usepackage[noabbrev,capitalise]{cleveref}
\graphicspath{{figures/}{figures/}}
\makeatletter
\def\input@path{{figures/}{figures/}}
\makeatother

\title[Interval hypergraphic polytopes, Tamari interval posets, and weeping willows]{\mbox{Interval hypergraphic polytopes (or deformed associahedra),} \\ Tamari interval posets, and weeping willows}

\author[J.~Bastidas]{Jose Bastidas}
\address[J.~Bastidas]{LACIM, Université du Québec à Montréal, Canada}
\email{bastidas.math@proton.me}
\urladdr{https://bastidas-jose.codeberg.page/}

\author[F.~Gélinas]{Félix Gélinas}
\address[F.~Gélinas]{York University, Canada}
\email{felixgel@yorku.ca}
\urladdr{https://felixgelinas.github.io/}

\author[V.~Pilaud]{Vincent Pilaud}
\address[V.~Pilaud]{Universitat de Barcelona \& Centre de Recerca Matemàtica, Barcelona}
\email{vincent.pilaud@ub.edu}
\urladdr{https://www.ub.edu/comb/vincentpilaud/}

\author[G.~Poullot]{Germain Poullot}
\address[G.~Poullot]{Osnabr\"uck Universit\"at, Germany}
\email{germain.poullot@uni-osnabrueck.de}
\urladdr{https://embrunforestier.github.io/germainpoullot.github.io/}

\author[A.~Sack]{Andrew Sack}
\address[A.~Sack]{University of Michigan, United States of America}
\email{asack@umich.edu}
\urladdr{https://andrewsack.com/}

\author[E.~Tzanaki]{Eleni Tzanaki}
\address[E.~Tzanaki]{University of Crete, Greece}
\email{etzanaki@uoc.gr}
\urladdr{https://sites.google.com/view/tzanakel/homepage}

\thanks{VP was supported by the Spanish project PID2022-137283NB-C21 of MCIN/AEI/10.13039/501100011033 / FEDER, UE, by the Spanish--German project COMPOTE (AEI PCI2024-155081-2 \& DFG 541393733), and by the Severo Ochoa and María de Maeztu Program for Centers and Units of Excellence in R\&D (CEX2020-001084-M)}

\newtheorem{theorem}{Theorem}[section]
\newtheorem{corollary}[theorem]{Corollary}
\newtheorem{proposition}[theorem]{Proposition}
\newtheorem{lemma}[theorem]{Lemma}
\newtheorem{conjecture}[theorem]{Conjecture}
\crefname{conjecture}{Conjecture}{Conjectures}
\newtheorem{problem}[theorem]{Problem}
\crefname{problem}{Problem}{Problems}
\newtheorem{theoremA}{Theorem}

\crefname{theoremA}{Theorem}{Theorems}
\newtheorem{propositionA}[theoremA]{Proposition}

\crefname{propositionA}{Proposition}{Propositions}

\theoremstyle{definition}
\newtheorem{definition}[theorem]{Definition}
\newtheorem{example}[theorem]{Example}
\newtheorem{remark}[theorem]{Remark}

\crefname{notation}{Notation}{Notations}

\AddToHook{env/theorem/begin}{\crefalias{section}{theorem}}
\AddToHook{env/proposition/begin}{\crefalias{theorem}{proposition}}
\AddToHook{env/corollary/begin}{\crefalias{theorem}{corollary}}
\AddToHook{env/lemma/begin}{\crefalias{theorem}{lemma}}
\AddToHook{env/conjecture/begin}{\crefalias{theorem}{conjecture}}
\AddToHook{env/definition/begin}{\crefalias{theorem}{definition}}
\AddToHook{env/example/begin}{\crefalias{theorem}{example}}
\AddToHook{env/remark/begin}{\crefalias{theorem}{remark}}
\AddToHook{env/observation/begin}{\crefalias{theorem}{observation}}
\AddToHook{env/question/begin}{\crefalias{theorem}{question}}
\AddToHook{env/notation/begin}{\crefalias{theorem}{notation}}
\AddToHook{env/assumption/begin}{\crefalias{theorem}{assumption}}
\AddToHook{env/convention/begin}{\crefalias{theorem}{convention}}
\AddToHook{env/theoremA/begin}{\crefalias{theorem}{theorem}}
\AddToHook{env/propositionA/begin}{\crefalias{theoremA}{proposition}}

\newcommand{\R}{\mathbb{R}} 
\newcommand{\I}{\mathbb{I}} 
\renewcommand{\b}[1]{\boldsymbol{#1}} 

\renewcommand{\c}[1]{{\mathcal{#1}}} 

\newcommand{\set}[2]{\left\{ #1 \;\middle|\; #2 \right\}} 
\newcommand{\bigset}[2]{\big\{ #1 \;|\; #2 \big\}} 
\newcommand{\ssm}{\smallsetminus} 
\newcommand{\eqdef}{\mbox{\,\raisebox{0.2ex}{\scriptsize\ensuremath{\mathrm:}}\ensuremath{=}\,}} 
\newcommand{\defeq}{\mbox{~\ensuremath{=}\raisebox{0.2ex}{\scriptsize\ensuremath{\mathrm:}} }} 
\newcommand{\simplex}{\triangle} 

\DeclareMathOperator{\conv}{conv} 
\DeclareMathOperator{\inv}{inv} 

\newcommand{\ie}{\textit{i.e.}~} 
\newcommand{\eg}{\textit{e.g.}~} 
\newcommand{\aka}{\textit{aka.}~} 
\newcommand{\para}[1]{\medskip\noindent\uline{#1}} 
\definecolor{darkblue}{rgb}{0,0,0.7} 
\definecolor{green}{RGB}{57,181,74} 
\definecolor{violet}{RGB}{147,39,143} 
\newcommand{\red}{\color{red}} 
\newcommand{\blue}{\color{blue}} 
\newcommand{\orange}{\color{orange}} 
\newcommand{\green}{\color{green}} 
\newcommand{\darkblue}{\color{darkblue}} 
\newcommand{\defn}[1]{\textsl{\darkblue #1}} 
\newcommand{\mathdefn}[1]{{\darkblue #1}} 
\renewcommand{\emptyset}{\varnothing}

\usepackage{todonotes}

\newcommand{\OEIS}[1]{\href{https://oeis.org/#1}{\cite[#1]{OEIS}}} 

\newcommand{\fS}{\mathfrak{S}} 

\newcommand{\Or}{\mathcal O}  
\newcommand{\BB}{\mathbb B}  
\newcommand{\GG}{\mathbb G}  
\newcommand{\FF}{\mathbb F}  
\newcommand{\HH}{\mathbb H}  
\newcommand{\II}{\mathbb I} 
\newcommand{\JJ}{\mathbb J} 
\newcommand{\KK}{\mathbb K} 

\newcommand{\intervalHypergraph}[2]{
	\begin{tikzpicture}[baseline=0]
		\foreach \x in {1,...,#1} {
			\node (\x) at (\x*.5,-.3) [inner sep = -1pt] {$\scriptstyle \x$};
		}
		\newcount{\y} \y=0
		\foreach \a/\b in {#2} {
			\draw [thick,{Bar[width=3pt]}-{Bar[width=3pt]}] (\a*.5,\y*.2)--(\b*.5,\y*.2);
			\global\advance\y by 1
		}
		\node at (.5,0) {\phantom{$\bullet$}};
		\node at (#1*.5,0) {\phantom{$\bullet$}};
	\end{tikzpicture}
}

\newcommand{\acyclicOrientation}[2]{
	\begin{tikzpicture}[baseline=0]
		\foreach \x in {1,...,#1} {
			\node (\x) at (\x*.5,-.3) [inner sep = -1pt] {$\scriptstyle \x$};
		}
		\newcount{\y} \y=0
		\foreach \a/\b/\c in {#2} {
			\draw [thick,{Bar[width=3pt]}-{Bar[width=3pt]}] (\a*.5,\y*.2)--(\b*.5,\y*.2); \node at (\c*.5,\y*.2) {$\bullet$};
			\global\advance\y by 1
		}
		\node at (.5,0) {\phantom{$\bullet$}};
		\node at (#1*.5,0) {\phantom{$\bullet$}};
	\end{tikzpicture}
}

\newcommand{\weepingWillow}[3]{
	\begin{tikzpicture}[decoration={markings, mark=at position 0.7 with {\arrow{>}}}, baseline=0, yscale={#3}]
		\foreach \x in {1,...,#1} {
			\node (\x) at (\x*.5,-.2) [inner sep = -1pt] {$\scriptstyle \x$};
		}
		\foreach \a/\b in {#2} {
			\ifthenelse{\a<\b}{\draw[postaction={decorate}] (\a*.5,0) arc (180:0:\b*.25-\a*.25);}{\draw[postaction={decorate}] (\a*.5,0) arc (0:180:\a*.25-\b*.25);}
		}	
	\end{tikzpicture}
}

\newcommand{\SchroderWeepingWillow}[4]{\begin{tikzpicture}[decoration={markings, mark=at position 0.7 with {\arrow{>}}}, baseline=0, scale={#4}]
	\foreach \x in {1,...,#1} {
		\node (\x) at (\x*.5,-.2) [inner sep = -1pt] {$\scriptstyle \x$};
	}

	\foreach \a/\b in {#2} {
		\ifthenelse{\a<\b}{\draw[postaction={decorate}] (\a*.5,0) arc (180:0:\b*.25-\a*.25);}{\draw[postaction={decorate}] (\a*.5,0) arc (0:180:\a*.25-\b*.25);}
	}

	\foreach \liste in {#3} {
		\def\prev{}%
		\edef\mypath{}%
		\def\len{}%
		\def\first{}%
		\def\last{}%
		\foreach \x [count=\i] in \liste {%
			\ifnum\i=1
				\xdef\mypath{(0.5*\x,0)}
				\xdef\first{\x}
				\xdef\prev{\x}
			\else
				\xdef\mypath{\mypath arc (180:0:{0.25*(\x-\prev)})}
				\xdef\prev{\x}
				\xdef\len{\i}
				\xdef\last{\x}
			\fi
		}
		\draw[name path=A] \mypath;
		\pgfmathsetmacro{\ys}{ifthenelse(\len==2,1.5,1)}
		\draw[yscale=\ys, name path=B] (0.5*\first,0) arc (180:0:{0.25*(\last-\first)});
		\tikzfillbetween[of=A and B]{pattern=north west lines, pattern color=gray};
	}
\end{tikzpicture}}

\newcommand{\TamariIntervalPoset}[3]{
	\begin{tikzpicture}[decoration={markings, mark=at position 0.7 with {\arrow{>}}}, baseline=0, yscale={#3}]
		\foreach \x in {1,...,#1} {
			\node (\x) at (\x*.5,-.2) [inner sep = -1pt] {$\scriptstyle \x$};
		}
		\foreach \a/\b in {#2} {
			\ifthenelse{\a<\b}{\draw[postaction={decorate}] (\a*.5,0) arc (180:0:\b*.25-\a*.25);}{\draw[postaction={decorate}] (\a*.5,-.4) arc (360:180:\a*.25-\b*.25);}
		}	
	\end{tikzpicture}
}

\newcommand{\TamariIntervalPreposet}[4]{\begin{tikzpicture}[decoration={markings, mark=at position 0.7 with {\arrow{>}}}, baseline=0, scale={#4}]
	\foreach \x in {1,...,#1} {
		\node (\x) at (\x*.5,-.2) [inner sep = -1pt] {$\scriptstyle \x$};
	}

	\foreach \a/\b in {#2} {
		\ifthenelse{\a<\b}{\draw[postaction={decorate}] (\a*.5,0) arc (180:0:\b*.25-\a*.25);}{\draw[postaction={decorate}] (\a*.5,-.4) arc (360:180:\a*.25-\b*.25);}
	}

	\foreach \liste in {#3} {
		\def\prev{}%
		\edef\mypath{}%
		\def\len{}%
		\def\first{}%
		\def\last{}%
		\foreach \x [count=\i] in \liste {%
			\ifnum\i=1
				\xdef\mypath{(0.5*\x,0)}
				\xdef\first{\x}
				\xdef\prev{\x}
			\else
				\xdef\mypath{\mypath arc (180:0:{0.25*(\x-\prev)})}
				\xdef\prev{\x}
				\xdef\len{\i}
				\xdef\last{\x}
			\fi
		}
		\draw[name path=A] \mypath;
		\pgfmathsetmacro{\ys}{ifthenelse(\len==2,1.5,1)}
		\draw[yscale=\ys, name path=B] (0.5*\first,0) arc (180:0:{0.25*(\last-\first)});
		\tikzfillbetween[of=A and B]{pattern=north west lines, pattern color=gray};
	}
\end{tikzpicture}}

\NewDocumentCommand{\drawSubset}{O{blue} O{white} O{white} m}{
	\begin{tikzpicture}[scale=.3]

	\path[use as bounding box] (0,0) rectangle (1.4,0.8660254);

	\foreach \i/\x/\y in {
	  1/0/0,
	  2/0.25/0.4330127,
	  3/0.5/0.8660254,
	  4/0.5/0,
	  5/0.75/0.4330127,
	  6/1/0
	}{
	  \coordinate (v\i) at (\x,\y);
	}

	\draw[line width=2.5mm, line join=round, line cap=round, color=#2] (v1) -- (v6) -- (v3) -- cycle;
	\draw[line width=2mm, line join=round, line cap=round, color=#3!40] (v1) -- (v6) -- (v3) -- cycle;

	\foreach \i in {1,2,3,4,5,6}{
	  \draw[color=#1] (v\i) circle (0.2);
	}

	\foreach \i in {#4}{
	  \fill[color=#1] (v\i) circle (0.2);
	}
	
	\end{tikzpicture}
}

\newcommand{\less}{\vartriangleleft} 
\renewcommand{\lessdot}{\mathbin{\vartriangleleft\hspace{-7pt}\cdot\,}} 
\newcommand{\more}{\vartriangleright} 
\newcommand{\moredot}{\mathbin{\vartriangleright\hspace{-9pt}\cdot\,\,}} 
\newcommand{\bless}{\blacktriangleleft} 
\newcommand{\bmore}{\blacktriangleright} 

\newcommand{\ww}{{W\!\!W}} 
\newcommandx{\WW}[1][1=n]{\mathbb{W}\!\mathbb{W}\!_{#1}} 
\newcommand{\sww}{S\!W\!\!W}
\DeclareMathOperator{\tour}{tour}
\newcommandx{\canoArc}[2][1=A,2=B]{#1\smallfrown#2}

\newcommandx{\Perm}[1][1=n]{\mathds{P}\mathrm{erm}(#1)} 
\newcommandx{\Asso}[1][1=n]{\mathds{A}\mathrm{sso}(#1)} 
\newcommand{\poly}[1]{\mathds{#1}} 
\newcommand{\pol}{\poly{P}} 

\begin{document}

\begin{abstract}
For a hypergraph~$\HH$ on~$[n]$, the hypergraphic polytope~$\simplex_\HH$ is the Minkowski sum of the standard simplices~$\simplex_H$ for all~$H \in \HH$.
We focus here on interval hypergraphs, where all hyperedges are intervals of~$[n]$.
They are precisely the deformations of Loday's associahedron.
Their vertex posets are Tamari interval posets, and we describe which Tamari interval poset appears as a vertex poset in which interval hypergraphic polytope.
We also characterize the interval hypergraphs~$\II$ for which the hypergraphic polytope~$\simplex_\II$ is simple, and we study their vertex posets, which we call weeping willows.
\end{abstract}

\maketitle

\begin{figure}[h]
    \centering


\definecolor{myproperpurple}{RGB}{224, 176, 255}

\begin{tikzpicture}[scale=1.575, yscale=0.73]
\draw[blue, line width=2pt, rounded corners=6pt] (-3.85,-4.88) rectangle (3.85, 4);
\node at (0,3.8){\color{blue}HYPERGRAPHIC POLYTOPES};

\draw[myproperpurple,  line width=2pt,rounded corners=6pt] (-3.75, 3.5) rectangle (1, 0.13);
\node[fill=white, fill opacity=0.7, inner sep=1pt] at ({-3.75/2 + 0.5}, 3.25){\color{myproperpurple}NESTOHEDRA (building sets)};

\draw[cyan,  line width=2pt,rounded corners=6pt] (-1.75, 3) rectangle (3.75, -3.75);
\node[fill=white, fill opacity=0.7, inner sep=1pt] at ({-1.75/2 + 3.75/2}, 2.8){\color{cyan}INTERVAL HYPERGRAPHIC POLYTOPES};

\draw[red,  line width=2pt,rounded corners=6pt] (-3.75, -4.75) rectangle (1, 0);
\node[fill=white, fill opacity=0.7, inner sep=1pt] at ({0.5 - 3.75/2}, -0.2){\color{red}UNIFORM HYPERGRAPHIC POLYTOPES};

\draw[orange,  line width=2pt, rounded corners=6pt] (-3.65, -0.5) -- (-0.5, -0.5) -- (-0.5, -1.5) -- (-1.85, -1.5) -- (-1.85, -3.75) -- (-3.65, -3.75) -- cycle;
\node[fill=white, fill opacity=0.7, inner sep=1pt] at ({-0.25 - 1.85}, -0.7){\color{orange}GRAPHICAL ZONOTOPES};

\node[anchor = west] at (-3.2, -1){Chordful};
\node[anchor = west] at (-3.2, -1.25){\begin{scriptsize}Prop.\,\ref{prop:chordful}\end{scriptsize}};
\node[anchor = west] at (-1.7, -1){Path zono.};
\node[anchor = west] at (-1.7, -1.25){\begin{scriptsize}Ex.\,\ref{exm:orientationPath},\,\ref{exm:cube}\end{scriptsize}};

\node[anchor = west] at (-1.66, 2.2){Associahedra};
\node[anchor = west] at (-1.66, 1.95){\begin{scriptsize}Ex.\,\ref{exm:binarySearchTrees},\,\ref{exm:associahedron}\end{scriptsize}};

\node[anchor = west] at (-1.66, 1.45){Fertilitopes};
\node[anchor = west] at (-1.66, 1.2){\begin{scriptsize}Ex.\,\ref{ex:fertilitopes}\end{scriptsize}};

\node[anchor = west] at (-1.66, .7){Pitman--Stanley pol.};
\node[anchor = west] at (-1.66, 0.45){\begin{scriptsize}Ex.\,\ref{exm:PitmanStanleyTrees},\,\ref{exm:PitmanStanley}\end{scriptsize}};

\node[anchor = west] at (1.75, 1.66){Freehedra};
\node[anchor = west] at (1.75, 1.4){\begin{scriptsize}Ex.\,\ref{exm:freehedronTrees},\,\ref{exm:freehedron}\end{scriptsize}};

\node[anchor = west] at (1.05, -2.25){Capped unit interval pol.};
\node[anchor = west] at (1.05, -2.5){\begin{scriptsize}Sec.\,\ref{subsubsec:cappedUnitIntervalPosets}, Ex.\,\ref{exm:cappedUnitIntervalHypergraphicPolytope}\end{scriptsize}};

\node[anchor = west] at (-1.35, -2.25){Uniform interval pol.};
\node[anchor = west] at (-1.35, -2.5){\begin{scriptsize}Sec.\,\ref{subsubsec:uniformIntervalPosets}, Ex.\,\ref{exm:uniformIntervalHypergraphicPolytope}\end{scriptsize}};

\draw[green, ultra thick] (-4.5, -1.62) -- (4.2, -1.62);

\draw[green] (-4.1, 1) node{\rotatebox{90}{\textbf{Simple}}};
\draw[green] (-4.1, -3.2) node{\rotatebox{90}{\textbf{Non-simple}}};
\end{tikzpicture}

    \caption{The main families of hypergraphic polytopes discussed in this paper.}
    \label{fig:WhereIsWhat}
\end{figure}

\pagebreak
\tableofcontents



\pagebreak
\section{Introduction}
\label{sec:introduction}

\para{Hypergraphic polytopes.}
We fix an integer~$n \ge 1$, and denote by~$(\mathdefn{\b{e}_i})_{i \in [n]}$ the standard basis of~$\R^n$. 
The \defn{hypergraphic polytope} of a hypergraph~$\mathdefn{\HH}\subseteq 2^{[n]}$ on~$[n]$ is the Minkowski sum~$\mathdefn{\simplex_\HH} \eqdef \! \sum_{H\in \HH} \simplex_H$,
where $\mathdefn{\simplex_H}$ is the simplex given by the convex hull of the points $\b{e}_h \in \R^n$ for~$h \in H$ (\cref{subsec:D_H}).
The face lattice of~$\simplex_\HH$ was described combinatorially in terms of acyclic orientations of~$\HH$ in~\cite{BenedettiBergeronMachacek} (\cref{subsec:acyclicOrientations,subsec:acyclicPreorientations}).

Hypergraphic polytopes (or some special cases) have been studied in~\cite{FeichtnerSturmfels, PostnikovReinerWilliams, Postnikov, AgnarssonMorris, Agnarsson, BenedettiBergeronMachacek, PadrolPilaudPoullot-deformationConesHypergraphicPolytopes, Rehberg, AguiarArdila, CardinalHoangMerinoMickaMutze, CardinalSteiner, ABGPS, BergeronPilaud} among others.
Important examples of hypergraphic polytopes include the permutahedron (when~$\HH = \binom{[n]}{2}$ is the complete graph), the associahedron of~\cite{ShniderSternberg,Loday} (when~${\HH = \set{[i,j]}{1 \le i \le j \le n}}$ is the complete interval hypergraph), graphical zonotopes (when~$\HH \subseteq \binom{[n]}{2}$ is a graph), graph associahedra~\cite{CarrDevadoss}, nestohedra~\cite{FeichtnerSturmfels,Postnikov}, multiplihedra~\cite{Stasheff-HSpaces, SaneblidzeUmble-diagonals, Forcey-multiplihedra, ArdilaDoker, ChapotonPilaud, PilaudPoullot2025PivotPolytope}, constrainahedra~\cite{BottmanPoliakova, ChapotonPilaud, PilaudPoullot2025PivotPolytope}, and other more specific examples given below (\cref{subsec:exmHP}).

Hypergraphic polytopes belong to the more general class of \defn{deformed permutahedra} (\aka \defn{generalized permutahedra}~\cite{Postnikov}, or \defn{polymatroids}~\cite{Edmonds}).
The later are all deformations of the permutahedron, \ie polytopes whose normal fans coarsen the braid arrangement (\cref{subsec:deformedPermutahedra}).
They are all obtained as Minkowski sums and differences of faces of the standard simplex $\simplex_{[n]}$.
Within the class of deformed permutahedra, the hypergraphic polytopes are precisely those which can be constructed using only Minkowski \emph{sums} of faces of the standard simplex.

The edge directions of deformed permutahedra are all of the form~$\b{e}_i - \b{e}_j$.
Hence, each vertex $\b{v}$ of a deformed permutahedron $\pol$ defines a directed graph, called its \defn{vertex digraph}, with nodes~$[n]$ and where there is an arc $i \to j$ if $\pol$ has a neighboring vertex $\b{w}$ of $\b{v}$ such that $\b{w} - \b{v}$ is a positive multiple of~$\b{e}_j - \b{e}_i$.
The corresponding \defn{vertex poset} is the transitive closure of the vertex digraph.
Similarly, each face of a deformed permutahedron defines a preposet (\ie a reflexive and transitive relation), called its \defn{face preposet}.
In this paper, we will study the combinatorics of these vertex posets and face preposets for certain hypergraphic polytopes.


\para{Interval hypergraphic polytopes.}
Following~\cite{BergeronPilaud}, we study in this paper the case of \defn{interval hypergraphs}~$\II$, \ie when all hyperedges of~$\II$ are intervals of~$[n]$, namely $\II \subseteq \set{[i,j]}{1 \le i \le j \le n}$.
Note that the family of interval hypergraphic polytopes includes
\begin{itemize}
\item the classical associahedron of~\cite{ShniderSternberg,Loday} when~$\II$ contains all intervals of~$[n]$,
\item the Pitman--Stanley polytope~\cite{PitmanStanley} when~$\II$ is the set of all initial intervals~$[i]$~for~${i \in [n]}$,
\item the freehedron of~\cite{Saneblidze-freehedron} when~$\II$ is the set of all initial intervals~$[i]$ for~${i \in [n]}$ and all final intervals~$[n] \ssm [i]$~for~${i \in [n-1]}$,
\item the fertilitopes of~\cite{Defant-fertilitopes} when any two intervals of~$\II$ are either nested or disjoint.
\end{itemize}
See \cref{subsec:examplesWeepingWillows}.
The permutahedron, graphical zonotopes, graph associahedra, nestohedra, multiplihedra and constrainahedra are not interval hypergraphic polytopes (except for trivial examples).

Two combinatorially interesting families of interval hypergraphs are those closed under intersection, and those closed under union (of intersecting hyperedges).
The former corresponds to interval hypergraphic lattices \cite[Thm.~A]{BergeronPilaud} while the latter corresponds to interval nestohedra (in the sense of~\cite{FeichtnerSturmfels,Postnikov}).
An elementary size preserving bijection between these two families of interval hypergraphs was described in~\cite{Pilaud-MarioLuigi}, passing through certain permutations known to be counted by median Genocchi numbers.
We reproduce this argument (\Cref{subsec:Genocchi}) in order to place it in a more formal context, noting that interval nestohedra form an important class of simple interval hypergraphic polytopes.

By definition, the interval hypergraphic polytopes are deformations of the classical associahedron, \ie all polytopes whose normal fan coarsens the sylvester fan.
It actually follows from~\cite{BazierMatteChapelierLaguetDouvilleMousavandThomasYildirim,PadrolPaluPilaudPlamondon,PadrolPilaudPoullot-deformedNestohedra} that the converse also holds when we allow for positive scaling of the Minkowski summands (which preserves the normal fan).
Namely, any deformation of the associahedron is an interval hypergraphic polytope~$\sum_{1 \le i \le j \le n} \lambda_{ij} \simplex_{[i,j]}$ with~$\lambda_{ij} \ge 0$ for all~$1 \le i < j \le n$ and~$\lambda_{ii} \in \R$ for~$i \in [n]$.

\para{Tamari interval (pre)posets.}
\enlargethispage{.2cm}
Interval hypergraphic polytopes are also closely related to the Tamari interval posets of~\cite{ChatelPons}.
These are the posets~$\less$ on~$[n]$ such that~$a \less c$ implies~$a \less b$ while~$a \more c$ implies~$b \more c$ for all~$1 \le a < b < c \le n$ (\cref{subsec:TamariIntervalPosets,subsec:examplesTamariIntervalPosets}).
They are in correspondence with intervals of the Tamari lattice, are counted by remarkable product formulas~\cite{Chapoton1,Chapoton2}, and are also in bijection with relevant families of planar maps~\cite{BernardiBonichon,FangFusyNadeau} (\cref{subsec:countingTamariIntervalPosets}).
The following connects Tamari interval posets and interval hypergraphic polytopes (\cref{subsec:TIPIHP}).

\begin{propositionA}[{\Cref{prop:TamariIntervalPosets1,prop:TamariIntervalPosets2}}]
\label{prop:TamariIntervalPosets}
The interval hypergraphic polytopes are precisely the deformed permutahedra whose vertex posets are Tamari interval posets.
Moreover, any Tamari interval poset is a vertex poset of some interval hypergraphic polytope.
\end{propositionA}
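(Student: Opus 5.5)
The plan is to work throughout with the description of vertex posets of a Minkowski sum: for a generic direction, the vertex of~$\simplex_\HH$ it selects is the sum of the vertices of the summands it selects. I take from the earlier sections that the vertex poset of this sum is the transitive closure of the union of the vertex digraphs of the summands at those vertices. I will use this description as known and not re-derive it.

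For a single simplex~$\simplex_{[i,j]}$ at its vertex~$\b{e}_k$, the neighbouring vertices are~$\b{e}_x$ with~$x\in[i,j]\ssm\{k\}$, and the edge directions are~$\b{e}_x-\b{e}_k$. So with the paper's convention its vertex digraph is the star of arcs~$k\to x$ for~$x\in[i,j]$. In short, every arc of a vertex digraph of~$\simplex_\II$ is an arc~$k\to x$ lying inside some interval~$[i,j]\in\II$ that contains~$k$ together with all points between~$k$ and~$x$.

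\emph{Forward direction.} I first show that every vertex poset~$\less$ of an interval hypergraphic polytope is a Tamari interval poset. Let~$a<b<c$ with~$a\less c$, realised by a chain of arcs~$a=x_0\to x_1\to\dots\to x_m=c$. If~$b$ itself occurs in the chain, we are done. Otherwise, since the chain starts left of~$b$ and ends right of~$b$, some arc~$x_t\to x_{t+1}$ jumps over~$b$. This arc comes from an interval~$[i,j]\in\II$ whose chosen vertex is~$x_t$, and~$[i,j]$ contains~$b$. Hence~$x_t\to b$ is also an arc, and~$a\less x_t\less b$. The case~$a\more c$ (that is, a chain from~$c$ down to~$a$) is symmetric and gives~$b\more c$.

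\emph{Converse.} Let~$\pol$ be a deformed permutahedron all of whose vertex posets are Tamari interval posets. The normal cone of a vertex is the union of the braid cones of the linear extensions of its vertex poset. By the correspondence of~\cite{ChatelPons}, the linear extensions of a Tamari interval poset form a Tamari interval of the Tamari lattice, viewed as a union of sylvester classes. Therefore every maximal normal cone of~$\pol$ is a union of sylvester cones, so the normal fan of~$\pol$ coarsens the sylvester fan. The results of~\cite{BazierMatteChapelierLaguetDouvilleMousavandThomasYildirim,PadrolPaluPilaudPlamondon,PadrolPilaudPoullot-deformedNestohedra} quoted in the introduction then write~$\pol=\sum\lambda_{ij}\simplex_{[i,j]}$ with~$\lambda_{ij}\ge0$ for~$i<j$. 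The~$\lambda_{ii}$ only translate, and positive rescaling does not change the normal fan or the vertex posets. So up to translation and rescaling of summands,~$\pol$ is an interval hypergraphic polytope. I expect this step to be the main obstacle: one must check carefully that the linear extensions of a Tamari interval poset are exactly a union of full sylvester classes, which is what "coarsening the sylvester fan" requires, rather than merely some set of permutations.

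\emph{Realisation.} Given a Tamari interval poset~$\less$, let~$\II$ consist of all singletons together with all intervals~$[a,c]$ such that~$a\less c$ or~$c\less a$.
\begin{itemize}
\item If~$a\less c$, the Tamari condition gives~$a\less b$ for all~$b\in[a,c]$, so~$a$ is the~$\less$-minimum of~$[a,c]$.
\item If~$c\less a$, the condition~$a\more c\Rightarrow b\more c$ gives~$c\less b$ for all~$b\in[a,c]$, so~$c$ is the~$\less$-minimum of~$[a,c]$.
\end{itemize}
Now fix a linear extension~$\sigma$ of~$\less$ and a generic direction minimised in the order~$\sigma$. Each summand~$\simplex_{[a,c]}$ then selects the vertex~$\b{e}_m$ where~$m$ is the~$\sigma$-first, hence~$\less$-minimum, element of~$[a,c]$. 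The star~$m\to x$ for~$x\in[a,c]$ consists of relations of~$\less$. Moreover every relation~$a\less c$ appears as such an arc, from the summand~$\simplex_{[a,c]}$ (or~$\simplex_{[c,a]}$). Hence the transitive closure of the union of these stars is exactly~$\less$, and~$\less$ is the vertex poset of~$\simplex_\II$ at the vertex selected by~$\sigma$.
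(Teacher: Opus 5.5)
Your proof is correct. For the characterization it follows the paper. The forward direction is the argument of \cref{lem:avoidingPatterns}\,\eqref{item:ap2}: you take the arc of the chain that jumps over~$b$, where the paper uses that the union of the chain of intervals is an interval containing~$b$. The converse goes, as in the paper, through the normal fan coarsening the sylvester fan and the simplicial deformation cone of~$\Asso$. The step you flag as the main obstacle is exactly characterization~\eqref{item:TIPprop1} of \cref{def:TamariIntervalPoset}, which the paper also imports from~\cite{ChatelPons}. It can also be checked directly. If $a<b<c$ and $b$ precedes the adjacent letters $a,c$ in a linear extension of~$\less$, the Tamari condition forces $a$ and~$c$ to be incomparable. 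Hence the linear extensions are closed under the sylvester moves $UbVacW \leftrightarrow UbVcaW$.

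The genuine difference is the realisation. The paper takes $\II_\less=\set{a^\less}{a\in[n]}$ with the orientation $a^\less\mapsto a$, and only needs to observe that this orientation is acyclic and generates~$\less$. You take all intervals with $\less$-comparable endpoints and read the vertex off a linear extension, which avoids any acyclicity check. Your hypergraph does lie in~$\c{I}_\less$, below~$\JJ_\less$. However, it is generally not minimal, and it loses the properties the paper needs later. For the rooted weeping willow with arcs $4\to2$, $2\to1$, $2\to3$, your hypergraph contains $[1,2]$, $[2,3]$ and $[1,4]$ but not $[1,3]$, since $1$ and~$3$ are incomparable. It therefore violates condition~\eqref{cond:simple1} of \cref{thm:characterizationSimpleIntervalHypergraphicPolytopesProof}, and its polytope is not simple. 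By contrast, $\II_\less$ consists of the nested intervals $[1,3]$ and $[1,4]$ together with singletons. This is why the paper's choice is the one reused in \cref{prop:weepingWillows2,prop:rootedWeepingWillows2}.
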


In more detail (\cref{subsec:finerDescriptionTamariIntervalPoset}), we characterize which Tamari interval posets appear as vertex posets of which interval hypergraphic polytopes (\Cref{thm:TamariIntervalPosetInIntervalHypergraphic}).
We then consider, for a given Tamari interval poset~$\less$, the inclusion poset~$\c{I}_\less$ of interval hypergraphs $\II$ such that~$\less$ is a vertex poset of~$\simplex_\II$.
This poset~$\c{I}_\less$ is an order convex subposet of the boolean lattice, always admits a maximum, and we characterize when it admits a minimum (\Cref{prop:inclusionPosetIntervalHypergraphsContainingTamariIntervalPoset}).

In this paper, we also consider the face preposets of interval hypergraphic polytopes.
We prove (\cref{subsec:TIPPIHP}) that they are precisely the preposets~$\bless$ on $[n]$ such that $a \bless c$ implies~$a \bless b$ while~$a \bmore c$ implies~$b \bmore c$ for all~$1 \le a < b < c \le n$, which we call \defn{Tamari interval preposets} (\cref{subsec:TamariIntervalPreposets}).
We again describe which Tamari interval preposets appear as face preposets of which interval hypergraphic polytopes (\Cref{thm:TamariIntervalPreposetInIntervalHypergraphic}) and study the inclusion poset~$\c{I}_\bless$ of interval hypergraphs $\II$ such that~$\bless$ is a face preposet of~$\simplex_\II$ (\cref{prop:inclusionPosetIntervalHypergraphsContainingTamariIntervalPreposet}).

\para{Simple interval hypergraphic polytopes.}
A polytope is \defn{simple} if each vertex is incident to dimension-many edges (or, equivalently, facets).
For a generalized permutahedron, this means that its vertex digraphs are forests (actually trees if the polytope is of maximal dimension $n-1$).
Among the above-mentioned examples of hypergraphic polytopes, the permutahedron, associahedron, graph associahedra, and nestohedra are all simple, while the multiplihedra and constrainahedra are not (except in low dimension).
As observed in~\cite[Rem.~6.2]{Kim} (see also \cite[Prop.~5.2]{PostnikovReinerWilliams} and \cite[Prop.~53]{Pilaud-acyclicReorientationLattices}), the graphical zonotope of a graph~$G$ is simple if and only if~$G$ is \defn{chordful} (meaning that any cycle induces a clique).
In contrast, it is unclear whether simple hypergraphic polytopes admit an elementary characterization (\Cref{subsec:towardsSimpleHypergraphicPolytopes}).
Going back to interval hypergraphs, we prove the following characterization of simple interval hypergraphic polytopes.

\begin{theoremA}[{\Cref{thm:characterizationSimpleIntervalHypergraphicPolytopesProof}}]
\label{thm:characterizationSimpleIntervalHypergraphicPolytopes}
The hypergraphic polytope $\simplex_\II$ of an interval hypergraph~$\II$ is simple if and only if the following two conditions hold for all $I, J \in \II$:
\begin{enumerate}[(i)]
\item if $I \cap J \ne \varnothing$ and there exists $K \in \II$ with $I \cup J \subseteq K$, then $I \cup J \in \II$, and 
\item if $|I \cap J| \ge 2$, then $I \cup J \in \II$ or there exists $K \in \II$ with $I \cap J \subseteq K$ and $I \not\subseteq K$ and $J \not\subseteq K$. 
\end{enumerate}
\end{theoremA}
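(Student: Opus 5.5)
We will work with the combinatorial description of the faces of~$\simplex_\II$ by acyclic orientations of~\cite{BenedettiBergeronMachacek}. A vertex of~$\simplex_\II$ corresponds to an acyclic orientation~$O$ of~$\II$, which chooses a sink~$O(I) \in I$ for each~$I \in \II$. Its vertex digraph has an arc~$i \to j$ exactly when flipping the choice from~$j$ to~$i$ in some minimal way gives a neighbouring vertex. In the interval case, the vertex poset is a Tamari interval poset (\cref{prop:TamariIntervalPosets}). Simplicity means that every vertex digraph is a forest, with one tree per connected component of the hypergraph. Equivalently, the number of edges at each vertex equals $n$ minus the number of connected components of~$\II$. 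We therefore plan to prove both directions by analysing the Hasse diagram of the vertex poset of~$\simplex_\II$. Concretely, we read it from the orientation: $i$ and~$j$ are related when some hyperedge containing both has its sink at one of them. Simplicity then amounts to every vertex digraph having no more arcs than a spanning forest.

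\emph{Necessity.} We assume that (i) or (ii) fails and construct a vertex whose digraph contains a cycle (as an undirected graph), or has too many arcs.

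If (i) fails, take~$I, J$ intersecting with~$I \cup J \notin \II$, and~$K \supseteq I \cup J$ in~$\II$; we may choose~$K$ minimal. Without loss of generality~$I$ starts first and~$J$ ends last. We choose an orientation with the following properties: $I$ has sink its left endpoint~$a$, $J$ has sink its right endpoint~$b$, and $K$ has sink in~$I \cap J$. Hyperedges strictly between these are oriented so that the arcs $a \to$ (sink of~$K$) and $b \to$ (sink of~$K$) both survive as cover relations. The arc from~$a$ towards~$b$ is also forced through~$K$ with no intermediate interval of~$\II$ between~$I \cup J$ and~$K$, because~$I \cup J \notin \II$. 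This produces a triangle in the undirected vertex digraph, hence a non-simple vertex.

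If (ii) fails, take~$I, J$ with~$|I \cap J| \ge 2$, $I \cup J \notin \II$, and every~$K \supseteq I \cap J$ containing~$I$ or~$J$. We orient~$I$ and~$J$ with sinks at the two endpoints~$c < d$ of~$I \cap J$ (say $I$ to~$c$ and $J$ to~$d$, suitably arranged). All hyperedges containing~$I \cap J$ are then oriented consistently, so that both~$c$ and~$d$ receive arcs from the points of~$I \cap J$ strictly between them. The hypothesis prevents any hyperedge separating~$c$ from~$d$ inside~$I \cap J$ from merging these arcs, so we obtain a $4$-cycle or parallel covers.

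\emph{Sufficiency.} Assume (i) and (ii), and fix an acyclic orientation~$O$. We show that the vertex digraph is a forest by proving that each node~$j$ has at most one outgoing cover relation to a node that is larger in the poset, or by an inductive argument. We induct on~$n$, restricting to the maximal intervals: (i) makes the hypergraph restricted to each maximal interval~$K$ closed under intersecting unions. Let~$s = O(K)$ for a maximal~$K$. Deleting~$s$ splits the structure into the left and right parts, which are interval hypergraphs~$\II_{<s}$ and~$\II_{>s}$. We check that (i) and (ii) are inherited by these parts and by their contractions, mirroring the product decomposition of faces of~$\simplex_\II$. The vertex digraph is then obtained by joining the two forests to~$s$ by exactly one arc per component, provided no two intervals in the same component give distinct covers to~$s$. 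That last claim is where condition~(ii) enters. Two intervals on the same side overlapping in at least two points would give two different arcs into~$s$ unless their union is present, or unless a separating~$K$ refines the order so that one arc becomes non-covering.

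We expect this final step to be the main obstacle: verifying that the hereditary properties hold and that (ii) exactly kills the redundant covers. We would first try the explicit description of covers in Tamari interval posets to make it precise.
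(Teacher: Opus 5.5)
Your proposal has genuine gaps in both directions.

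\textbf{Necessity.} Your constructions do not define vertices of~$\simplex_\II$. For~(i), you orient~$I$ at~$a=\min I\in I\ssm J$ and~$K\supseteq I\cup J$ at some~$s\in I\cap J$. Since~$I\subseteq K$, this gives~$O(K)=s\in I\ssm\{O(I)\}$ and~$O(I)=a\in K\ssm\{O(K)\}$, which is a cycle of length~$2$. In general, $O(K)\in I\subseteq K$ forces~$O(K)=O(I)$. For~(ii), orienting~$I$ and~$J$ at two distinct points~$c,d\in I\cap J$ gives~$O(I)\in J\ssm\{O(J)\}$ and~$O(J)\in I\ssm\{O(I)\}$, which is cyclic whatever the arrangement.

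Moreover, the triangle you aim for in~(i) cannot exist. A vertex digraph is the Hasse diagram of a poset, and three pairwise adjacent nodes would form a chain whose two extremes are not a cover. So there are no triangles (nor parallel arcs), and the shortest cycles are the diamond and bowtie of \cref{lem:posetTree}.

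The missing idea is to place the minima \emph{outside} the overlap.
For~(i), take~$i=\min I$, $j=\max J$, $k\in K\ssm(I\cup J)$, $\ell\in I\cap J$, and~$\sigma=kXij\ell Y$, where~$X$ lists~$[n]\ssm(I\cup J\cup\{k\})$. Then~$k\less i$, $k\less j$, $i\less\ell$ and~$j\less\ell$. Moreover~$i$ and~$j$ are incomparable: they are consecutive in~$\sigma$, and any interval containing both strictly contains~$I\cup J\notin\II$, hence contains a letter of~$kX$.
For~(ii), first use this construction to reduce to the case where no hyperedge contains~$I\cup J$. Then take~$\sigma=ijk\ell X$ with~$k,\ell$ the endpoints of~$I\cap J$. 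Any interval containing~$k$ and~$\ell$ contains~$I\cap J$, hence~$I$ or~$J$, so it is oriented at~$i$ or~$j$. You get each of~$i,j$ below each of~$k,\ell$, with both pairs incomparable.

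\textbf{Sufficiency.} This half is only a plan, and its ingredients fail as stated. Your first criterion (at most one upper cover per node) only holds for rooted forests. It is violated by simple polytopes, e.g.\ by the vertex~$2\lessdot 1$, $2\lessdot 3$ of the associahedron on~$[3]$. Its dual fails at the vertex~$1\lessdot 2\moredot 3$ of the square~$\simplex_{\{[1,2],[2,3]\}}$.

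Deleting~$s=O(K)$ does not split~$\II$ into independent interval hypergraphs~$\II_{<s}$ and~$\II_{>s}$, because an interval through~$s$ can be oriented outside~$K$. For instance, take~$K=[1,4]$, $K'=[2,5]$, $O(K)=3$, $O(K')=5$. Finally, the claim that~(ii) ``kills the redundant covers'' is precisely the content of the theorem, and you leave it open.

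The paper instead argues by contradiction from a cycle in the Hasse diagram of a non-simple vertex:
\begin{itemize}
\item Take a sink~$t$ of the cycle with neighbours~$x\lessdot t\moredot y$. By \cref{lem:avoidingPatterns}, $x$ and~$y$ lie on opposite sides of~$t$, say~$x<t<y$.
\item The cycle also has an arc straddling~$t$, say~$u\lessdot v$ with~$u<t<v$. The same lemma gives~$u\le x$.
\item Take intervals witnessing the three covers. If~$y\le v$, you get~$I\cup J\subseteq K$ with~$I\cup J\notin\II$ (otherwise~$x$ and~$y$ would be comparable), violating~(i).
\item If~$v<y$, choose the witnesses so that~$\min(I)$ is maximal and~$\max(J)$ is minimal; this violates~(ii).
\end{itemize}
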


\para{(Schröder) weeping willows.}
\enlargethispage{.2cm}
We then consider the vertex trees of these simple interval hypergraphic polytopes.
We call \defn{weeping willow} any directed tree~$\ww$ on~$[n]$ such that for all~${1 \le a < b < c \le n}$, if~$\ww$ contains the arc~$(a,c)$ (resp.~$(c,a)$), then it contains a directed path from~$a$ to~$b$ (resp.~from~$c$ to~$b$) (\cref{subsec:weepingWillows}).
In other words, these are precisely the Hasse diagrams of the Tamari interval posets which are trees.
We first observe that these trees have interesting counting formulas (\cref{subsec:countingWeepingWillows}).
We then specialize \cref{prop:TamariIntervalPosets} to the following statement.

\begin{propositionA}[{\Cref{prop:weepingWillows1,prop:weepingWillows2}}]
\label{prop:weepingWillows}
The simple interval hypergraphic polytopes are precisely the deformed permutahedra whose vertex digraphs are weeping willows.
Moreover, any weeping willow is a vertex tree of some simple interval hypergraphic polytope.
\end{propositionA}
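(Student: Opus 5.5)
We reduce \cref{prop:weepingWillows} to \cref{prop:TamariIntervalPosets} together with the observation that a deformed permutahedron is simple exactly when all its vertex digraphs are forests (trees in full dimension).

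\emph{First part.} Let~$\pol$ be a deformed permutahedron.

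Suppose first that $\pol = \simplex_\II$ is a simple interval hypergraphic polytope. By \cref{prop:TamariIntervalPosets}, every vertex poset of~$\pol$ is a Tamari interval poset. By simplicity, every vertex digraph is a forest. A vertex digraph of a deformed permutahedron is the Hasse diagram of its vertex poset, since no arc is implied by transitivity (edges at a vertex span the cone of the poset). Hence each vertex digraph is a tree that is the Hasse diagram of a Tamari interval poset, that is, a weeping willow. Full dimensionality can be arranged, or weeping willows can be read on connected components; the definition in \cref{subsec:weepingWillows} should clarify which is meant.

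Conversely, suppose all vertex digraphs of~$\pol$ are weeping willows. Then each transitive closure is a Tamari interval poset: if $a \to c$ is an arc with $a<b<c$, the weeping willow condition gives a path $a \to b$. If instead $a \less c$ only via a path, we induct on the path length. The key sublemma is that the transitive closure of a weeping willow is a Tamari interval poset. Its proof: given a path $a=x_0\to x_1\to\dots\to x_k=c$ and $a<b<c$, some arc $x_i\to x_{i+1}$ jumps over~$b$, or $b=x_i$ for some~$i$. In the first case, the arc condition gives a path $x_i\to b$ if $x_i<b<x_{i+1}$; if $x_{i+1}<b<x_i$, it gives $x_{i+1}\to\dots$ no, it gives $x_i\to b$ from the reversed-arc clause. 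Either way $x_i \to b$, so $a\less b$. The case $a\more c$ is symmetric. So by \cref{prop:TamariIntervalPosets}, $\pol$ is interval hypergraphic, and it is simple since its vertex digraphs are trees.

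\emph{Second part.} Given a weeping willow~$\ww$, its transitive closure~$\less$ is a Tamari interval poset by the sublemma. By \cref{prop:TamariIntervalPosets}, or more precisely by the finer \cref{thm:TamariIntervalPosetInIntervalHypergraphic}, $\less$ is a vertex poset of some~$\simplex_\II$. However, $\simplex_\II$ need not be simple. This is the main obstacle: we must choose~$\II$ so that \emph{all} vertex digraphs are trees.

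The plan is to build an explicit~$\II$ from~$\ww$ satisfying conditions~(i) and~(ii) of \cref{thm:characterizationSimpleIntervalHypergraphicPolytopes}. The natural candidate is the hypergraph of intervals spanned by connected subtrees relevant to~$\ww$: for each node~$v$, take the interval hull of the set of descendants (respectively ancestors) of~$v$ in the appropriate direction. Alternatively, intersect the maximal hypergraph of~$\c{I}_\less$ (\cref{prop:inclusionPosetIntervalHypergraphsContainingTamariIntervalPoset}) with a building set, making~$\simplex_\II$ an interval nestohedron. One then checks two things:
\begin{enumerate}[(a)]
\item this~$\II$ lies in~$\c{I}_\less$, using the criterion of \cref{thm:TamariIntervalPosetInIntervalHypergraphic};
\item it satisfies (i) and (ii), so that \cref{thm:characterizationSimpleIntervalHypergraphicPolytopes} gives simplicity.
\end{enumerate}
If the nestohedral (union-closed) choice fails to contain~$\less$, we instead close the chosen~$\II$ under the unions demanded by~(i), and verify that this closure does not destroy~$\less$ as a vertex poset. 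We expect this verification to be the delicate step.
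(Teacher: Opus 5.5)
Your first part is correct and is essentially the paper's argument. It combines \cref{prop:TamariIntervalPosets} with the fact that a vertex is simple exactly when its vertex digraph is a forest, and your sublemma is the nontrivial direction of \cref{prop:WWTIP}.

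The genuine gap is the second part, which is where all the work lies. You never fix a hypergraph, and you never check the conditions of \cref{thm:characterizationSimpleIntervalHypergraphicPolytopes}; you defer this as ``the delicate step''. Moreover, two of your candidates cannot work in general:
\begin{itemize}
\item The nestohedral choice fails for every non-rooted willow, because vertex digraphs of nestohedra are rooted forests (\cref{prop:NestoVertexDiGraphsAreBBforests}). Your fallback of closing under unions is therefore needed in general, and it is left unverified.
\item Intervals spanned by ancestor sets destroy~$\less$ as a vertex poset. For the willow $1 \to 2 \leftarrow 3$, the hull $[1,3]$ of the ancestors of~$2$ admits no~$a$ with $a \in [1,3] \subseteq a^\less$. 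This violates \cref{thm:TamariIntervalPosetInIntervalHypergraphic}\,\eqref{cond:belong1}.
\end{itemize}

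The missing idea is that the descendant candidate alone already works, with no closure needed. Since each $a^\less$ is already an interval, take $\II_\less \eqdef \set{a^\less}{a \in [n]}$. Then $\less$ is a vertex poset of $\simplex_{\II_\less}$ via the orientation $a^\less \mapsto a$ (\cref{prop:TamariIntervalPosets2}). Simplicity follows from \cref{lem:posetTree}, that is, from the fact that the transitive closure of a tree contains no diamond and no bowtie with incomparable pairs.

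For \cref{thm:characterizationSimpleIntervalHypergraphicPolytopesProof}\,\eqref{cond:simple1}: suppose $\ell \in i^\less \cap j^\less$ and $i^\less \cup j^\less \subseteq k^\less$. Then $k \less i, j \less \ell$, so $i$ and $j$ are comparable by \cref{lem:posetTree}\,\eqref{item:diamond}. Hence $i^\less$ and $j^\less$ are nested, and their union is one of them, so it lies in~$\II_\less$.

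For \eqref{cond:simple2}: the case where $i$ and $j$ are comparable is handled as above. If $i$ and $j$ are incomparable, any $k, \ell \in i^\less \cap j^\less$ satisfy $i, j \less k, \ell$, so $k$ and $\ell$ are comparable by \cref{lem:posetTree}\,\eqref{item:bowtie}. Thus $i^\less \cap j^\less$ is a chain with a minimum~$k_0$. The interval $K = k_0^\less$ contains $i^\less \cap j^\less$ but, by antisymmetry, contains neither~$i$ nor~$j$.

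This is exactly the paper's proof of \cref{prop:weepingWillows2}.
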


This result can even be extended to characterize interval nestohedra (\ie when the underlying interval hypergraph is closed under union of intersecting hyperedges).

\begin{propositionA}[{\Cref{prop:rootedWeepingWillows1,prop:rootedWeepingWillows2}}]
\label{prop:rootedWeepingWillows}
The interval nestohedra are precisely the deformed permutahedra whose vertex digraphs are rooted weeping willows.
Moreover, any rooted weeping willow is a vertex tree of some interval nestohedron.
\end{propositionA}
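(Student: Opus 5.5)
The plan is to deduce the statement from \cref{prop:weepingWillows} together with the classical description of the vertices of nestohedra. Throughout I take \emph{rooted} to mean that every node has in-degree at most one (equivalently, every node has at most one outgoing arc, depending on the orientation convention fixed in \cref{subsec:weepingWillows}). In either case all arcs point consistently toward, or away from, a single root in each component.

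\emph{Interval nestohedra have rooted weeping willows as vertex digraphs.} Let $\II$ be closed under union of intersecting hyperedges, so $\II$ is a building set in the sense of~\cite{FeichtnerSturmfels,Postnikov}. Its nestohedron is simple, so by \cref{prop:weepingWillows} every vertex digraph is a weeping willow, and it only remains to see that it is rooted. I would use the standard description of the vertices of $\simplex_\II$ by maximal nested sets $N$. For such an $N$, each element $i \in [n]$ is the top of a unique block $B_i \in N$, namely the minimal block of $N$ containing $i$. The edges at the vertex correspond to removing one block $B_j$ from $N$ and inserting the other possible block. This exchanges $j$ with the top of the block immediately above $B_j$ in $N$, so it changes the vertex by a positive multiple of $\b{e}_j - \b{e}_{p(j)}$ (or of its opposite, according to the convention). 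Consequently the vertex digraph is exactly the Hasse diagram of the $\II$-forest of $N$, oriented parent-to-child or child-to-parent, and this is a rooted forest. The step to verify carefully is the sign of this exchange, which I would check on the minor of $\II$ restricted to the block just above $B_j$.

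\emph{Rooted vertex digraphs force closure under union.} Suppose every vertex digraph of a deformed permutahedron $\pol$ is a rooted weeping willow. Then $\pol$ is simple, so by \cref{prop:weepingWillows} (with positive rescaling of the summands) we have $\pol = \simplex_\II$ for an interval hypergraph $\II$ satisfying the conditions of \cref{thm:characterizationSimpleIntervalHypergraphicPolytopes}. Assume for contradiction that $I, J \in \II$ intersect while $I \cup J \notin \II$. By condition~(i), no hyperedge of $\II$ contains $I \cup J$. Pick a linear order on $[n]$ in which:
\begin{itemize}
\item some $x \in I \cap J$ is placed below all other elements of $I \cup J$,
\item the maximum of $I$ is an element $a \in I \ssm J$, and the maximum of $J$ is an element $b \in J \ssm I$.
\end{itemize}
Consider the vertex maximizing this order, read through acyclic orientations (\cref{subsec:acyclicOrientations}). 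I expect that $x$ then receives arcs from two different elements, one coming from the side of $I$ and one from the side of $J$. The reason is that no hyperedge containing both $a$ and $b$ can absorb them into one chain: any such hyperedge would contain $I \cup J$, since intervals containing $a$ and $b$ contain everything between them. This contradicts rootedness.

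This is the main obstacle: a vertex digraph only contains the Hasse-type arcs, so one must check that the two arcs into $x$ are genuine edges and not merely transitive relations. My first attempt would be to pass to the face of $\pol$ determined by a generic linear functional that is constant on $I \cup J$, which is again an interval hypergraphic polytope. On that face I would choose $I, J$ minimal with the bad property, so that the two arcs are forced to be cover relations.

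\emph{Every rooted weeping willow is realised.} Given a rooted weeping willow $\ww$, let $\II$ consist of the node sets of all subtrees of $\ww$, namely each node together with its descendants. The weeping-willow condition should imply, by induction on the depth, that each such set is an interval. Two subtrees are always nested or disjoint, so $\II$ is laminar. Hence it is trivially closed under union of intersecting hyperedges, so $\simplex_\II$ is an interval nestohedron; indeed it is a fertilitope in the sense of~\cite{Defant-fertilitopes}. The family $\II$ itself is a maximal nested set of this building set, and by the first step the vertex digraph of the corresponding vertex is exactly $\ww$.
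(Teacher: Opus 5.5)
Your first and third steps are fine and agree with the paper. The forward direction is Postnikov's description of the vertex digraphs of a nestohedron as its $\BB$-forests, which the paper simply cites \cite[Prop.~7.8]{Postnikov}. Your realisation by the laminar family of subtree node sets is exactly the paper's $\II_\less = \set{a^\less}{a \in [n]}$.

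\textbf{The gap is in the middle step (rooted $\Rightarrow$ union closed).} What you need there is that $a$ and $b$ are \emph{incomparable} in the vertex poset. Once this holds, $a \less x$ and $b \less x$ already contradict rootedness, since the predecessors of a node in a rooted forest form a chain. So your worry about whether the relations at $x$ are cover relations is beside the point. Your justification only excludes a \emph{single} hyperedge containing both $a$ and $b$, and even that requires $a = \min I$ and $b = \max J$, which you did not impose. It does not exclude a chain of hyperedges through other elements, and your order allows such chains.

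Here is a concrete failure. Take $\II = \{[1,3],[3,5],[2,5],[2,3]\}$ on $[5]$; it satisfies both conditions of \cref{thm:characterizationSimpleIntervalHypergraphicPolytopesProof} but is not union closed. Set $I = [1,3]$, $J = [3,5]$, $x = 3$, and use the order listing $1,2,5,4,3$ from top to bottom, i.e.\ the orientation $\Or_\sigma$ with $\sigma = 12543$. All your requirements hold, with $a = 1$ and $b = 5$. Yet the vertex digraph is $1 \to 2 \to 5 \to 3$, $5 \to 4$, which is a rooted tree. The element $2 \in I$, placed between $a$ and $b$, gives $a \less 2 \less b$. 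Your proposed repair does not help: here $I \cup J = [5]$, so the face you describe is the whole polytope, and $I, J$ is already inclusion-minimal among bad pairs.

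\textbf{The missing idea.} Assume without loss of generality $\min I < \min J$ and $\max I < \max J$ (otherwise the intervals are nested). Put $a = \min I$ and $b = \max J$ as the two topmost elements of the whole order. A shortest chain witnessing $a \less b$ would end with a hyperedge containing $b$ and oriented at an element above $b$, that is, at $a$. Such a hyperedge contains $[a,b] = I \cup J$, which you have already excluded via condition~(i). Hence $a$ and $b$ are incomparable, and your contradiction goes through. Note that the node with two incoming arcs need not be $x$: for $\sigma = 15234$ in the example above it is the node $2$.

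This is essentially the paper's argument with $\sigma = ijX$. There, $i = \min I$ is a root, and $j = \max J$ lies in the same component, so it has an incoming arc, which can only come from $i$. This produces $K \in \II$ with $I \cup J \subseteq K$, and hence $I \cup J \in \II$ by condition~(i).
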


Again, using our characterization of which Tamari interval posets appear as vertex posets of which interval hypergraphic polytopes, we describe the posets of simple hypergraphic polytopes and of interval nestohedra containing a given weeping willow (\Cref{prop:inclusionPosetIntervalHypergraphsContainingWeepingWillow,prop:inclusionPosetIntervalBuildingSetsContainingRootedWeepingWillow}).
We finally extend these results to face preposets of simple interval hypergraphic polytopes, which we call \defn{Schröder weeping willows} (\cref{sec:SchroderWeepingWillows}).


\section{Preliminaries}
\label{sec:preliminaries}


\subsection{Deformed permutahedra}
\label{subsec:deformedPermutahedra}

The \defn{Minkowski sum} of two polytopes~$\poly{P}, \poly{Q} \subseteq \R^n$ is the polytope~$\mathdefn{\poly{P} + \poly{Q}} \eqdef \set{\b p+\b q}{\b p \in \poly{P}, \b q \in \poly{Q}}$.
A \defn{deformation} of a polytope~$\poly{P}$ is a polytope~$\poly{Q}$ which satisfies the following equivalent conditions:
\begin{itemize}
\item $\poly{Q}$ is a weak Minkowski summand of~$\poly{P}$, meaning that there exists~$\lambda > 0$ and a polytope~$\poly{R}$ such that~$\lambda \poly{P} = \poly{Q} + \poly{R}$,
\item the normal fan of~$\poly{Q}$ coarsens the normal fan of~$\poly{P}$,
\item $\poly{Q}$ can be obtained from~$\poly{P}$ by parallelly translating its facets without moving past vertices,
\item $\poly{Q}$ can be obtained from~$\poly{P}$ by moving its vertices in such a way that all edge directions and orientations are preserved.
\end{itemize}

The \defn{permutahedron~$\Perm$} is the polytope in~$\R^n$ obtained equivalently as:
\begin{itemize}
\item the convex hull of the points~$\sum_{i \in [n]} i \, \b{e}_{\sigma(i)}$ for all permutations~$\sigma$ of~$[n]$, see~\cite{Schoute},
\item the intersection of the hyperplane~$\poly{H} = \smash{\bigset{\b{x} \in \R^n}{\sum_{i \in [n]} x_i = \binom{n+1}{2}}}$ with the half-spaces $\smash{\bigset{\b{x} \in \R^n}{\sum_{i \in I} x_i \ge \binom{|I|+1}{2}}}$ for all~${\varnothing \ne I \subsetneq [n]}$, see \cite{Rado},
\item (a translation of) the Minkowski sum of the segments~$[\b{e}_i, \b{e}_j]$ for~$1\leq i < j \leq n$.
\end{itemize}
See \cref{fig:exmHypergraphicPolytopes}.
The (outer) normal fan of the permutahedron~$\Perm$ is the \defn{braid fan}, defined by the (type~$A$) Coxeter arrangement formed by the hyperplanes~$\set{\b{x} \in \R^n}{x_i = x_j}$ for all~${1 \le i < j \le n}$.
Each permutation~$\sigma$ of~$[n]$ corresponds to a maximal cone~$\mathdefn{\poly{C}_\sigma} \eqdef \set{\b{x} \in \R^n}{x_{\sigma(1)} \le \dots \le x_{\sigma(n)}}$ of the braid fan, consisting of all points whose coordinates are ordered by the permutation~$\sigma$.

\begin{figure}[b]
	\centerline{\includegraphics[width=\linewidth]{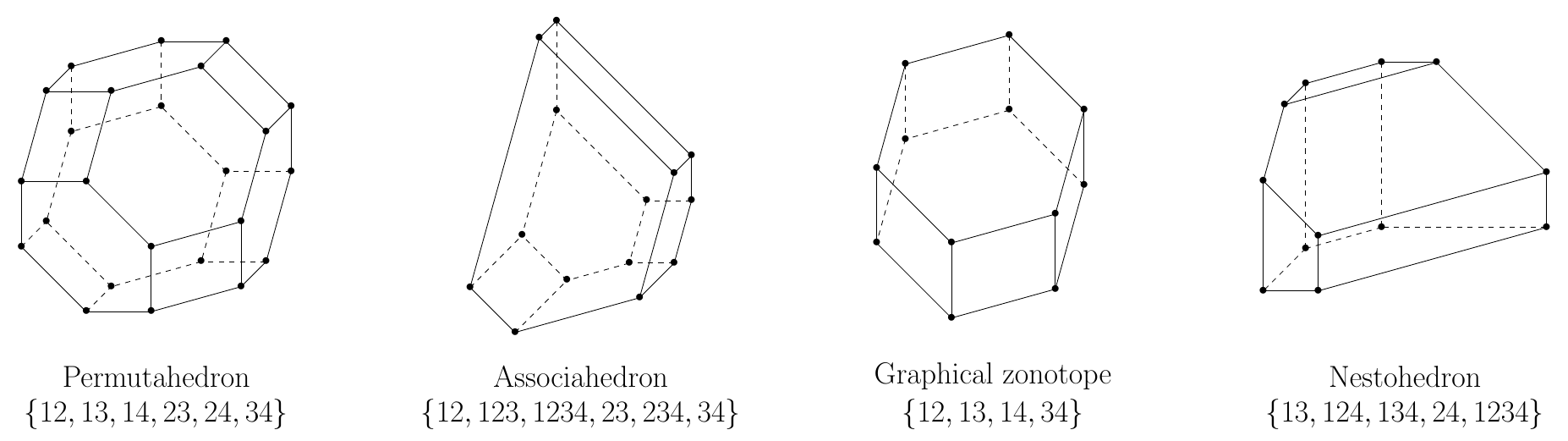}}
	\caption{Examples of 3-dimensional hypergraphic polytopes.}
	\label{fig:exmHypergraphicPolytopes}
\end{figure}

A \defn{deformed permutahedron} (\aka \defn{generalized permutahedron}~\cite{Postnikov}, or \defn{polymatroid}~\cite{Edmonds}) is a deformation of the permutahedron~$\Perm$.
A classical example of a deformed permutahedron is the \defn{associahedron~$\Asso$}, defined equivalently as
\begin{itemize}
\item the convex hull of the points~$\sum_{i \in [n]} \ell(T,i) \, r(T,i) \, \b{e}_i$ for all binary trees~$T$ on~$n$ nodes, where $\ell(T,i)$ and~$r(T,i)$ respectively denote the numbers of leaves in the left and right subtrees of the $i$-th node of~$T$ in infix labeling, see~\cite{Loday},
\item the intersection of the hyperplane~$\poly{H}$ with the halfspaces~${\bigset{\b{x} \in \R^n}{\sum_{i \le \ell \le j} x_\ell \ge \binom{j-i+2}{2}}}$ for all~$1 \le i \le j \le n$, see~\cite{ShniderSternberg},
\item the Minkowski sum of the faces~$\simplex_{[i,j]}$ of the standard simplex~$\simplex_{[n]}$ for ${1 \le i \le j \le n}$, see~\cite{Postnikov}.
\end{itemize}
See \cref{fig:exmHypergraphicPolytopes}.
The (outer) normal fan of the associahedron is the \defn{sylvester fan}.
Each binary tree~$T$ with $n$ nodes corresponds to a maximal cone~$\mathdefn{\poly{C}_T} \eqdef \set{\b{x} \in \R^n}{x_i \le x_j \text{ for $i$ child of $j$ in~$T$}}$ of the sylvester fan.
Other examples of deformed permutahedra include matroid polytopes, graphical zonotopes, graph associahedra~\cite{CarrDevadoss}, nestohedra~\cite{FeichtnerSturmfels}, quotientopes~\cite{PilaudSantos-quotientopes,PadrolPilaudRitter}, brick polytopes~\cite{PilaudSantos-brickPolytope}, multiplihedra~\cite{Stasheff-HSpaces, SaneblidzeUmble-diagonals, Forcey-multiplihedra, ArdilaDoker, ChapotonPilaud}, constrainahedra~\cite{BottmanPoliakova, ChapotonPilaud}, and many others.

Following~\cite{PostnikovReinerWilliams}, we now recall the dictionary between vertices (resp.~faces) of deformed permutahedra and integer posets (resp.~preposets).
Note that our convention for vertex poset and face preposets is reversed from that of~\cite{PostnikovReinerWilliams} in order to fit with that of~\cite{BenedettiBergeronMachacek, BergeronPilaud}.
In particular, several of our arguments are formulated in terms of minimizing directions rather than maximizing ones.

For a vertex~$\b{v}$ of a deformed permutahedron~$\poly{Q}$, we call \defn{vertex digraph} of~$\b{v}$ in~$\poly{Q}$ the directed graph on $[n]$ with an arc~$i \to j$ for each neighbor~$\b{w}$ of~$\b{v}$~$\poly{Q}$ such that~$\b{w}-\b{v}$ is a positive multiple of~$\b{e}_j - \b{e}_i$, and \defn{vertex poset $\less$} of~$\b{v}$ in~$\poly{Q}$ its transitive closure (note that vertex digraphs are acyclic, otherwise it would contradict $\b v$ being a vertex).
In other words, the (outer) normal cone of~$\b{v}$ in~$\poly{Q}$ is given by~$\set{\b{x} \in \R^n}{x_i \ge x_j \text{ for all } i \less j}$.
In particular, the linear extensions of the opposite of the vertex poset of~$\b{v}$ in~$\poly{Q}$ are precisely the permutations $\sigma$ such that the cone~$\poly{C}_\sigma$ of the braid arrangement is contained in the normal cone of~$\b{v}$ in~$\poly{Q}$.
Moreover, note that $\b{v}$ is a simple vertex of~$\poly{Q}$ if and only if its vertex digraph is a forest.

More generally, for a face~$\poly{F}$ of~$\poly{Q}$, we call \defn{face preposet~$\bless$} of~$\poly{F}$ in~$\poly{Q}$ the preposet on~$[n]$ such that the (outer) normal cone of~$\poly{F}$ in~$\poly{P}$ is given by~$\set{\b{x} \in \R^n}{x_i \ge x_j \text{ for all } i \bless j}$.
Recall that a \defn{preposet}~$\bless$ is a reflexive and transitive relation, and defines an equivalence relation~$\set{(a,b)}{a \bless b \text{ and } b \bless a}$ and a poset~$\set{(A,B)}{a \bless b \text{ for some (or all) } a \in A \text{ and } b \in B}$ on the equivalence classes of this equivalence relation.
We will always denote by~$\less$ the poset associated to the preposet~$\bless$.
We call \defn{face digraph} of~$\poly{F}$ in~$\poly{Q}$ the Hasse diagram of the poset~$\less$ (its vertex set being a partition of~$[n]$).
If~$\b{v}$ is a simple vertex, then the face digraphs of the faces containing~$\b{v}$ are precisely the forests obtained by iterative edge contractions in the vertex digraph of~$\b{v}$.

For instance, the vertex (resp.~face) digraphs of the associahedron~$\Asso$ are precisely the binary (resp.~Schröder) trees with $n+1$ leaves, labeled in inorder, and oriented towards their leaves, see \Cref{exm:binarySearchTrees} (resp.~\cref{exm:faceLatticeAssociahedron}).
Recall that a \defn{Schröder tree} is a rooted plane tree where each node has at least two children (or equivalently a tree obtained from a binary tree by iterative edge contractions).



\subsection{Hypergraphic polytopes}
\label{subsec:D_H}

A \defn{hypergraph} $\HH$ on~$[n]$ is a collection of subsets of~$[n]$.
The \defn{hypergraphic polytope}~$\simplex_\HH$ is the Minkowski sum
\[
\mathdefn{\simplex_\HH} \eqdef \sum_{H\in \HH} \simplex_H\,,
\]
where $\mathdefn{\simplex_H} \eqdef \conv\set{\b{e}_h}{h \in H}$ is the simplex given by the convex hull of the points $\b{e}_h \in \R^n$ for~$h \in H$.
Note that~$\simplex_\HH$ is always a deformed permutahedron, as it is a Minkowski sum of faces of the standard simplex.

\begin{example}
\label{exm:DH1}
For the hypergraph $\HH=\{ {\orange 123}, {\green 134} \}$,
we  have
\[
\begin{array}{ccc}
\begin{tikzpicture}[scale=1.5,baseline=.5cm]
	\coordinate (v1) at (.6,.75);
	\coordinate (v2) at (0,0);
	\coordinate (v3) at (1.2,0);
	\draw[white, fill=orange!7] (v1) -- (v2) -- (v3) -- cycle;
    \node (1) at (.6,.75) {$\scriptstyle \orange 1$};
	\node (2) at (0,0) {$\scriptstyle \orange 2$};
	\node (3) at (1.2,0) {$\scriptstyle \orange 3$};
	\draw (1) -- (2) -- (3) -- (1);
\end{tikzpicture}
\quad + & \quad
 \begin{tikzpicture}[scale=1.5,baseline=.5cm]
    \coordinate (v1) at (.6,.75);
	\coordinate (v3) at (1.2,0);
	\coordinate (v4) at (2.2,.5);
	\draw[white, fill=green!7] (v1) -- (v3) -- (v4) -- cycle;
    \node (1) at (.6,.75) {$\scriptstyle \green 1$};
	\node (3) at (1.2,0) {$\scriptstyle \green 3$};
	\node (4) at (2.2,.5) {$\scriptstyle \green 4$};
    \draw (1) -- (3) -- (4) -- (1);
\end{tikzpicture}
\quad = &
\begin{tikzpicture}[scale=1.5,baseline=.5cm]
		\coordinate (v23) at (0,0);
		\coordinate (v21) at (-.6,.75);
		\coordinate (v24) at (1,.5);
        \coordinate (v34) at (2.2,.5);
		\coordinate (v14) at (1.6,1.25);
        \draw[white, fill=green!7] (v23) -- (v21) -- (v24) -- cycle;
        \draw[white, fill=orange!7] (v24) -- (v34) -- (v14) -- cycle;
        \node (23) at (0,0) {$\scriptstyle ({\orange 2}, {\green 3})$}; 		
		\node (21) at (-.6,.75) {$\scriptstyle ({\orange 2}, {\green 1})$};	
		\node (24) at (1,.5) {$\scriptstyle ({\orange 2}, {\green 4})$}; 	
		\node (34) at (2.2,.5) {$\scriptstyle ({\orange 3}, {\green 4})$};
		\node (14) at (1.6,1.25) {$\scriptstyle ({\orange 1}, {\green 4})$};	
		\node (33) at (1.2,0) {$\scriptstyle ({\orange 3}, {\green 3})$};	
		\node (11) at (0,1.5) {$\scriptstyle ({\orange 1}, {\green 1})$};		
		\path (11) edge  (21);
		\path[dotted] (11) edge (33);
		\path (11) edge (14);
		\path (14) edge  (34) ;
		\path (14) edge (24)  ;
		\path (24) edge (34)  ;
		\path (21) edge (24);
		\path (21) edge (23);
		\path (24) edge (23);
		\path (23) edge (33);
		\path (33) edge (34);
\end{tikzpicture}\\
\orange{\simplex_{123}} & \green{\simplex_{134}} & \simplex_\HH = {\orange \simplex_{123}} + {\green \simplex_{134}}\\
\end{array}
\]
which is a 3-dimensional polytope sitting in $\R^4$.
We simplify notation, writing~$123$ for~$\{1,2,3\}$.
\end{example}

\begin{remark}
\label{rem:singletons}
Note that the singleton hyperedges are irrelevant for our purposes.
Namely, adding to~$\HH$ the hyperedge~$\{i\}$ for some~$i \in [n]$ just translates the polytope~$\simplex_\HH$ in the direction~$\b{e}_i$, which does not affect the face structure (nor the normal fan) of the polytope.
For some statements, it is convenient to assume that $\{i\} \in \HH$ for all $i \in [n]$.
However, we ignore the singletons in the figures to simplify the drawings.
\end{remark}

Recall that the face of a Minkowski sum~$\sum_i \poly{P}_i$ minimizing a direction~$\b{c}$ is the Minkowski sum of the faces of the summands~$\poly{P}_i$ minimizing~$\b{c}$.
This standard fact immediately yields combinatorial descriptions of the vertices and faces of the hypergraphic polytope~$\triangle_\HH$, which we recall in the next two sections.
Our descriptions are similar to (but slightly different from) those of \cite[Thm.~2.18]{BenedettiBergeronMachacek}.


\subsection{Acyclic orientations and vertices of hypergraphic polytopes} 
\label{subsec:acyclicOrientations}

We first recall a combinatorial model for the vertices of~$\simplex_\HH$, which will be extended to all faces of~$\triangle_\HH$ later in \cref{subsec:acyclicPreorientations}.

\begin{definition}
\label{def:acyclicOrientation}
An \defn{orientation} of~$\HH$ is a map~$O: \HH \to [n]$ such that~$O(H) \in H$ for all~${H \in \HH}$.
The orientation~$O$ is \defn{acyclic} if there is no~$H_1, \dots, H_k$ with~$k \ge 2$ such that~$O(H_{i+1}) \in H_i \ssm \{O(H_i)\}$ for~$i \in [k-1]$ and~$O(H_1) \in H_k \ssm \{O(H_k)\}$.
\end{definition}

\begin{example}
\label{exm:DH2}
The hypergraph $\HH=\{ {\orange 123}, {\green 134} \}$ of~\cref{exm:DH1} has $9$ orientations, $7$ of which are acyclic as displayed in~\cref{exm:DH1} (we represent each orientation~$O$ by the pair~$({\orange O(123)}, {\green O(134)})$). 
For instance, the orientation $({\orange O(123)}, {\green O(134)}) = ({\orange 1}, {\green3})$ is cyclic since ${\orange O(123)} = {\orange 1} \in {\green 134}$ and ${\green O(134)} = {\green 3} \in {\orange 123}$ yields a cycle with $k = 2$.
\end{example}

\begin{proposition}[{\cite[Thm.~2.18]{BenedettiBergeronMachacek}}]
\label{prop:acyclicOrientations}\label{def:less_A}
The acyclic orientations of~$\HH$ are in bijection with the vertices of~$\simplex_\HH$.
More precisely, an acyclic orientation~$A$ of~$\HH$~corresponds~to
\begin{itemize}
\item the vertex $\sum_{H \in \HH} \b{e}_{A(H)}$ of~$\simplex_\HH$,
\item the vertex poset~$\mathdefn{\less_A}$ of~$\simplex_\HH$ defined as the transitive closure of~$\bigset{A(H) \le h}{h \in H \in \HH}$,
\item the maximal cone~$\mathdefn{\poly{C}_A} \eqdef \set{\b{x} \in \R^n}{x_{A(H)} \ge x_{h} \text{ for all } h \in H \in \HH}$ in the normal fan of~$\simplex_\HH$
\end{itemize}
\end{proposition}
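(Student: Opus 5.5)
We prove the statement using the fact recalled above: the face of a Minkowski sum minimizing a direction~$\b{c}$ is the sum of the faces of the summands minimizing~$\b{c}$. For a generic~$\b{c}$ (all coordinates distinct), the face of~$\simplex_H$ minimizing~$\b{c}$ is the single vertex~$\b{e}_h$, where~$h$ is the element of~$H$ with smallest~$c_h$. Hence each generic~$\b{c}$ determines an orientation~$O_{\b{c}}$ with~$O_{\b{c}}(H) = \arg\min_{h \in H} c_h$, and the vertex of~$\simplex_\HH$ minimizing~$\b{c}$ is~$\sum_{H} \b{e}_{O_{\b{c}}(H)}$. In the paper's convention, minimizing~$\b{c}$ corresponds to the outer normal cone direction~$\b{x} = -\b{c}$. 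This matches~$\poly{C}_A$ with~$x_{A(H)} \ge x_h$.

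First we show that every~$O_{\b{c}}$ is acyclic. Along a would-be cycle, $O(H_{i+1}) \in H_i \ssm \{O(H_i)\}$ forces~$c_{O(H_i)} < c_{O(H_{i+1})}$, and going around the cycle gives~$c_{O(H_1)} < c_{O(H_1)}$, a contradiction. Conversely, given an acyclic orientation~$A$, define the relation~$A(H) \to h$ for~$h \in H \ssm \{A(H)\}$. Acyclicity of~$A$ means this digraph has no directed cycle: a directed cycle~$A(H_1) \to A(H_2) \to \cdots$ is exactly the forbidden configuration. So its transitive closure~$\less_A$ is a poset. Pick a linear extension and choose~$\b{c}$ strictly increasing along it. Then~$A(H)$ is the minimum of~$\b{c}$ on each~$H$, so~$O_{\b{c}} = A$. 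This shows that orientations realized by generic directions are exactly the acyclic ones.

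It remains to check injectivity and identify the cones. The set of~$\b{x}=-\b{c}$ realizing~$A$, together with its closure, is exactly~$\poly{C}_A = \{x_{A(H)} \ge x_h\}$. This set is full-dimensional: it contains the open set just constructed. The normal cone of the vertex~$\b{v}_A = \sum_H \b{e}_{A(H)}$ is the closure of the union of the regions of generic directions minimized at~$\b{v}_A$.

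The main obstacle is showing that distinct acyclic orientations give distinct vertices, since two orientations could a priori give the same point~$\sum_H \b{e}_{A(H)}$. I would argue as follows. If~$\b{v}_A = \b{v}_{A'}$, then a generic~$\b{c}$ from the interior of~$\poly{C}_A$ minimizes the whole polytope at this point, so it also minimizes each summand at the corresponding vertex. Since a minimizing point of a Minkowski sum decomposes uniquely as a sum of summand minimizers, this forces~$A'(H) = A(H)$ for all~$H$. Thus the normal cone of~$\b{v}_A$ is exactly~$\poly{C}_A$, and the map~$A \mapsto \b{v}_A$ is a bijection.

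Finally, the normal cone~$\{x_i \ge x_j \text{ for } i \less j\}$ equals~$\poly{C}_A$. Since both are described by inequalities~$x_a \ge x_b$, the defining relations generate the same transitive closure. So the vertex poset is~$\less_A$.
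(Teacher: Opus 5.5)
Your proof is correct and follows the standard route the paper points to: the paper gives no proof of its own beyond citing \cite[Thm.~2.18]{BenedettiBergeronMachacek} and the fact that the face of a Minkowski sum minimizing $\b{c}$ is the sum of the summands' minimizing faces, and your linear-extension step matches the surjection $\sigma \mapsto \Or_\sigma$ recalled right after the statement. The one step you leave implicit is surjectivity of $A \mapsto \b{v}_A$; it holds because every vertex is the unique minimizer of some generic $\b{c}$ (its normal cone is full-dimensional), so it equals $\b{v}_{O_{\b{c}}}$.
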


%
%
%

\begin{definition}
\label{def:surjection}
For a permutation~$\sigma$ of~$[n]$, we define an orientation~$\mathdefn{\Or_\sigma}$ of~$\HH$ by
\[
\Or_\sigma(H) \eqdef \sigma\big(\min\set{j}{\sigma(j)\in H}\big).
\]
Equivalently, $\Or_\sigma(H)$ is the element of $H$ that first appears in the word $\sigma(1) \sigma(2) \ldots \sigma(n)$.
\end{definition}

\begin{proposition}[{\cite[Lem.~2.9]{BenedettiBergeronMachacek}}]
The map~$\Or$ is a surjection from the permutations of~$[n]$ to the acyclic orientations of~$\HH$.
The pre-image $\Or^{-1}(A) \eqdef \set{\sigma}{\Or_\sigma = A}$ is the set of linear extensions of~$\less_A$.
\end{proposition}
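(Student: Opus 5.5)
We need to prove two things: that $\Or_\sigma$ is an acyclic orientation, so the map lands where claimed and every acyclic orientation is hit; and that $\Or_\sigma = A$ exactly when $\sigma$ is a linear extension of~$\less_A$. Here "linear extension" means $\sigma^{-1}(a) < \sigma^{-1}(b)$ whenever $a \less_A b$, i.e.\ the word $\sigma(1)\dots\sigma(n)$ respects~$\less_A$. We argue directly from \cref{def:acyclicOrientation,def:surjection}, without using the geometry.

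\emph{Step 1: $\Or_\sigma$ is acyclic.}
Write $\mathrm{pos}(x) \eqdef \sigma^{-1}(x)$. By definition, $\Or_\sigma(H)$ is the element of $H$ with smallest position. So if $O(H_{i+1}) \in H_i \ssm \{O(H_i)\}$, then $\mathrm{pos}(O(H_i)) < \mathrm{pos}(O(H_{i+1}))$, strictly. Suppose $H_1, \dots, H_k$ formed a cycle. Chaining these inequalities around it would give $\mathrm{pos}(O(H_1)) < \mathrm{pos}(O(H_1))$, a contradiction.

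\emph{Step 2: the fiber of $A$ is the set of linear extensions of $\less_A$.}
First suppose $\sigma$ is a linear extension of~$\less_A$. For every $H$ and every $h \in H \ssm \{A(H)\}$ we have $A(H) \less_A h$, so $A(H)$ precedes $h$ in~$\sigma$. Hence $A(H)$ is the first element of $H$ in the word, and $\Or_\sigma(H) = A(H)$.

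Conversely, suppose $\Or_\sigma = A$. Then for each generating relation $A(H) \le h$ with $h \in H$, the element $A(H)$ comes first among $H$, so $\mathrm{pos}(A(H)) \le \mathrm{pos}(h)$. Since the order on positions is transitive, the inequality extends to the transitive closure. So $\sigma$ is a linear extension of~$\less_A$.

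Here we need $\less_A$ to be a genuine poset, i.e.\ antisymmetric, so that linear extensions make sense. This is given by \cref{prop:acyclicOrientations}. It can also be seen directly: a nontrivial cycle in the generating relation $A(H_i) \to h \in H_i \ssm\{A(H_i)\}$ is exactly a cycle as in \cref{def:acyclicOrientation}.

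\emph{Step 3: surjectivity.}
Let $A$ be an acyclic orientation. Then $\less_A$ is a poset on the finite set $[n]$, so it has at least one linear extension~$\sigma$. By Step 2, $\Or_\sigma = A$.

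The only point requiring care is the antisymmetry of $\less_A$ in Step 2. It amounts to matching a directed cycle of the relation "$A(H) \to h$ for $h \in H \ssm \{A(H)\}$" with a cycle of hyperedges $H_1, \dots, H_k$ in the sense of \cref{def:acyclicOrientation}. To do this, take a cycle of elements and record the hyperedge used at each step. The head of each step lies in the next hyperedge minus its orientation, which is exactly the required condition. If only one hyperedge occurs, the cycle would need $A(H) \to h \to \dots \to A(H)$ within a single $H$. But every step starts at the orientation of its hyperedge, so this is impossible with $k=1$ unless the relation is trivial.
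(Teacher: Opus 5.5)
Your proof is correct and complete. Step 1 is right: positions strictly increase along any hyperedge cycle of $\Or_\sigma$, so $\Or_\sigma$ is acyclic. The two inclusions in Step 2 then identify $\Or^{-1}(A)$ with the set of linear extensions of $\less_A$. Surjectivity follows from the existence of a linear extension once $\less_A$ is known to be antisymmetric.

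The paper gives no proof of its own here; it imports the statement from \cite[Lem.~2.9]{BenedettiBergeronMachacek}. So there is no route to compare against, and your argument is the natural self-contained verification.

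One small suggestion concerns antisymmetry. Rely only on your direct check, which works: a closed walk in the relation $A(H)\to h$ for $h\in H\ssm\{A(H)\}$ has length at least $2$ and is literally a sequence $H_1,\dots,H_k$ with $k\ge 2$ as in \cref{def:acyclicOrientation}. Do not also appeal to \cref{prop:acyclicOrientations}, since that result is quoted from the same source and may itself rest on this lemma.
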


\begin{example}
\label{exm:DH3}
Continuing with the hypergraph $\HH=\{{\orange 123}, {\green 134} \}$ of \cref{exm:DH1,exm:DH2}, let $A$ be the acyclic orientation $({\orange 2}, {\green 4})$.
The order~$\less_A$ is the following
\[
	{\less_A} = \text{Transitive closure }\Bigg(
	\begin{tikzpicture}[scale=1,baseline=.2cm]
		\node (2) at (0,0) {$\scriptstyle 2$};
		\node (1) at (-.4,.7) {$\scriptstyle 1$};
		\node (3) at (.4,.7) {$\scriptstyle 3$};
		\draw [thick] (2)--(1); 
		\draw [thick] (2)--(3); 
	\end{tikzpicture} 
	\cup
	\begin{tikzpicture}[scale=1,baseline=.2cm]
		\node (4) at (0,0) {$\scriptstyle 4$};
		\node (1) at (-.4,.7) {$\scriptstyle 1$};
		\node (3) at (.4,.7) {$\scriptstyle 3$};
		\draw [thick] (4)--(1); 
		\draw [thick] (4)--(3); 
	\end{tikzpicture} 
	\Bigg)
	=
	\begin{tikzpicture}[scale=1,baseline=.2cm]
		\node (4) at (.4,0) {$\scriptstyle 4$};
		\node (2) at (-.4,0) {$\scriptstyle 2$};
		\node (1) at (-.4,.7) {$\scriptstyle 1$};
		\node (3) at (.4,.7) {$\scriptstyle 3$};
		\draw [thick] (4)--(1); 
		\draw [thick] (4)--(3); 
		\draw [thick] (2)--(1); 
		\draw [thick] (2)--(3); 
	\end{tikzpicture}.
\]
The linear extensions of this order are the permutations $2413$, $2431$, $4213$ and $4231$.
\end{example}


\subsection{Acyclic preorientations and faces of hypergraphic polytopes} 
\label{subsec:acyclicPreorientations}

We now recall a combinatorial model for all faces of~$\simplex_\HH$, similar to (but slightly different from) that of \cite[Thm.~2.18]{BenedettiBergeronMachacek}.

\begin{definition}
\label{def:acyclicPreorientation}
A \defn{preorientation} of~$\HH$ is a map~$O: \HH \to 2^{[n]}$ such that~$\varnothing \ne O(H) \subseteq H$ for all~${H \in \HH}$.
The preorientation~$O$ is \defn{acyclic} if there is no~$H_1, \dots, H_k$ with~$k \ge 2$ such that~${O(H_{i+1}) \cap H_i \ne \varnothing}$ for~$i \in [k-1]$ and~$O(H_1) \cap (H_k \ssm \{O(H_k)\}) \ne \varnothing$.
\end{definition} 

\begin{proposition}[{\cite[Thm.~2.18]{BenedettiBergeronMachacek}}]
\label{prop:acyclicPreorientations}
The acyclic preorientations of~$\HH$ are in bijection with the faces of~$\simplex_\HH$.
More precisely, an acyclic preorientation~$A$ of~$\HH$ corresponds to
\begin{itemize}
\item the face $\sum_{H \in \HH} \triangle_{A(H)}$ of~$\simplex_\HH$,
\item the face preposet~$\mathdefn{\bless_A}$ of~$\simplex_\HH$ defined as the transitive closure of~${\bigset{a \le h}{h \in H \in \HH, \, a \in A(H)}}$,
\item the cone~$\mathdefn{\poly{C}_A} \eqdef \set{\b{x} \in \R^n}{x_a \ge x_h \text{ for all } h \in H \in \HH, a \in A(H)}$ in the normal fan of~$\simplex_\HH$.
\end{itemize}
\end{proposition}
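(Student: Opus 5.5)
The argument uses the standard fact recalled above: the face of a Minkowski sum minimizing a direction $\b{c}$ is the sum of the faces of the summands minimizing $\b{c}$. I would work through normal cones. For a nonempty $S \subseteq H$, the face $\simplex_S$ of $\simplex_H$ has outer normal cone (for the minimizing convention used in the paper) $N_H(S) \eqdef \set{\b{x}}{x_s = x_{s'} \ge x_h \text{ for all } s,s' \in S,\ h \in H}$. The nonempty faces of $\simplex_\HH$ correspond bijectively to the nonempty cones of the common refinement of the normal fans of the summands. Equivalently, they correspond to the choices $A(H) \subseteq H$, one face $\simplex_{A(H)}$ per hyperedge, such that the relative interiors of the cones $N_H(A(H))$ for $H \in \HH$ have a common point. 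Distinct choices give distinct faces, because the face $\sum_H \simplex_{A(H)}$ recovers its relatively open normal cone. The choice determines the face $\sum_{H} \simplex_{A(H)}$, which proves the first bullet once the realizable choices are identified with the acyclic preorientations.

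So the key step is the following claim: there exists $\b{x}$ with $x$ constant on each $A(H)$ and $x_a > x_h$ for all $a \in A(H)$ and $h \in H \ssm A(H)$ if and only if $A$ is acyclic. I would prove it by considering the relation $\bless_A$, the transitive closure of $a \le h$ for $h \in H$ and $a \in A(H)$. Note that elements of a common $A(H)$ are automatically related in both directions.

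For the direction ``realizable implies acyclic'', I would take a cycle $H_1, \dots, H_k$ as in \cref{def:acyclicPreorientation}. Following the chain of weak inequalities $x_{O(H_{i+1})} \le x_{O(H_i)}$ around the cycle gives equality everywhere. This contradicts the strict inequality at the step where the element lies in $H_k \ssm O(H_k)$.

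For the converse, the plan is to show that acyclicity says exactly that whenever $a \in A(H)$ and $h \in H \ssm A(H)$, we do not have $h \bless_A a$. Any witness chain for $h \bless_A a$ is a cycle of hyperedges of the forbidden form (I read $H_k \ssm \{O(H_k)\}$ as $H_k \ssm O(H_k)$). I would then choose $\b{x}$ to be a strictly order-reversing function on the poset of equivalence classes of $\bless_A$, for instance minus the rank in a linear extension. Then $x$ is constant on the classes, $x_a \ge x_h$ whenever $a \bless_A h$, and strict inequality holds exactly when the two elements are inequivalent, which is the case for $a \in A(H)$, $h \in H \ssm A(H)$.

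This also identifies the normal cone. The relative interior of $\bigcap_H N_H(A(H))$ is nonempty, and its closure is $\set{\b{x}}{x_a \ge x_h \text{ for } a \in A(H),\ h\in H}$, which is $\poly{C}_A$ and is defined by the transitive closure $\bless_A$. So $\bless_A$ is the face preposet and $\poly{C}_A$ the normal cone, giving the second and third bullets.

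The main obstacle is the bookkeeping in the acyclic-to-realizable direction. One must check carefully that the definition's cycle condition (intersections $O(H_{i+1}) \cap H_i \neq \varnothing$, closing with a strict step) matches precisely ``some strict pair lies in one equivalence class of $\bless_A$''. The remaining steps are routine polyhedral facts about Minkowski sums.
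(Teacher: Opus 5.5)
Your proposal is correct and follows essentially the route the paper indicates. The paper gives no separate argument: it cites Benedetti--Bergeron--Machacek and notes that the statement follows immediately from the fact that faces of a Minkowski sum are sums of faces of the summands. That is exactly what you carry out, including turning a witness chain for $h \bless_A a$, closed up by $H$, into a forbidden cycle, and realizing the choice via a strictly order-reversing function on the classes of $\bless_A$. One cosmetic slip: the cone $N_H(S)$ you write is the normal cone for the \emph{maximizing} convention, not the minimizing one, and it is the maximizing convention that matches $\poly{C}_A$.
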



%
%

\begin{definition} \label{surj}
For an ordered partition (\aka set composition) $X = (X_1,\dots,X_k)$ of $[n]$, we define a preorientation \mathdefn{$\mathcal{O}_X$} of $\HH$ by
\[
\mathcal{O}_X(H) \eqdef H \cap X_{\min\set{i \in [k]}{H \cap X_i \neq \varnothing}}.
\]
\end{definition}

\begin{proposition}[{\cite[Lem.~2.9]{BenedettiBergeronMachacek}}]
The map~$\Or$ is a surjection from the ordered partitions of~$[n]$ to the acyclic preorientations of~$\HH$.
\end{proposition}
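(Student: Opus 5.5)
The statement to prove is that $X \mapsto \Or_X$ sends every ordered partition to an acyclic preorientation of~$\HH$, and that every acyclic preorientation arises in this way. I will use the normal fan description of \cref{prop:acyclicPreorientations} and argue geometrically, with a combinatorial check at each step.

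\emph{Well-definedness.} Fix an ordered partition $X=(X_1,\dots,X_k)$ of~$[n]$. Choose $\b{x}\in\R^n$ that is constant on each block, with $x_a > x_b$ whenever $a\in X_i$, $b\in X_j$ and $i<j$. For each $H\in\HH$, the face of $\simplex_H$ maximizing $\b{x}$ is $\simplex_{\Or_X(H)}$. This holds because $\Or_X(H)$ is exactly the set of elements of $H$ lying in the first block that meets~$H$.

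I then work with $-\b{x}$ to match the paper's minimizing convention. The face of $\simplex_\HH$ selected by $\b{x}$ is $\sum_H \simplex_{\Or_X(H)}$, since faces of a Minkowski sum are sums of faces of the summands. By \cref{prop:acyclicPreorientations}, $\Or_X$ is therefore acyclic.

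This can also be checked directly. For each $H$, assign to $\Or_X(H)$ the index of its block. The two relations in the definition of acyclicity say:
\begin{itemize}
\item $\Or_X(H_{i+1})\cap H_i\ne\varnothing$ forces the index of $H_i$ to be at most that of $H_{i+1}$;
\item $\Or_X(H_1)\cap(H_k\ssm \Or_X(H_k))\ne\varnothing$ forces the index of $H_k$ to be strictly less than that of $H_1$.
\end{itemize}
Following a cycle would then give a strictly increasing loop of indices, which is impossible.

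\emph{Surjectivity.} Let $A$ be an acyclic preorientation. Take $\b{x}$ in the relative interior of the cone $\poly{C}_A$ from \cref{prop:acyclicPreorientations}. The faces of $\simplex_H$ cut out by $\b{x}$ are precisely the $\simplex_{A(H)}$. Concretely, on $H$, the elements where $x$ attains its maximum form exactly $A(H)$.

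Let $X$ be the ordered partition given by the level sets of $\b{x}$, ordered by decreasing value. Then the first block of $X$ meeting $H$ is the set of maximizers of $\b{x}$ on $H$. Intersecting it with $H$ gives $\Or_X(H)=A(H)$.

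\emph{Main obstacle.} The delicate point is that a relative-interior point of $\poly{C}_A$ realizes equality $x_a=x_{a'}$ exactly when $a,a'\in A(H)$ for some $H$, and strict inequality $x_a>x_h$ for $h\in H\ssm A(H)$. This requires identifying $\poly{C}_A$ with the normal cone of the face $\sum_H\simplex_{A(H)}$, which is the content of the cited proposition.

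If I want to avoid that proposition, I will instead build $X$ combinatorially from the face preposet $\bless_A$. I take its equivalence classes and order them by a linear extension of the associated poset. Acyclicity is what guarantees that no element of $H\ssm A(H)$ is equivalent to an element of $A(H)$, so $\Or_X(H)=A(H)$ follows.
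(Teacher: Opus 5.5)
Your proposal is correct. The paper does not prove this statement; it quotes it from \cite[Lem.~2.9]{BenedettiBergeronMachacek}, so you are supplying an argument rather than reproducing one.

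Your two combinatorial arguments are complete and use nothing beyond the definition of acyclicity. For well-definedness, write $\iota(H)$ for the index of the block containing $\Or_X(H)$; a forbidden chain would give $\iota(H_1)\le\dots\le\iota(H_k)<\iota(H_1)$. For surjectivity, $A(H)$ lies in a single $\bless_A$-class, while each $h\in H\ssm A(H)$ lies in a strictly larger class. Indeed, a path from $h$ back to $A(H)$, followed by the step from $A(H)$ to $h$, is exactly a forbidden chain $H_1,\dots,H_k$. These are the versions to present. They are also the safer route, since \cref{prop:acyclicPreorientations} is itself \cite[Thm.~2.18]{BenedettiBergeronMachacek}, which comes after the lemma in that source, so deriving the lemma from it risks circularity.

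The geometric versions need two repairs.

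First, \cref{prop:acyclicPreorientations} only says that $A\mapsto\sum_H\simplex_{A(H)}$ is a bijection from acyclic preorientations onto faces. You use it to conclude two things: that $\Or_X$ is acyclic because $\sum_H\simplex_{\Or_X(H)}$ is a face, and that a relative-interior point $\b x$ of $\poly{C}_A$ has maximizers on $H$ exactly $A(H)$. Both also need the fact that a face of a Minkowski sum determines its summand faces (compare support functions). This is a genuine extra step, because $O\mapsto\sum_H\simplex_{O(H)}$ is not injective on all preorientations. For $\HH=\{\{1,2\},\{1,2,3\}\}$, the cyclic preorientations $\{1,2\}\mapsto\{1\},\ \{1,2,3\}\mapsto\{1,2\}$ and $\{1,2\}\mapsto\{1,2\},\ \{1,2,3\}\mapsto\{1\}$ both give $\b{e}_1+\simplex_{\{1,2\}}$.

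Second, your claim that relative-interior points satisfy $x_a=x_{a'}$ exactly when $a,a'\in A(H)$ for a single $H$ is false. Take $\HH=\{\{1,2\},\{2,3\}\}$ with the acyclic $A(\{1,2\})=\{1,2\}$ and $A(\{2,3\})=\{2,3\}$: all of $\poly{C}_A$ satisfies $x_1=x_3$. Equality holds exactly on $\bless_A$-classes. What you actually need is only that $\b x$ is constant on each $A(H)$, and that $x_a>x_h$ for $a\in A(H)$ and $h\in H\ssm A(H)$. The latter holds because $x_a\ge x_h$ is an implicit equality of $\poly{C}_A$ only if $h\bless_A a$, which acyclicity excludes.

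Also, $\poly{C}_A$ is already written with $A(H)$ maximizing, so passing to $-\b x$ is unnecessary and only confuses the conventions.
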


\begin{remark}
Note that an (acyclic) orientation of~$\HH$ is essentially a (acyclic) preorientation of~$\HH$ whose images are all singletons.
In this case, the orientation just records the element while the preorientation records the singleton.
It thus slightly simplifies the notation to work with orientations when focusing on vertices of~$\simplex_\HH$.
\end{remark}

\subsection{Three families of hypergraphic polytopes}
\label{subsec:exmHP}

We briefly recall three specific families of hypergraphic polytopes, which are illustrated in \cref{fig:exmHypergraphicPolytopes}.


\subsubsection{Graphical zonotopes}\label{sssec:GraphicalZono}

We start with graphical zonotopes, which are extensively studied in the literature, see \eg~\cite{Stanley-acyclicOrientations, Greene, GreeneZaslavsky, PostnikovReinerWilliams, Postnikov, Pilaud-acyclicReorientationLattices,PadrolPilaudPoullot-deformedGraphicalZonotopes,Poullot2025RaysDeformationConesGraphical,PadrolPoullot2025IndecomposabilityAndBeyond}.

\begin{definition}
A \defn{graphical zonotope} is the hypergraphic polytope~$\simplex_\GG$ of a graph~$\GG$, \ie a hypergraph such that $|H| = 2$ for all~$H \in \GG$.
\end{definition}

\begin{example}\,
\begin{itemize}
\item The graphical zonotope of a tree on $n$ nodes is a cube of dimension $n-1$.
\item The graphical zonotope of the complete graph on $n$ nodes is the permutahedron $\Perm$, see Figure~\ref{fig:exmHypergraphicPolytopes}.
\end{itemize}
\end{example}

Note that the normal fan of the graphical zonotope~$\simplex_\GG$ is the fan defined by the graphical arrangement of~$\GG$, that is, the collection of hyperplanes~$\set{\b{x} \in \R^n}{x_i = x_j}$ for all arcs~$\{i,j\}$ of~$\GG$.
The following description of the vertex posets of~$\simplex_\GG$ is classical, and traces back to the work of C.~Greene~\cite{Greene} (see also~\cite[Lem.~7.1]{GreeneZaslavsky}).

\begin{proposition}[\cite{Greene}, {\cite[Lem.~7.1]{GreeneZaslavsky}}]
The vertex posets of a graphical zonotope~$\simplex_\GG$ are the transitive closures of the acyclic orientations of~$\GG$.
Moreover, any poset is the vertex poset of some graphical zonotope.
\end{proposition}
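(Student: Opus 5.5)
The statement has two parts. The first is that every vertex poset of $\simplex_\GG$ is the transitive closure of an acyclic orientation of $\GG$. The second is that every poset arises in this way.

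For the first part, I would specialize \cref{prop:acyclicOrientations} to a graph $\GG$. An orientation $O$ of $\GG$ picks one endpoint $O(\{i,j\})$ of each edge $\{i,j\}$. We identify it with the directed graph having an arc $O(H) \to h$ for $H = \{O(H), h\}$.

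The acyclicity condition of \cref{def:acyclicOrientation} says there is no sequence $H_1, \dots, H_k$ with $O(H_{i+1}) \in H_i \ssm \{O(H_i)\}$, cyclically. When every $H_i$ has two elements, $H_i \ssm \{O(H_i)\}$ is the head of the arc $H_i$. So the condition says exactly that the tail of the next arc is the head of the previous one, \ie that this is a directed cycle. Hence acyclic orientations of the hypergraph $\GG$ are exactly the usual acyclic orientations of the graph $\GG$.

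By \cref{prop:acyclicOrientations}, the vertex poset $\less_A$ is the transitive closure of the relations $A(H) \le h$ for $h \in H$. For an edge these relations are the reflexive pair together with the arc $A(H) \to h$. Therefore $\less_A$ is the transitive closure of the directed graph $A$, which proves the first part.

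For the second part, take any poset $\less$ on $[n]$ and let $\GG$ be the graph of its comparable pairs (or just its Hasse diagram). Orient each edge $\{i,j\}$ from $i$ to $j$ when $i \less j$. This orientation $A$ is acyclic, since a directed cycle would give $i \less j \less i$ with $i \ne j$, contradicting antisymmetry. Its transitive closure is $\less$, because the arcs cover all relations of $\less$ (or generate them, in the Hasse diagram case) and every arc is a relation of $\less$. By the first part, $\less$ is the vertex poset of $\simplex_\GG$ at the vertex $\sum_{H} \b{e}_{A(H)}$.

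The only point needing care is the translation of the hypergraph acyclicity condition into directed cycles, including the degenerate case $k = 2$. For $k=2$ the condition would require two distinct edges $H_1 \ne H_2$ whose arcs form a $2$-cycle $i \to j \to i$. That is impossible in a simple graph, and it is also consistent with the graph notion, since such a pair would not be an acyclic orientation there either.
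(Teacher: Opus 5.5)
Your argument is correct. The paper gives no proof of its own here (it simply cites Greene and Greene--Zaslavsky), and your route matches the pattern the paper uses for the analogous realizability statements in \cref{prop:NestoVertexDiGraphsAreBBforests} and \cref{prop:TamariIntervalPosets2}: specialize \cref{prop:acyclicOrientations} to $2$-element hyperedges, then orient the Hasse diagram (or comparability graph) of $\less$ along $\less$. One cosmetic point: \cref{def:acyclicOrientation} does not require the $H_i$ to be distinct, so it literally forbids directed closed walks rather than directed cycles; this is equivalent, since every closed directed walk contains a directed cycle.
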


Finally, simple graphical zonotopes were characterized in~\cite[Rem.~6.2]{Kim} (see also \cite[Prop.~5.2]{PostnikovReinerWilliams} and \cite[Prop.~53]{Pilaud-acyclicReorientationLattices}).
We say that a graph~$\GG$ is \defn{chordful} (\aka a \defn{block graph}, \aka \defn{clique tree}) when the following equivalent conditions are satisfied:
\begin{enumerate}[(i)]
\item any cycle in~$\GG$ induces a clique of~$\GG$,
\item the $2$-connected components of~$\GG$ are cliques,
\item $\GG$ is the line graph of some forest~$\FF$ (the line graph of~$\FF$ is the graph whose vertices are the arcs of~$\FF$, and where two arcs of~$\FF$ are connected if they are incident to a common node).
\end{enumerate}

\begin{proposition}[{\cite[Rem.~6.2]{Kim}}]
\label{prop:chordful}
The graphical zonotope~$\simplex_\GG$ is simple if and only if~$\GG$ is chordful.
\end{proposition}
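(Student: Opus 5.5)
Since the statement \cref{prop:chordful} is cited from \cite[Rem.~6.2]{Kim}, I would give a short self-contained argument using the vertex digraphs. By \cref{prop:acyclicOrientations}, applied to a graph~$\GG$, the vertices of~$\simplex_\GG$ correspond to the acyclic orientations~$A$ of~$\GG$, and the vertex poset~$\less_A$ is the transitive closure of~$A$. A vertex is simple if and only if its vertex digraph, which is the Hasse diagram of~$\less_A$, is a forest. Its edges are the arcs of~$A$ that are not implied transitively by a longer directed path. Everything therefore reduces to one question: for which~$\GG$ do all acyclic orientations have a forest as Hasse diagram? (Here a forest means that the underlying undirected graph is acyclic.)

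\textbf{Chordful implies simple.} Let~$\GG$ be chordful, so its $2$-connected components (blocks) are cliques. Any cycle of the Hasse diagram is an undirected cycle in~$\GG$, so it lies inside a single block, hence inside a clique~$K$. An acyclic orientation restricted to the clique~$K$ is a total order, and the covering relations inside~$K$ form a single path. Could the Hasse diagram have additional cover relations between elements of~$K$ that come from paths leaving~$K$? A directed path leaving~$K$ and returning to it would give a cycle through several blocks, which is impossible. So the Hasse edges within~$K$ are exactly a path, and there is no cycle. Hence every vertex digraph is a forest.

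\textbf{Simple implies chordful.} Suppose~$\GG$ is not chordful. Take a cycle that does not induce a clique, and choose an induced cycle~$C$ of length~$k\ge 4$; this exists because a non-clique block contains an induced cycle of length at least~$4$, or two adjacent triangles forming a diamond.
\begin{itemize}
\item If~$C$ is an induced cycle of length at least~$4$, orient it acyclically, for instance with a single source and a single sink. Extend this to all of~$\GG$ by a linear order putting~$V(C)$ first in a compatible order. Every edge of~$C$ is then a cover relation, because any other directed path between its endpoints would have to use a chord of~$C$, and~$C$ has none. So~$C$ is a cycle in the Hasse diagram.
\item If~$\GG$ contains a diamond on~$a,b,c,d$ with the edge~$ac$ missing, choose a linear order with~$a$ and~$c$ minimal, followed by~$b$ and~$d$. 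Then the four edges~$ab, ad, cb, cd$ are all covers, giving a $4$-cycle in the Hasse diagram.
\end{itemize}

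The main obstacle is this second direction: I must make sure that vertices outside~$C$ do not create transitive shortcuts. The fix is to order the vertices of the chosen subgraph first in the linear extension. Any directed path between two of its vertices then stays inside that subgraph, since all other vertices come later.
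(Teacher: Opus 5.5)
The paper gives no proof of its own here; it only cites \cite{Kim}. Your reduction to Hasse diagrams of transitive closures of acyclic orientations is sound, and the direction chordful $\Rightarrow$ simple is correct. On a clique block, $A$ induces a total order, and only its consecutive pairs can be covers. Paths leaving the block could only destroy covers, not create them, so that remark is unnecessary.

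The converse, however, fails in both of your cases. Take first the induced cycle $C$. You claim every edge $uv$ of $C$ is a cover because any other directed path would need a chord, but this overlooks the complementary path of $C$ from $u$ to $v$. Suppose the single source and single sink are adjacent on $C$, e.g.\ $c_1 \to c_2 \to c_3 \to c_4$ and $c_1 \to c_4$. Then the edge joining them is implied transitively, and the Hasse diagram of $C$ is a path. Your argument never uses $k \ge 4$, so it would equally show that an induced triangle gives a non-simple vertex, yet the zonotope of a triangle is a hexagon. You must orient $C$ with non-adjacent source and sink, e.g.\ $c_1 \to c_2 \to c_3$ and $c_1 \to c_k \to \dots \to c_4 \to c_3$. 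This is possible precisely because $k \ge 4$.

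In the diamond case, your order ($a, c$ first, then $b, d$) ignores the edge $bd$ shared by the two triangles. Orienting it, say $b \to d$, gives $a \to b \to d$ and $c \to b \to d$. So $ad$ and $cd$ are not covers, and the Hasse diagram on the diamond is the tree $a \to b$, $c \to b$, $b \to d$; the other orientation of $bd$ fails symmetrically. Instead, put the endpoints of the shared edge at the extremes, in the order $b, a, c, d$. Then $bd$ is the implied arc, $a$ and $c$ are incomparable, and $b \to a \to d$, $b \to c \to d$ form a $4$-cycle of covers. This is the pattern of \cref{lem:posetTree}\,\eqref{item:diamond}, and your device of placing these vertices first in the linear order then works as you say.

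Finally, the dichotomy ``induced cycle of length $\ge 4$ or induced diamond'' is true but deserves a line of proof. Take a shortest cycle not inducing a clique. If it has a chord $xy$, the two shorter cycles it creates induce cliques. Any two non-adjacent vertices $u, w$ of the original cycle must then lie on opposite sides, so $\{u, x, y, w\}$ induces a diamond.
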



\subsubsection{Nestohedra}

Nestohedra of building sets were defined in the work of A.~Postnikov~\cite{Postnikov} and E.-M.~Feichtner and B.~Sturmfels~\cite{FeichtnerSturmfels} in connection to the wonderful compactifications of subspace arrangements of C.~De~Concini and C.~Procesi~\cite{DeConciniProcesi}.

\begin{definition}
A hypergraph $\BB$ is a \defn{building set} if~$A \cap B \ne \varnothing$ implies~$A \cup B \in \BB$ for all~$A, B \in \BB$.
The hypergraphic polytope~$\simplex_\BB$ is the \defn{nestohedron} of~$\BB$.
\end{definition}

\begin{example} \,
\begin{itemize}
\item The collection $2^{[n]}$ of all subsets of~$[n]$ is a building set. The corresponding nestohedron is (normally equivalent to) the permutahedron $\Perm$, see Figure~\ref{fig:exmHypergraphicPolytopes}.
\item The collection of all intervals of~$[n]$ is a building set. The corresponding nestohedron is the usual (Loday) associahedron $\Asso$, see Figure~\ref{fig:exmHypergraphicPolytopes}.
\item The collection of \defn{tubes} of a graph~$\GG$ (\ie subsets of vertices inducing a connected subgraph) is a building set, called the \defn{graphical building set} of $\GG$. The corresponding nestohedron is called a \defn{graphical associahedron}~\cite{CarrDevadoss}.
\item More generally, the collection of tubes of a hypergraph~$\HH$ (\ie subsets of vertices inducing a connected subhypergraph) is a building set, whose nestohedron is the \defn{hypergaph associahedron} of~$\HH$ \cite{DosenPetric}.
\item A collection $\BB$ of nested sets (\ie if $A, B\in \BB$ satisfy $A\cap B\ne \varnothing$, then either $A\subseteq B$ or~$B\subseteq A$) is a building set. The corresponding nestohedron is called a \defn{fertilitope}~\cite{Defant-fertilitopes}. 
\end{itemize}
\end{example}

\enlargethispage{.1cm}
In a rooted tree $T$ with associated order relation $\less$, the \defn{descendant set} of a node $a \in T$ is~$\mathdefn{a^\less} \eqdef \set{b \in T}{a \less b}$, and two vertices $a,b \in T$ are \defn{incomparable} if neither $a\in b^\less$ nor $b\in a^\less$.
For a building set $\BB$, a directed tree is a \defn{$\BB$-tree} if for all $a\in T$, we have $a^\less\in \BB$, and for all incomparable vertices $a_1, \dots, a_r\in T$, we have $\bigcup_{i=1}^r a_i^\less \notin \BB$. 
A rooted forest $F$ is a \defn{$\BB$-forest} if the connected components of $F$ are connected components of $\BB$ (\ie inclusion maximal elements of~$\BB$), each inducing a \defn{$\BB$-tree}.
Especially, if $\BB$ is connected (\ie $[n]\in \BB$), \mbox{then all $\BB$-forests are $\BB$-trees.}

\pagebreak

\begin{proposition}[{\cite[Prop.~7.8]{Postnikov}}]\label{prop:NestoVertexDiGraphsAreBBforests}
The vertex digraphs of the nestohedron~$\simplex_\BB$ are precisely the $\BB$-forests.
In particular, all nestohedra are simple polytopes.
Moreover, any rooted forest is the vertex digraph of a nestohedron.
\end{proposition}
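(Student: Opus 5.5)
The plan is to derive all three claims from \cref{prop:acyclicOrientations}, by turning acyclic orientations of the building set~$\BB$ into $\BB$-forests and back. We may assume that~$\BB$ contains all singletons, by \cref{rem:singletons}.

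\textbf{From acyclic orientations to $\BB$-forests.} Let~$A$ be an acyclic orientation of~$\BB$, with vertex poset~$\less_A$.
\begin{enumerate}[(1)]
\item For each~$a \in [n]$, consider all~$B \in \BB$ with~$A(B) = a$. This family is nonempty because~$\{a\} \in \BB$ and~$A(\{a\}) = a$, and any two of its members share~$a$. Repeatedly applying the building set axiom, their union~$U_a$ lies in~$\BB$.
\item Next we show~$A(U_a) = a$. Suppose instead~$A(U_a) = b \ne a$. Then~$b$ lies in some~$B$ with~$A(B) = a$, while~$a \in U_a \ssm \{b\}$. This is a $2$-cycle, contradicting acyclicity.
\item From the first two steps we get~$a^{\less_A} = U_a$. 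The inclusion~$\supseteq$ is clear. For~$\subseteq$, we prove that~$U_a$ is closed upward. If~$c \in U_a$, then~$U_c \cap U_a \ni c$, so~$U_a \cup U_c \in \BB$. Its orientation must be~$a$: any other choice~$d$ lies in~$U_a$ or in~$U_c$ and creates a cycle as before. Hence~$U_c \subseteq U_a$.
\item It follows that the descendant sets~$a^{\less_A}$ form a laminar family. Two of them either meet, in which case the argument of the previous step nests them, or they are disjoint. So the Hasse diagram of~$\less_A$ is a rooted forest, with~$a$ covering its children, and each descendant set belongs to~$\BB$.
\item Incomparable nodes~$a_1, \dots, a_r$ have disjoint~$U_{a_i}$. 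If their union~$U$ were in~$\BB$, then~$A(U)$ would lie in some~$U_{a_i}$, say equal to~$d$. Then~$U \subseteq U_d$, which contradicts the disjointness of the~$U_{a_i}$ unless~$r = 1$.
\item Roots correspond to maximal elements of~$\BB$, so each component is a connected component of~$\BB$.
\end{enumerate}
This shows that the vertex digraphs are~$\BB$-forests, and that they are forests, so every vertex is simple and the nestohedron is a simple polytope.

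\textbf{From $\BB$-forests to acyclic orientations.} Given a~$\BB$-forest~$F$, define~$A(B)$ as the~$\less_F$-minimal element of~$B$. This is well defined and unique. Indeed, if~$B$ had two minimal elements, they would be incomparable nodes~$a_1, \dots, a_r$ whose descendant sets cover~$B$. Then~$B \cup \bigcup_i a_i^{\less}$ would be in~$\BB$, by the building set axiom through connectivity via~$B$, and it would equal~$\bigcup_i a_i^{\less}$, which is forbidden.

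The orientation~$A$ is acyclic, because~$A(B) \less_F h$ for every~$h \in B$, so a cycle would give a cycle in~$F$. Moreover~$\less_A$ contains the relations~$a \less c$ for~$c$ a child of~$a$, via~$B = a^{\less}$, and it is contained in~$\less_F$. Hence~$\less_A = \less_F$.

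\textbf{Every rooted forest occurs.} For a rooted forest~$F$, take~$\BB$ to be the set of all descendant sets together with all singletons. These sets are laminar, so~$\BB$ is a building set. The forest~$F$ is a~$\BB$-forest: a union of descendant sets of incomparable nodes is never a single descendant set.

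The main obstacle I expect is the closure argument in the first direction, namely showing that~$\less_A$ is tree-like. This is where I would rely carefully on the short 2-cycle contradictions.
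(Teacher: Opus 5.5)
Your proposal is correct. For the last sentence it is the paper's argument: the descendant sets of a rooted forest form a laminar, hence building, family for which the forest is a $\BB$-forest, and then the first part applies.

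For the first part you take a different route. The paper gives no proof there and simply cites \cite[Prop.~7.8]{Postnikov}, relying on Postnikov's nested-set description of nestohedra. You instead derive it directly from the acyclic-orientation model of \cref{prop:acyclicOrientations}.
\begin{itemize}
\item Your key object is $U_a$, the union of all hyperedges oriented towards $a$. It lies in $\BB$, is itself oriented towards $a$, and equals $a^{\less_A}$.
\item Laminarity, the two $\BB$-tree conditions, and the identification of roots with maximal elements of $\BB$ all follow from contradictions with two-element cycles. These use only the trivial direction of acyclicity, so you never need an analogue of \cref{prop:length2cycles}, which the paper only has for interval hypergraphs.
\item Conversely, orienting each $B$ towards its $\less_F$-minimum inverts the construction.
\end{itemize}
This gives a self-contained argument in the same language as the paper's proof of \cref{thm:TamariIntervalPosetInIntervalHypergraphic}. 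It also makes the bijection between acyclic orientations of $\BB$ and $\BB$-forests explicit: $a^{\less_A}$ is the largest hyperedge oriented towards $a$. The citation buys only brevity.

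A few steps you assert deserve one line each.
\begin{itemize}
\item Laminar principal up-sets force each element to have at most one lower cover. This is why the Hasse diagram is a rooted forest.
\item For $r$ minimal, $U_r$ is inclusion maximal in $\BB$. Indeed, if $B \supseteq U_r$, then $r \in B \subseteq U_{A(B)}$, so $A(B) = r$ and $B \subseteq U_r$.
\item In the converse direction, every $B \in \BB$ lies in a single component of $F$, because the components are the pairwise disjoint maximal elements of $\BB$. You need this since the incomparability condition is only imposed inside each tree.
\end{itemize}
Your normalization that $\BB$ contains all singletons is more than a convenience. It is implicit in the notion of $\BB$-forest, as in Postnikov's definition of building sets, since leaves need $\{a\} \in \BB$.
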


\begin{proof}
The first part of the statement is \cite[Prop.~7.8]{Postnikov}.
For the last sentence of the statement, fix a rooted forest $F$, and let $\BB = \set{a^\less}{a \in F}$.
Then $\BB$ is a collection of nested sets, because $a^\less\cap b^\less\ne \varnothing$ implies $a\in b^\less$ (hence $a^\less\subseteq b^\less$) or $b\in a^\less$ (hence $b^\less\subseteq a^\less$).
Hence, $\BB$ is a building set and $F$ is a $\BB$-forest.
Thus, by the first part of the statement, $F$ is a vertex digraph of the nestohedron $\simplex_{\BB}$.
\end{proof}

In \cite{PostnikovReinerWilliams}, the authors conjectured that nestohedra are recognizable (among the deformed permutahedra) by their vertex digraphs.

\begin{conjecture}[{\cite[Question~8.3]{PostnikovReinerWilliams}}]
\label{conj:nestohedralCharacterization}
A deformed permutahedron has the normal fan of a nestohedron if and only if all its vertex digraphs are rooted forest.
\end{conjecture}

We will prove this conjecture in an upcoming paper.
In \Cref{prop:rootedWeepingWillows1}, we prove a weaker version of this conjecture, which can be phrased in two different but equivalent ways.
Namely, we prove that \cref{conj:nestohedralCharacterization} holds when we replace
\begin{itemize}
\item either ``deformed permutahedron'' by ``deformed associahedron'' (or equivalently ``interval hypergraphic polytope'', see \cref{subsec:IHP}),
\item or ``rooted trees'' by ``rooted weeping willows'' (see \cref{sec:weepingWillows}).
\end{itemize}

We conclude with a relevant family of examples of nestohedra.

\begin{example}\label{ex:fertilitopes}
A \defn{fertilitope}~\cite{Defant-fertilitopes} is a nestohedron~$\simplex_\BB$ where all elements of~$\BB$ are either nested or disjoint.
To describe its vertices, it is convenient to assume that~$\BB$ contains all singletons.
For~$B \in \BB$, let $\mathdefn{\partial B}$ be the set of inclusion maximal elements of $\set{A \in \BB}{A \subsetneq B}$ if~$|B| > 1$, and $\partial B = \{B\}$ if $|B| = 1$.
The vertices of the fertilitope $\simplex_\BB$ are in bijection with the sequences~$(x_B)_{B \in \BB} \in [n]^\BB$ such that $x_B \in \set{x_A}{A \in \partial B}$ and~$x_{\{b\}} = b$ for~$b \in [n]$.
Hence, the fertilitope $\simplex_\BB$ has $\prod_{B \in \BB} |\partial B|$ vertices and is actually combinatorially equivalent to the product of simplices~$\prod_{B \in \BB} \simplex_{|\partial B|}$.
The $\BB$-tree associated to $(x_B)_{B \in \BB}$ is formed by the arcs from $x_B$ to $x_A$ for all $A \in \partial B$ with $x_A \ne x_B$.
The $i$-th coordinate of the vertex associated to $(x_B)_{B \in \BB}$ is the number of $B \in \BB$ satisfying $x_B = i$.
\end{example}


\subsubsection{Interval hypergraphic polytopes}
\label{subsec:IHP}

Following~\cite{BergeronPilaud}, we focus in this paper on the following family of hypergraphs on~$[n]$.

\begin{definition}
An \defn{interval hypergraph~$\II$} is a hypergraph on~$[n]$ where each~$I \in \II$ is an interval of the form $I = \mathdefn{[i,j]} \eqdef [j]\ssm[i-1] = \{i, i+1, i+2, \dots, j-1, j\}$.
\end{definition}

\begin{definition}
An \defn{interval building set} is an interval hypergraph which is also a building set.
\end{definition}


\begin{example}
Our running example~$\HH=\{ {\orange 123}, {\green 134} \}$  of \cref{exm:DH1,exm:DH2,exm:DH3}, is not an interval hypergraph, as ${\green 134}$ is not an interval.
\end{example}

\begin{example}
\label{exm:intervalHypergraph}
The hypergraph
$\II = \{ {\red 123}, {\orange 1234}, {\green 23}, {\blue 234} \}$ 
is an interval hypergraph admitting an acyclic orientation $\Or_{4132} = ({\red 1}, {\orange 4}, {\green 3}, {\blue 4})$, see \Cref{fig:IntervalHypergraphExample}.
\end{example}

\begin{figure}[h]
    \centering
    \includegraphics[scale=.4]{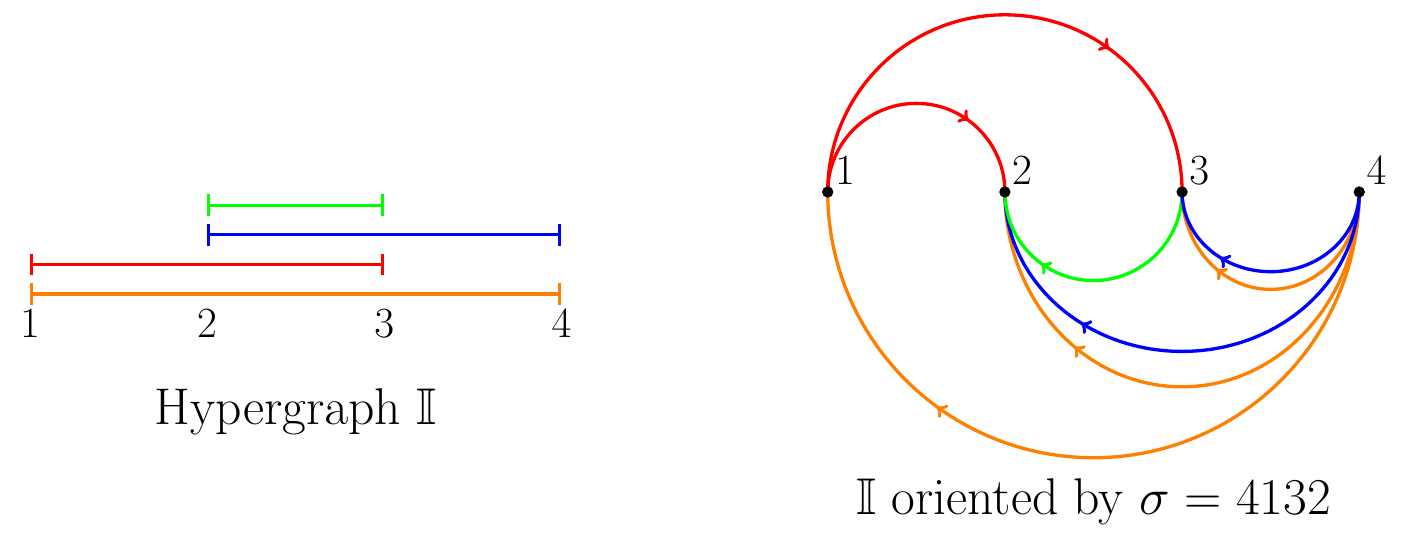}
    \caption{(Left) An interval hypergraph $\II$, (Right) and the graph formed by the directed edges $(\Or_{4132}(H),\, h)$ for each $H \in \II$ and each $h \in H \setminus \{\Or_{4132}(H)\}$.}
    \label{fig:IntervalHypergraphExample}
\end{figure}

We will prove in \cref{prop:TamariIntervalPosets1} some relevant characterizations of interval hypergraphic polytopes.
At the moment, we only need the following important observation from~\cite[Prop.~3.6]{BergeronPilaud}, and its generalization to preorientations.
They state that an orientation (resp.~preorientation) is acyclic if and only if it contains no cycle of length $2$.
\cref{prop:length2cycles} is proven in~\cite[Prop.~3.6]{BergeronPilaud} or follows from \cref{prop:prelength2cycles}.

\begin{proposition}[{\cite[Prop.~3.6]{BergeronPilaud}}]
\label{prop:length2cycles}
An orientation $O$ of an interval hypergraph~$\II$ is acyclic if and only if there is no~$I, J \in \II$ such that~$O(I) \in J \ssm \{O(J)\}$ and~$O(J) \in I \ssm \{O(I)\}$. 
\end{proposition}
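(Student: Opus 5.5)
One direction is immediate: a cycle of length~$2$ is a cycle in the sense of \cref{def:acyclicOrientation} with~$k=2$, so an acyclic orientation has no such pair. For the converse, I will show that every cycle can be shortened, so that a shortest cycle has length~$2$. Take a cycle~$I_1, \dots, I_k$ with~$k \ge 3$ of minimal length, and set~$o_i \eqdef O(I_i)$. Indices are read modulo~$k$, and the cycle condition is~$o_{i+1} \in I_i \ssm \{o_i\}$. I aim to derive a contradiction by producing a shorter cycle.

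The key step uses the interval structure and extremal positions. Among the~$o_i$, pick one whose value is extremal (say maximal), and call it~$o_j$. Then~$o_j \in I_{j-1} \ssm \{o_{j-1}\}$ and~$o_{j+1} \in I_j \ssm \{o_j\}$. Since~$I_{j-1}$ is an interval containing~$o_{j-1}$ and~$o_j$, it contains every value between them. The plan is to show that~$o_{j+1} \in I_{j-1} \ssm \{o_{j-1}\}$. This would let me delete~$I_j$ and obtain the shorter cycle~$I_1, \dots, I_{j-1}, I_{j+1}, \dots, I_k$. If the shortened cycle has length~$2$, that is still a cycle of length~$2$, so the reduction terminates in either case.

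Making this work requires a case analysis on the relative order of~$o_{j-1}, o_j, o_{j+1}$. Since~$o_j$ is maximal, both~$o_{j-1} \le o_j$ and~$o_{j+1} \le o_j$ hold.
\begin{itemize}
\item If~$o_{j-1} \le o_{j+1} \le o_j$, then~$o_{j+1} \in [o_{j-1}, o_j] \subseteq I_{j-1}$. It differs from~$o_{j-1}$ unless the two are equal, and in that case~$I_j, I_{j-1}$ already form a $2$-cycle, since~$o_{j-1} = o_{j+1} \in I_j \ssm \{o_j\}$ and~$o_j \in I_{j-1} \ssm \{o_{j-1}\}$.
\item If~$o_{j+1} < o_{j-1}$, then~$I_j \supseteq [o_{j+1}, o_j] \ni o_{j-1}$, and~$o_{j-1} \ne o_j$. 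So~$o_{j-1} \in I_j \ssm \{o_j\}$ and~$o_j \in I_{j-1} \ssm \{o_{j-1}\}$, which gives a $2$-cycle~$I_{j-1}, I_j$ directly.
\end{itemize}
In both cases I get either a shorter cycle or a $2$-cycle, which contradicts minimality when~$k \ge 3$.

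The main obstacle I expect is the bookkeeping of degenerate equalities, for instance~$o_{j-1} = o_{j+1}$, or nonconsecutive repeated values among the~$o_i$. I also need to make sure the extremal choice is used consistently. Choosing a maximum (rather than a minimum) suffices by symmetry, and since consecutive~$o_i$ are distinct, the maximum is strictly larger than both of its neighbours. Once these checks are done, the minimal-counterexample argument completes the proof.
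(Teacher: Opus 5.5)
Your proof is correct, but it takes a different route from the paper's. The paper obtains the statement from \cref{prop:prelength2cycles}. That proof anchors the cycle at the distinguished edge $I_k \to I_1$ and does a three-case analysis on the relative positions of the witnesses $x_1, x_2, x_k$. In the third case it has to scan along the cycle for an index $i$ with $x_1$ between $x_i$ and $x_{i+1}$, and then pass to one of the sub-cycles $I_1, \dots, I_i$ or $I_i, \dots, I_k$.

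You instead pick a global extremum. Since consecutive $o_i$ are distinct, a maximal $o_j$ lies strictly above both of its neighbours, so only the comparison of $o_{j-1}$ and $o_{j+1}$ matters. Your two cases then either exhibit the $2$-cycle $I_{j-1}, I_j$ directly or allow you to delete $I_j$. This is shorter and purely local: it even gives a direct induction on $k$, with no need for a minimal counterexample.

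The degenerate situations you worry about in your last paragraph are already covered by your case split. Repeated nonconsecutive values play no role, and no symmetry argument is needed, since the maximum works as is.

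What the paper's route buys is generality. It handles preorientations, where all edges of the cycle but one carry only the weak condition $O(H_{i+1}) \cap H_i \ne \varnothing$, and a single edge carries the strict condition. In that setting, deleting the interval at an extremal witness could remove the unique strict edge, and witnesses of consecutive intervals may coincide. So your extremal trick does not transfer verbatim, whereas the base-point argument keeps track of the strict edge throughout.
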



%
%

\begin{proposition}
\label{prop:prelength2cycles}
A preorientation $O$ of an interval hypergraph~$\II$ is acyclic if and only if there is no~$I, J \in \II$ such that~$O(I) \cap J \ne \varnothing$ and~$O(J) \cap (I \ssm O(I)) \ne \varnothing$.
\end{proposition}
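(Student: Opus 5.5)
Only the ``if'' direction needs work, since a cycle of length $2$ is already a cycle in the sense of \cref{def:acyclicPreorientation}. I would show the contrapositive: any cycle of a preorientation of an interval hypergraph can be shortened to a cycle of length $2$.

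So suppose $O$ has a cycle $H_1, \dots, H_k$ with $k \ge 2$. This means $O(H_{i+1}) \cap H_i \ne \varnothing$ for $i \in [k-1]$, and $O(H_1) \cap (H_k \ssm O(H_k)) \ne \varnothing$. I read $H_k \ssm \{O(H_k)\}$ as $H_k \ssm O(H_k)$. Choose such a cycle with $k$ minimal, and assume $k \ge 3$. Up to cyclic rotation we may assume the strict step is at any position.

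The key step is the following. For consecutive hyperedges $H_i \to H_{i+1}$, pick $x_i \in O(H_{i+1}) \cap H_i$. Since $O(H_i) \cap O(H_{i+1})$ is typically empty in a minimal cycle, work with representative points. Write $a_i$ for a chosen point of $O(H_i)$. Each $H_i$ is an interval containing $a_i$ and $a_{i+1}$, so it contains the whole segment $[\min(a_i,a_{i+1}), \max(a_i,a_{i+1})]$.

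Take the index $i$ where $a_i$ is extremal among the points of the cycle, say maximal. Then $H_{i-1}$ contains $a_{i-1}$ and $a_i$. Because $a_i$ is maximal, the interval $H_{i-1}$ also contains $a_{i+1}$, which lies between some earlier point and $a_i$. More carefully: the walk $a_{i-1}, a_i, a_{i+1}$ must come back down, so $a_{i+1}$ lies in the interval spanned by $a_{i-1}$ and $a_i$, or $a_{i-1}$ lies in the interval spanned by $a_i$ and $a_{i+1}$. In the first case $H_{i-1}$ contains $O(H_{i+1})$, and $H_i$ can be skipped, giving a shorter cycle $\dots, H_{i-1}, H_{i+1}, \dots$. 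In the second case $H_i \ni a_{i-1}$ and we skip instead to close a shorter cycle $H_{i-1} \to H_i$ directly. Either way $k$ decreases, contradicting minimality. I would follow the pattern of the proof of \cref{prop:length2cycles} in \cite[Prop.~3.6]{BergeronPilaud}, adapted to sets.

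The main obstacle is bookkeeping the \emph{strict} condition $O(H_1) \cap (H_k \ssm O(H_k)) \ne \varnothing$. Shortcutting may delete the unique strict step, leaving a ``cycle'' in which every intersection could be inside the $O$-sets.

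To handle this, I first observe that if all steps are non-strict then $O(H_{i+1}) \cap H_i \subseteq O(H_i)$ for every $i$. Acyclicity requires at least one point $x \in O(H_{i+1}) \cap (H_i \ssm O(H_i))$, and I would use this $x$ as the representative at the strict position.

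I would then choose representatives so that the strict step is never the one removed. This is possible by choosing the extremal point among the remaining positions, or by arguing that when the skipped hyperedge carries the strict step, the new step inherits strictness: if $H_{i-1}$ contains $x \notin O(H_i)$ and we jump, we check $x \notin O(H_{i-1})$ via the interval containment. If this case analysis becomes messy, the fallback is an induction that reduces to the orientation case. One refines $O$ to the singletons given by a suitable linear extension and applies \cref{prop:length2cycles}, using the surjection from ordered partitions.
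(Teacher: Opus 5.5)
Your reduction to a minimal cycle and the extremal-point shortcut are a reasonable start, but the proposal has a genuine gap exactly where you flag the obstacle. The single strict step $O(H_1) \cap (H_k \ssm O(H_k)) \ne \varnothing$ is never actually controlled.

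Your second case is false as stated. From $a_{i-1} \in O(H_{i-1}) \cap H_i$ and $a_i \in O(H_i) \cap H_{i-1}$ you cannot conclude that $H_{i-1}, H_i$ is a cycle, because one of the two witnesses must avoid the other hyperedge's $O$-set. For instance, take $H_1 = [2,3]$, $H_2 = [1,3]$, $H_3 = [1,2]$ with $O(H_1) = O(H_2) = \{2,3\}$ and $O(H_3) = \{1\}$, and witnesses $a_1 = 2$ (the strict one), $a_2 = 3$, $a_3 = 1$. The maximum is $a_2$, so you are in your second case, yet $H_1, H_2$ is not a cycle.

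The mirror configuration breaks your first case. Take $H_1 = [1,2]$, $H_2 = [1,3]$, $H_3 = [2,3]$ with $O(H_1) = O(H_2) = \{1,2\}$ and $O(H_3) = \{3\}$, and witnesses $a_1 = 2$, $a_2 = 1$, $a_3 = 3$. Skipping $H_3$ removes the strict step and again leaves the non-cycle $H_1, H_2$.

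None of your proposed repairs fixes this:
\begin{itemize}
\item Checking $x \notin O(H_{i-1})$ ``via the interval containment'' cannot work. Containments tell you which hyperedges contain a point, never that a point avoids an $O$-set.
\item Choosing the extremum ``among the remaining positions'' destroys the local-extremum property your case split relies on.
\item The fallback presupposes the conclusion. An ordered partition $X$ with $\Or_X = O$ exists exactly when $O$ is acyclic. An arbitrary singleton refinement neither inherits the absence of $2$-cycles nor transfers acyclicity back to $O$, since orientation cycles need every step strict while preorientation cycles need only one.
\end{itemize}

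The missing idea is that each such failure is itself a $2$-cycle on the two hyperedges adjacent to the removed step.
\begin{itemize}
\item In your first case, suppose the strict witness was $a_i \notin O(H_{i-1})$ or $a_{i+1} \notin O(H_i)$, and the new witness $a_{i+1}$ lies in $O(H_{i-1})$. Then $H_{i-1}, H_i$ is a $2$-cycle.
\item In your second case, either $a_{i-1} \notin O(H_i)$, and $H_{i-1}, H_i$ is a $2$-cycle. Or $a_{i-1} \in O(H_i) \cap H_{i-2}$, and you skip $H_{i-1}$ instead. Strictness is then inherited if the strict witness was $a_{i-1}$, and $H_{i-1}, H_i$ is a $2$-cycle if it was $a_i$.
\end{itemize}
With this case analysis your global-extremum route does close.

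The paper avoids the issue by anchoring at the strict witness $x_1 \in O(I_1) \cap (I_k \ssm O(I_k))$ and comparing it with its neighbours $x_2 \in O(I_2) \cap I_1$ and $x_k \in O(I_k) \cap I_{k-1}$.
\begin{itemize}
\item If $x_k$ lies between $x_1$ and $x_2$, then $x_k \in I_1$ and $I_1, I_k$ is a $2$-cycle.
\item If $x_2$ lies between $x_1$ and $x_k$, then $x_2 \in I_k$. Either $x_2 \in O(I_k)$ gives the $2$-cycle $I_1, I_k$, or $I_2, \dots, I_k$ is a shorter cycle with strict witness $x_2$.
\item If $x_1$ lies between $x_2$ and $x_k$, then $x_1 \in I_i$ for some $2 \le i < k$. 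Then $I_i, \dots, I_k$ or $I_1, \dots, I_i$ is a shorter cycle with strict witness $x_1$, according to whether $x_1 \in O(I_i)$.
\end{itemize}
Because the anchor is the strict witness itself, strictness is never lost.
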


\begin{proof}
If there exist $I,J\in \II$ such that $O(I) \cap J \ne \varnothing$ and~$O(J) \cap (I \ssm O(I)) \ne \varnothing$, then $O$ is cyclic by \Cref{def:acyclicPreorientation}.
Conversely, suppose that~$O$ is cyclic, and let~$I_1,\dots,I_k \in \II$ be such that there are~$x_1 \in O(I_1) \cap (I_k \ssm \{O(I_k)\})$ and~$x_i \in O(I_i) \cap I_{i-1}$ for~$2 \le i \le k$, and that~$k \ge 3$.
We distinguish three cases, proving that there exists a smaller cycle (hence by recursion, a cycle of length $2$):
\begin{itemize}
\item If~$x_1 \le x_2 \le x_k$ (resp.~$x_k \le x_2 \le x_1$), then~$x_2 \in I_k$. If~$x_2 \in O(I_k)$, then~$I_1, I_k$ is a smaller cycle. If~$x_2 \notin O(I_k)$, then~$I_2, \dots, I_k$ is a smaller cycle.
\item If~$x_1 \le x_k \le x_2$ (resp.~$x_2 \le x_k \le x_1$), then~$x_k \in I_1$, so that~$I_1, I_k$ is a smaller cycle.
\item If~$x_2 \le x_1 \le x_k$ (resp.~$x_k \le x_1 \le x_2$), then there exists~$2 \le i < k$ such that~$x_i \le x_1 \le x_{i+1}$ (resp.~$x_{i+1} \le x_1 \le x_i$), hence~$x_1 \in I_i$. If~$x_1 \in O(I_i)$, then~$I_i, \dots, I_k$ is a smaller cycle. If~$x_1 \notin O(I_i)$, then~$I_1, \dots, I_i$ is a smaller cycle.
\qedhere
\end{itemize}
\end{proof}


\subsection{Two Genocchi families of interval hypergraphic polytopes}
\label{subsec:Genocchi}

We now consider two families of interval hypergraphs, which are both counted by the famous median Genocchi numbers (see \cref{rem:Genocchi}).
The following proposition was advertised in~\cite{Pilaud-MarioLuigi}, but we reproduce it here in a more formal environment.
In this statement, the size of a hypergraph (resp.~graph, resp.~permutation) is its number of hyperedges (resp.~arcs, resp.~simple transpositions in any reduced expression).

\begin{proposition}[\cite{Pilaud-MarioLuigi}]
\label{prop:plumbingBijections}
There are size preserving bijections between:
\begin{enumerate}[(i)]
\item the interval hypergraphs~$\II$ on~$[n]$ such that~$I,J \in \I$ and~$I \cap J \ne \varnothing$ implies~$I \cap J \in \II$,~\label{item:intersectionClosed}
\item the interval hypergraphs~$\II$ on~$[n]$ such that~$I,J \in \I$ and~$I \cap J \ne \varnothing$ implies~$I \cup J \in \II$,~\label{item:unionClosed}
\item the graphs~$([n+1], E)$ such that~$\{a,c\}, \{b,d\} \in E$ implies~$\{b,c\} \in E$ for all~${a < b < c < d}$,~\label{item:hookGraphs}
\item the graphs~$([n+1], E)$ such that~$\{a,c\}, \{b,d\} \in E$ implies~$\{a,d\} \in E$ for all~${a < b < c < d}$,~\label{item:terrainLikeGraphs}
\item the permutations~$\sigma$ of~$[2n]$ such that~$\sigma(i) \le 2i$ and~$\sigma(2n-i+1) \ge 2(n-i)+1$ for all~$i \in [n]$,~\label{item:YoshiPermutations}
\item the permutations~$\tau$ of~$[2n]$ such that~$\tau(2i-1) \ge 2i-1$ and~$\tau(2i) \le 2i$ for all~$i \in [n]$. \label{item:DumontPermutations}
\end{enumerate}
\end{proposition}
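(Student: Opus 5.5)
The plan is to connect the six families by a chain of bijections: (i) to (iii), (ii) to (iv), then (iii) to (v), (iv) to (vi), and finally (v) to (vi). Each link should be size preserving. We count all intervals $[i,j]$ with $i \le j$, singletons included, as hyperedges; this is harmless by \cref{rem:singletons}, and with this convention the sizes match.

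The first two links should be elementary. Send an interval $[i,j]$ of $[n]$ to the pair $\{i, j+1\}$ of $[n+1]$. This is a bijection between the intervals of $[n]$ and the $2$-subsets of $[n+1]$, so it turns an interval hypergraph $\II$ into a graph $([n+1],E)$ with $|E| = |\II|$. Take $a<b<c<d$ in $[n+1]$ and arcs $\{a,c\},\{b,d\}$. They correspond to the intervals $[a,c-1]$ and $[b,d-1]$, which intersect exactly when $b \le c-1$, and that always holds here. Their intersection is $[b,c-1]$, which corresponds to $\{b,c\}$. Their union is $[a,d-1]$, which corresponds to $\{a,d\}$.

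It remains to handle nested intervals. Pairs of the form $\{a,d\},\{b,c\}$ correspond to nested intervals, whose intersection and union are the intervals themselves. Pairs sharing an endpoint also give nested intervals. Hence intersection closure in (i) translates exactly to condition (iii), and union closure in (ii) translates exactly to condition (iv). I will just check these cases.

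For the permutation side, I would encode each graph $([n+1],E)$ by the permutation whose inversion set is read off from $E$. Concretely, $[2n]$ is the set of positions, and pairs of entries $\{2a-1, 2b-2\}$ (or a similar parity-shifted labelling) stand for arcs $\{a,b\}$ with $a<b$. The arc is present exactly when the corresponding pair of entries is inverted. This makes the size (the number of inversions) equal to $|E|$. The constraints $\sigma(i) \le 2i$ and $\sigma(2n-i+1) \ge 2(n-i)+1$ should restrict which pairs can be inverted to these $n(n+1)/2$ pairs. Inversion sets are characterized by transitivity and co-transitivity, and these should then reduce, on these pairs, to the four-point rule of (iii) (respectively of (iv) for the Dumont-type family (vi)).

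The last link, (v) to (vi), I would get by composing through the graphs. One could also look for a direct map, for instance interleaving or conjugating by the permutation $i \mapsto 2i-1$, $n+i \mapsto 2i$, which sends the staircase constraints of (v) to the parity constraints of (vi).

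The main obstacle is the graph–permutation step. One must pick the labelling of arcs by pairs of entries so that three things hold at once: every permutation satisfying the constraints has its inversions only among the designated pairs, every graph obeying the four-point rule is realized, and the realizing permutation is unique. I would first test small cases, $n=2$ and $n=3$, where the median Genocchi numbers are $2$ and $8$, to fix the labelling. I would then prove uniqueness by reconstructing $\sigma$ greedily from left to right. If this inversion encoding fails, the fallback is to prove (v) and (vi) equinumerous with (iii) and (iv) by a common recursive decomposition on the largest element, with size tracked as a statistic.
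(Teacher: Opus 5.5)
\textbf{There is a genuine gap: the permutation side does not work, so you never produce the non-trivial bijection between (i) and (ii).}

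Your links (i)$\leftrightarrow$(iii) and (ii)$\leftrightarrow$(iv) are exactly the paper's map $[i,j]\mapsto\{i,j+1\}$, and your check is right in substance. One slip: arcs $\{a,c\},\{c,d\}$ correspond to the adjacent disjoint intervals $[a,c-1]$ and $[c,d-1]$, not to nested ones. Like disjoint pairs, they trigger neither closure condition, so nothing changes.

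The permutation side is where all the content lies, and each step there fails:
\begin{enumerate}
\item Your inversion encoding fails already for $n=2$. The Yoshi permutations $2413$, $2314$, $1324$, $1423$ of $[4]$ together invert all six pairs of values. The eight Yoshi permutations also invert five of the six pairs of positions. So no set of $3$ designated pairs contains all inversions, and ``arc present iff its pair is inverted'' cannot be a size-preserving bijection onto graphs on $[3]$.
\item For (vi) an inversion encoding is impossible even in principle. The permutation $\tau=4231$ satisfies (vi) and has $5$ inversions, whereas a graph on $[3]$ has at most $3$ arcs. So the number of inversions is not even equidistributed between (iv) and (vi).
\item Your link (v)$\leftrightarrow$(vi) is either circular or misstated. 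Going ``through the graphs'' presupposes (iii)$\leftrightarrow$(iv), which is exactly what is missing. Your direct map puts $\sigma(i)$ in position $2i-1$, turning $\sigma(i)\le 2i$ into $\tau(2i-1)\le 2i$ rather than $\tau(2i-1)\ge 2i-1$. The correct shuffle, used in the paper, is the other one: $\tau=\sigma\rho$ with $\rho(2i-1)=n+i$ and $\rho(2i)=i$, i.e.\ $\tau(2i)=\sigma(i)$ and $\tau(2i-1)=\sigma(n+i)$. Note that it does not preserve length.
\item The fallback recursive decomposition is not specified, and would at best give equinumerosity, not bijections.
\end{enumerate}

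The missing idea is where the $n(n+1)/2$ slots live. They are not pairs of entries of $\sigma$, but the letters of the triangular word $Q=s_n(s_{n-1}s_{n+1})(s_{n-2}s_ns_{n+2})\cdots(s_1s_3\cdots s_{2n-1})$. Its $n(n+1)/2$ positions are indexed by the intervals of $[n]$. Each Yoshi permutation $\sigma$ defines a non-empty subword complex $SC(Q,\sigma)$. Its facets are the complements of the reduced expressions of $\sigma$ inside $Q$, and there are in general many of them, so a canonical choice is needed. The paper makes two choices: the greedy and the anti-greedy facets, each of which is unique. Their positions (the contacts), read as intervals, form an intersection-closed, resp.\ union-closed, interval hypergraph. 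This matches both (i) and (ii) with the same family (v), which is what produces the bijection (i)$\leftrightarrow$(ii), and hence (iii)$\leftrightarrow$(iv).
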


\begin{proof}
Mapping the interval~$[i,j]$ to the arc~$\{i,j+1\}$ clearly defines size preserving  bijections \mbox{\eqref{item:intersectionClosed} $\leftrightarrow$ \eqref{item:hookGraphs}} and \mbox{\eqref{item:unionClosed} $\leftrightarrow$ \eqref{item:terrainLikeGraphs}}.
Considering the permutation~$\rho$ of~$[2n]$ with~$\rho(2i-1) = n+i$ and~${\rho(2i) = i}$, the composition~$\tau \eqdef \sigma\rho$ defines a bijection \mbox{\eqref{item:YoshiPermutations} $\leftrightarrow$ \eqref{item:DumontPermutations}}.
Finally, the bijections \mbox{\eqref{item:intersectionClosed} $\leftrightarrow$ \eqref{item:YoshiPermutations} $\leftrightarrow$ \eqref{item:unionClosed}} are illustrated in \cref{fig:plumbingBijection}.
Namely, consider the word $$Q \eqdef s_n (s_{n-1} s_{n+1}) (s_{n-2} s_n s_{n+2}) (s_{n-3} s_{n-1} s_{n+1} s_{n+3}) \cdots (s_1 s_3 \cdots s_{2n-3} s_{2n-1})$$ on the simple transpositions~$s_i \eqdef (i \; i+1)$ of~$\fS_{2n}$.
Then any permutation~$\sigma$ of~$[2n]$ with~$\sigma(i) \le 2i$ and~$\sigma(2n-i+1) \ge 2(n-i)+1$ for all~$i \in [n]$ defines a non-empty subword complex~$SC(Q, \sigma)$ (see \cite{KnutsonMiller-GroebnerGeometry,KnutsonMiller-subwordComplex}) which admits unique greedy and anti-greedy facets (see~\cite{Pilaud-greedyFlipTree,PilaudStump-ELlabeling}).
The positions of the contacts in the greedy (resp.~anti-greedy facet) of~$SC(Q, \sigma)$ give an interval hypergraph satisfying~\eqref{item:intersectionClosed} (resp.~\eqref{item:unionClosed}).
\qedhere

\begin{figure}[h]
	\centerline{
		\begin{tabular}{c@{}c@{}c@{}c}
			\includegraphics[scale=1.2]{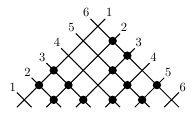} &
			\includegraphics[scale=1.2]{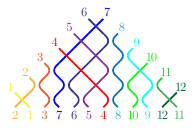} &
			\includegraphics[scale=1.2]{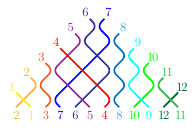} &
			\includegraphics[scale=1.2]{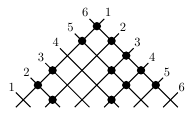} \\
			\eqref{item:intersectionClosed} $\leftrightarrow$ \eqref{item:hookGraphs} &
			greedy facet &
			anti-greedy facet &
			\eqref{item:unionClosed} $\leftrightarrow$ \eqref{item:terrainLikeGraphs} 
		\end{tabular}
	}
	\caption{Plumbing bijection.}
	\label{fig:plumbingBijection}
\end{figure}
\end{proof}

\begin{remark}
\enlargethispage{.1cm}
Note that in \cref{prop:plumbingBijections},
\begin{enumerate}[(i)]
\item the interval hypergraphs of~\eqref{item:intersectionClosed} are called \defn{intersection closed} and are precisely (up to the singletons) those whose hypergraphic poset (defined as the transitive closure of the graph of the hypergraphic polytope~$\simplex_\II$ oriented in the linear direction~$(1-n, 3-n, \dots, n-3, n-1)$) is a lattice~\cite[Thm.~A]{BergeronPilaud} (note that not all such hypergraphic polytopes are simple),
\item the interval hypergraphs of~\eqref{item:unionClosed} are called \defn{union closed} and are precisely (up to the singletons) the interval building sets in the sense of \cite{FeichtnerSturmfels, Postnikov}, and their hypergraphic polytopes are \defn{interval nestohedra} (in particular, they are simple polytopes),
\item the graphs of~\eqref{item:hookGraphs} are known as \defn{grounded rectangle graphs}~\cite{JelinekTopfer}, \defn{hook graphs}~\cite{Hixon}, \defn{max point-tolerance graphs}~\cite{CatanzaroChaplickFelsnerHalldorssonHalldorssonHixonStacho}, \defn{$p$-box graphs}~\cite{SotoCaro}, or \defn{non-jumping graphs}~\cite{AshurFiltserSababn},
\item the graphs of~\eqref{item:terrainLikeGraphs} are known as \defn{terrain-like graphs}~\cite{FroeseRenken-algo,FroeseRenken,AshurFiltserSababn},
\item the permutations of~\eqref{item:YoshiPermutations} are called \defn{Yoshi permutations} in~\cite{Pilaud-MarioLuigi},
\item the permutations of~\eqref{item:DumontPermutations} are called \defn{Dumont permutations}~\cite{DumontRandrianarivony}.
\end{enumerate}
Note that a similar bijection from \eqref{item:terrainLikeGraphs} to \eqref{item:DumontPermutations} was presented in~\cite{FroeseRenken}, but we are not aware that the bijection from \eqref{item:hookGraphs} to either \eqref{item:terrainLikeGraphs} or \eqref{item:DumontPermutations} was observed earlier, even if the two classes of graphs \eqref{item:hookGraphs} and \eqref{item:terrainLikeGraphs} were compared in~\cite{AshurFiltserSababn}.
\end{remark}

\begin{remark}
\label{rem:Genocchi}
The Dumont permutations of \cref{prop:plumbingBijections}\,\eqref{item:DumontPermutations} are counted by the famous median Genocchi numbers~\OEIS{A005439}, whose first values are
\[
1, 1, 2, 8, 56, 608, 9440, 198272, 5410688, 186043904, 7867739648, 401293838336, \dots
\]
They can be defined, for instance, using every second number~$g_{2n-1,1}$ in the leftmost column of the \defn{Seidel triangle}, which is the array of numbers~$g_{n,k}$ with~$n \ge 1$ and~$1 \le k \le \frac{n+1}{2}$, defined by~$g_{1,1} = 1$ and the induction
\[
g_{2n,k} = \sum_{i \ge k} g_{2n-1,i}
\qquad\text{and}\qquad
g_{2n+1,k} = \sum_{i \le k} g_{2n,i}.
\]
See \cref{table:Seidel}.
\begin{table}[H]
	\[
	\begin{array}{c|ccccccc}
	n \backslash k & 1 & 2 & 3 & 4 & 5 & 6 & \cdots \\
	\hline
	1 & 1 \\
	2 & 1 \\
	3 & 1 & 1 \\
	4 & 2 & 1 \\
	5 & 2 & 3 & 3 \\
	6 & 8 & 6 & 3 \\
	7 & 8 & 14 & 17 & 17 \\
	8 & 56 & 48 & 34 & 17 \\
	9 & 56 & 104 & 138 & 155 & 155 \\
	10 & 608 & 552 & 448 & 310 & 155 \\
	11 & 608 & 1160 & 1608 & 1918 & 2073 & 2073 \\
	\vdots & \vdots & \vdots & \vdots & \vdots & \vdots & \vdots & \ddots
	\end{array}
	\]
	\caption{Seidel's triangle~$g_{n,k}$.}
	\label{table:Seidel}
\end{table}
\end{remark}

\begin{remark}
The proof actually extends to arbitrary staircase polyomino~\cite[Sect.~3]{Pilaud-MarioLuigi}, but this goes beyond the focus of this paper.
\end{remark}


\section{Simple interval hypergraphic polytopes}
\label{sec:simpleIntervalHypergraphicPolytopes}

In this section, we prove the characterization of simple interval hypergraphic polytopes stated in \cref{thm:characterizationSimpleIntervalHypergraphicPolytopes}, and which we recall here.
We prove the necessity direction in \cref{subsec:necessity} and the sufficiency direction in \cref{subsec:sufficiency}.
The set of all interval hypergraphs on $4$ nodes whose hypergraphic polytope is simple is illustrated in \Cref{fig:LatticeOfSinI}.

\begin{theorem}
\label{thm:characterizationSimpleIntervalHypergraphicPolytopesProof}
The hypergraphic polytope $\simplex_\II$ of an interval hypergraph~$\II$ is simple if and only if the following two conditions hold for all $I, J \in \II$:
\begin{enumerate}[(i)]
\item if $I \cap J \ne \varnothing$ and there exists $K \in \II$ with $I \cup J \subseteq K$, then $I \cup J \in \II$, and\label{cond:simple1}
\item if $|I \cap J| \ge 2$, then $I \cup J \in \II$ or there exists $K \in \II$ with $I \cap J \subseteq K$ and $I \not\subseteq K$ and $J \not\subseteq K$. \label{cond:simple2}
\end{enumerate}
\end{theorem}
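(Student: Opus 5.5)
Simplicity of $\simplex_\II$ means that every vertex digraph is a forest. For an acyclic orientation $A$ (\cref{prop:acyclicOrientations}), the vertex digraph is the Hasse diagram of $\less_A$, i.e.\ the cover relations of the transitive closure of the arcs $A(H)\to h$ for $h \in H \in \II$. So the plan is to show both directions by analysing when this Hasse diagram has two distinct directed paths between the same pair of nodes (an undirected cycle). We work with all singletons added (\cref{rem:singletons}). For an interval hypergraph, the relation $\less_A$ has the Tamari interval property: $A(H) \less_A h$ for every $h \in H$, so $A(H)$ dominates an entire interval. We will use that each element $x$ covers or is covered in a controlled way. Concretely, for a node $x$, the set $D(x)=\{y : x \less_A y\}$ is a union of intervals $H$ with $A(H)=x$ together with their descendants.

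\textbf{Necessity.} We argue by contraposition. Assume (i) fails: $I\cap J\neq\varnothing$, $I\cup J\subseteq K\in\II$, but $I\cup J\notin\II$. We choose a permutation $\sigma$ producing a non-forest. We may shrink to a minimal witness $K$ and take $I,J$ overlapping but not nested; say $I=[a,b]$ and $J=[c,d]$ with $a<c\le b<d$. Take $\sigma$ starting with an element $k\in K\ssm(I\cup J)$ if one exists, so that $K$ is oriented to $k$. Next come an element $x\in I\ssm J$ and an element $y\in J\ssm I$, so that $I\to x$ and $J\to y$. Then a common element $z\in I\cap J$ is covered by both $x$ and $y$, and both are below $k$. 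This gives the undirected cycle $k\to x\to z\leftarrow y\leftarrow k$, provided no hyperedge creates shortcuts. Minimality of $K$ and the absence of $I\cup J$ (or of any interval strictly between $I\cup J$ and $K$ containing both) must be used to show that the arcs $k\to x$ and $k\to y$ are covers. If $K=I\cup J\cup\{\ldots\}$ has no extra element, then $I \cup J \subsetneq K$ still forces an extra element. Assume now (ii) fails: $|I\cap J|\ge2$, $I\cup J\notin\II$, and every $K\supseteq I\cap J$ contains $I$ or $J$. Let $\sigma$ start with $x\in I\ssm J$ and $y\in J\ssm I$ (or the endpoints of the intersection if $I$ and $J$ are nested, though nested pairs trivially satisfy $I\cup J\in\II$). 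Then the two elements $p,q$ of $I\cap J$, ordered suitably, are both reachable from $x$ and from $y$. Since no interval separates $I \cap J$ from $I$ and $J$, the elements of $I\cap J$ cannot be linked through a single intermediate source. We then get a cycle $x\to p\leftarrow y\to q\leftarrow x$ in the Hasse diagram, and pick $p,q$ as the minimal elements of $I \cap J$ under $\less_A$.

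\textbf{Sufficiency.} Assume (i) and (ii), and suppose some vertex digraph contains an undirected cycle. Since vertex digraphs of interval hypergraphic polytopes are Hasse diagrams of Tamari interval posets, a shortest undirected cycle can be reduced, using the interval property as in the proof of \cref{prop:prelength2cycles}, to a ``diamond''. That is, two nodes $x\neq y$ cover a common node $z$ via intervals $I\ni z$ with $A(I)=x$ and $J\ni z$ with $A(J)=y$, and $x,y$ lie in a common principal up-set or are joined by a second path. We then take $I,J$ minimal such intervals and derive a contradiction by cases. If some $K$ contains $I\cup J$, then (i) gives $I\cup J\in\II$, whose orientation makes one of the arcs non-covering. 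If $|I\cap J|\ge2$, then (ii) supplies either $I\cup J$ or a separating $K$, and the orientation of $K$ creates a shortcut or a 2-cycle, contradicting \cref{prop:length2cycles}. If $|I\cap J|=1$ and no $K$ covers $I \cup J$, then the two branches cannot reconnect above, so there is no cycle.

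\textbf{Main obstacle.} The hard step is the reduction of an arbitrary undirected cycle in the Hasse diagram to this local configuration of two intervals. I would attempt it by choosing the cycle minimizing the total length of the involved intervals and repeatedly applying the three-case argument of \cref{prop:prelength2cycles}. The necessity constructions also need care to ensure that the exhibited arcs are genuine covers.
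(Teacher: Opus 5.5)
Both directions of your proposal have genuine gaps.

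\textbf{Necessity.} You do not need the arcs of your $4$-cycles to be covers. The comparabilities $k\less x$, $k\less y$, $x\less z$, $y\less z$, with $x,y$ incomparable, already force a cycle in the Hasse diagram by \cref{lem:posetTree}\,\eqref{item:diamond}; use \eqref{item:bowtie} for the configuration in (ii). The real crux is the incomparability of $x$ and $y$.

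Your $\sigma=k\,x\,y\cdots$, with arbitrary $x\in I\ssm J$ and $y\in J\ssm I$, does not ensure it. Any hyperedge containing $x,y$ but not $k$ is oriented to $x$, and minimality of $K$ does not prevent this. For $\II=\{[1,4],[2,3],[3,4],[2,5]\}$ with $I=[2,3]$, $J=[3,4]$, $K=[1,4]$, the permutation $\sigma=12435$ orients $[2,5]$ to $2$. The Hasse diagram is then the tree $1\lessdot2\lessdot4\lessdot3$, $2\lessdot5$.

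The fix has two parts:
\begin{itemize}
\item take $x=\min I$ and $y=\max J$, so that any hyperedge containing both contains $I\cup J$, hence strictly, hence meets $[n]\ssm(I\cup J)$;
\item start $\sigma$ with $k$ followed by \emph{all} other elements of $[n]\ssm(I\cup J)$, then $x,y$.
\end{itemize}
For (ii) you need the same extremal $x,y$. You must also first use the failure of (i) to reduce to the case where no hyperedge contains $I\cup J$; otherwise $\sigma=x\,y\cdots$ orients such a hyperedge to $x$. Finally, ``the minimal elements of $I\cap J$'' need not be two incomparable elements. Instead put $p=\min(I\cap J)$ and $q=\max(I\cap J)$ right after $x,y$. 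Every hyperedge containing $p,q$ contains $I\cap J$, hence $I$ or $J$, hence $x$ or $y$, so it is not oriented to $p$ or $q$. Since $p,q$ are consecutive in $\sigma$, they are incomparable.

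\textbf{Sufficiency.} The reduction you flag as the main obstacle is missing. Worse, the local analysis at a sink is false, because the violating pair need not be the two intervals entering the sink.

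Take $\II=\{[1,4],[2,3],[3,5]\}$ and $\sigma=15234$, so $[1,4]\mapsto1$, $[2,3]\mapsto2$, $[3,5]\mapsto5$. The Hasse diagram contains the cycle $1\lessdot2\lessdot3\moredot5\lessdot4\moredot1$. At the sink $3$, the intervals $[2,3]$ and $[3,5]$ meet only in $3$, and no hyperedge contains $[2,5]$. Your last case would therefore claim there is no cycle. In fact $\II$ satisfies (i), and (ii) fails for the pair $[1,4],[3,5]$, which involves the interval of the arc $1\lessdot4$ jumping over the sink.

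That is the missing idea. Fix a sink $t$ with $x\lessdot t\moredot y$.
\begin{itemize}
\item By \cref{lem:avoidingPatterns}\,\eqref{item:ap3}, up to symmetry $x<t<y$.
\item The cycle must contain an arc, say $u\lessdot v$, with $u<t<v$.
\item Then \eqref{item:ap2} and \eqref{item:ap3} give $u\le x$.
\item If $y<v$, the intervals of $x\lessdot t$ and $t\moredot y$ lie strictly inside an interval of $u\lessdot v$ and witness the failure of (i).
\item If $v<y$, take an interval of $u\lessdot v$ with maximal minimum and an interval of $t\moredot y$ with minimal maximum. They share $t$ and $v$ and witness the failure of (ii).
\end{itemize}
Those extremal choices are exactly what rule out a separating hyperedge oriented at $u$ or $y$. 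Your ``creates a shortcut or a 2-cycle'' does not address this.

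Be careful with the boundary case $v=y$. There the interval of $t\moredot y$ may stick out of that of $u\lessdot v$. An example is $\sigma=14235$ on the same $\II$, with $u=1$, $x=2$, $t=3$, $v=y=4$, where $[3,5]\not\subseteq[1,4]$. Since $\II$ satisfies (i), the violation must be read off as (ii) for these two intervals.
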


\begin{example}
We recover from \cref{thm:characterizationSimpleIntervalHypergraphicPolytopesProof} that the hypergraphic polytopes are all simple for:
	\begin{itemize}
	\item interval building sets (that is, interval hypergraphs closed under union of intersecting hyperedges): $I, J \in \II$ and~$I \cap J \ne \varnothing$ implies $I \cup J \in \II$, see~\cite{FeichtnerSturmfels,Postnikov},
	\item interval hypergraphs closed under subintervals: $I \in \II$ and~$J \subseteq I$ implies~$J \in \II$, see~\cite[Prop.~ 5.5]{CortesFrost2026DyckPathsConfigurationSpaces}.
	\end{itemize}
In particular the associahedron $\simplex_\II$ for $\II = \set{[i, j]}{1 \leq i < j \leq n}$ is simple for both reasons.
\end{example}

The following corollary emphasizes that interval nestohedra are special among simple interval hypergraphic polytopes.
See \Cref{cor:SimpleSaturatedHypergraphsContainingGroundSetAreBuildingSets} for a generalization to all hypergraphs.

\begin{corollary}
\label{coro:fullHyperedgePresent}
Among the interval hypergraphs containing the ground set~$[n]$ as a hyperedge, the only simple hypergraphic polytopes are the interval nestohedra.
\end{corollary}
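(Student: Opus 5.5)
We deduce this directly from \cref{thm:characterizationSimpleIntervalHypergraphicPolytopesProof}, taking the theorem as given.

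\emph{Interval nestohedra are simple.} If $\II$ is an interval building set, then $\simplex_\II$ is a nestohedron, hence simple by \cref{prop:NestoVertexDiGraphsAreBBforests}.

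\emph{Simple with $[n] \in \II$ implies building set.} Assume $[n] \in \II$ and $\simplex_\II$ is simple. Take any $I, J \in \II$ with $I \cap J \ne \varnothing$. Then $K \eqdef [n] \in \II$ satisfies $I \cup J \subseteq K$. Condition~\eqref{cond:simple1} of \cref{thm:characterizationSimpleIntervalHypergraphicPolytopesProof} therefore gives $I \cup J \in \II$. Hence $\II$ is closed under union of intersecting hyperedges, so it is an interval building set and $\simplex_\II$ is an interval nestohedron. Condition~\eqref{cond:simple2} is not needed here.

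\emph{Conventions on singletons.} By \cref{rem:singletons}, adding or removing singletons $\{i\}$ does not change the normal fan. Also, intersecting with a singleton forces the singleton to be contained in the other hyperedge, so the union is that hyperedge, which already lies in $\II$. Thus singletons never obstruct the building-set property, and the equivalence holds whatever convention on singletons is used.

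Taken together, among interval hypergraphs with $[n] \in \II$, $\simplex_\II$ is simple exactly when $\II$ is an interval building set. The only real content is condition~\eqref{cond:simple1} applied with $K = [n]$.
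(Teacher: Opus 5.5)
Your proposal is correct and matches the paper's proof, which likewise applies condition~\eqref{cond:simple1} of \cref{thm:characterizationSimpleIntervalHypergraphicPolytopesProof} with $K = [n]$ to get closure under unions of intersecting hyperedges. Your additions are sound, and the paper leaves both implicit: the converse direction (interval nestohedra are simple, via \cref{prop:NestoVertexDiGraphsAreBBforests}), and the remark that singletons never obstruct the building-set property.
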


\begin{proof}
Suppose $\simplex_\II$ is simple and $[n]\in \II$.
For $I, J\in \II$ with $I\cap J \ne \emptyset$, applying \Cref{thm:characterizationSimpleIntervalHypergraphicPolytopesProof}\,\eqref{cond:simple1} with $K = [n]$, we obtain that~$I\cup J\in\II$.
Hence, $\II$ is a building set.
\end{proof}

\begin{example}
\label{exm:cappedUnitIntervalHypergraphicPolytopeSimple}
For~$S \subseteq [n-1]$, the hypergraphic polytope~$\simplex_{\II_S}$ of~$\II_S \eqdef \set{[i,i+1]}{i \in S} \cup \{[n]\}$ is simple if and only if $n \le 3$ or $S$ does not contain two consecutive elements or equivalently no unit intervals intersect, see \Cref{exm:cappedUnitIntervalHypergraphicPolytope}.
\end{example}

\begin{figure}
	\includegraphics{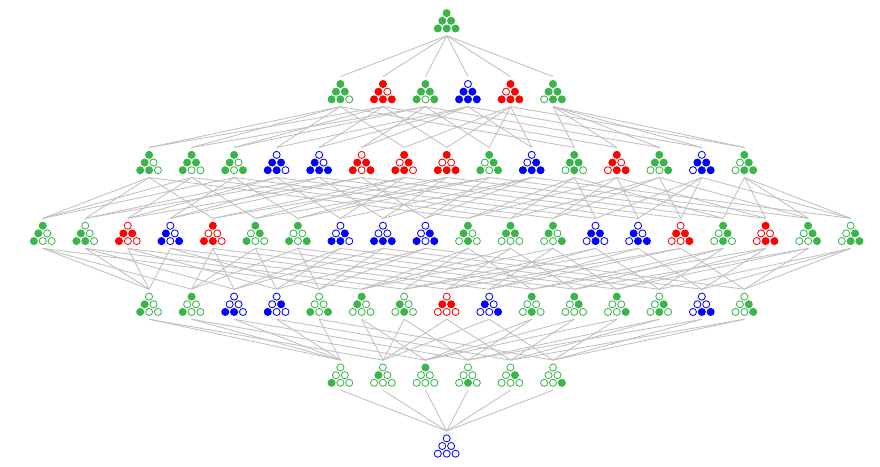}
	\caption{The boolean lattice $\c I$ of interval hypergraphs (containing all singletons) on $4$ nodes, with respect to inclusion. We encode a hypergraph by a subset of the triangle, where the presence of an interval~$[i,j]$ appears as a solid dot in diagonal~$i$ and antidiagonal~$j-1$. For instance, the interval hypergraph $\{[1,2], [2,4],[3,4]\}$ is encoded by \!\drawSubset{1,5,6}\!\!. The \textcolor{green}{green} and \textcolor{blue}{blue} hypergraphs define simple hypergraphic polytopes (\textcolor{green}{green} are the interval nestohedra), while the \textcolor{red}{red} do not.}
	\label{fig:LatticeOfSinI}
\end{figure}


\subsection{Necessity}
\label{subsec:necessity}

We first prove the forward direction of \cref{thm:characterizationSimpleIntervalHypergraphicPolytopesProof}.
We start with a standard observation that we will use repeatedly throughout the paper.
See also \cref{lem:cyclesPoset} for a discussion on the backward direction of this statement.

\begin{lemma}
\label{lem:posetTree}
If~$\less$ is the transitive closure of a directed tree, then for any~$i,j,k,\ell$,
\begin{enumerate}[(i)]
\item if~$k \less i$, $k \less j$, $i \less \ell$ and~$j \less \ell$, then~$i$ and~$j$ are comparable in~$\less$, \label{item:diamond}
\item if~$i \less k$, $i \less \ell$, $j \less k$, and~$j \less \ell$, then either~$i$ and~$j$, or~$k$ and~$\ell$ are comparable in~$\less$. \label{item:bowtie}
\end{enumerate}
\end{lemma}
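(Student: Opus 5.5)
The plan rests on one structural fact: in a directed tree $T$, the unique undirected path between two nodes is the only candidate for a directed path between them. So $a \less b$ holds exactly when the tree path from $a$ to $b$ is directed from $a$ to $b$. Moreover, since $T$ is acyclic, any directed walk in $T$ repeats no node, so it is a path and hence \emph{the} tree path between its endpoints. I treat the degenerate cases where some of $i,j,k,\ell$ coincide separately; they are immediate, because equality counts as comparability.

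For~\eqref{item:diamond}, concatenate the directed path from $k$ to $i$ with the directed path from $i$ to $\ell$. This gives a directed walk from $k$ to $\ell$, hence the unique tree path from $k$ to $\ell$, and this path passes through $i$. The same argument with $j$ shows that this path also passes through $j$. Thus $i$ and $j$ both lie on a single directed path from $k$ to $\ell$, so one precedes the other along it, and they are comparable in $\less$.

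For~\eqref{item:bowtie}, let $m$ be the median of $i,k,\ell$, that is, the common node of the three tree paths between these three nodes. The directed paths from $i$ to $k$ and from $i$ to $\ell$ share the segment from $i$ to $m$, and then give $m \less k$ and $m \less \ell$. Hence the tree path from $k$ to $\ell$ is the union of two directed paths leaving $m$, so $m$ is its unique source. Running the same argument with $j$, the median $m'$ of $j,k,\ell$ is also the unique source of the path from $k$ to $\ell$, so $m' = m$. This yields $i \less m$, $j \less m$, $m \less k$ and $m \less \ell$.
\begin{itemize}
\item If $m = k$ or $m = \ell$, then $k$ and $\ell$ are comparable, and we are done.
\item If $m$ differs from both $k$ and $\ell$, the argument must instead show that $i$ and $j$ are comparable. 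The natural route is to use that the set $\set{x}{x \less m}$ of nodes below $m$ forms a chain.
\end{itemize}

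This last step is the main obstacle. The chain property holds when every node of $T$ has in-degree at most one, that is, when $T$ is an arborescence. In that case the argument closes, since $i$ and $j$ both lie in the chain below $m$. For an arbitrarily oriented tree, however, the step fails. The tree with arcs $i \to m$, $j \to m$, $m \to k$, $m \to \ell$ satisfies all four hypotheses of~\eqref{item:bowtie}, yet $i,j$ are incomparable and $k,\ell$ are incomparable. So before writing the final proof I would check which notion of ``directed tree'' the paper intends here. If it means an arborescence, the argument above completes~\eqref{item:bowtie}. If it means an arbitrary orientation of a tree, then~\eqref{item:bowtie} as worded needs an extra hypothesis, and the median node $m$ is exactly what the argument produces in general.
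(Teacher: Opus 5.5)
Your proof of (i) is correct and is the standard argument; the paper itself gives no proof and calls the lemma a standard observation. Your counterexample to (ii) is also correct, and it does not depend on a choice of convention. Throughout the paper a directed tree is an arbitrary orientation of a tree (weeping willows, for instance, need not be rooted). The failure already occurs among weeping willows. Take the star on~$[5]$ with arcs $1 \to 3$, $5 \to 3$, $3 \to 2$, $3 \to 4$. It satisfies \cref{def:weepingWillow}, and its transitive closure has $1 \less 2$, $1 \less 4$, $5 \less 2$, $5 \less 4$, while both pairs $\{1,5\}$ and $\{2,4\}$ are incomparable. So (ii) is false as stated, and restricting to arborescences is not a repair the paper can use. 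The crown condition of \cref{lem:cyclesPoset} for~$r = 2$ has the same defect: its ``only if'' direction fails on this star.

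What your median argument does prove is the right replacement. Suppose $i, j \less k, \ell$ and neither pair is comparable. Then the pivot~$m$ lies outside~$\{i,j,k,\ell\}$ and satisfies $i, j \less m \less k, \ell$ strictly, so \emph{none} of the four relations is a cover relation. Equivalently, the conclusion of (ii) holds as soon as one of the four relations is a cover.

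This corrected form is all the paper needs in two places:
\begin{itemize}
\item In \cref{lem:forward2}, $i,j,k,\ell$ are the first four letters of the linear extension~$\sigma$ and $i,j$ are incomparable, so $i \lessdot k$.
\item In case~(iii) of \cref{prop:noncrossing}, $a \lessdot c$ is an arc.
\end{itemize}
The proof of \cref{prop:weepingWillows2}, by contrast, applies (ii) with no cover hypothesis and deduces that $i^\less \cap j^\less$ is a chain. This fails on the star, where $1^\less \cap 5^\less = \{2,3,4\}$. What that proof actually needs is that $i^\less \cap j^\less$ has a minimum, and this is true. For incomparable $i, j$ with a common upper bound, $i^\less \cap j^\less = s^\less$, where $s$ is the unique sink of the tree path between $i$ and~$j$.
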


We now decompose the proof of the forward direction of \cref{thm:characterizationSimpleIntervalHypergraphicPolytopesProof} into two lemmas, corresponding to the two conditions of \cref{thm:characterizationSimpleIntervalHypergraphicPolytopesProof}.
In the proof of each lemma, we assume that~$\II$ does not fulfill the condition, and we exhibit a non-simple vertex of~$\simplex_\II$.
For this, we define a permutation~$\sigma$ such that the vertex digraph of the acyclic orientation~$\Or_\sigma$ contains a cycle.

In the following figures, the  solid arrows are cover relations  $\lessdot$ and the dashed arrows are comparisons $\less$.

\begin{lemma}
\label{lem:forward1}
If an interval hypergraph~$\II$ contains~$I, J, K \in \II$ such that~$I \cap J \ne \varnothing$, $I \cup J \notin \II$ and $I \cup J \subsetneq K$, then the hypergraphic polytope~$\simplex_\II$ is not simple.
\end{lemma}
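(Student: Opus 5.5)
The plan is to exhibit a non-simple vertex of $\simplex_\II$. I will choose a permutation~$\sigma$ so that the vertex poset $\less_A$ of $A \eqdef \Or_\sigma$ contains a ``diamond'' $e \less a$, $e \less d$, $a \less x$, $d \less x$ with $a$ and $d$ incomparable. By \cref{lem:posetTree}\,\eqref{item:diamond}, $\less_A$ is then not the transitive closure of a tree, so the vertex digraph of~$A$ is not a forest and the corresponding vertex is not simple.

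\emph{Normalization.} Write $I = [a,b]$ and $J = [c,d]$. If $I \subseteq J$ or $J \subseteq I$, then $I \cup J \in \II$, which is excluded. So up to swapping $I$ and $J$ we have $a < c \le b < d$ and $I \cup J = [a,d] \notin \II$. Since $K$ is an interval strictly containing $[a,d]$, it contains $a-1$ or $d+1$. Let $N$ be the set of those of $a-1$, $d+1$ that lie in $[n]$.

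\emph{The permutation.} Let $\sigma$ list first the elements of $N$, starting with one that lies in $K$; call it $e$. Then comes $a$, then $d$, and then all remaining elements in arbitrary order. The consequences are as follows.
\begin{itemize}
\item $A(K) = e$, because $e$ is the first element of $\sigma$ and $e \in K$. Since $a, d \in K$, this gives $e \less a$ and $e \less d$.
\item $A(I) = a$ and $A(J) = d$. Indeed $I, J \subseteq [a,d]$ contain no element of $N$, $a$ is the first element of $I$ in $\sigma$, and $a \notin J$. Picking any $x \in I \cap J$ with $x \ne a, d$ (for instance $x = c$ if $c \ne d$, and otherwise $c = b$ works after a slight adjustment), we get $a \less x$ and $d \less x$. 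If $I \cap J = \{c\}$ with $c = d$, then $J$ is a singleton and $I \cup J = I$ lies in $\II$, so this degenerate case does not occur. A remaining boundary case to double-check is $b = c$.
\end{itemize}

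\emph{Incomparability of $a$ and $d$ (main step).} Since $\sigma$ is a linear extension of $\less_A$ and $a$ precedes $d$ in $\sigma$, we cannot have $d \less a$. It remains to rule out $a \less d$. I will show that the up-set of $a$ is contained in $[a, d-1]$, by tracking the hyperedges with head $a$ or with head in $(a,d)$.
\begin{itemize}
\item Take a hyperedge $H$ with $A(H) = a$. Then $H$ contains no element of $N$, since those come earlier in $\sigma$. So if $H \ni d$, then $H = [a,d]$, which is not in $\II$. Hence $H \subseteq [a, d-1]$.
\item Take a hyperedge $H$ whose head lies in $(a,d)$. Such an element comes after $a$ and $d$ in $\sigma$, so $H$ contains neither $a$ nor $d$. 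Hence $H \subseteq (a,d)$.
\end{itemize}
By induction along chains of arcs $A(H) \to h$ starting from~$a$, every element reachable from $a$ stays in $[a, d-1]$, so $d \not\more a$. I expect this careful control of which hyperedges can carry a chain from $a$ to $d$ to be the main obstacle. It is exactly where the hypotheses $[a,d] \notin \II$ and the early placement of all of $N$ are used.
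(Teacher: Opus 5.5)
Your proof is correct and follows essentially the paper's route. Like the paper, you build a permutation that starts with an element of $K \ssm (I\cup J)$ and then places $\min I$ and $\max J$ adjacently. This yields the diamond $e \less a,\, e \less d,\, a \less x,\, d \less x$ with $a,d$ incomparable, and you conclude with \cref{lem:posetTree}\,\eqref{item:diamond}.

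Two simplifications are available. First, since your normalization gives $a<c\le b<d$, the choice $x=c$ is always valid, including when $b=c$, so your hedged subcases are unnecessary. Second, because $a$ and $d$ are adjacent in $\sigma$, a relation $a \less d$ would have to be a cover relation, i.e.\ come from a hyperedge with head $a$ containing $d$. Your first bullet alone already excludes this, so the reachability induction is not needed; this is exactly how the paper argues incomparability.
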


\begin{proof}
Assume by symmetry that~$i \eqdef \min(I) \in I \ssm J$ and~$j \eqdef \max(J) \in J \ssm I$, and fix arbitrary~${k \in K \ssm (I \cup J)}$ and $\ell \in I \cap J$.
Note that~$i, j, k, \ell$ exist by assumption, and are all distinct by definition.

\centerline{\includegraphics[scale=1]{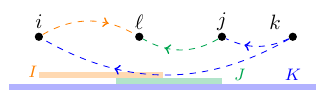}}

Let $X$ (resp.~$Y$) be the word formed by the complement of $I \cup J \cup \{k\}$ in~$[n]$ (resp.~of~$\{i, j,\ell\}$ in~$I \cup J$) written in an arbitrary order.
Consider the poset~$\less$ of the acyclic orientation~$\Or_\sigma$ defined by the permutation~$\sigma \eqdef k X ij\ell Y$.
As~$\{i,j,k\} \subseteq K \in \II$ and~$\Or_\sigma(K) = k$, we have~$k \less i$ and~$k \less j$.
Considering~$I$ and~$J$, we obtain similarly that~$i \less \ell$ and~$j \less \ell$.
Since~$\II$ contains only intervals, if~$\{i,j\} \subseteq H \in \II$, then~$I \cup J = [i, j] \subseteq H$.
Hence,~$I \cup J \subsetneq H$ because~$I \cup J \notin \II$, so $H$ contains one of the letters in the word $kX$, implying that~$\Or_\sigma(H) \notin \{i,j\}$, hence~$i$ and~$j$ do not form a cover relation of~$\less$.
As~$i$ and~$j$ are consecutive in~$\sigma$ and do not form a cover relation of~$\less$, we conclude that they are incomparable in~$\less$.
Since~$k \less i$, $k \less j$, $i \less \ell$ and $j \less \ell$, and~$i$ and~$j$ are incomparable in~$\less$, we obtain by \cref{lem:posetTree}\,\eqref{item:diamond} that the Hasse diagram of~$\less$ has a cycle.
We conclude that~$\simplex_\II$ is not simple.
\end{proof}

\begin{lemma}
\label{lem:forward2}
If an interval hypergraph~$\II$ contains~$I, J \in \II$ such that~$|I \cap J| \ge 2$, $I \cup J \notin \II$, and~$I \subseteq K$ or $J \subseteq K$ whenever~$K \in \II$ satisfies~$I \cap J \subseteq K$, then the hypergraphic polytope~$\simplex_\II$ is not simple.
\end{lemma}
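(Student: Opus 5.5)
The plan is to mirror the proof of \cref{lem:forward1}. I will exhibit a permutation~$\sigma$ such that the vertex poset~$\less$ of~$\Or_\sigma$ contains a ``bowtie'': four distinct elements~$i,j,k,\ell$ with~$i \less k$, $i \less \ell$, $j \less k$, $j \less \ell$, where~$i,j$ are incomparable and~$k,\ell$ are incomparable. Then \cref{lem:posetTree}\,\eqref{item:bowtie} shows that the Hasse diagram of~$\less$ is not a tree, so the corresponding vertex is not simple.

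\emph{Setup.} First, $I$ and~$J$ are not nested, since otherwise~$I \cup J \in \{I,J\} \subseteq \II$. By symmetry, write~$I = [a,b]$ and~$J = [c,d]$ with~$a < c$ and~$b < d$. Then~$I \cap J = [c,b]$, and~$c < b$ because~$|I \cap J| \ge 2$. Set~$i \eqdef a \in I \ssm J$, $j \eqdef d \in J \ssm I$, $k \eqdef c$ and~$\ell \eqdef b$; these four elements are distinct. Let~$X$ be the word formed by the complement of~$I \cup J$ in~$[n]$, and~$Y$ the word formed by~$(I \cup J) \ssm \{i,j,k,\ell\}$, each in arbitrary order. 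Consider~$\sigma \eqdef X\, i\, j\, k\, \ell\, Y$.

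\emph{Key steps.}
\begin{enumerate}[(1)]
\item Since~$I$ is disjoint from the letters of~$X$, we get~$\Or_\sigma(I) = i$, so~$i \less k$ and~$i \less \ell$. Likewise~$\Or_\sigma(J) = j$ gives~$j \less k$ and~$j \less \ell$.
\item Two elements that are consecutive in a linear extension and comparable must form a cover relation. A cover relation of~$\less_A$ is an arc~$A(H) \to h$ with~$h \in H$. So it suffices to show that neither~$(i,j)$ nor~$(k,\ell)$ arises as such an arc.
\item For~$(i,j)$: any~$H \in \II$ containing~$i$ and~$j$ contains the interval~$[a,d] = I \cup J$. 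Since~$I \cup J \notin \II$, the inclusion is strict, so~$H$ contains a letter of~$X$ and~$\Or_\sigma(H) \notin \{i,j\}$.
\item For~$(k,\ell)$: any~$H \in \II$ containing~$c$ and~$b$ contains~$[c,b] = I \cap J$. By hypothesis it then contains~$I$ or~$J$, hence contains~$i$ or~$j$, both of which appear before~$k$ in~$\sigma$. So~$\Or_\sigma(H) \ne k$.
\end{enumerate}
Thus~$i,j$ are incomparable and~$k,\ell$ are incomparable, and \cref{lem:posetTree}\,\eqref{item:bowtie} yields a cycle in the Hasse diagram. Hence~$\simplex_\II$ is not simple.

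The only delicate point is step~(2): justifying that consecutive elements of a linear extension can be comparable only through an actual arc~$\Or_\sigma(H) \to h$. This is the same principle already used in the proof of \cref{lem:forward1}. If they were comparable through a longer chain, some intermediate element would have to sit strictly between them in~$\sigma$, which is impossible for consecutive letters.
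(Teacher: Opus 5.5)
Your proof is correct and uses the same bowtie as the paper: $i=\min I$, $j=\max J$, $k=\min(I\cap J)$, $\ell=\max(I\cap J)$, made consecutive in~$\sigma$, then \cref{lem:posetTree}\,\eqref{item:bowtie}. The one difference is that you put the complement of~$I\cup J$ at the front of~$\sigma$, so any hyperedge strictly containing~$I\cup J$ is oriented away from~$\{i,j\}$; the paper instead first invokes \cref{lem:forward1} to assume no hyperedge contains~$I\cup J$ and then takes $\sigma = ijk\ell X$. Your version is self-contained, and it is exactly the construction the paper later uses for \cref{lem:forward2_general}.
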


\begin{proof}
As $I\cup J\notin \II$, by \cref{lem:forward1}, we can assume that there is no~$H \in \II$ with~$I \cup J \subseteq H$, otherwise $\simplex_\II$ is not simple.
Assume by symmetry that~$i \eqdef \min(I) \in I \ssm J$ and~$j \eqdef \max(J) \in J \ssm I$, and let~$k \eqdef \min(I \cap J)$ and~$\ell \eqdef \max(I \cap J)$.
Note that~$i, j, k, \ell$ exist and are all distinct by assumption.

\centerline{\includegraphics[scale=1]{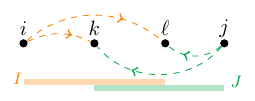}}

Let~$X$ be the word formed by the complement of $\{i,j,k,\ell\}$ in~$[n]$ written in an arbitrary order.
Consider the poset~$\less$ of the acyclic orientation~$\Or_\sigma$ defined by the permutation~$\sigma \eqdef ijk\ell X$.
Since~$\{i,k,\ell\} \subseteq I \in \II$ and~$\Or_\sigma(I) = i$, we have~$i \less k$ and~$i \less \ell$.
Considering~$J$, we obtain similarly that~$j \less k$ and~$j \less \ell$.
As we assume that there is no $H\in\II$ satisfying~$[i, j] = I \cup J \subseteq H$, and since $\II$ contains only intervals, there is no $H\in \II$ such that~$\{i,j\} \subseteq H$.
As~$i$ and~$j$ are consecutive in~$\sigma$, we conclude that they are incomparable in~$\less$.
Moreover, for any~$\{k,\ell\} \subseteq H \in \II$, we have~$I \cap J = [k, \ell] \subseteq H$, hence~$I \subseteq H$ or~$J \subseteq H$, implying that $\Or_\sigma(H) \notin\{k, \ell\}$.
As~$k$ and~$\ell$ are consecutive in~$\sigma$, we conclude that they are incomparable in~$\less$.
Since~$i \less k$, $i \less \ell$, $j \less k$, and~$j \less \ell$, and both~$\{i,j\}$ and~$\{k,\ell\}$ are incomparable pairs in~$\less$, we obtain by \cref{lem:posetTree}\,\eqref{item:bowtie} that the Hasse diagram of~$\less$ has a cycle.
We conclude that~$\simplex_\II$ is not simple.
\end{proof}

\begin{proof}[Proof of forward direction of \cref{thm:characterizationSimpleIntervalHypergraphicPolytopesProof}]
Follows from \cref{lem:forward1,lem:forward2}.
\end{proof}


\subsection{Sufficiency}
\label{subsec:sufficiency}

We now prove the backward direction of \cref{thm:characterizationSimpleIntervalHypergraphicPolytopesProof}.
We start with some observations on the vertex posets of interval hypergraphic polytopes.

\begin{lemma}
\label{lem:avoidingPatterns}
Consider an acyclic orientation of an interval hypergraph on~$[n]$, and denote by~$\less$ the associated poset on~$[n]$ from~\cref{def:less_A} and by~$\lessdot$ its cover relations.
For any~${1 \le a < b < c \le n}$,
\begin{enumerate}[(i)]
\item $a \lessdot b \moredot c$ (or $a \moredot b \lessdot c$) implies that~$a$ and~$c$ are incomparable (\ie $a\not\less c$ and $c\not\less a$), \label{item:ap1} 
\item $a \less c$ implies~$a \less b$ (and symmetrically $a \more c$ implies $b \more c$), \label{item:ap2}
\item $a \lessdot c$ or $a \moredot c$ implies that~$b \not\less c$ (and symmetrically $a \not\more b$). \label{item:ap3}
\end{enumerate}
\end{lemma}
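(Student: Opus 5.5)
We will work directly from the description of~$\less$ in \cref{def:less_A}: $\less$ is the transitive closure of the \emph{generating arcs}~$A(H) \to h$ for~$h \in H \ssm \{A(H)\}$, $H \in \II$, where~$A$ is the acyclic orientation. So~$x \less y$ with~$x \ne y$ holds exactly when there is a chain~$x = x_0 \to x_1 \to \dots \to x_m = y$ of generating arcs. Acyclicity guarantees that~$\less$ is antisymmetric, hence a genuine poset. The plan is to prove~\eqref{item:ap2} first, since it is the only place where the interval hypothesis enters, and then to deduce~\eqref{item:ap1} and~\eqref{item:ap3} formally from~\eqref{item:ap2}, antisymmetry, and the definition of cover relations.

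For~\eqref{item:ap2}, suppose~$a \less c$ with~$a < b < c$, and take a chain~$a = x_0 \to \dots \to x_m = c$ of generating arcs. If~$b$ appears in the chain, we are done. Otherwise, since~$x_0 < b < x_m$, there is a step~$t$ with~$x_t < b < x_{t+1}$. The generating arc~$x_t \to x_{t+1}$ comes from some~$H_t \in \II$ with~$A(H_t) = x_t$ and~$x_{t+1} \in H_t$. Since~$H_t$ is an interval containing~$x_t$ and~$x_{t+1}$, it contains~$b$. Hence~$x_t \to b$ is also a generating arc, and~$a \less x_t \less b$. The statement~$a \more c \Rightarrow b \more c$ follows by the mirror argument, reversing the order of~$[n]$. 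This crossing step is the only real content of the lemma, and it is straightforward once the chain is written down.

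For~\eqref{item:ap1}, suppose~$a \lessdot b \moredot c$, so~$a \less b$ and~$c \less b$ are both covers. If~$a \less c$, then~$a \less c \less b$ contradicts the cover~$a \lessdot b$. If~$c \less a$, then~$c \less a \less b$ contradicts the cover~$c \lessdot b$. Similarly, for~$a \moredot b \lessdot c$, a comparison~$a \less c$ (resp.~$c \less a$) would give~$b \less a \less c$ (resp.~$b \less c \less a$), contradicting a cover. Note that this part does not even use the interval structure.

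For~\eqref{item:ap3}, first take~$a \lessdot c$. Then~\eqref{item:ap2} gives~$a \less b$. If moreover~$b \less c$, then~$a \less b \less c$ contradicts the cover, so~$b \not\less c$. Also,~$b \less a$ would contradict antisymmetry, so~$a \not\more b$. Now take~$a \moredot c$, \ie~$c \lessdot a$. The symmetric half of~\eqref{item:ap2} gives~$c \less b$. Hence~$b \less c$ would contradict antisymmetry, and~$b \less a$ would give~$c \less b \less a$, contradicting the cover.
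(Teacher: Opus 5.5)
Your proof is correct and follows essentially the same route as the paper. Part (i) is the same formal observation about covers. Part (ii) uses the interval property along a chain of generating relations: you locate the single step that jumps over $b$, whereas the paper notes that the union of the chain's intervals is an interval containing $b$ and truncates the chain there. Part (iii) is deduced from (ii), using the cover property when $a \lessdot c$ and antisymmetry when $a \moredot c$, and you also spell out the symmetric half $a \not\more b$, which the paper leaves to symmetry.
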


\begin{proof}
Point~\eqref{item:ap1} obviously holds for any poset.
Indeed, if for instance~$a \less c$, then~$a \less b$ is not a cover relation as it is implied by~$a \less c$ and~$c \less b$.

Point~\eqref{item:ap2} already appeared as~\cite[Prop.~3.10]{BergeronPilaud}, we repeat the short proof for convenience.
Assume that~$a \less c$. By \cref{def:less_A}, there are~$I_1, \dots, I_k \in \II$ such that~$a = A(I_1)$, $A(I_{i+1}) \in I_{i}$ for all~$i \in [k-1]$, and~$c \in I_k$.
As $\bigcup_{i \in [k]} I_i$ is an interval containing~$a$ and~$c$ and~$a < b < c$, it also contains~$b$.
Hence, there is~$j \in [k]$ such that~$b \in I_j$, and the sequence~$I_1, \dots, I_j$ proves that~$a \less b$.

Finally, for Point~\eqref{item:ap3}, assume that $a$ and $c$ are comparable and that~$b \less c$. We distinguish:
\begin{itemize}
\item if~$a \less c$, then $a \less b$ by~\eqref{item:ap2}, so that~$a \less b \less c$ implies that~$a \less c$ is not a cover relation,
\item if~$a \more c$, then~$b \more c$ by~\eqref{item:ap2}, so that~$b \less c$ and~$b \more c$ contradict the acyclicity of the orientation.
\qedhere
\end{itemize}
\end{proof}

\begin{proof}[Proof of backward direction of \cref{thm:characterizationSimpleIntervalHypergraphicPolytopesProof}]
Assume that~$\simplex_\II$ is not simple.
Consider a non-simple vertex, and denote by~$A$ the corresponding acyclic orientation of~$\II$, by~$\less$ the associated poset on~$[n]$, and by~$\lessdot$ its cover relations.
As the vertex is not simple, $\lessdot$ contains an unoriented cycle~$\Gamma$, which contains a sink~$t \in [n]$.
Let~$x,y \in [n]$ be such that~$x \lessdot t \moredot y$ in the cycle~$\Gamma$.
By \cref{lem:avoidingPatterns}\,\eqref{item:ap3}, $x$ and~$y$ cannot both lie on the same side of~$t$, so we can assume by symmetry that~$1 \le x < t < y \le n$.
Moreover, as~$\Gamma$ is a cycle, it contains an arc~$\{u,v\}$ with~$1 \le u < t < v \le n$.
Assume by symmetry that this arc is oriented~$u \lessdot v$.
Note that~$u \less v$ implies $u \less t$ by \cref{lem:avoidingPatterns}\,\eqref{item:ap2}.
If~$x < u < t$, then $x \lessdot t$ and~$u \less t$ contradict \cref{lem:avoidingPatterns}\,\eqref{item:ap3}.
Hence, $u \le x < t$ and we distinguish two situations depending on the relative positions of~$v$ and~$y$.

Assume first that~$y \le v$, so that we have~$1 \le u \le x < t < y \le v \le n$.
As~$x \lessdot t$, $t \moredot y$, and~$u \lessdot v$, there are intervals $I, J, K \in \II$ such that~$[x,t] \subseteq I$ and~$A(I) = x$, $[t,y] \subseteq J$ and~$A(J) = y$, and~$[u,v] \subseteq K$ and $A(K) = u$.
If~$u \in I$, then we have~$u \more x$, which contradicts \cref{lem:avoidingPatterns}\,\eqref{item:ap3} since~$u \lessdot v$.
Similarly, we have~$v \notin I$, $u \notin J$, and~$v \notin J$.
We thus obtain that~$I \cup J \subset K$.
\centerline{\includegraphics[scale=1]{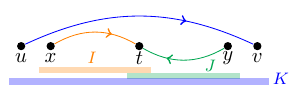}}

Assume now that~$I \cup J \in \II$.
If~$A(I \cup J) \in I$, then~$A(I \cup J) = x$ (as otherwise~$I$ and $I\cup J$ would contradict the acyclicity of~$A$).
Similarly, if~$A(I \cup J) \in J$, then~$A(I \cup J) = y$.
In both cases, we obtain that~$x$ and~$y$ are comparable for~$\less$, which contradicts \cref{lem:avoidingPatterns}\,\eqref{item:ap1} since~$x \lessdot t \moredot y$.
We conclude that~$I \cup J \notin \II$, so that our interval hypergraph~$\II$ does not satisfy Condition~\eqref{cond:simple1} in \cref{thm:characterizationSimpleIntervalHypergraphicPolytopesProof}.

Assume now that~$v < y$, so that we have~$1 \le u \le x < t < v < y \le n$.
As~$u \lessdot v$ and~$t \moredot y$, there are~$I,J \in \II$ such that~$[u,v] \subseteq I$ and~$A(I) = u$, and~$[t,y] \subseteq J$ and~$A(J) = y$.
We moreover pick these intervals~$I,J \in \II$ such that~$\min(I)$ is maximal while~$\max(J)$ is minimal among all possible intervals satisfying these properties.
Note that~$I \cap J$ contains at least~$t$ and~$v$.

\centerline{\includegraphics[scale=1]{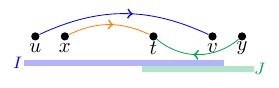}}

We now separate two cases:
\begin{itemize}
\item Assume first that~$I \cup J \in \II$. We again distinguish two cases:
	\begin{itemize}
	\item If~$A(I \cup J) \in I$, then~$A(I \cup J) = u$ by acyclicity of~$A$. We thus obtain that~$u \less y$. As~$A(J) = y$ and~$v \in [t,y] \subseteq J$, we also have~$v \more y$. We conclude that~$u \less y \less v$, contradicting that $u \lessdot v$ is a cover relation.
	\item If~$A(I \cup J) \in J$, then~$A(I \cup J) = y$ by acyclicity of~$A$. As~$x \in [u,y] \subseteq I \cup J$, this implies that~$x \more y$, which contradicts \cref{lem:avoidingPatterns}\,\eqref{item:ap1} since~$x \lessdot t \moredot y$.
	\end{itemize}

\centerline{\includegraphics[scale=1]{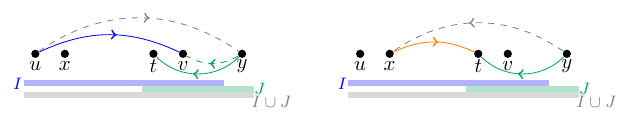}}
    
As~$A(I \cup J) \in I \cup J$, we obtain a contradiction, and conclude that~$I \cup J \notin \II$.
\item Assume now that there is~$K \in \II$ with~$I \cap J \subseteq K$ but~$I \not\subseteq K$ and~$J \not\subseteq K$. Set~$s = A(K)$.
If $s < u$, then $A(I) = u\in K$ and $A(K) = s\in I$ (otherwise $I\subseteq K$), which contradicts the acyclicity of $A$.
If $s = u$, then $[u, v] \subseteq K$ with $A(K) = u$, which contradicts the maximality of $\min(I)$.
Hence, we have $u < s$.
By a symmetric argument, we get that~$u < s < y$.
We again discuss two cases:
	\begin{itemize}
	\item If~$s < v$, we have~$s = A(K)$ and~$v \in I \cap J \subseteq K$, so that~$s \less v$, which contradicts \cref{lem:avoidingPatterns}\,\eqref{item:ap3} since~$u \lessdot v$ and~$u < s < v$.
	\item If~$t < s$, we have~$s = A(K)$ and~$t \in I \cap J \subseteq K$, so that~$s \less t$, which contradicts \cref{lem:avoidingPatterns}\,\eqref{item:ap3} since~$t \moredot y$ and~$t < s < y$.
	\end{itemize}

\centerline{\includegraphics[scale=1]{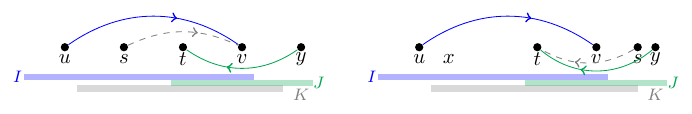}}

As~$t < v$, we conclude that there is no~$K \in \II$ with~$I \cap J \subseteq K$ but~$I \not\subseteq K$ and~$J \not\subseteq K$.
\end{itemize}
We thus obtained that~$|I \cap J| \ge 2$ but~$I \cup J \notin \II$ and there is no~$K \in \II$ with~$I \cap J \subseteq K$ but~$I \not\subseteq K$ and~$J \not\subseteq K$, so that our interval hypergraph~$\II$ does not satisfy Condition~\eqref{cond:simple2} in \cref{thm:characterizationSimpleIntervalHypergraphicPolytopesProof}.
\end{proof}


\subsection{Counting simple interval hypergraphic polytopes}
\label{subsec:countingSimpleIntervalHypergraphicPolytopes}

In this section, we discuss the problem of counting simple interval hypergraphic polytopes.
Remember that singleton hyperedges are irrelevant for hypergraphic polytopes (see \cref{rem:singletons}).
We thus ignore singletons in all our counting (taking them into account would only amount to multiply by a factor~$2^n$).

Denote by~$\mathdefn{I_n}$ (resp.~$\mathdefn{S_n}$, resp.~$\mathdefn{N_n}$, resp.~$\mathdefn{C_n}$) the number of interval hypergraphs (resp.~interval hypergraphs~$\II$ whose hypergraphic polytope~$\simplex_\II$ is simple, resp.~interval building sets, resp.~interval hypergraphs closed under subintervals) containing all singletons.
See \cref{table:relevantHypergraphicPolytopes} for the first few values of the sequences~$(I_n)_{n \ge 1}$, $(S_n)_{n \ge 1}$, $(N_n)_{n \ge 1}$ and $(C_n)_{n \ge 1}$.

\begin{table}[h]
	\centerline{
		\begin{tabular}{c|rrrrrrrrrr|c}
			$n$ & $1$ & $2$ & $3$ & $4$ & $5$ & $6$ & $7$ & $8$ & $9$ & $\cdots$ & OEIS \\
			\hline
			$I_n$ & $1$ & $2$ & $8$ & $64$ & $1\,024$ & $32\,768$ & $2\,097\,152$ & $268\,435\,456$ & $68\,719\,476\,736$ & $\cdots$ & \href{https://oeis.org/A006125}{A006125} \\
			$S_n$ & $1$ & $2$ & $8$ & $53$ & $510$ & $6\,771$ & $121\,036$ & $\dots$ &&& --- \\
			$N_n$ & $1$ & $2$ & $7$ & $38$ & $295$ & $3\,098$ & $42\,271$ & $726\,734$ & $15\,366\,679$ & $\cdots$ & \href{https://oeis.org/A000366}{A000366} \\
			$C_n$ & $1$ & $2$ & $5$ & $14$ & $42$ & $132$ & $429$ & $1\,430$ & $4\,862$ & $\cdots$ & \href{https://oeis.org/A000108}{A000108}
		\end{tabular}
	}
	\caption{The numbers~$I_n = 2^{\binom{n}{2}}$ of interval hypergraphic polytopes, $S_n$ of simple interval hypergaphic polytopes, $N_n$ of interval nestohedra, and $C_n$ of interval hypergraphs closed under subintervals on~$[n]$. Note that~$C_n < N_n < S_n < I_n$, for all $n \ge 4$.}
	\label{table:relevantHypergraphicPolytopes}
\end{table}

As any nestohedron is simple, we clearly have 
\[
\frac{1}{16\sqrt{\pi}} \left(\frac{2}{e^2\pi^2}\right)^n n^{2n+3/2} \sim N_n \le S_n \le I_n = 2^{\binom{n}{2}} \sim 2^{n^2/2}.
\]
The asymptotic expression for~$N_n$ is obtained from the standard expression for the median Genocchi numbers~\OEIS{A005439}, divided by~$2^n$ (since we ignore singletons as discussed earlier).

Finally, we have also reported in \cref{table:relevantConnectedHypergraphicPolytopes} the number~$\mathdefn{I^\star_n}$ (resp.~$\mathdefn{S^\star_n}$, resp.~$\mathdefn{N^\star_n}$, resp.~$\mathdefn{C^\star_n}$) of connected interval hypergraphs (resp.~connected interval hypergraphs~$\II$ whose hypergraphic polytope~$\simplex_\II$ is simple, resp.~connected interval building sets, resp~connected interval hypergraphs closed under subintervals) containing all singletons.
The numbers of \cref{table:relevantHypergraphicPolytopes,table:relevantConnectedHypergraphicPolytopes} are related by $\left(\sum_n I_n \, x^n\right) \left( 1 - \sum_n I_n^\star \, x^n \right) = 1$ (and similarly for~$S_n$, $N_n$, and~$C_n$).

\begin{table}[h]
	\centerline{
		\begin{tabular}{c|rrrrrrrrrr|c}
			$n$ & $1$ & $2$ & $3$ & $4$ & $5$ & $6$ & $7$ & $8$ & $9$ & $\cdots$ & OEIS \\
			\hline
			$I^\star_n$ & $1$ & $1$ & $5$ & $49$ & $893$ & $30\,649$ & $2\,030\,213$ & $264\,198\,625$ & $68\,180\,168\,717$ & $\cdots$ & \href{https://oeis.org/A348901}{A348901} \\
			$S^\star_n$ & $1$ & $1$ & $5$ & $38$ & $401$ & $5\,691$ & $106\,693$ & $\dots$ &&& --- \\
			$N^\star_n$ & $1$ & $1$ & $4$ & $25$ & $217$ & $2\,470$ & $35\,647$ & $637\,129$ & $13\,843\,948$ & $\cdots$ & \href{https://oeis.org/A218826}{A218826} \\
			$C^\star_n$ & $1$ & $1$ & $2$ & $5$ & $14$ & $42$ & $132$ & $429$ & $1\,430$ & $\cdots$ & \href{https://oeis.org/A000108}{A000108}
		\end{tabular}
	}
	\caption{The numbers~$I^\star_n$ of connected interval hypergraphic polytopes, $S^\star_n$ of connected simple interval hypergaphic polytopes, $N^\star_n$ of connected interval building sets, and $C^\star_n$ of connected interval hypergraphs closed under subintervals on~$[n]$.  Note that~$C^\star_n < N^\star_n < S^\star_n < I^\star_n$, for all $n \ge 4$.}
	\label{table:relevantConnectedHypergraphicPolytopes}
\end{table}


\subsection{Characterizing simple hypergraphic polytopes?}
\label{subsec:towardsSimpleHypergraphicPolytopes}

In this section, we discuss the extension of \cref{thm:characterizationSimpleIntervalHypergraphicPolytopes} to all hypergraphic polytopes, beyond those only consisting of intervals.
First, it is important to observe that, contrary to the case of interval hypergraphs (where the Minkowski sum $\sum_{} \lambda_{ij}\simplex_{[i,j]}$ uniquely determines the summands), the hypergraph~$\HH$ is in general not determined by the normal fan of the hypergraphic polytope~$\simplex_\HH$.

\begin{lemma}
\label{lem:completingHypergraphIntoSaturated}
If~$J \notin \HH$ and for every~$e \in \binom{J}{2}$ there exists~$H \in \HH$ such that~$e \subseteq H \subseteq J$, then~$\simplex_\HH$ and~$\simplex_{\HH \cup \{J\}}$ have the same normal fan. 
\end{lemma}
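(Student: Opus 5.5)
We plan to show that every maximal cone of the normal fan of~$\simplex_\HH$ is already a maximal cone for~$\simplex_{\HH \cup \{J\}}$. Since~$\simplex_{\HH \cup \{J\}} = \simplex_\HH + \simplex_J$, the normal fan of the sum is the common refinement of the normal fans of~$\simplex_\HH$ and~$\simplex_J$. Hence it suffices to show that the normal fan of~$\simplex_\HH$ refines that of~$\simplex_J$. Concretely, we will show that for every maximal cone~$\poly{C}_A$ of the normal fan of~$\simplex_\HH$ (with~$A$ an acyclic orientation, \cref{prop:acyclicOrientations}), there is a single~$j \in J$ with~$x_j \ge x_h$ for all~$h \in J$ and all~$\b{x} \in \poly{C}_A$. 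This means~$\poly{C}_A$ lies in the normal cone of the vertex~$\b{e}_j$ of~$\simplex_J$, and then the refinement holds (the common refinement equals the fan of~$\simplex_\HH$).

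To find~$j$, we work with the vertex poset~$\less_A$. On~$\poly{C}_A$ we have~$x_a \ge x_b$ whenever~$a \less_A b$. So it is enough to show that the restriction of~$\less_A$ to~$J$ has a minimum, that is, an element~$j \in J$ with~$j \less_A h$ for all~$h \in J$.

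The key step uses the hypothesis on pairs. For any two distinct~$h, h' \in J$, pick~$H \in \HH$ with~$\{h,h'\} \subseteq H \subseteq J$. Then~$A(H) \in H \subseteq J$ and~$A(H) \less_A h$, $A(H) \less_A h'$. Now take~$j$ to be a minimal element of~$\less_A$ restricted to~$J$ (it exists since~$\less_A$ is a poset). For any~$h \in J \ssm \{j\}$, apply the hypothesis to the pair~$\{j,h\}$ to get~$H$ with~$A(H) \in J$ and~$A(H) \less_A j$. Minimality of~$j$ within~$J$ forces~$A(H) = j$, hence~$j \less_A h$. So~$j$ is the minimum of~$J$, as required. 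If~$|J| = 1$, the claim is just \cref{rem:singletons}.

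Finally, we conclude that every maximal cone of the fan of~$\simplex_\HH$ lies in a maximal cone of the fan of~$\simplex_J$. Therefore the common refinement, which is the normal fan of~$\simplex_{\HH \cup \{J\}}$, coincides with the normal fan of~$\simplex_\HH$.

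The main subtlety is justifying that checking maximal cones suffices. We will use the standard fact that if every maximal cone of a complete fan~$\c{F}$ lies in some cone of a complete fan~$\c{G}$, then~$\c{F}$ refines~$\c{G}$, and the common refinement of~$\c{F}$ and~$\c{G}$ is~$\c{F}$. This also gives equality at the level of lower-dimensional faces.
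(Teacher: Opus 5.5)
Your proof is correct, but it takes a different route from the paper's.

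The paper argues with walls. Two complete fans coincide if and only if they have the same codimension-one cones. The walls of the normal fan of a Minkowski sum are the union of the walls of the summands. Finally, the wall of~$\simplex_J$ attached to~$\{u,v\} \in \binom{J}{2}$ is covered by walls of~$\simplex_\HH$ exactly when some~$H \in \HH$ satisfies~$\{u,v\} \subseteq H \subseteq J$.

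You instead prove directly that the normal fan of~$\simplex_\HH$ refines that of~$\simplex_J$, i.e.\ that~$\simplex_J$ is a deformation of~$\simplex_\HH$. You do this by showing that~$J$ has a minimum in every vertex poset~$\less_A$. Your minimality argument is sound: $A(H) \in H \subseteq J$ together with $A(H) \less_A j$ forces~$A(H) = j$. The fan-theoretic fact you invoke, that a complete fan whose maximal cones each lie in a cone of another complete fan equals their common refinement, does hold.

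Your approach stays entirely within the acyclic-orientation model and makes the bijection on vertices explicit. The vertex of~$\simplex_\HH$ given by~$A$ corresponds to the vertex of~$\simplex_{\HH \cup \{J\}}$ obtained by extending~$A$ with~$J \mapsto \min_{\less_A} J$, so the vertex is just shifted by that~$\b{e}_j$.

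The paper's wall argument, on the other hand, gives the converse for free, since its criterion is an ``if and only if''. That converse is what justifies the ``that is'' in the definition of saturated hypergraphs that follows the lemma. Your method could also yield the converse: take~$\sigma$ listing~$[n] \ssm J$ first and then a pair~$u, v$ violating the hypothesis, so that~$u$ and~$v$ are both minimal in~$J$ for~$\less_{\Or_\sigma}$. That would, however, be an additional argument you would need to write out.
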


\begin{proof}
Remember that 
\begin{itemize}
\item two fans coincide if and only if they have the same walls (\ie codimension~$1$ cones), 
\item the union of the walls of the normal fan of a Minkowski sum is the union of the walls of the normal fans of the summands,
\item the normal fan of a simplex~$\simplex_J$ with~$J \subseteq [n]$ has a wall for each~$\{u,v\} \in \binom{J}{2}$, given by~$\set{\b x \in \R^n}{x_u = x_v < x_j \text{ for all } j \in J \ssm \{u,v\}}$.
\end{itemize}
Consequently, the hypergraphic polytopes~$\simplex_\HH$ and~$\simplex_{\HH \cup \{J\}}$ have the same normal fan if and only if the walls of the normal fan of the simplex~$\simplex_J$ are all contained in the union of the walls of~$\simplex_\HH$, that is, if and only if, for each~$e \in \binom{J}{2}$, there is~$H \in \HH$ such that~$e \subseteq H \subseteq J$.
\end{proof}

\begin{example}
The hypergraphic polytopes of the hypergraphs~$\{12, 23, 13\}$ and $\{12, 23, 13, 123\}$ have the same normal fan.
More generally, the braid fan is the normal fan of the hypergraphic polytope of any hypergraph containing the complete graph~$\binom{[n]}{2}$, for example of the complete hypergraph~$\sum_{X \subseteq [n]} \simplex_X$.
\end{example}

We say that a hypergraph~$\HH$ is \defn{saturated} if there is no~$J$ as in \cref{lem:completingHypergraphIntoSaturated}, that is, if~$\HH$ is inclusion maximal among all hypergraphs whose hypergraphic polytope have a given normal fan.
(Note that this differs from the notion of saturated hypergraph of~\cite[Sect.~4]{DosenPetric}, which just means building set.)
For instance, all interval hypergraphs are saturated.

To extend \cref{thm:characterizationSimpleIntervalHypergraphicPolytopes}, it is convenient to restrict our attention to saturated hypergraphs, as this entails no loss of generality.
First, \cref{lem:forward1,lem:forward2} extend straightforward to all saturated hypergraphs (we give similar proofs), which yields the following necessary conditions on~$\HH$ for~$\simplex_\HH$ to be simple.

\begin{lemma}
\label{lem:forward1_general}
If a saturated hypergraph~$\HH$ contains~$I, J, K \in \HH$ such that~$I \cap J \ne \varnothing$, $I \cup J \notin \HH$, and~$I \cup J \subsetneq K$, then the hypergraphic polytope~$\simplex_\HH$ is not simple.
\end{lemma}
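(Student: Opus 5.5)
We adapt the proof of \cref{lem:forward1} to the setting of a general hypergraph. The interval structure was used there only to locate the four elements $i,j,k,\ell$ and to argue that $i$ and $j$ are incomparable; saturation will replace that second use. The plan is to exhibit a permutation $\sigma$ whose vertex poset $\less$ of $\Or_\sigma$ satisfies $k \less i$, $k \less j$, $i \less \ell$, $j \less \ell$ with $i,j$ incomparable. Then \cref{lem:posetTree}\,\eqref{item:diamond} shows the Hasse diagram of $\less$ is not a tree, so the corresponding vertex of $\simplex_\HH$ is not simple.

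First we exploit saturation. Since $I \cup J \notin \HH$ and $\HH$ is saturated, \cref{lem:completingHypergraphIntoSaturated} cannot apply to $I\cup J$. Hence there is a pair $\{i,j\} \in \binom{I\cup J}{2}$ such that no $H \in \HH$ satisfies $\{i,j\} \subseteq H \subseteq I \cup J$. This pair cannot lie inside $I$ or inside $J$, since $I$ and $J$ themselves would then contain it. So we may assume $i \in I \ssm J$ and $j \in J \ssm I$. Next fix $\ell \in I \cap J$, which is nonempty, and $k \in K \ssm (I \cup J)$, which exists since $I\cup J \subsetneq K$. The four elements are distinct.

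Take $\sigma \eqdef k X i j \ell Y$, where $X$ lists $[n] \ssm (I\cup J\cup\{k\})$ and $Y$ lists $(I\cup J)\ssm\{i,j,\ell\}$, each in arbitrary order. The required relations follow as before:
\begin{itemize}
\item $\Or_\sigma(K)=k$ gives $k \less i$ and $k \less j$;
\item $\Or_\sigma(I)=i$ gives $i\less \ell$;
\item $\Or_\sigma(J)=j$ gives $j \less \ell$.
\end{itemize}
These hold because no letter before $i$ (respectively $j$) in $\sigma$ lies in $I$ (respectively $J$), except $i$ itself. Indeed, $j\notin I$, and $i \notin J$.

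It remains to show that $i$ and $j$ are incomparable, and this is the step needing care. Since $i$ and $j$ are adjacent in $\sigma$, and $\sigma$ is a linear extension of $\less$, any comparability must be $i \less j$, and it would then be a cover relation. Any element $m$ with $i \less m \less j$ would have to appear strictly between $i$ and $j$ in $\sigma$, which is impossible. A cover relation $i \lessdot j$ in the transitive closure of $\{\Or_\sigma(H)\le h\}$ must come from a generating relation, namely some $H \ni i,j$ with $\Or_\sigma(H) = i$. By the choice of $\{i,j\}$, such an $H \not\subseteq I\cup J$, so $H$ contains a letter of $kX$. That letter precedes $i$ in $\sigma$, so $\Or_\sigma(H)\neq i$, a contradiction. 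Hence $i,j$ are incomparable, and \cref{lem:posetTree}\,\eqref{item:diamond} concludes the proof. The only genuinely new point compared with the interval case is extracting the pair $\{i,j\}$ from saturation.
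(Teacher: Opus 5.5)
Your proof is correct and follows essentially the same route as the paper: extract from saturation a pair $\{i,j\} \subseteq I \cup J$ not contained in any hyperedge inside $I \cup J$, take $\sigma = kXij\ell Y$ to obtain $k \less i, j \less \ell$ with $i, j$ adjacent in $\sigma$ and hence incomparable, and conclude with \cref{lem:posetTree}\,\eqref{item:diamond}. You are slightly more explicit than the paper on two points: why the pair must split as $i \in I \ssm J$ and $j \in J \ssm I$, and why adjacency in $\sigma$ rules out a cover relation $i \lessdot j$.
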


\begin{proof}
As~$\HH$ is saturated and~$I \cup J \notin \HH$, there exist~$i,j \in I \cup J$ such that there is no~$H \in \HH$ with~$\{i,j\} \subseteq H \subseteq I \cup J$.
Fix moreover~$k \in K \ssm (I\cup J)$ and~$\ell \in I \cap J$.
Note that~$i, j \in (I \cup J) \ssm (I \cap J)$, so that~$i, j, k, \ell$ are all distinct.
Consider the poset~$\less$ of the acyclic orientation~$\Or_{\sigma}$ defined by a permutation~$\sigma \eqdef kXij\ell Y$ where~$X$ (resp.~$Y$) is the word formed by the complement of~$I \cup J \cup \{k\}$ in~$[n]$ (resp.~of~$\{i, j, \ell\}$ in~$I \cup J$) written in an arbitrary order.
The choice of~$i,j,k,\ell$ ensures that $k \less i \less \ell$ and $k \less j \less \ell$.
Moreover, as $i$ and $j$ are consecutive in $\sigma$ and there is no~$H \in \HH$ with~$\{i,j\} \subseteq H \subseteq I\cup J = \{i, j, \ell\} \cup Y$,  we obtain that $i$ and $j$ are incomparable in~$\less$.
We conclude by \cref{lem:posetTree}~\eqref{item:diamond} that the Hasse diagram of~$\less$ has a cycle, hence that~$\simplex_\II$ is not simple.
\end{proof}

\begin{lemma}
\label{lem:forward2_general}
If a saturated hypergraph~$\HH$ contains~$I, J \in \HH$ such that~$|I \cap J| \ge 2$, $I \cup J \notin \HH$, and there exists $k, \ell\in I\cap J$ such that there is no $H\in\HH\ssm\{I, J\}$ satisfying $\{k,\ell\}\subseteq H\subseteq I\cup J$, then the hypergraphic polytope~$\simplex_\HH$ is not simple.
\end{lemma}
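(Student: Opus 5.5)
Following the proof of \cref{lem:forward2}, I would construct a permutation~$\sigma$ whose acyclic orientation~$\Or_\sigma$ has a poset~$\less$ realizing the bowtie pattern of \cref{lem:posetTree}\,\eqref{item:bowtie} with both pairs incomparable. I would first reduce to the case where no~$H \in \HH$ contains~$I \cup J$: if some~$H \supsetneq I \cup J$ exists, then, since~$I \cup J \notin \HH$, \cref{lem:forward1_general} already shows that~$\simplex_\HH$ is not simple. As~$\HH$ is saturated and~$I \cup J \notin \HH$, there are~$i, j \in I \cup J$ such that no~$H \in \HH$ satisfies~$\{i,j\} \subseteq H \subseteq I \cup J$. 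Combined with the reduction, no hyperedge contains~$\{i,j\}$ at all.

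Next I would place~$i, j$ in~$I \cup J$. If both lay in~$I$, then~$I$ itself would contain~$\{i,j\}$, and similarly for~$J$. So one of them, say~$i$, is in~$I \ssm J$ and the other,~$j$, is in~$J \ssm I$. Together with the given~$k, \ell \in I \cap J$, this makes~$i, j, k, \ell$ pairwise distinct. I would set~$\sigma \eqdef ijk\ell X$ with~$X$ the remaining letters in any order. Then~$\Or_\sigma(I) = i$ because~$i$ is the first letter of~$\sigma$ and~$i \in I$, which gives~$i \less k$ and~$i \less \ell$. For~$J$, the letter~$i$ does not lie in~$J$ and~$j$ is the next letter, so~$\Or_\sigma(J) = j$, which gives~$j \less k$ and~$j \less \ell$.

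For incomparability of~$i$ and~$j$, I would use that they are consecutive in~$\sigma$. Any comparison between them would therefore have to be a cover relation, and that would require a hyperedge containing both, which does not exist. For~$k$ and~$\ell$, consecutiveness again reduces the question to whether some~$H$ with~$\{k,\ell\} \subseteq H$ has~$\Or_\sigma(H) \in \{k, \ell\}$. Such an~$H$ avoids~$i$ and~$j$.

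This is the main obstacle: the hypothesis only forbids~$H \ne I, J$ with~$\{k,\ell\} \subseteq H \subseteq I \cup J$, and says nothing about hyperedges leaving~$I \cup J$. I would handle it by changing the tail of~$\sigma$ to~$\sigma \eqdef i j X' k \ell Y$, where~$X'$ lists the complement of~$I \cup J$ and~$Y$ lists the rest of~$I \cup J$. Any~$H \ni k, \ell$ not contained in~$I \cup J$ then meets~$X'$, which comes before~$k$ and~$\ell$ in~$\sigma$, so~$\Or_\sigma(H) \notin \{k, \ell\}$. Any~$H \ni k, \ell$ with~$H \subseteq I \cup J$ is~$I$ or~$J$ by hypothesis, and these are oriented to~$i$ and~$j$. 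The relations from the first two steps are unaffected: no hyperedge~$H \supsetneq I \cup J$ was allowed to remain after the reduction, and~$\Or_\sigma(I) = i$,~$\Or_\sigma(J) = j$ are still forced since~$i, j$ are the first two letters. Hence~$k$ and~$\ell$ are incomparable, and \cref{lem:posetTree}\,\eqref{item:bowtie} yields a cycle in the Hasse diagram, so~$\simplex_\HH$ is not simple.
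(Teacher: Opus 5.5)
There is a genuine gap in how you handle the pair $i,j$. The sentence ``combined with the reduction, no hyperedge contains $\{i,j\}$ at all'' is an interval-case argument: there, a hyperedge containing $i=\min(I\cup J)$ and $j=\max(I\cup J)$ automatically contains $I\cup J$. For a general saturated hypergraph this fails. Saturation only excludes hyperedges $H\supseteq\{i,j\}$ with $H\subseteq I\cup J$, and your reduction only excludes $H\supseteq I\cup J$. A hyperedge containing $i$, $j$ and some element outside $I\cup J$, but not all of $I\cup J$, survives both. Since $i$ is the first letter of $\sigma = ijX'k\ell Y$, such an $H$ is oriented to $i$ and gives $i\less j$.

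Concretely, take $\HH=\{123,234,145\}$ together with all singletons. It is saturated: every $J'\notin\HH$ with $|J'|\ge 2$ has a pair covered by no hyperedge inside $J'$. Here $I=123$, $J=234$, $\{k,\ell\}=\{2,3\}$ satisfy the hypotheses, and necessarily $\{i,j\}=\{1,4\}$. Your permutation is $\sigma=14523$, with $\Or_\sigma(123)=1$, $\Or_\sigma(234)=4$, $\Or_\sigma(145)=1$. The Hasse diagram is then $1\to4$, $1\to5$, $4\to2$, $4\to3$, which is a tree, so the bowtie argument yields nothing at this vertex. You correctly spotted that the hypothesis on $k,\ell$ says nothing about hyperedges leaving $I\cup J$. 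The saturation property for $i,j$ has exactly the same limitation, and inserting $X'$ between $j$ and $k$ only repairs the second pair.

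The fix is to put the complement of $I\cup J$ at the very front, $\sigma = X\,ijk\ell\,Y$, which is what the paper does.
\begin{itemize}
\item $\Or_\sigma(H)\in\{i,j\}$ forces $H\cap X=\varnothing$, i.e.\ $H\subseteq I\cup J$, and saturation rules this out for $H\supseteq\{i,j\}$.
\item $\Or_\sigma(H)\in\{k,\ell\}$ forces $H\subseteq (I\cup J)\ssm\{i,j\}$, and the hypothesis rules this out for $H\supseteq\{k,\ell\}$, since $I\ni i$ and $J\ni j$.
\item $\Or_\sigma(I)=i$ and $\Or_\sigma(J)=j$ still hold, so you get the bowtie with both pairs incomparable.
\end{itemize}
In the example, $\sigma=51423$ produces the cycle $1\to2\leftarrow4\to3\leftarrow1$. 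With this ordering your preliminary reduction via \cref{lem:forward1_general} also becomes unnecessary, since any $H\supsetneq I\cup J$ is oriented into $X$.
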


\begin{proof}
As~$\HH$ is saturated and~$I \cup J \notin \HH$, there exist~$i,j \in I \cup J$ such that there is no~$H \in \HH$ with~$\{i,j\} \subseteq H \subseteq I \cup J$.
Note that~$i, j \in (I \cup J) \ssm (I \cap J)$, so that~$i, j, k, \ell$ are all distinct.
Consider the poset~$\less$ of the acyclic orientation~$\Or_{\sigma}$ defined by a permutation~$\sigma \eqdef Xijk\ell Y$ where~$X$ (resp.~$Y$) is the word formed by the complement of $I \cup J$ in~$[n]$ (resp.~of~$\{i, j, k, \ell\}$ in~$I\cup J$) written in an arbitrary order.
The choice of~$i,j,k,\ell$ ensures that $i \less k$, $i \less \ell$, $j \less k$ and~$j \less \ell$.
Moreover, as~$i$ and $j$ are consecutive  in $\sigma$ and there is no~$H \in \HH$ with~$\{i,j\} \subseteq H \subseteq I \cup J = \{i, j, k, \ell\} \cup Y$, we obtain that $i$ and $j$ are incomparable in~$\less$.
Similarly, as~$k$ and~$\ell$ are consecutive  in $\sigma$ and there is no~$H \in \HH$ with~$\{k,\ell\} \subseteq H \subseteq (I \cup J) \ssm \{i,j\} = \{k, \ell\} \cup Y$, we obtain that $k$ and $\ell$ are incomparable in~$\less$.
We conclude by \cref{lem:posetTree}\,\eqref{item:crown} that the Hasse diagram of~$\less$ has a cycle, hence that~$\simplex_\II$ is not simple.
\end{proof}

\begin{example}
The hypergraphic polytope of the saturated hypergraph $\{123, 234, 145, 12345\}$ is not simple by \cref{lem:forward1_general} with $I = 123$, $J = 234$, $K = 12345$.
The same hypergraphic polytope is not simple by~\cref{lem:forward2_general} with $I = 123$ and $J = 234$.
\end{example}

The following consequence of \cref{lem:forward1_general} generalizes~\cref{coro:fullHyperedgePresent}.
Roughly speaking, among the hypergraphic polytopes containing the full simplex~$\simplex_{[n]}$ as a summand, the simple hypergraphic polytopes are precisely the nestohedra.

\begin{corollary}\label{cor:SimpleSaturatedHypergraphsContainingGroundSetAreBuildingSets}
Consider a saturated hypergraph~$\HH$ containing the ground set~$[n]$ as a hyperedge. Then the hypergraphic polytope~$\simplex_\HH$ is simple if and only if~$\HH$ is a building set.
\end{corollary}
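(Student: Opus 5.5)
The two directions are of very different nature. For the forward direction, I would argue by contraposition via \cref{lem:forward1_general}. Assume $\HH$ is saturated, contains $[n]$, and is not a building set. Then there are $I, J \in \HH$ with $I \cap J \ne \varnothing$ and $I \cup J \notin \HH$. Since $I \cup J \notin \HH$ while $[n] \in \HH$, we have $I \cup J \ne [n]$, hence $I \cup J \subsetneq [n] =: K$. \Cref{lem:forward1_general} then gives that $\simplex_\HH$ is not simple. This direction is essentially immediate.

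For the backward direction, assume $\HH$ is a building set (and saturated, with $[n] \in \HH$). Then $\simplex_\HH$ is a nestohedron, and all nestohedra are simple by \cref{prop:NestoVertexDiGraphsAreBBforests}: their vertex digraphs are $\BB$-forests, in particular forests, so every vertex is simple. No extra use of saturation or of $[n] \in \HH$ is needed here.

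The only point I would check is that \cref{lem:forward1_general} genuinely applies, namely that $\HH$ is saturated as assumed. Its proof uses saturation to find $i,j \in I \cup J$ not jointly covered by any $H \subseteq I \cup J$, and this is exactly the hypothesis of the corollary. It is also worth remarking why saturation is required. A non-saturated hypergraph such as $\{12,23,13,123\}$ minus $123$, together with a ground set, can give a polytope with the same normal fan as a building set without itself being one. So the ``only if'' statement must be read for saturated $\HH$, which is how it is stated.
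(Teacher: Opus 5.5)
Your proof is correct and matches the paper's: the forward direction is exactly \cref{lem:forward1_general} applied with $K=[n]$, and the backward direction is the simplicity of nestohedra from \cref{prop:NestoVertexDiGraphsAreBBforests}, which the paper takes for granted. Your check that $I\cup J\subsetneq[n]$ (because $[n]\in\HH$ and $I\cup J\notin\HH$) is the strictness condition the paper leaves implicit. One small slip in your closing side remark: for $n=3$ the hypergraph $\{12,13,23\}\cup\{[3]\}$ is itself a building set. The illustration only works for $n\ge 4$, where $\{12,13,23,[n]\}$ is simple (it is normally equivalent to the nestohedron of $\{12,13,23,123,[n]\}$) yet is not a building set.
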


\begin{proof}
Suppose $\simplex_\HH$ is simple and $[n]\in \HH$.
For $A, B\in \HH$ with $A\cap B \ne \emptyset$, applying \Cref{lem:forward1_general} with $K = [n]$, we obtain that~$A\cup B\in\HH$.
Hence, $\HH$ is a building set.
\end{proof}

To prove \Cref{lem:forward1_general,lem:forward2_general}, we heavily relied on \Cref{lem:posetTree} which describes two kinds of patterns that prevent a Hasse diagram from being a directed forest.
However, more general obstructions can arise, demanding for more convoluted criteria on $\HH$ characterizing when is $\simplex_\HH$ a simple polytope.
The following classical strengthening of \cref{lem:posetTree} characterizes the posets whose Hasse diagrams are forests.


\begin{lemma}
\label{lem:cyclesPoset}
The Hasse diagram $D$ of the order relation $\less$ is a directed forest if and only if:
\begin{enumerate}[(i)]
\item there exist no $i,j,k,\ell$ with $i$ and $j$ incomparable by $\less$ such that~$k \less i$, $k \less j$, $i \less \ell$ and~$j \less \ell$,\label{item:diamond2}
\item there exist no $k_1, \dots, k_r,\ell_1,\dots, \ell_r$ for $r\geq 2$ with $k_p$ and $k_q$ (resp. $\ell_p$ and $\ell_q$) incomparable by $\less$ for $p\ne q$ such that~$k_p \less \ell_p$, $k_p \less \ell_{p+1}$ and~$k_r \less \ell_1$. \label{item:crown}
\end{enumerate}
See \cref{fig:crownPosets} for illustrations of these posets.
\begin{figure}
	\begin{tikzpicture}[scale=1,baseline=.2cm]
		\begin{scope}[shift={(0,.5)}]
			\node (1) at (0,-.7) {$k$};
			\node (2) at (-.4,0) {$i$};
			\node (3) at (.4,0) {$j$};
			\node (4) at (0,.7) {$\ell$};
			\draw [thick] (1)--(2); 
			\draw [thick] (1)--(3); 
			\draw [thick] (2)--(4); 
			\draw [thick] (3)--(4); 
		\end{scope}
		
		\begin{scope}[shift={(2,0)}]
			\node (1) at (-.4,0) {$k_1$};
			\node (2) at (.4,0) {$k_2$};
			\node (3) at (-.4,1) {$\ell_1$};
			\node (4) at (.4,1) {$\ell_2$};
			\draw [thick] (1)--(3); 
			\draw [thick] (1)--(4); 
			\draw [thick] (2)--(3); 
			\draw [thick] (2)--(4); 
		\end{scope}
		
		\begin{scope}[shift={(4.5,0)}]
			\node (1) at (-.8,0) {$k_1$};
			\node (2) at (0,0) {$k_2$};
			\node (3) at (.8,0) {$k_3$};
			\node (4) at (-.8,1) {$\ell_1$};
			\node (5) at (0,1) {$\ell_2$};
			\node (6) at (.8,1) {$\ell_3$};
			\draw [thick] (1)--(4); 
			\draw [thick] (1)--(5); 
			\draw [thick] (2)--(5); 
			\draw [thick] (2)--(6); 
			\draw [thick] (3)--(6); 
			\draw [thick] (3)--(4); 
		\end{scope}
		
		\begin{scope}[shift={(7.5,0)}]
			\node (1) at (-1.2,0) {$k_1$};
			\node (2) at (-.4,0) {$k_2$};
			\node (3) at (.4,0) {$k_3$};
			\node (4) at (1.2,0) {$k_4$};
			\node (5) at (-1.2,1) {$\ell_1$};
			\node (6) at (-.4,1) {$\ell_2$};
			\node (7) at (.4,1) {$\ell_3$};
			\node (8) at (1.2,1) {$\ell_4$};
			\draw [thick] (1)--(5); 
			\draw [thick] (1)--(6); 
			\draw [thick] (2)--(6); 
			\draw [thick] (2)--(7); 
			\draw [thick] (3)--(7); 
			\draw [thick] (3)--(8); 
			\draw [thick] (4)--(8); 
			\draw [thick] (4)--(5); 
		\end{scope}
		
		\begin{scope}[shift={(10,.5)}]
			\node {$\dots$};
		\end{scope}
	\end{tikzpicture}
	\caption{Diamond and crown posets.}
	\label{fig:crownPosets}
\end{figure}
\end{lemma}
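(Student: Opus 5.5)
The plan is to prove both directions by relating cycles in the underlying undirected graph of the Hasse diagram $D$ to the two forbidden patterns. For the forward direction, suppose $D$ is a directed forest, so that $\less$ is the transitive closure of a forest. Every comparability $a \less b$ is then witnessed by the unique undirected path between $a$ and $b$ in $D$, and this path is directed. In a diamond $k \less i, j \less \ell$ with $i,j$ incomparable, the directed paths $k \to i \to \ell$ and $k \to j \to \ell$ must coincide, since $D$ is a forest and has a unique path from $k$ to $\ell$. Hence $i$ and $j$ both lie on a single directed path and are comparable, a contradiction; this is just \cref{lem:posetTree}\,\eqref{item:diamond}. For a crown, I would take the union of the directed paths $k_p \to \ell_p$, $k_p \to \ell_{p+1}$ and $k_r \to \ell_1$, which forms a closed walk in the forest. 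I would then show that this walk contains a genuine cycle unless two of the $k$'s or two of the $\ell$'s are comparable. Concretely, in a tree the paths from $k_p$ to $\ell_p$ and to $\ell_{p+1}$ share a maximal common prefix ending at a branching node $m_p$, and the paths into $\ell_{p+1}$ from $k_p$ and $k_{p+1}$ share a suffix starting at a merging node $m'_p$. Incomparability of the $k$'s forces the merging nodes to differ from the $k$'s, and incomparability of the $\ell$'s forces the branching nodes to differ from the $\ell$'s. The reduced cyclic sequence $m_1, m'_1, m_2, \dots$ then gives a cycle. The case $r = 2$ is \cref{lem:posetTree}\,\eqref{item:bowtie}.

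For the backward direction, I would argue by contrapositive. Assume $D$ is not a forest and pick an undirected cycle $\Gamma$ in $D$ of minimal length. Orient $\Gamma$ as in $D$ and decompose it into maximal directed runs, alternating between local sources $s_1, \dots, s_r$ and local sinks $t_1, \dots, t_r$, so that $s_p \less t_p$ and $s_p \less t_{p+1}$ (indices mod $r$). We have $r \ge 1$ because $D$ is acyclic as a digraph.

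If $r = 1$, the cycle consists of two distinct directed paths from $s_1$ to $t_1$. Since these are paths in the Hasse diagram, neither can be a single arc when the other is longer. So each path has an interior node $i$ and $j$ respectively. I would take $i, j$ adjacent to $s_1$ on each path and show they are incomparable using the minimality of $\Gamma$: a comparability between them would produce a shorter cycle or contradict the cover relations. This yields a diamond.

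If $r \ge 2$, I would aim to show that the $s_p$ are pairwise incomparable and the $t_p$ are pairwise incomparable, which gives a crown. If, say, $s_p \less s_q$, then concatenating a directed path from $s_p$ to $s_q$ with pieces of $\Gamma$ produces a shorter cycle, or else a diamond directly. I expect this shortcutting argument to be the main obstacle, since the shortcut path may intersect $\Gamma$. I would handle it by choosing $\Gamma$ minimal and, among the nodes realizing a comparability, taking the first point where the shortcut leaves and re-enters $\Gamma$. Each such re-entry splits $\Gamma$ into two strictly shorter closed walks, and one of them must contain a cycle. If the shortened structure degenerates, it should yield a diamond, so in all cases one of the two patterns appears.
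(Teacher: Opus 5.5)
Your forward argument for crowns cannot be completed, because that half of the statement is false as written. Take the poset $k_1,k_2 \less c \less \ell_1,\ell_2$. Its Hasse diagram, with arcs $k_1\to c$, $k_2\to c$, $c\to\ell_1$, $c\to\ell_2$, is a tree, yet $k_1,k_2,\ell_1,\ell_2$ satisfy every requirement of~(ii) with $r=2$. In your construction this is exactly the degenerate case $m_1=m'_1=m_2=m'_2=c$. Incomparability keeps the branching and merging nodes away from the $k$'s and $\ell$'s, but nothing prevents them from all coinciding, and then the reduced sequence is a single node rather than a cycle. The same example refutes \cref{lem:posetTree}\,\eqref{item:bowtie}, so citing it for $r=2$ does not help. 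Your diamond argument, on the other hand, is fine.

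The paper's sketch only treats the other implication, namely that a cycle in $D$ produces a diamond or a crown. The implication ``crown $\Rightarrow$ not a forest'' does hold when the $2r$ crown relations are cover relations, since they then form a $2r$-cycle of $D$. This is what actually happens in \cref{lem:forward2,lem:forward2_general}, where the four nodes are consecutive in a linear extension. Requiring covers in~(ii) does not save the equivalence, however: $s_1 \lessdot x \lessdot t_1 \moredot s_2 \lessdot t_2 \moredot s_1$ has a cycle in its Hasse diagram but no diamond and no crown of covers. So the statement must be corrected before any proof is possible.

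For the backward direction, your $r=1$ step is correct. The successors of $s_1$ on the two paths are distinct upper covers of $s_1$, hence incomparable, and minimality is not needed. You also correctly locate the real difficulty for $r\ge 2$, which the paper's sketch glosses over. But minimizing the length of $\Gamma$ does not resolve it. A path leaving and re-entering $\Gamma$ creates cycles whose lengths are its own length plus that of an arc of $\Gamma$, and these need not be shorter. For example, take the cover crown $s_1 \lessdot t_1 \moredot s_2 \lessdot t_2 \moredot s_3 \lessdot t_3 \moredot s_4 \lessdot t_4 \moredot s_1$ and add a chain $s_1 \lessdot x_1 \lessdot \dots \lessdot x_m \lessdot s_3$ with $m \ge 4$. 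The $8$-cycle is then the unique shortest cycle, its sources satisfy $s_1 \less s_3$, and there is no diamond, so your dichotomy ``shorter cycle or diamond'' fails.

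The quantity to minimize is instead the number $r$ of sources. Suppose a directed path $P$ from $u$ to $v$ meets $\Gamma$ only at its endpoints, and let $\Gamma_1,\Gamma_2$ be the two cycles formed by $P$ and the two arcs of $\Gamma$. Interior nodes of $P$ and the node $v$ are sources of neither $\Gamma_i$, every other node except $u$ is a source of at most as many $\Gamma_i$ as of $\Gamma$, and $u$ gains at most one. Hence $r_1+r_2\le r+1$. So either some $\Gamma_i$ has a single source, which yields a diamond, or both have fewer sources than $\Gamma$. For a cycle minimizing $r$, every directed path of $D$ between two nodes of $\Gamma$ must therefore run along $\Gamma$. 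Such a path cannot join two distinct sources, since its last arc would enter a source along $\Gamma$, nor two distinct sinks. The sources and sinks of this cycle therefore form the required crown.
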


\begin{proof}
The proof is classical from the literature, we give here a very short idea.
Fix a cycle in~$D$ and consider $k_1, \dots, k_r$ to be its sources and $\ell_1, \dots, \ell_r$ its sinks, enumerated in cyclic order.
If~$r \geq 2$, then one is in the case of~\eqref{item:crown}.
If $r = 1$, then there are two directed paths from $k_1$ to $\ell_1$:
taking some $i$ on the first path and some $j$ on the second yields case~\eqref{item:diamond2}.
\end{proof}

\begin{example}
\label{exm:longCrown}
Let $C_n$ be a cycle on $n\geq 4$ vertices.
The graphical zonotope~$\simplex_{C_n}$ is not simple, even if it does not violate the necessary conditions of \cref{lem:forward1_general,lem:forward2_general}.
\end{example}

\pagebreak

However, searching in general for patterns in~$\HH$ which yield vertex digraphs of~$\simplex_\HH$ containing the cycles of \cref{lem:cyclesPoset} seems combinatorially intricate and computationally inefficient.
We thus leave the following problem open.

\begin{problem}
What is the complexity of the decision problem ``Given a hypergraph~$\HH \subseteq \binom{2^{[n]}}{m}$, is the hypergraphic polytope~$\simplex_\HH$ simple?''.
\end{problem}

\begin{remark}
Note that the problem ``Does~$\simplex_\HH$ have a non-simple vertex?'' is \textbf{NP} as one can provide as certificate a vertex poset of~$\simplex_\HH$ (given as an acyclic orientation of~$\HH$), and one of the patterns of \cref{lem:cyclesPoset}.
\end{remark}

\begin{remark}
We have seen three classes of hypergraphs $\HH$ for which determining whether $\simplex_\HH$ is simple can be done in polynomial time:
\begin{itemize}
\item for a building set $\BB$, the nestohedron $\simplex_\BB$ is always simple,
\item for a graph $\GG$, the graphical zonotope $\simplex_\GG$ is simple if and only if any cycle of $\GG$ induces a clique (this can be checked in $O(n+m)$ time),
\item for interval hypergraphs $\II$, the conditions of \Cref{thm:characterizationSimpleIntervalHypergraphicPolytopes} can be checked in $O(m^4)$ time.
\end{itemize}
Besides, note that verifying that $\HH$ is a building set (resp.~a graph, resp.~an interval hypergraph) can be done in $O(m^3)$ (resp.~$O(m)$, resp.~$O(n m)$) time.
These families however do not cover all simple hypergraphic polytopes.
\end{remark}

\section{Tamari interval posets and vertices of interval hypergraphic polytopes}
\label{sec:TamariIntervalPosets}

In this section, we consider the vertex posets of the interval hypergraphic polytopes.
These posets are precisely the Tamari interval posets defined by G.~Chatel and V.~Pons~\cite{ChatelPons}.


\subsection{Tamari interval posets}
\label{subsec:TamariIntervalPosets}

\enlargethispage{.3cm}
Recall that the \defn{weak order} is the lattice on permutations of~$\fS_n$ whose cover relations are pairs of permutations related by the swap of two values at adjacent positions.
Equivalently $\sigma \le \tau$ if and only if~$\inv(\sigma) \subseteq \inv(\tau)$ where~$\inv(\sigma)$ is the \defn{inversion set} of the permutation~$\sigma$, defined by~$\inv(\sigma) \eqdef \set{(\sigma(i), \sigma(j))}{1 \le i < j \le n \text{ and } \sigma(i) > \sigma(j)}$.
The Hasse diagram of the weak order is the graph of the permutahedron oriented in the direction~$\b{\omega} \eqdef (n, \dots, 1) - (1, \dots, n) = (n-1, n-3, \dots, 3-n, 1-n)$.
The \defn{Tamari lattice}~\cite{Tamari} is the lattice on binary trees on~$n$ nodes whose cover relations are pairs of binary trees related by a right rotation.
The Hasse diagram of the Tamari lattice is the graph of the associahedron oriented in the direction~$\b{\omega}$.
The following are classical characterizations of the intervals in the weak order and in the Tamari lattice.

\begin{definition}[{\cite[Thm.~6.8]{BjornerWachs}, \cite[Prop.~2.5]{ChatelPilaudPons}}]
\label{def:weakOrderIntervalPoset}
A \defn{weak order interval poset} is a poset~$\less$ on~$[n]$ with the following equivalent properties:
\begin{enumerate}[(i)]
\item the set of linear extensions of~$\less$ forms an interval~$[\sigma, \tau]$ of the weak order,
\item for~$1 \le a < b < c \le n$, one has ${(a \less c \implies a \less b \text{ or } b \less c)}$ and~${(a \more c \implies a \more b \text{ or } b \more c)}$.
\end{enumerate}
\end{definition}

\begin{definition}[{\cite[Def.~2.7 \& Thm.~2.8]{ChatelPons}, \cite[Coro.~2.24]{ChatelPilaudPons}}]
\label{def:TamariIntervalPoset}
A \defn{Tamari interval poset} is a poset~$\less$ on~$[n]$ with the following equivalent properties:
\begin{enumerate}[(i)]
\item the set of linear extensions of~$\less$ is the union of the sets of linear extensions of the binary trees in an interval of the Tamari lattice,\label{item:TIPprop1}
\item the set of linear extensions of~$\less$ is an interval~$[\sigma, \tau]$ of the weak order such that~$\sigma$ avoids the pattern $231$ and $\tau$ avoids the pattern~$213$,\label{item:TIPprop2}
\item for~$1 \le a < b < c \le n$, one has ${(a \less c \implies a \less b)}$ and~${(a \more c \implies b \more c)}$,\label{item:TIPprop3}
\item for any~$a \in [n]$, the principal upper set~$\mathdefn{a^\less} \eqdef \set{b \in [n]}{a \less b}$ of~$a$ is an interval of~$[n]$. \label{item:i_less_m_M}\label{item:TIPprop4}
\end{enumerate}
\end{definition}

\pagebreak

\begin{remark}
\label{rem:drawingTIP}
Following~\cite{ChatelPilaudPons}, we represent a Tamari interval poset~$\less$ on~$[n]$ by its Hasse diagram, with node~$a$ at coordinates~$(a,0)$ and with increasing (resp.~decreasing) cover relations~${a \lessdot b}$ (resp.~$a \moredot b$) with~$a < b$ drawn above (resp.~below) the horizontal axis.
Note that it yields a non-crossing graph (as for any~$a < b < c < d$, if~$a \less c$ and~$b \less d$, then~$a \less b$ and~$b \less c$ so that~$a \less c$ is not a cover relation).
A typical example of Tamari interval poset is illustrated~in~\cref{fig:TamariIntervalPoset}.
\begin{figure}[h]
	\centerline{\includegraphics[scale=1]{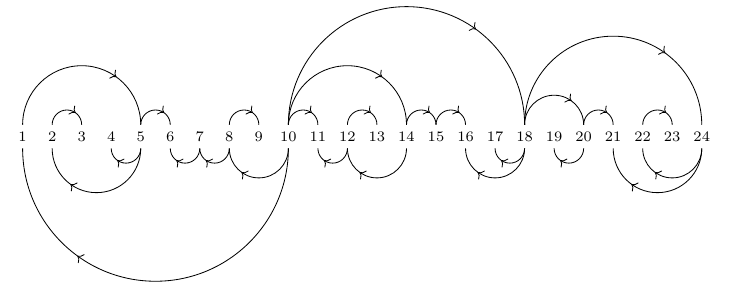}}
	\caption{The Hasse diagram of a typical Tamari interval poset.}
	\label{fig:TamariIntervalPoset}
\end{figure}
\end{remark}


\subsection{Two families of Tamari interval posets}
\label{subsec:examplesTamariIntervalPosets}

We describe two interesting families of Tamari interval posets, which will later appear as the vertex posets of certain families of hypergraphic polytopes (see also \cref{subsec:examplesWeepingWillows} for other families where all Hasse diagrams are trees).


\subsubsection{Capped unit interval posets}
\label{subsubsec:cappedUnitIntervalPosets}

We begin by considering the following posets, whose full significance will become clear in \cref{exm:cappedUnitIntervalHypergraphicPolytope} (see also \cref{subsubsec:cappedSUnitIntervalPosets,exm:cappedUnitIntervalHypergraphicPolytopePreposets}).

\begin{definition}
\label{def:cappedUnitIntervalPosets}
A \defn{capped unit interval poset} is a poset~$\less$ on~$[n]$ such that
\begin{itemize}
\item $\less$ admits a unique minimum~$m$, 
\item for any cover relation~$a \lessdot b$, either~$a = m$ or~$|a-b| = 1$.
\end{itemize}
\end{definition}

\begin{example}
\label{exm:capped_unit_n=1,2,3,4}
For instance, the capped unit interval posets for~$n = 4$ are the following

\medskip
\centerline{\includegraphics[scale=1]{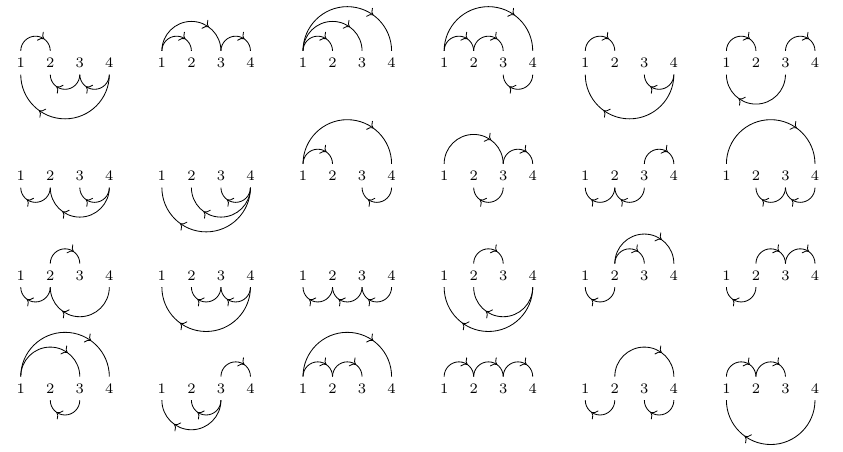}}
\end{example}

\pagebreak

\begin{proposition}
Capped unit interval posets are Tamari interval posets.
\end{proposition}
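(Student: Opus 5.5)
We check condition~\eqref{item:TIPprop3} of \cref{def:TamariIntervalPoset}: for all~$1 \le a < b < c \le n$, $a \less c$ implies~$a \less b$, and~$a \more c$ implies~$b \more c$. We use~\eqref{item:TIPprop3} rather than the interval characterization~\eqref{item:TIPprop4}, because the strict upper set of the minimum~$m$ is~$[n] \ssm \{m\}$. That set is not an interval unless~$m \in \{1,n\}$, so~\eqref{item:TIPprop4} only makes sense with reflexive upper sets, and~\eqref{item:TIPprop3} avoids that convention issue.

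The key observation concerns any element~$x \ne m$ and any element~$y$ with~$x \less y$. Write~$x = x_0 \lessdot x_1 \lessdot \dots \lessdot x_k = y$ as a saturated chain of cover relations. No~$x_i$ equals~$m$: since~$m$ is the unique minimum, $m \less x \less x_i$, so~$x_i = m$ would force~$m \less m$. Hence every cover~$x_i \lessdot x_{i+1}$ in the chain has~$|x_i - x_{i+1}| = 1$ by \cref{def:cappedUnitIntervalPosets}. So the chain is a walk on~$[n]$ with unit steps from~$x$ to~$y$, and by a discrete intermediate value argument it passes through every integer between~$x$ and~$y$.

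We then treat the two implications, each in two cases.
\begin{itemize}
\item Suppose~$a \less c$. If~$a = m$, then~$a \less b$ because~$m$ is the minimum and~$b \ne m$. If~$a \ne m$, the unit-step chain from~$a$ to~$c$ visits~$b$, since~$a < b < c$, so~$a \less b$.
\item Suppose~$a \more c$, that is, $c \less a$. If~$c = m$, then~$c \less b$ since~$b \ne m$, so~$b \more c$. If~$c \ne m$, the unit-step chain from~$c$ to~$a$ visits~$b$, so~$c \less b$, that is, $b \more c$.
\end{itemize}
This establishes~\eqref{item:TIPprop3}, so capped unit interval posets are Tamari interval posets.

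There is no real obstacle. The only point needing care is that saturated chains starting at an element other than~$m$ never pass through~$m$, so they use only unit covers.
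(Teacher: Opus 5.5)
Your proof is correct and is essentially the paper's argument: both take a saturated chain from $a$ to $c$ and use that any cover which is not a unit step must start at the minimum $m$. The paper considers the cover that jumps over $b$ and deduces $a = m$. You instead split on whether $a = m$ and note that a chain starting away from $m$ never returns to $m$, so it uses only unit steps and hits every integer between $a$ and $c$ — the same idea in contrapositive form.
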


\begin{proof}
Assume that~$a \less c$ for some~$1 \le a < b < c \le n$.
Consider any saturated chain from~$a$ to~$c$ in~$\less$.
If~$b$ belongs to this chain, then~$a \less b$.
Otherwise, let~$a'$ be the last element~$< b$ and~$c'$ be the first element~$>b$ along this chain.
Hence, we have~$a \less a' \lessdot c' \less c$ and~$1 \le a' < b < c' \le n$.
As~$a' \lessdot c'$ and~$|a'-c'| > 1$, we get that~$a' = m$, hence that~$a = m$.
We conclude that~$a \less b$.
The proof is symmetric if~$a \more c$.
\end{proof}

\begin{remark}
The number of capped unit interval posets is
\[
3^{n-2} + \Bigl( \sum_{m = 2}^{n-1} 3^{m-2} 3^{n-m-1} \Bigr) + 3^{n-2} = (n+5)3^{n-2} \qquad \text{\OEIS{A038765},}
\]
because they are determined by their minimum~$m$ together with their restrictions to each consecutive pair of integers of~$[n] \ssm \{m\}$ (and for each pair $\{i, i+1\}$, there are three possibilities: either $i$ and $i+1$ are incomparable, or $i \less i+1$, or $i+1 \less i$).
The number of capped unit interval posets whose Hasse diagram is a tree is
\[
G_{n-2} + \Bigl( \sum_{m = 2}^{n-1} G_{m-2} G_{n-m-1} \Bigr) + G_{n-2} \qquad\text{\OEIS{A290917}}, 
\]
where~$G_{n} = 3 G_{n-1} - G_{n-2}$ is the bisection of the Fibonacci sequence \OEIS{A001906} (now for each pair $\{i, i+1\}$, say with $m > i$, there are three possibilities: either $i$ and $i+1$ are incomparable, or $i$ is smaller than $i+1$, or $i+1$ is smaller than $i$ which forbids $i-1$ to be smaller than $i$).
Note that this implies that asymptotically, most capped unit interval posets on $n$ nodes are not trees.
\end{remark}

We now consider subfamilies of capped unit interval posets indexed by subsets~$S$ of~$[n-1]$.
Again, the significance will become clear in \cref{exm:cappedUnitIntervalHypergraphicPolytope} (see also \cref{subsubsec:cappedSUnitIntervalPosets,exm:cappedUnitIntervalHypergraphicPolytopePreposets}).
 
\begin{definition}
\label{def:cappedSUnitIntervalPosets}
For~$S \subseteq [n-1]$, a \defn{capped $S$-unit interval poset} is a capped unit interval poset~$\less$ with minimum~$m$ such that
\begin{itemize}
\item $i$ and $i+1$ are comparable in~$\less$ for any $i \in S$,
\item if $i$ and $i+1$ form a cover relation in~$\less$, then~$i \in S$ or~$m \in \{i,i+1\}$.
\end{itemize}
\end{definition}

\begin{remark}
By the same argument as above, capped $S$-unit interval posets are counted by
\[
\sum_{m = 1}^n 2^{|S \ssm \{m-1, m\}|},
\]
and those whose Hasse diagram is a tree are counted by
\[
\sum_{m=1}^n \ell_S(1,m) \cdot r_S(m,n)
\]
where~$\ell_S(j,m)$ (resp.~$r_S(m,j)$) are defined inductively for~$1 \le j \le m$ (resp.~$m \le j \le n$) by~$\ell_S(j,m) = 1$ if $j \ge m-1$ (resp.~$r_S(m,j) = 1$ if $j \le m+1$) and
\begin{align*}
& \ell_S(j,m) =  (1 + \delta_{j \in S}) \cdot \ell_S(j+1,m) - \delta_{\{j, j+1\} \subseteq S \ssm \{m-1\}} \cdot \ell_S(j+2,m) \\
\qquad\text{(resp.}\quad & r_S(m,j) = (1 + \delta_{j-1 \in S}) \cdot r_S(m,j-1) - \delta_{\{j-1, j-2\} \subseteq S \ssm \{m\}} \cdot r_S(m,j-2) \quad \text{)}
\end{align*}
otherwise. (Here, $\delta_X$ denotes the Kronecker symbol, meaning that~$\delta_X = 1$ if~$X$ holds true, and~$\delta_X = 0$ otherwise).
\end{remark}

\begin{example}
In particular:
\begin{itemize}
\item The capped $\varnothing$-unit interval posets are the star posets with a minimum~$m$ covered by all elements of~$[n] \ssm \{m\}$.
They are counted by~$n$, and their Hasse diagrams are all trees.
\item The capped $[n-1]$-unit interval posets are the capped unit interval posets in which any consecutive integers are comparable. 
They are counted by~$(n+2)2^{n-3}$ \OEIS{A045623},
and those whose Hasse diagram is a tree are counted by~$(n-1)(n^2-2n+12)/6$ \OEIS{A177787}.
\end{itemize}
\end{example}

\subsubsection{Uniform interval posets}
\label{subsubsec:uniformIntervalPosets}

We now consider the following posets, whose significance will become clear in \cref{exm:uniformIntervalHypergraphicPolytope} (see also \cref{subsubsec:uniformIntervalPreposets,exm:uniformIntervalHypergraphicPolytopePreposets}).


\begin{definition}
\label{def:uniformIntervalPosets}
For $k \ge 1$, a \defn{$k$-uniform interval poset} is a poset~$\less$ on~$[n]$ such that
\begin{itemize}
\item every length~$k$ subinterval of~$[n]$ admits a unique minimum in~$\less$, and
\item every cover relation connects the minimum in~$\less$ of a length $k$ subinterval to another element of that subinterval.
\end{itemize}
\end{definition}

\begin{example}
The antichain is the only $1$-uniform interval poset.
The $k$-uniform interval posets on~$[4]$ for~$k = 2, 3, 4$ are the following

\medskip
\centerline{\includegraphics[scale=1]{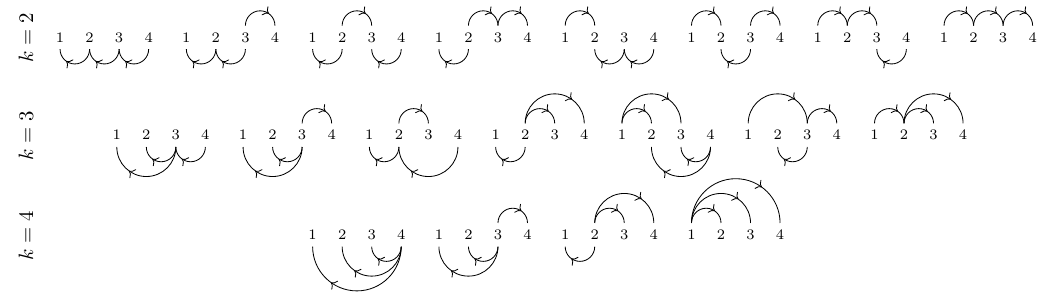}}
\end{example}

\begin{proposition}
All $k$-uniform interval posets are Tamari interval posets.
\end{proposition}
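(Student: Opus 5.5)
We verify \cref{def:TamariIntervalPoset}\,\eqref{item:TIPprop3} directly, following the same saturated-chain argument used for capped unit interval posets. Let~$\less$ be a $k$-uniform interval poset and fix~$1 \le a < b < c \le n$ with~$a \less c$; by symmetry it suffices to show~$a \less b$. Choose a saturated chain~$a = x_0 \lessdot x_1 \lessdot \dots \lessdot x_p = c$. If~$b$ lies on the chain we are done. Otherwise, since the chain starts left of~$b$ and ends right of~$b$, some cover relation~$x_q \lessdot x_{q+1}$ jumps over~$b$, meaning~$\min(x_q,x_{q+1}) < b < \max(x_q,x_{q+1})$. We take~$q$ to be the first such index, so that~$x_0, \dots, x_q$ all lie left of~$b$, which gives~$x_q < b < x_{q+1}$.

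The key local step uses the second axiom of \cref{def:uniformIntervalPosets}. The cover relation~$x_q \lessdot x_{q+1}$ connects the minimum of some length-$k$ interval~$L$ to another element of~$L$. Since~$x_q$ is below~$x_{q+1}$, the minimum of~$L$ must be~$x_q$, and~$x_{q+1} \in L$. Because~$L$ is an interval containing~$x_q < b < x_{q+1}$, it contains~$b$. As~$x_q$ is the minimum of~$L$ in~$\less$, we get~$x_q \less b$, and therefore~$a \less x_q \less b$. The case~$a \more c$ is symmetric: we reverse the chain, so that the cover relation crossing~$b$ has its lower end to the right of~$b$, and the same interval argument yields~$b \more c$.

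The only subtle point is interpreting ``the minimum of~$L$'' correctly. It is a unique minimum for the restriction of~$\less$ to~$L$, meaning it lies below every element of~$L$ in~$\less$, not merely in~$\less|_L$; these notions coincide because~$\less|_L$ is just the restriction. We also need that the endpoint of the cover relation which is the minimum of~$L$ is its lower end, which is immediate by antisymmetry, since the minimum of~$L$ lies below the other element. Finally, the first axiom (existence of a minimum in every window) is not needed for this direction. If~$k > n$ the conditions are vacuous and the poset is an antichain, which is trivially Tamari. The argument has no real obstacle beyond choosing the crossing cover relation carefully.
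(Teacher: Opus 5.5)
Your proof is correct and follows the paper's argument: take a saturated chain from $a$ to $c$, find a cover relation that jumps over $b$, note that its lower end is the minimum of a length-$k$ window, that window then contains $b$, and the symmetric case is handled the same way. Choosing the first straddling cover relation is in fact slightly more careful than the paper's wording ``last element $< b$, first element $> b$'', since a chain that crosses $b$ more than once would make those two elements non-consecutive.
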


\begin{proof}
Assume that~$a \less c$ for some~$1 \le a < b < c \le n$.
Consider any saturated chain from~$a$ to~$c$ in~$\less$.
If~$b$ belongs to this chain, then~$a \less b$.
Otherwise, let~$a'$ be the last element~$< b$ and~$c'$ be the first element~$>b$ along this chain.
Hence, we have~$a \less a' \lessdot c' \less c$ and~$1 \le a' < b < c' \le n$.
Thus, $a'$ is the minimal element of a length~$k$ interval~$I$ and~$c' \in I$.
We thus obtain that~$b \in I$, so that~$a' \less b$.
We conclude that~$a \less b$.
The proof is symmetric if~$a \more c$.
\end{proof}

\begin{remark}
The first few numbers of such posets are gathered in \cref{table:uniformIntervalPosets}.
We leave as an open problem to compute explicit formulas.

\begin{table}[h]
	\[
	\begin{tabular}{c|ccccccccccccc}
	$k \backslash n$ & $1$ & $2$ & $3$ & $4$ & $5$ & $6$ & $7$ & $8$ & $9$ & $10$ & $11$ & $12$ & $\cdots$ \\
	\hline
	$2$ & $1$ & $2$ & $4$ & $8$ & $16$ & $32$ & $64$ & $128$ & $256$ & $512$ & $1\,024$  & $2\,048$ & $\cdots$ \\ 
	$3$ & $1$ & $1$ & $3$ & $7$ & $16$ & $36$ & $81$ & $182$ & $409$ & $919$ & $2\,065$ & $4\,640$ & $\cdots$ \\ 
	$4$ & $1$ & $1$ & $1$ & $4$ & $10$ & $24$ & $56$ & $128$ & $292$ & $ 664$ & $1\,508$ & $3\,424$ & $\cdots$ \\ 
	$5$ & $1$ & $1$ & $1$ & $1$ & $5$ & $13$ & $32$ & $76$ & $176$ & $400$ & $905$ & $2\,038$ & $\cdots$ \\ 
	$6$ & $1$ & $1$ & $1$ & $1$ & $1$ & $6$ & $16$ & $40$ & $96$ & $224$ & $512$ & $1\,152$ & $\cdots$ \\ 
	$7$ & $1$ & $1$ & $1$ & $1$ & $1$ & $1$ & $7$ & $19$ & $48$ & $116$ & $272$ & $624$ & $\cdots$ \\ 
	\end{tabular}
	\]
	\caption{The numbers of $k$-uniform interval posets on~$[n]$.}
	\label{table:uniformIntervalPosets}
\end{table}
\end{remark}


\subsection{Counting Tamari interval posets}
\label{subsec:countingTamariIntervalPosets}

%

A celebrated result of F.~Chapoton~\cite{Chapoton1} states that the Tamari intervals are enumerated by
\[
\frac{2}{(3n+1)(3n+2)} \binom{4n+1}{n+1}.
\]
The first few values can be found in \cref{table:relevantCombinatorialObjects} or~\OEIS{A000260}. 
These numbers also count the rooted $3$-connected planar triangulations with $2n+2$ faces.
An explicit bijection between Tamari intervals and $3$-connected triangulations was given in~\cite{BernardiBonichon}.
A more recent and powerful approach can be found in~\cite{FangFusyNadeau}.
We also refer to \cite{bostan2023refined} for intriguing refined counting for Tamari interval posets, providing explicit product formulas for the $f$-vectors of the canonical complex of the Tamari lattice and of the cellular diagonal of the associahedron.

The number of connected Tamari interval posets (\ie whose Hasse diagram is connected) is also reported in \cref{table:relevantCombinatorialObjects} or \OEIS{A294084}.
The generating functions $f$ of the Tamari interval posets and $g$ of the connected Tamari interval posets are clearly related by~$f = 1+g+g^2+\cdots = 1/(1-g)$.
\begin{table}[h]
	\centerline{
		\begin{tabular}{c|rrrrrrrrrrr|c}
			$n$ & $1$ & $2$ & $3$ & $4$ & $5$ & $6$ & $7$ & $8$ & $9$ & $10$ & $\cdots$ & OEIS \\
			\hline
			$TIP_n$ & $1$ & $3$ & $13$ & $68$ & $399$ & $2\,530$ & $16\,965$ & $118\,668$ & $857\,956$ & $6\,369\,883$ & $\cdots$ & \href{https://oeis.org/A000260}{A000260} \\
			$CTIP_n$ & $1$ & $2$ & $8$ & $41$ & $240$ & $1\,528$ & $10\,312$ & $72\,647$ & $528\,992$ & $3\,954\,488$ & $\cdots$ & \href{https://oeis.org/A294084}{A294084} \\
			$\ww_n$ & $1$ & $2$ & $8$ & $38$ & $196$ & $1\,062$ & $5\,948$ & $34\,120$ & $199\,316$ & $1\,181\,126$ & $\cdots$ & \href{https://oeis.org/A047098}{A047098} \\
			$R\ww_n$ & $1$ & $2$ & $7$ & $30$ & $143$ & $728$ & $3\,876$ & $21\,318$ & $120\,175$ & $690\,690$ & $\cdots$ & \href{https://oeis.org/A006013}{A006013} \\
			 $TIPX_n$ & $1$ & $2$ & $5$ & $17$ & $67$ & $287$ & $\cdots$ &&&&& ---  \\
			$\ww\!\!X_n$ & $1$ & $2$ & $5$ & $16$ & $67$ & $316$ & $1599$ & $8480$ & $46512$ & $261668$ & $\cdots$ & ---
		\end{tabular}
	}
	\caption{The numbers~$TIP_n$ of Tamari interval posets, $CTIP_n$ of connected Tamari interval posets, $\ww_n$ of weeping willows, $R\ww_n$ of rooted weeping willows, $TIPX_n$ of Tamari interval posets satisfying \cref{prop:inclusionPosetIntervalHypergraphsContainingTamariIntervalPoset}\,(ii), and $\ww\!\!X_n$ of weeping willows satisfying \cref{prop:inclusionPosetIntervalHypergraphsContainingWeepingWillow}\,(ii).}
	\label{table:relevantCombinatorialObjects}
\end{table}


\subsection{Tamari interval posets and interval hypergraphic polytopes}
\label{subsec:TIPIHP}

In this section, we prove \cref{prop:TamariIntervalPosets} connecting Tamari interval posets to interval hypergraphic polytopes.
In the following proposition, we allow for positive scaling of the Minkowski summands (which preserves the normal fan) in the definition of hypergraphic polytopes.
That is, we still call hypergraphic polytope the Minkowski sum~$\sum_{I \subseteq [n]} \lambda_I \simplex_I$ for any~$\lambda_I \ge 0$ for~$I \subseteq [n]$ with~$|I| > 1$.

\begin{proposition}
\label{prop:TamariIntervalPosets1}
The following are equivalent for a polytope~$\poly{P}$:
\begin{enumerate}[(i)]
\item $\poly{P}$ is an interval hypergraphic polytope,
\item $\poly{P}$ is a deformed permutahedron whose vertex posets are Tamari interval posets,
\item $\poly{P}$ is a deformed associahedron, \ie a deformation of $\Asso$.
\end{enumerate}
\end{proposition}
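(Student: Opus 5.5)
I will prove the cycle of implications (i)$\Rightarrow$(ii)$\Rightarrow$(iii)$\Rightarrow$(i). Throughout, recall that the vertex posets of a deformed permutahedron determine its normal fan: the normal cone of the vertex with poset~$\less$ is $\set{\b{x}}{x_i \ge x_j \text{ for } i \less j}$, and it is the union of the braid cones~$\poly{C}_\sigma$ over the linear extensions~$\sigma$ of the opposite of~$\less$.

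\emph{(i)$\Rightarrow$(ii).} Positive scaling of summands does not change the normal fan, so the vertex posets of $\sum_{I} \lambda_I \simplex_I$ coincide with those of~$\simplex_\II$, where $\II = \set{I}{\lambda_I > 0}$. By \cref{prop:acyclicOrientations}, these are the posets~$\less_A$ for the acyclic orientations~$A$ of~$\II$. By \cref{lem:avoidingPatterns}\,\eqref{item:ap2}, each~$\less_A$ satisfies \cref{def:TamariIntervalPoset}\,\eqref{item:TIPprop3}, so it is a Tamari interval poset.

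\emph{(ii)$\Rightarrow$(iii).} Let $\less$ be a vertex poset of~$\poly{P}$. By \cref{def:TamariIntervalPoset}\,\eqref{item:TIPprop1}, the set of linear extensions of~$\less$ is a union of sets of linear extensions of binary trees. Hence the normal cone of the corresponding vertex is a union of maximal cones~$\poly{C}_T$ of the sylvester fan. One point needs care here: the sylvester cone~$\poly{C}_T$ is the union of the braid cones of the linear extensions of~$T$ (oriented suitably, matching the reversal convention between vertex posets and cones), so I have to check that the orientation conventions line up. Granting this, every maximal normal cone of~$\poly{P}$ is a union of sylvester cones, so the normal fan of~$\poly{P}$ coarsens the sylvester fan. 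Thus $\poly{P}$ is a deformation of~$\Asso$.

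\emph{(iii)$\Rightarrow$(i).} This is the substantial direction, and the main obstacle is the sign condition on the coefficients. Every deformation of~$\Asso$ is a deformed permutahedron, so it can be written uniquely as a signed Minkowski combination $\sum_{I} y_I \simplex_I$ of faces of the simplex, using its Möbius-inverted submodular function. I would then follow the cited results \cite{PadrolPaluPilaudPlamondon,PadrolPilaudPoullot-deformedNestohedra}. The normal fan coarsens the sylvester fan exactly when the wall-crossing inequalities of the sylvester fan hold. The deformation cone of the associahedron is simplicial, with extremal rays the simplices~$\simplex_{[i,j]}$. Since the sylvester fan is the normal fan of the nestohedron of the interval building set, its deformation cone is generated by the~$\simplex_B$ for $B$ in the building set, up to translations, as in \cite{PadrolPilaudPoullot-deformedNestohedra}. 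It follows that $\poly{P} = \sum_{i\le j}\lambda_{ij}\simplex_{[i,j]}$ with $\lambda_{ij}\ge 0$ for $i<j$, while $\lambda_{ii}\in\R$ only contributes a translation. Finally, I will discard the singletons by \cref{rem:singletons}, which leaves an interval hypergraphic polytope with positive scalings.

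The main risk is the last step if one insists on a self-contained proof. The alternative would be to verify directly that the submodular function of a deformation of~$\Asso$ has nonnegative Möbius coefficients on intervals and vanishing coefficients on non-intervals, using the wall-crossing inequalities for adjacent binary trees related by a rotation. I would instead rely on the cited description of the deformation cone.
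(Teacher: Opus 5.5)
Your proposal is correct and follows the paper's proof essentially step for step: (i)$\Rightarrow$(ii) via \cref{lem:avoidingPatterns}\,\eqref{item:ap2}, (ii)$\Rightarrow$(iii) by writing each vertex cone as a union of sylvester cones using \cref{def:TamariIntervalPoset}\,\eqref{item:TIPprop1}, and (iii)$\Rightarrow$(i) by citing that the deformation cone of $\Asso$ is simplicial with rays the $\simplex_{[i,j]}$. One caution: drop the justification ``since the sylvester fan is the normal fan of a nestohedron, its deformation cone is generated by the $\simplex_B$ for $B$ in the building set''. That principle is false for nestohedra in general: the permutahedron is the nestohedron of $2^{[3]}$, yet the reversed triangle $-\simplex_{[3]}$ is a deformation of it that needs a negative coefficient. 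So step (iii)$\Rightarrow$(i) must rest on the specific simplicial-cone result for the associahedron, which you also state.
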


\begin{proof}
\para{(i) $\Rightarrow$ (ii).}
We already mentioned that all hypergraphic polytopes are deformed permutahedra, so we just need to show that all vertex posets of an interval hypergraphic polytope are Tamari interval posets.
This was already observed in~\cite[Prop.~3.10]{BergeronPilaud}, and we already repeated the argument in \cref{lem:avoidingPatterns}\,\eqref{item:ap2}.

\para{(ii) $\Rightarrow$ (iii).}
Consider a Tamari interval poset~$\less$, corresponding to an interval~$[T,T']$ between two binary trees~$T$ and~$T'$ in the Tamari lattice.
The cone defined by~$\less$ is then the union of the cones of the sylvester fan corresponding to the binary trees in~$[T,T']$.
Hence, if all vertex posets of~$\poly{P}$ are Tamari interval posets, then the normal fan of~$\poly{P}$ coarsens the sylvester fan, so that~$\poly{P}$ is a deformed associahedron.

\para{(iii) $\Rightarrow$ (i).}
It follows from~\cite{BazierMatteChapelierLaguetDouvilleMousavandThomasYildirim,PadrolPaluPilaudPlamondon,PadrolPilaudPoullot-deformedNestohedra} that the deformation cone of the associahedron~$\Asso$ is simplicial, and its rays are the faces~$\simplex_{[i,j]}$ of the standard simplex corresponding to intervals~$[i,j]$ of~$[n]$.
Hence, any deformed associahedron is a non-negative Minkowski sum of~$\simplex_{[i,j]}$, in other words, an interval hypergraphic polytope.
\end{proof}

\begin{proposition}
\label{prop:TamariIntervalPosets2}
Any Tamari interval poset is a vertex poset of an interval  hypergraphic polytope.
\end{proposition}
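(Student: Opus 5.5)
Given a Tamari interval poset~$\less$ on~$[n]$, we build the interval hypergraph directly from its principal upper sets. By \cref{def:TamariIntervalPoset}\,\eqref{item:TIPprop4}, for each~$a \in [n]$ the set~$a^\less = \set{b \in [n]}{a \less b}$ (which contains~$a$ by reflexivity) is an interval of~$[n]$. So we set
\[
\II_\less \eqdef \set{a^\less}{a \in [n]},
\]
which is an interval hypergraph (it contains the singletons~$a^\less = \{a\}$ for maximal elements~$a$; these are harmless by \cref{rem:singletons}). Note that distinct elements~$a \ne a'$ have distinct upper sets, since~$a^\less = a'^\less$ would force~$a \less a'$ and~$a' \less a$. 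We then define the orientation~$A(a^\less) \eqdef a$, and claim that~$A$ is acyclic with~$\less_A = {\less}$. By \cref{prop:acyclicOrientations} this makes~$\less$ the vertex poset of the vertex~$\sum_a \b{e}_a$ of~$\simplex_{\II_\less}$.

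We first check acyclicity with \cref{prop:length2cycles}. Suppose~$I = a^\less$ and~$J = b^\less$ satisfy~$A(I) = a \in J \ssm \{b\}$ and~$A(J) = b \in I \ssm \{a\}$. Then~$b \less a$ and~$a \less b$ with~$a \ne b$, contradicting antisymmetry of~$\less$. Hence~$A$ is acyclic.

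Next we identify~$\less_A$. By \cref{def:less_A}, $\less_A$ is the transitive closure of the relations~$A(H) \le h$ for~$h \in H \in \II_\less$. Here these relations are exactly~$a \le h$ for~$h \in a^\less$, that is, exactly the pairs~$a \less h$. This relation is already transitive, so~$\less_A = {\less}$.

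No step seems to be a real obstacle: the key input is just the interval property~\eqref{item:TIPprop4}, which guarantees that~$\II_\less$ consists of intervals. The only point needing care is that the assignment~$a \mapsto a^\less$ is injective, so that~$A$ is well defined; this is handled by the antisymmetry argument above.
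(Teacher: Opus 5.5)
Your proof is correct and follows the paper's argument exactly: the paper also uses $\II_\less = \set{a^\less}{a \in [n]}$ with the orientation $a^\less \mapsto a$, and simply calls ``clear'' the acyclicity and the equality $\less_{A_\less} = {\less}$, which you verify via \cref{prop:length2cycles} and antisymmetry. You also note that $a \mapsto a^\less$ is injective, so the orientation is well defined; the paper leaves this point implicit.
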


\begin{proof}
%
For a Tamari interval poset~$\less$, recall that we denote $a^\less \eqdef \set{b \in [n]}{a \less b}$ for $a \in [n]$.
By \cref{def:TamariIntervalPoset}\,\eqref{item:i_less_m_M}, the set $a^\less$ is an interval for all $a \in [n]$.
Consider the orientation~$A_\less$ of the interval hypergraph~$\II_\less \eqdef \set{a^\less}{a \in [n]}$ defined by~$A_\less(a^\less) = a$ for all~$a \in [n]$.
It is clear that~$A_\less$ is acyclic and that~$\less_{A_\less} = \less$.
\end{proof}

We now give explicitly the coordinates of the vertex of $\simplex_\II$ associated to a given Tamari interval poset, see \Cref{fig:coordinates} for an example.

\begin{proposition}
\label{prop:coordinates}
The vertex of an interval hypergraphic polytope~$\simplex_\II$ corresponding to a Tamari interval poset~$\less$ has $i$-th coordinate $\# \set{I \in \II}{i \in I \subseteq i^\less}$, that is the number of intervals of $\II$ having $i$ as its minimum.
\end{proposition}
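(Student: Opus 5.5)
Let $A$ be the acyclic orientation of~$\II$ whose vertex poset is~$\less$. By \cref{prop:acyclicOrientations}, the vertex is $\sum_{I \in \II} \b{e}_{A(I)}$. Its $i$-th coordinate is therefore $\#\set{I \in \II}{A(I) = i}$. It remains to show that, for each $I \in \II$ and $i \in [n]$, the following three conditions are equivalent:
\begin{enumerate}[(a)]
\item $A(I) = i$,
\item $i \in I \subseteq i^\less$,
\item $i$ is the $\less$-minimum of $I$.
\end{enumerate}
Here $i^\less$ is understood to contain~$i$, since $\less$ is reflexive.

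For (a) $\Rightarrow$ (b): if $A(I) = i$, then $i \in I$. Moreover, by definition of~$\less_A$ as the transitive closure of $\bigset{A(H) \le h}{h \in H \in \II}$, we get $i \less h$ for every $h \in I$, so $I \subseteq i^\less$.

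For (b) $\Rightarrow$ (a): suppose $i \in I \subseteq i^\less$, and set $j \eqdef A(I)$. By the previous step, $j \less i$. Since $i \in I \subseteq i^\less$ and $j \in I$, we also have $i \less j$. Antisymmetry of the poset~$\less_A$ (which holds because $A$ is acyclic) gives $i = j$. The same argument shows that $I$ has a unique $\less$-minimum, namely $A(I)$, and that (c) is equivalent to (a). This justifies the phrase ``having $i$ as its minimum'' in the statement, where minimum is taken with respect to~$\less$.

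Summing over $I \in \II$ then gives the formula. There is no real obstacle; the only care needed is in interpreting ``minimum'' as the $\less$-minimum rather than the numerical one, and in handling singleton hyperedges. A singleton $\{i\}$ satisfies $A(\{i\}) = i$ and $i \in \{i\} \subseteq i^\less$, so it is counted consistently on both sides.
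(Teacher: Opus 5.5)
Your proof is correct and follows essentially the same route as the paper. In both, the vertex is a sum of one basis vector $\b{e}_j$ per hyperedge $I$, with $j$ identified as the $\less$-minimum of $I$, i.e.\ the unique $i$ with $i \in I \subseteq i^\less$. The paper phrases this through a generic direction $\b{c}$ compatible with $\less$ and the face decomposition of Minkowski sums, rather than through the explicit formula $\sum_{I} \b{e}_{A(I)}$ of \cref{prop:acyclicOrientations}, and it leaves implicit the antisymmetry argument you spell out for (b)~$\Rightarrow$~(a).
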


\begin{proof}
Consider any vector~$\b{c}$ such that~$a \less b$ implies~$c_a < c_b$.
Recall that the vertex of a Minkowski sum~$\sum_i \poly{P}_i$ minimizing a generic direction~$\b{c}$ is the Minkowski sum of the vertices of the summands~$\poly{P}_i$ minimizing~$\b{c}$.
Moreover, the vertex minimizing~$\b{c}$ on a simplex~$\simplex_I$ is $\b{e}_j$, where~$j \in I$ is such that~$c_j = \min\set{c_k}{k \in I}$.
Hence, the vertex of~$\simplex_\II$ minimizing~$\b{c}$ has $i$-th coordinate~$\# \set{I \in \II}{i \in I \subseteq i^\less}$.
\end{proof}


\begin{figure}
	\centerline{\includegraphics[scale=1]{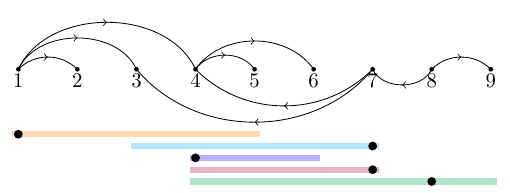}}
	\caption{An interval hypergraph $\II$ with a Tamari interval poset associated to the vertex $\b v$ of $\simplex_\II$. By \Cref{prop:coordinates}, counting the number of intervals of $\II$ with minimum element $i$ for each $i\in [9]$ yields $\b v = (1, 0, 0, 1, 0, 0, 2, 1, 0)$}
	\label{fig:coordinates}
\end{figure}


\subsection{A finer description}
\label{subsec:finerDescriptionTamariIntervalPoset}

We now refine \cref{prop:TamariIntervalPosets2} to describe which Tamari interval poset appears as a vertex poset in which interval hypergraphic polytope.

\begin{theorem}
\label{thm:TamariIntervalPosetInIntervalHypergraphic}
A poset~$\less$ is a vertex poset of an interval hypergraphic polytope~$\simplex_\II$ if and only if
\begin{enumerate}[(i)]
\item for any~$I \in \II$, there is~$a \in [n]$ such that~$a \in I \subseteq a^\less$, \label{cond:belong1}
\item for any cover relation~$a \lessdot b$, there is~$I \in \II$ such that~$\{a,b\} \subseteq I \subseteq a^\less$. \label{cond:belong2}
\end{enumerate}
\end{theorem}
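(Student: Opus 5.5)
The plan is to work through \cref{prop:acyclicOrientations}: $\less$ is a vertex poset of $\simplex_\II$ if and only if $\less = \less_A$ for some acyclic orientation $A$ of $\II$, where $\less_A$ is the transitive closure of the relations $A(I) \le h$ for $h \in I \in \II$. So I will prove both directions by passing between orientations $A$ and the two conditions.

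\emph{Necessity.} Suppose $\less = \less_A$. For $I \in \II$, set $a \eqdef A(I)$. By definition $a \less h$ for every $h \in I$, so $a \in I \subseteq a^\less$, which is condition~\eqref{cond:belong1}. For a cover relation $a \lessdot b$, note that $\less_A$ is generated by the relations $A(I) \le h$. A cover relation of a transitive closure must be one of the generating arcs. Hence there is $I \in \II$ with $A(I) = a$ and $b \in I$, and then $\{a,b\} \subseteq I \subseteq a^\less$ by the previous observation, which is condition~\eqref{cond:belong2}.

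\emph{Sufficiency.} Assume \eqref{cond:belong1} and \eqref{cond:belong2}. For each $I \in \II$, choose $A(I)$ to be a $\less$-minimal element among the $a$ with $a \in I \subseteq a^\less$. In fact any such $a$ is the unique minimum of $I$ in $\less$, since $I \subseteq a^\less$ means $a$ is below every element of $I$, so there is no real choice. Then every generating relation $A(I) \le h$ holds in $\less$, giving $\less_A \subseteq \less$. Acyclicity of $A$ follows: a cycle as in \cref{def:acyclicOrientation} would produce $x \less y \less x$ with $x \ne y$ in the poset $\less$. Alternatively one may invoke \cref{prop:length2cycles}, using $A(I) \less A(J)$ and $A(J) \less A(I)$. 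For the reverse inclusion, it suffices to realize each cover relation $a \lessdot b$ of $\less$ inside $\less_A$, since $\less$ is the transitive closure of its covers. By \eqref{cond:belong2} there is $I$ with $\{a,b\} \subseteq I \subseteq a^\less$, so $a$ is the $\less$-minimum of $I$ and $A(I) = a$. Thus $a \less_A b$. Hence $\less_A = \less$, and $\less$ is the vertex poset of the vertex $\sum_I \b{e}_{A(I)}$.

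The main subtlety is justifying that the orientation in the sufficiency direction is well defined and matches the cover relations. Both points reduce to observing that $a \in I \subseteq a^\less$ forces $a = \min_\less I$. One should also remark that $\less$ need not be assumed a Tamari interval poset beforehand: this property follows a posteriori from \cref{prop:TamariIntervalPosets1}.
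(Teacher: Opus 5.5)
Your proof is correct and follows essentially the same route as the paper. Necessity is read off from $\less_A$ via \cref{prop:acyclicOrientations}; sufficiency sets $A(I)$ to be the unique $\less$-minimum of $I$, checks acyclicity, and recovers each cover relation of $\less$ from condition~(ii). Your direct acyclicity argument (a cycle of the orientation would give a strict cycle in $\less$) is a harmless variant of the paper's appeal to \cref{prop:length2cycles}, and it does not even use that the hyperedges are intervals.
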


\begin{proof}
Assume first that~${\less} = {\less_A}$ is the vertex poset of the hypergraphic polytope~$\simplex_\II$ corresponding to the acyclic orientation~$A$ of~$\II$.
Then, by \Cref{prop:acyclicOrientations}:
\begin{enumerate}[(i)]
\item For any $I \in \II$, we have $A(I) \in I \subseteq A(I)^{\less}$.
\item For any cover relation~$a \lessdot b$, there is $I \in \II$ such that $a = A(I)$ and $b \in I$. In particular, $\{a,b\} \subseteq I \subseteq A(I)^\less = a^\less$.

\end{enumerate}
Hence, $\less$ and~$\II$ satisfy~\eqref{cond:belong1} and~\eqref{cond:belong2}.

Conversely, consider a poset~$\less$ and an interval hypergraph~$\II$ that satisfy~\eqref{cond:belong1} and~\eqref{cond:belong2}.
We now construct an acyclic orientation~$A$ of~$\II$ such that~${\less} = {\less_A}$.
For all~$I \in \II$, we define~$A(I)$ to be the unique~$a \in [n]$ such that~$a \in I \subseteq a^\less$.
Note that it exists by~\eqref{cond:belong1}, and it is unique by antisymmetry of~$\less$. 
We claim that
\begin{itemize}
\item $A$ is acyclic. Otherwise, there are~$I,J \in \II$ with~$A(I) \in J$ and~$A(J) \in I$ by \cref{prop:length2cycles}. By definition of~$A$, we then get~$A(I) \in J \subseteq A(J)^\less$ and~$A(J) \in I \subseteq A(I)^\less$. This implies that~$A(I) = A(J)$ by antisymmetry of~$\less$.
\item ${\less} = {\less_A}$. Indeed, any cover relation of~$\less_A$ is clearly a relation in~$\less$. Conversely, for any cover relation~$a \lessdot b$, there exists~$I \in \II$ such that~$\{a,b\} \subseteq I \subseteq a^\less$ by~\eqref{cond:belong2}. Hence, $a = A(I)$ and~$b \in I$ so that we get~$a \less_A b$.
\end{itemize}
We conclude that~${\less} = {\less_A}$ is indeed the vertex poset of the hypergraphic polytope~$\simplex_\II$ corresponding to the acyclic orientation~$A$ of~$\II$.
\end{proof}

\begin{remark}
Two comments on \cref{thm:TamariIntervalPosetInIntervalHypergraphic}.
First, in \eqref{cond:belong1}, for any~$I \in \II$, there exists a unique~$a\in[n]$ such that~$a \in I \subseteq a^\less$.
Second, \eqref{cond:belong2} implies that for any~$a \in [n]$, there is~$I \in \II$ such that~$a \in I \subseteq a^\less$. However, the latter is not equivalent to \eqref{cond:belong1} (nor to \eqref{cond:belong2}).
\end{remark}

\begin{example}
\label{exm:cappedUnitIntervalHypergraphicPolytope}
For~$S \subseteq [n-1]$, the vertex posets of the hypergraphic polytope~$\simplex_{\II_S}$ of \linebreak ${\II_S = \set{[i,i+1]}{i \in S} \cup \{[n]\}}$ are precisely the capped $S$-unit interval posets of~\cref{def:cappedSUnitIntervalPosets}.
From \cref{prop:coordinates}, we obtain that the $i$-th coordinate of the vertex~$\b{v}_\less$ of~$\simplex_{\II_S}$ corresponding to a capped $S$-unit interval poset~$\less$ with minimum~$m$ is $\delta_{i-1 \more i} + \delta_{i \less i+1} + \delta_{i = m}$ (here, $n > 2$ so that the intervals~$[1,2]$ and~$[n]$ do not coincide).
For instance, we illustrate below the correspondence between the capped $[3]$-unit interval posets on~$[4]$ and the acyclic orientations of the interval hypergraph~$\{[1,2], [2,3], [3,4], [1,4]\}$.
We mark the minimum of each interval with a bullet {\scriptsize $\bullet$}, and the $i$-th coordinate of the associated vertex is the number of intervals whose bullet {\scriptsize $\bullet$} lies at~$i$.

\medskip
\centerline{\includegraphics[scale=1]{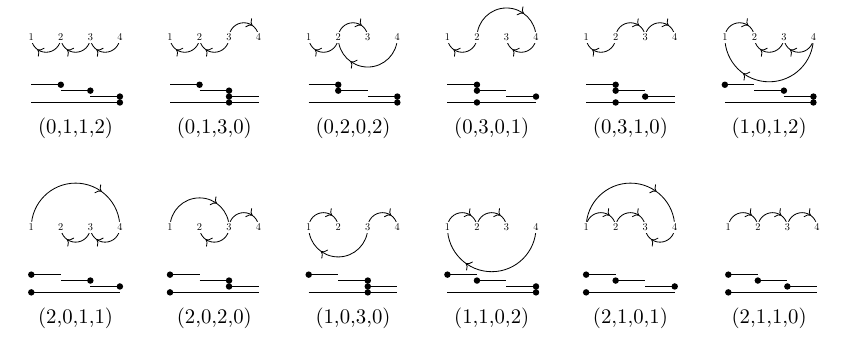}}
\end{example}


\begin{example}
\label{exm:uniformIntervalHypergraphicPolytope}
For the (complete) $k$-uniform interval hypergraph~$\II \eqdef \set{[i,i+k-1]}{i \in [n-k]}$, the hypergraphic polytope~$\simplex_\II$ is a non-simple polytope whose vertex posets are precisely the $k$-uniform interval posets of~\cref{def:uniformIntervalPosets}.
We illustrate below the correspondence for~$n = 4$ and~$k = 2, 3, 4$.

\medskip
\centerline{\includegraphics[scale=1]{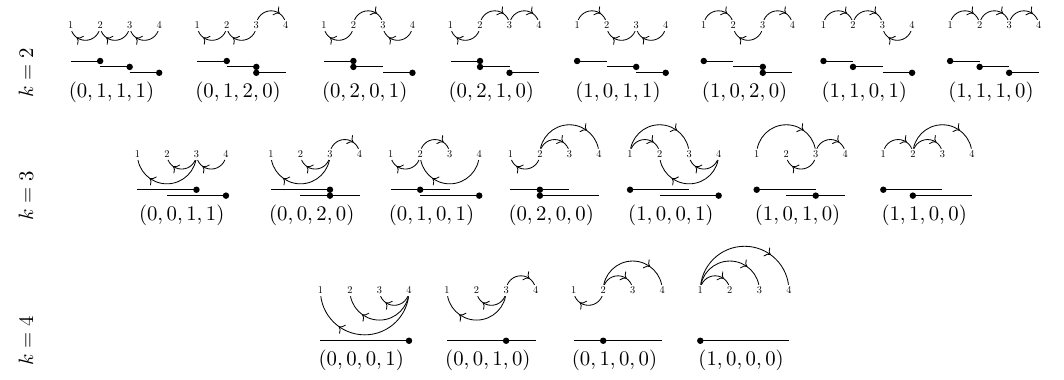}}
\end{example}

We now exploit \cref{thm:TamariIntervalPosetInIntervalHypergraphic} to describe the set of interval hypergraphs for which~$\less$ is a vertex poset.
Here, we restrict attention to interval hypergraphs containing all singletons, as dropping this assumption would simply yield a Cartesian product with a boolean lattice (see \cref{rem:singletons}).
We denote by~\defn{$U(\less)$} the maximal cover relations of~$\less$ for the order given by~$(a \lessdot b) \prec (a \lessdot c)$ for~$1 \le a < b < c \le n$ or~$1 \le c < b < a \le n$.
In other words, $U(\less)$ consists of the covers that are maximal among the covers with the same source.

\begin{proposition}
\label{prop:inclusionPosetIntervalHypergraphsContainingTamariIntervalPoset}
We denote by~$\mathdefn{\c{I}}$ the inclusion poset of interval hypergraphs~$\II$ on~$[n]$ containing all singletons (which is isomorphic to a boolean lattice on $\binom{n}{2}$ elements).
For a given Tamari interval poset~$\less$, denote by~\defn{$\c{I}_\less$} the subposet of~$\c{I}$ induced by interval hypergraphs~$\II$ on~$[n]$ such that~$\less$ is a vertex poset of~$\simplex_\II$.
Then
\begin{enumerate}[(i)]
\item $\c{I}_\less$ is order convex in~$\c{I}$.
\item The interval hypergraph~$\mathdefn{\II_\less} \eqdef \set{a^\less}{a \in [n]}$ is a minimal element of~$\c{I}_\less$. Moreover, it is the unique minimal element of~$\c{I}_\less$ if and only if there are no~$1 \le a < b < c \le n$ such that~$a \lessdot b \lessdot c$, or $a \moredot b \moredot c$, or $a \moredot b \lessdot c$ in~$U(\less)$. \label{item:existsMinimalIntervalHypergraph}
\item The interval hypergraph~$\mathdefn{\JJ_\less} \eqdef \bigcup_{a \in [n]} \set{I}{a \in I \subseteq a^\less}$ is the unique maximal element of~$\c{I}_\less$.
\end{enumerate}
\end{proposition}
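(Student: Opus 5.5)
The plan is to derive all three items directly from \cref{thm:TamariIntervalPosetInIntervalHypergraphic}. For an interval hypergraph~$\II$ containing all singletons, that theorem says that $\II \in \c{I}_\less$ if and only if two conditions hold.
\begin{enumerate}[(a)]
\item Every $I \in \II$ lies in~$\JJ_\less$, which is a restatement of condition~\eqref{cond:belong1}.
\item For every cover $a \lessdot b$, some $I \in \II$ satisfies $\{a,b\} \subseteq I \subseteq a^\less$.
\end{enumerate}
Condition~(a) defines a down-set of~$\c{I}$, namely the interval hypergraphs contained in~$\JJ_\less$. Condition~(b) defines an up-set, since adding hyperedges preserves it. An intersection of a down-set and an up-set is order convex, which gives~(i). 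For~(iii), I will check that $\JJ_\less$ satisfies (a) trivially. It satisfies~(b) because, by \cref{def:TamariIntervalPoset}\,\eqref{item:TIPprop4}, $a^\less$ is an interval with $a \in a^\less \subseteq a^\less$, so $a^\less \in \JJ_\less$, and it contains~$b$. Since every $\II \in \c{I}_\less$ lies below~$\JJ_\less$ by~(a), $\JJ_\less$ is the maximum.

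For the first half of~(ii), $\II_\less \subseteq \JJ_\less$, and it satisfies~(b) through the witness $a^\less$ for each cover $a \lessdot b$. For minimality, fix~$a$ having at least one cover $a \lessdot b$. Any $c^\less \in \II_\less$ with $\{a,b\} \subseteq c^\less \subseteq a^\less$ satisfies $c \less a$ (or $c = a$) and $a \in a^\less \ni c$, so $c = a$ by antisymmetry. Hence $a^\less$ is the only witness for that cover in~$\II_\less$, and it cannot be removed. Non-singleton hyperedges $a^\less$ with $a$ having no cover do not occur, since then $a^\less = \{a\}$.

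For the uniqueness criterion, I will first describe all minimal elements of~$\c{I}_\less$. Each non-singleton hyperedge $I$ of an element of~$\c{I}_\less$ has a unique source~$a$ with $a \in I \subseteq a^\less$. For a fixed~$a$, condition~(b) asks that the hyperedges with source~$a$ together contain every cover of~$a$. Since these hyperedges are intervals containing~$a$, a single interval containing the hull $h(a) \eqdef [b_L(a), b_R(a)]$ of the extreme covers of~$a$ suffices, and these extreme covers are exactly the covers of~$a$ in~$U(\less)$. Conversely, a single interval must contain $h(a)$. Thus the minimal elements of~$\c{I}_\less$ are exactly the hypergraphs obtained by choosing, for each~$a$ with a cover, one interval~$I_a$ with $h(a) \subseteq I_a \subseteq a^\less$, together with the singletons. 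In particular, the hypergraph $\set{h(a)}{a \in [n]}$ is always minimal, so there is a unique minimal element if and only if $h(a) = a^\less$ for every~$a$.

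The remaining step, which I expect to be the main obstacle, is to translate $h(a) \subsetneq a^\less$ into the forbidden patterns in~$U(\less)$. Suppose $a^\less = [x,y]$ with $y > b_R(a)$. A saturated chain from~$a$ to~$y$ must at some point step beyond~$b_R(a)$. Using \cref{def:TamariIntervalPoset}\,\eqref{item:TIPprop3} and the non-crossing drawing of \cref{rem:drawingTIP}, I would first reduce to the case where the element making this step is an extreme cover~$b$ of~$a$. That element then has its own extreme cover further right. This produces a configuration of two consecutive covers in~$U(\less)$ on the three elements involved: either both going rightwards, or a leftward cover of~$a$ followed by a rightward jump past~$a$. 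The symmetric configurations arise on the left. I would then check case by case that these configurations match the three listed patterns, keeping careful track of the paper's convention that $a \moredot b$ means $b \lessdot a$. Finally, I would check the converse: each listed pattern forces an element of some~$a^\less$ outside~$h(a)$, using that covers in~$U(\less)$ are extreme for their source.
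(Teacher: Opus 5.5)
Your arguments for (i), (iii) and for the minimality of~$\II_\less$ are sound. For (i), the down-set/up-set decomposition via \cref{thm:TamariIntervalPosetInIntervalHypergraphic} is a valid alternative to the paper's normal fan argument.

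The gap is in the uniqueness criterion of (ii). Condition (b) is imposed cover by cover. Your list of minimal elements of~$\c{I}_\less$, with one interval~$I_a$ satisfying $h(a) \subseteq I_a \subseteq a^\less$ for each source~$a$, is therefore incomplete. When~$a$ has covers on both sides, they can be witnessed by two one-sided intervals, such as $[\min(a^\less), a]$ and $[a, \max(a^\less)]$. Hence ``unique minimal element iff $h(a) = a^\less$ for all~$a$'' fails in the ``if'' direction.

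Concretely, take $n = 3$ and the poset with $2 \lessdot 1$ and $2 \lessdot 3$. Then $h(a) = a^\less$ for all~$a$, yet both $\{[1,3]\}$ and $\{[1,2],[2,3]\}$ (with singletons) are minimal in~$\c{I}_\less$: the triangle and the square share this vertex poset.

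This is exactly the third pattern. Since $a \moredot b$ means $b \lessdot a$, the pattern $a \moredot b \lessdot c$ says that the \emph{middle} element~$b$ is the source of covers on both sides. It does not force any $a^\less$ to exceed~$h(a)$. So your final step cannot work as planned:
\begin{itemize}
\item the converse check ``each pattern forces an element of some~$a^\less$ outside~$h(a)$'' is false for this pattern;
\item the mixed configuration you list (a cover $a \lessdot b$ with $b < a$, followed by $b \lessdot c$ with $c > a$) never occurs in a Tamari interval poset, since $b \less c$ and $b < a < c$ give $b \less a$, contradicting $a \less b$.
\end{itemize}

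To repair the argument, show that~$\II_\less$ is the unique minimal element iff every~$a^\less$ is forced. That is, for every~$a$, both $h(a) = a^\less$ holds and $a$ has covers on at most one side.

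The two-sided case is the pattern $a \moredot b \lessdot c$. It gives the competing element $\II_\less \ssm \{b^\less\} \cup \{[\min(b^\less), b], [b, \max(b^\less)]\}$, as in the paper.

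For $h(a) \subsetneq a^\less$, the reduction you expected to be the main obstacle is direct. Apply \cref{def:TamariIntervalPoset}\,\eqref{item:TIPprop3} with $y \eqdef \max(a^\less)$ as the third point. Then every saturated chain $a = z_0 \lessdot z_1 \lessdot \dots \lessdot z_m = y$ is increasing, and each $z_{i+1}$ is the rightmost cover of~$z_i$. A step leftwards, or to a non-rightmost cover, would contradict the Tamari condition against~$y$. So the chain lies in~$U(\less)$, and $y > b_R(a)$ iff $m \ge 2$, which yields $a \lessdot b \lessdot c$ in~$U(\less)$. Conversely, this pattern puts $c \in a^\less$ beyond $b = b_R(a)$. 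The left side symmetrically gives $a \moredot b \moredot c$.
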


\begin{example}
The interval hypergraphs~$\II_\less$ and~$\JJ_\less$ are illustrated in \cref{fig:poset_of_intervals} for the Tamari interval poset drawn in \Cref{fig:coordinates}.
Note that~$\II_\less$ is not the unique minimal element of~$\c{I}_\less$ since we have~$1 \lessdot 4 \lessdot 6$ in~$U(\less)$.
Another minimal element of~$\c{I}_\less$ is obtained by replacing~$[1,6]$ by~$[1,4]$.
\begin{figure}
	\centerline{\includegraphics[scale=1]{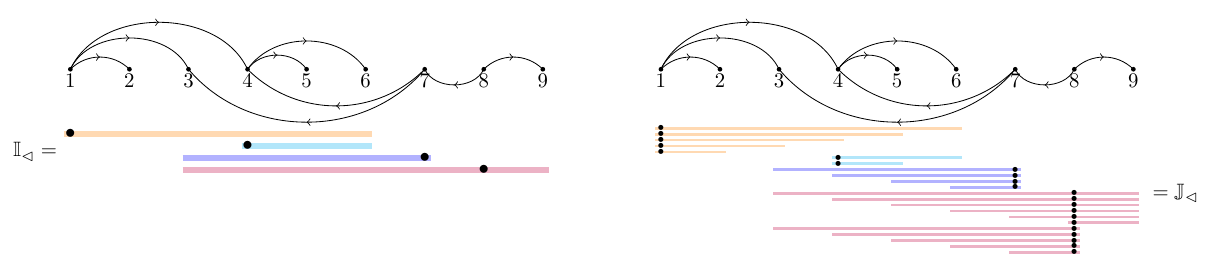}}
	\caption{The interval hypergraphs $\II_\less$ (left) and $\JJ_\less$ (right)  for the Tamari interval poset~$\less$ drawn on top. We did not draw the singletons they contain. The vertex corresponding to $\less$ is $\b v_{\less} = (2, 1, 1, 2, 1, 1, 2, 2, 1)$ in the polytope $\simplex_{\II_\less}$ and $\b v_\less = (6, 1, 1, 3, 1, 1, 5, 12, 1)$ in the polytope $\simplex_{\JJ_\less}$. (Count the number of {\scriptsize $\bullet$} above each coordinate and add~$1$ to account for the singleton which is not drawn.) Note that~$U(\less) = \{1 \lessdot 4, \, 4 \lessdot 6, \, 7 \lessdot 3, \, 8 \lessdot 7, \, 8 \lessdot 9\}$.}
	\label{fig:poset_of_intervals}
 \end{figure}   
\end{example}

\begin{example}
\cref{prop:inclusionPosetIntervalHypergraphsContainingTamariIntervalPoset} is illustrated in \cref{fig:LatticeOfSlessInIless} with the Tamari interval poset~$\less$ given by the transitive closure of the tree~$\ww$.
The set~$\c{I}_\less$ contains a unique maximal element~\mbox{$\JJ_\less =$ \!\drawSubset[green]{2,3,4,5,6}\!\!} but three minimal elements $\II_\less =$ \!\drawSubset[green]{3}\!\!, and also \!\drawSubset[red]{2,5}\!\!, and \!\drawSubset[blue]{2,6}\!\!.
\begin{figure}
    \centering
    \includegraphics{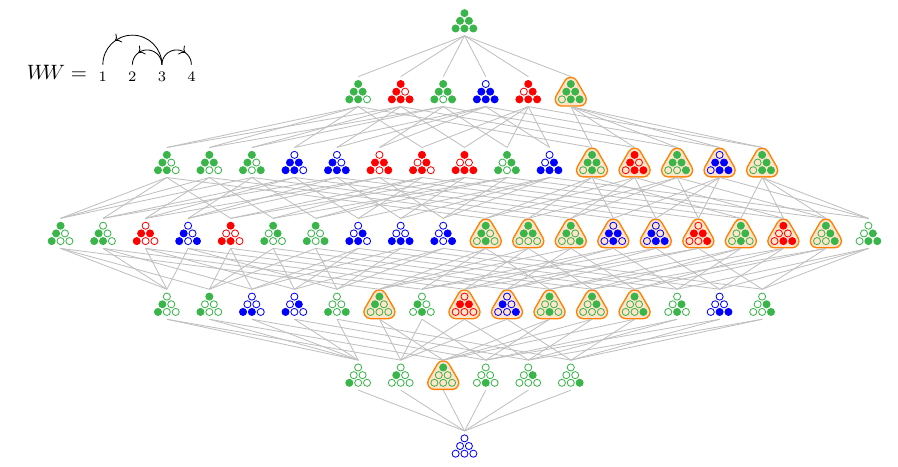}
    \caption{The boolean lattice $\c I$ of interval hypergraphs on $4$ nodes (see \cref{fig:LatticeOfSinI} for the encoding and color conventions: \textcolor{red}{red} yield non-simple interval hypergraphic polytopes; \textcolor{blue}{blue} and \textcolor{green}{green} yield simple ones; \textcolor{green}{green} yield nestohedra), we have encircled (in \textcolor{orange}{orange}) the interval hypergraphs containing a given tree~$\ww$ as a vertex poset.}
    \label{fig:LatticeOfSlessInIless}
   \end{figure}
\end{example}

\begin{proof}[Proof of \Cref{prop:inclusionPosetIntervalHypergraphsContainingTamariIntervalPoset}]
Recall that $\II\in\c I_\less$ if and only if ${\poly{C}_\less \eqdef \set{\b{x} \in \R}{x_i \ge x_j \text{ if } i \less j}}$ is a maximal cone in the normal fan $\simplex_{\II}$.
Observe that for any polytopes~$\poly{P}, \poly{Q}, \poly{R}$, if~$\poly{C}$ is a cone in the normal fan of~$\poly{P}$ and a cone in the normal fan of~$\poly{P} + \poly{Q} + \poly{R}$, then $\poly{C}$ is a cone in the normal fan of~$\poly{P} + \poly{Q}$.
Hence for interval hypergraphs~$\II \subseteq \JJ \subseteq \KK$, if $\II, \KK \in \c I_\less$, then $\poly{C}_\less$ is a cone in the normal fan of $\simplex_{\II}$ and of $\simplex_{\KK} = \simplex_{\II} + \sum_{J \in \JJ\ssm\II} \simplex_J + \sum_{K \in \KK\ssm\JJ} \simplex_K$, thus also in the normal fan of $\simplex_{\JJ} = \simplex_{\II} + \sum_{J \in \JJ\ssm\II} \simplex_J$, implying that $\JJ\in\c I_\less$.
This shows that~$\c{I}_\less$ is order convex inside~$\c{I}$.

Note that both~$\II_\less$ and~$\JJ_\less$ belong to~$\c{I}_\less$ as they clearly satisfy the conditions~\eqref{cond:belong1} and~\eqref{cond:belong2} of \cref{thm:TamariIntervalPosetInIntervalHypergraphic}.
Moreover, $\II_\less$ is an inclusion minimal interval hypergraph satisfying~\eqref{cond:belong2} (because for any~$a \in [n]$, the interval~$a^\less$ is the only interval of~$\II_\less$ containing~$a$ and contained in~$a^\less$), while $\JJ_\less$ is the unique inclusion maximal interval hypergraph satisfying~\eqref{cond:belong1} (since~$J_\less$ contains all intervals~$I$ for which there exists~$a \in [n]$ with~$a \in I \subseteq a^\less$).
Hence, we just have to discuss when~$\II_\less$ is the unique inclusion minimal element of~$\c{I}_\less$.

Assume first that there are no~$1 \le a < b < c \le n$ such that~$a \lessdot b \lessdot c$, or $a \moredot b \moredot c$, or $a \moredot b \lessdot c$ in~$U(\less)$.
As~$\less$ is a Tamari interval poset, any path of cover relations from~$a$ to~$\min(a^\less)$ (resp.~$\max(a^\less)$) is actually a path in~$U(\less)$.
The forbidden patterns thus force~$a^{\less}$ to be either trivial or to be an interval defined by a unique arc in $U(\less)$.
That is, for any~$a \in [n]$, either~$a^\less = \{a\}$, or~$a^\less = [a,b]$ with~$a \lessdot b$, or~$a^\less = [b,a]$ with~$a \lessdot b$.
We thus obtain that any interval hypergraph satisfying~\eqref{cond:belong2} must contain the interval~$a^\less$.
We conclude that~$\II_\less$ is the unique inclusion minimal element of~$\c{I}_\less$.

Conversely, assume that there are~$1 \le a < b < c \le n$ such that~$a \lessdot b \lessdot c$ in~$U(\less)$.
Consider~$\II \eqdef \II_\less \ssm \{a^\less\} \cup \{[\min(a^\less),b]\}$.
It is immediate that~$\II$ still satisfies the conditions~\eqref{cond:belong1} and~\eqref{cond:belong2} of \cref{thm:TamariIntervalPosetInIntervalHypergraphic}.
As~$\II_\less \not\subseteq \II$, we get that~$\c{I}_\less$ does not admit a unique inclusion minimal element.
The proof is identical if $a \moredot b \moredot c$ (resp.~$a \moredot b \lessdot c$) are both in~$U(\less)$, considering the interval hypergraph~$\II_\less \ssm \{c^\less\} \cup \{[b, \max(c^\less)]\}$ (resp.~$\II_\less \ssm \{b^\less\} \cup \{[\min(b^\less),b], [b, \max(b^\less)]\}$).
%
\end{proof}

\begin{remark}\label{rmk:coordinatesIlessJless}
It immediately follows from \cref{prop:coordinates} that
\begin{itemize}
\item the vertex of~$\simplex_{\II_\less}$ corresponding to~$\less$ is the characteristic vector of the nodes of~$\less$ with at least one outgoing neighbor (\ie of the non-maximal elements of $\less$), translated by~$(1,\dots,1)$ to account for singletons, and
\item the coordinates of the vertex of~$\simplex_{\JJ_\less}$ corresponding to~$\less$ are given by Loday's product formula
\[
\Bigl( \bigl( a - \min(a^\less) + 1 \bigr) \bigl( \max(a^\less) - a + 1 \bigr) \Bigr)_{a \in [n]}.
\]
\end{itemize}
See \Cref{fig:poset_of_intervals}.
Note that these formulas hold since~$\II_\less$ and~$\JJ_\less$ contain all singletons (in contrast to the vertex coordinates presented in \cref{fig:coordinates,exm:cappedUnitIntervalHypergraphicPolytope,exm:uniformIntervalHypergraphicPolytope}).
\end{remark}

\begin{remark}
The number of Tamari interval posets~$\less$ on~$[n]$ for which there are no~$1 \le a < b < c \le n$ such that~$a \lessdot b \lessdot c$, or $a \moredot b \moredot c$, or $a \moredot b \lessdot c$ in~$U(\less)$ as described in \cref{prop:inclusionPosetIntervalHypergraphsContainingTamariIntervalPoset}\,\eqref{item:existsMinimalIntervalHypergraph} are reported in \cref{table:relevantCombinatorialObjects}.
\end{remark}


\section{Weeping willows and vertices of simple interval hypergraphic polytopes}
\label{sec:weepingWillows}

In this section, we consider the vertex trees of the simple interval hypergraphic polytopes.
These trees are certain relevant non-crossing trees, closely connected to Tamari interval posets.


\subsection{Weeping willows}
\label{subsec:weepingWillows}

We now focus on the following trees, whose name will be explained by \cref{rem:weeping}.
A typical example is illustrated in \cref{fig:weepingWillow}, and four interesting families will be presented in \cref{subsec:examplesWeepingWillows}.

\begin{figure}[h]
	\centerline{\includegraphics[scale=1]{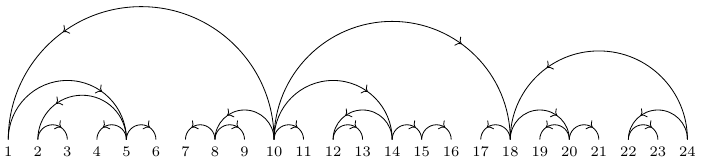}}
	\caption{A typical weeping willow. The crown arcs are $(10,1),(10,18)$ and $(24,18)$. In contrast, $U(\less)$ contains all cover relations except $5 \lessdot 4$, $10 \lessdot 8$, $10 \lessdot 11$, $10 \lessdot 14$ and $24 \lessdot 22$.}
	\label{fig:weepingWillow}
\end{figure}

\begin{definition}
\label{def:weepingWillow}
A \defn{weeping willow} is a directed tree~$\ww$ on~$[n]$ such that for all~$1 \le a < b < c \le n$, if~$\ww$ contains the arc~$(a,c)$ (resp.~$(c,a)$), then it contains a directed path from~$a$ to~$b$ (resp.~from~$c$ to~$b$).
Moreover, $\ww$ is \defn{rooted} if it has a unique source~$a \in [n]$ (\ie if there is a directed path from~$a$ to any node in~$[n]$).
\end{definition}
 
\begin{definition}
We call \defn{crown arcs} of a weeping willow~$\ww$ the arcs of~$\ww$ which are maximal for the order given by~$(i, j) \preceq (k, \ell)$ for~$\min(k,\ell) \le \min(i,j)$ and~$\max(i,j) \le \max(k,\ell)$.
\end{definition}

In our figures, the crown arcs are the topmost arcs.
In particular, every crown arc is a cover relation in~$U(\less)$ (where~$\less$ is the transitive closure of the weeping willow), but the reverse is false, see \cref{fig:weepingWillow}.
We now explore a few basic properties of weeping willows.

\begin{lemma}
\label{lem:weeping}
For any arc~$(i,j)$ of a weeping willow~$\ww$, the subtree of~$\ww$ induced by\linebreak $[\min(i,j), \max(i,j)]$ is rooted at~$i$.
\end{lemma}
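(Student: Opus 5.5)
The plan is to prove the statement for an arc~$(i,j)$ with~$i < j$; the case~$i > j$ is symmetric by reversing the order on~$[n]$. Since~$\ww$ is a tree, its induced subgraph on~$[i,j]$ is a forest. It therefore suffices to show the following \emph{claim}: for every~$b \in [i,j]$, the unique directed path~$P_b$ from~$i$ to~$b$ in~$\ww$ uses only nodes of~$[i,j]$. Such a path exists by \cref{def:weepingWillow}. Once the claim holds, the induced subgraph is connected, hence a subtree, and every node in it is reachable from~$i$, so this subtree is rooted at~$i$.

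I will prove the claim by contradiction, choosing the counterexample carefully. Suppose some~$P_b$ leaves~$[i,j]$, and pick~$b$ so that~$P_b$ is as short as possible. Then~$P_b$ contains a first arc~$(x,y)$ with~$x \in [i,j]$ and~$y \notin [i,j]$. By minimality, the subpath from~$i$ to~$x$ stays in~$[i,j]$. Suppose by symmetry that~$y > j$; the case~$y < i$ is handled the same way with~$i$ in place of~$j$.

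The main obstacle is exploiting this crossing arc, and I will use the undirected uniqueness of paths in the tree. Suppose first~$x \ne j$, so that~$x < j < y$. Applying the weeping willow condition to the arc~$(x,y)$ and the middle element~$j$ gives a directed path~$Q$ from~$x$ to~$j$. Concatenating the tree path from~$i$ to~$x$ with~$Q$ gives an undirected walk from~$i$ to~$j$. In a tree the unique path from~$i$ to~$j$ is the single arc~$(i,j)$, so~$x$ lies on that path. Hence~$x = i$ (the case~$x = j$ is excluded here). Now~$i$ has out-arcs to both~$j$ and~$y$, and~$P_b = i \to y \to \dots \to b$ with~$b < j < y$. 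Consider the last arc~$(z,w)$ of~$P_b$ that re-enters the interval, so~$z > j$ and~$w \le j$. If~$w < j$, the weeping willow condition for the arc~$(z,w)$ gives a directed path from~$z$ to~$j$. Together with the path~$i \to y \to \dots \to z$, this yields a second undirected route from~$i$ to~$j$ avoiding the arc~$(i,j)$, which contradicts that~$\ww$ is a tree. If~$w = j$, then~$P_b$ reaches~$j$ through~$y$, which is again a second route to~$j$ and again a contradiction.

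It remains to treat~$x = j$, that is, a path~$i \to j \to y \to \dots \to b$ with~$y > j$. Here the same argument applies to the returning arc~$(z,w)$, now using the middle element~$j$ or~$x$ as needed. Either~$w = j$, which would make a cycle through~$j$, or~$w < j$, in which case the condition on~$(z,w)$ produces a path from~$z$ to~$j$, closing a cycle through~$y$. The only delicate point is bookkeeping these two symmetric cases cleanly; in each of them the key move is the same, namely that a crossing arc of the weeping willow forces a second tree path to an endpoint of~$(i,j)$.
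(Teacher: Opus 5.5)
Your proof is correct in substance, and it is more complete than the paper's. The paper argues in two lines: for $k \in [i+1,j]$ the weeping condition on $(i,j)$ gives a directed path from $i$ to $k$ in $\ww$, so $k$ is not a source, and hence $i$ is the only source. It never checks that this path lies inside $[i,j]$. That check is needed for two things: for the induced subgraph on $[i,j]$ to be connected (so that ``subtree'' makes sense), and for $k$ to be a non-source of that subgraph rather than merely of $\ww$. Your claim is exactly this missing step. Your exit/re-entry analysis proves it directly from the definition: the weeping condition on a crossing arc produces either a directed cycle or a second path between $i$ and $j$.

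A shorter route to the same claim goes through \cref{prop:noncrossing}, whose proof does not use this lemma. No arc joins ${]i,j[}$ to $[n] \ssm [i,j]$, so a path from $i$ that leaves $[i,j]$ can only leave and return through $i$ or $j$. That either repeats a vertex or creates a second path between $i$ and $j$. Your case analysis is in effect an inline proof of this special case of non-crossing.

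A few slips should be repaired.

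First, ``so $z > j$'' is unjustified, since the path may leave on one side and come back from the other. If $z < i$, then $w \ne i$ because a path does not revisit $i$. So $z < i < w$, and the weeping condition on $(z,w)$ gives a directed path from $z$ to $i$. This closes a directed cycle with $i \to \dots \to z$. The same fix applies in your case $x = j$.

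Second, from an undirected walk from $i$ to $j$ you cannot conclude that $x$ lies on the unique $i$--$j$ path, because walks may wander off and backtrack. What you need is that the concatenation is a directed walk. It is then a directed path because $\ww$ has no directed cycle, and hence it equals the arc $i \to j$.

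Third, the case $y < i$ is not literally symmetric to $y > j$, since the arc leaves the left endpoint. It still works with the same moves: $x \ne i$ yields a directed cycle through $i$, and $x = i$ is settled by the re-entry argument.

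Finally, the minimality of $P_b$ is never used and can be dropped.
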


\begin{proof}
Assume for instance that~$i < j$ and let~$k \in [i+1, j]$.
As~$(i, j)$ is an arc of the weeping willow~$\ww$, we obtain that there is a directed path from~$i$ to~$k$.
This implies that~$k$ is not a source, so that $i$ is the only source of~$\ww$.
\end{proof}

\begin{remark}
\label{rem:weeping}
\cref{lem:weeping} justifies our choice of name for ``weeping willows''.
Namely, except their crown arcs which can be directed in both directions, all their other arcs are weeping away from the crown arcs and towards the leaves.
\end{remark}

\begin{proposition}
\label{prop:WWTIP}
The weeping willows are precisely the Hasse diagrams of the Tamari interval posets which are trees.
\end{proposition}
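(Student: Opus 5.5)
We prove both inclusions, using the characterization of \cref{def:TamariIntervalPoset}\,\eqref{item:TIPprop3} throughout, and writing~$\less$ for the transitive closure of a directed tree.

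\emph{Tamari interval poset with tree Hasse diagram $\Rightarrow$ weeping willow.}
Let~$\less$ be a Tamari interval poset whose Hasse diagram~$\ww$ is a tree. An arc~$(a,c)$ of~$\ww$ is a cover relation~$a \lessdot c$. If~$a<c$, then for any~$a<b<c$ condition~\eqref{item:TIPprop3} gives~$a \less b$. Since~$\less$ is the transitive closure of its Hasse diagram, there is a directed path from~$a$ to~$b$ in~$\ww$. The case of an arc~$(c,a)$ with~$a<c$ is symmetric: $c \less a$ yields~$c \less b$. Hence~$\ww$ is a weeping willow.

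\emph{Weeping willow $\Rightarrow$ Tamari interval poset with Hasse diagram~$\ww$.}
Let~$\ww$ be a weeping willow. Since a directed tree is acyclic, $\less$ is a poset. Its Hasse diagram is exactly~$\ww$: every arc~$(i,j)$ of a tree is the unique undirected path between~$i$ and~$j$, so no other directed path from~$i$ to~$j$ exists, and the arc is a cover relation.

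It remains to check~\eqref{item:TIPprop3}. Suppose~$a \less c$ with~$a<b<c$, and take the directed path~$a=x_0 \to x_1 \to \dots \to x_k = c$ in~$\ww$. If~$b$ lies on this path, we are done. Otherwise, since~$x_0<b<x_k$, there is a first index~$p$ with~$x_p<b<x_{p+1}$, or with~$x_p>b>x_{p+1}$. Either way the arc~$(x_p,x_{p+1})$ straddles~$b$, so the weeping willow condition gives a directed path from~$x_p$ to~$b$. Then~$a \less x_p \less b$. The case~$a \more c$ is symmetric: follow the path from~$c$ to~$a$, find an arc straddling~$b$, and conclude~$c \less b$.

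There is no serious obstacle here. The only step needing care is the straddling-arc argument: a directed path between nodes on opposite sides of~$b$, if it avoids~$b$, must contain an arc whose endpoints lie on opposite sides of~$b$. This holds simply because the labels change from below~$b$ to above~$b$ (or the reverse) at some step of the path.
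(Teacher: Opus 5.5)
Your proof is correct and follows the paper's argument essentially step for step. In one direction you apply property~(iii) of Tamari interval posets to cover relations; in the other you follow the directed path from $a$ to $c$, find an arc straddling $b$, and apply the weeping condition to that arc. Your version is also slightly more careful than the paper's: you use strict inequalities where the paper writes $x_{i-1} \le b \le x_i$, and you justify why the tree is the Hasse diagram of its transitive closure, which the paper only asserts.
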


\begin{proof}
Consider a weeping willow~$\ww$, and let~$\less$ be its transitive closure.
As $\ww$ is a tree, it is the Hasse diagram of~$\less$, so we just need to show that~$\less$ is a Tamari interval poset.
Consider any~$1 \le a < b < c \le n$ such that~$a \less c$.
As~$\less$ is the transitive closure of~$\ww$, there are~$a = x_0, \dots, x_k = c$ such that~$(x_{i-1}, x_i) \in \ww$ for each~$i \in [k]$.
Since~$x_0 = a < b < c = x_k$, there exists~$i \in [k]$ such that~$x_{i-1} \le b \le x_i$.
As~$\ww$ is a weeping willow, it contains a directed path from~$x_{i-1}$ to~$b$, hence also in a directed path from~$a$ to~$b$, implying that~$a \less b$.
Similarly, if~$1 \le a < b < c \le n$ such that~$a \more c$, then~$b \more c$.
Thus, $\less$ is a Tamari interval poset, whose Hasse diagram is~$\ww$.

Conversely, consider a Tamari interval poset~$\less$ whose Hasse diagram is a directed tree~$\ww$.
Consider~$1 \le a < b < c < d \le n$. 
For any~$1 \le a < b < c \le n$, if~$(a,c) \in \ww$ then $a\lessdot c$ implies $a \less b$, hence there is a path from~$a$ to~$b$.
Similarly, if~$(c,a) \in \ww$, there is a path from~$c$ to~$b$.
Hence, $\ww$ is weeping.
\end{proof}

\begin{proposition}
\label{prop:noncrossing}
Any weeping willow~$\ww$ is non-crossing, \ie for all~$1 \le a < b < c < d \le n$, the pairs $\{a,c\}$ and~$\{b,d\}$ cannot both be undirected arcs of~$\ww$.
\end{proposition}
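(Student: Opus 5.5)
We argue by contradiction, using only the definition of weeping willows (\cref{def:weepingWillow}), or equivalently their description as Hasse diagrams of Tamari interval posets (\cref{prop:WWTIP}) together with \cref{def:TamariIntervalPoset}\,\eqref{item:TIPprop3}. So suppose $1 \le a < b < c < d \le n$ and both $\{a,c\}$ and $\{b,d\}$ are arcs of~$\ww$, in some orientations. Write~$\less$ for the transitive closure of~$\ww$; since $\ww$ is a tree, each of its arcs is a cover relation of~$\less$. There are four cases according to the two orientations, and they reduce to two up to the symmetry $x \mapsto n+1-x$.

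In the first case both arcs point rightward, so $a \lessdot c$ and $b \lessdot d$. Applying the weeping condition to the arc $(b,d)$ and the intermediate point $c$ gives a directed path from $b$ to $c$, hence $b \less c$. Applying it to the arc $(a,c)$ and the point $b$ gives $a \less b$. Then $a \less b \less c$ with $b \neq a,c$, so $a \less c$ is not a cover relation, a contradiction. The case where both arcs point leftward is the mirror image: we get $d \less c$, $d \less b$ and $c \less b$, so $d \less c \less b$ and $d \lessdot b$ is not a cover.

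In the second case the arcs point in opposite directions. If $a \lessdot c$ and $d \lessdot b$, weeping on $(d,b)$ with the point $c$ gives $d \less c$. Weeping on $(a,c)$ with the point $b$ gives $a \less b$. These two relations alone give no contradiction, so we argue differently and use the tree structure directly. In~$\ww$ there is a directed path from $d$ to $c$, and it cannot be the single arc $(d,c)$ together with $(d,b)$ unless we combine it with a route back. Instead we use \cref{lem:weeping}. The subtree induced by $[a,c]$ is rooted at~$a$, and the subtree induced by $[b,d]$ is rooted at~$d$. Their intersection $[b,c]$ contains $b$ and $c$. Since $b \in [a,c]$, the path from $a$ to $b$ stays in $[a,c]$; since $c \in [b,d]$, the path from $d$ to $c$ stays in $[b,d]$. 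Now $b$ and $c$ are joined in~$\ww$ both through $a$ (via $a \to c$ and the path $a \to b$) and through $d$ (via $d \to b$ and the path $d \to c$). These two $b$--$c$ walks use the distinct arcs $(a,c)$ and $(d,b)$, and $a \ne d$, so together they produce a cycle in the underlying graph of~$\ww$, contradicting that $\ww$ is a tree. The remaining case, $c \lessdot a$ and $b \lessdot d$, is handled identically.

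The main obstacle is this mixed-orientation case, because the poset inequalities alone do not immediately conflict and one must invoke acyclicity of the tree. I expect the delicate point to be checking that the two $b$--$c$ connections really yield a cycle rather than a backtracking walk. I would settle it by noting that the unique tree path from $b$ to $c$ contains $a$ in the first description but cannot contain $a \notin [b,d]$ in the second. Indeed, the second connection lies within the induced subtree on $[b,d]$, which is connected by \cref{lem:weeping}, so the unique path lies in $[b,d]$ and misses $a < b$. This contradicts the first description.
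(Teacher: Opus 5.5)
Your two same-direction cases are correct and match the paper. The mixed case $a \lessdot c$, $d \lessdot b$ has a gap at exactly the point you flag, and the settlement you propose assumes what it needs to show. The claim that the unique tree path between $b$ and $c$ contains $a$ holds only if the walk $b \leftarrow \cdots \leftarrow a \to c$ does not backtrack, that is, only if the directed path from $a$ to $b$ does not begin with the arc $(a,c)$. Nothing you wrote excludes this. If $c \less b$, that path is $a \to c \to \cdots \to b$, and the tree path between $b$ and $c$ is just the reversed directed path from $c$ to $b$. That path avoids $a$ (since $a \less c$), so nothing is contradicted. Likewise the walk through $d$ backtracks when $b \less c$, so ``two walks using distinct arcs produce a cycle'' is not valid as stated.

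You must first exclude $c \less b$. Weeping on the arc $(d,b)$ at $c$ gives $d \less c$, so $c \less b$ would yield a directed path $d \to \cdots \to c \to \cdots \to b$ different from the arc $(d,b)$, which is impossible in a tree. Then $c$ is not on the directed path from $a$ to $b$. Appending the arc $a \to c$ gives a genuine tree path between $b$ and $c$ through $a$, and your observation that this path must lie in the connected induced subtree on $[b,d]$ (\cref{lem:weeping}) finishes the case.

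Alternatively, since $b \less c$ and $c \less b$ cannot both hold, one of your two walks is always a genuine path. The walk through $a$ is then excluded by the subtree on $[b,d]$, and the walk through $d$ by the subtree on $[a,c]$.

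Once repaired, your route differs from the paper's. The paper proves that both $\{a,d\}$ and $\{b,c\}$ are incomparable and then applies the bowtie pattern of \cref{lem:posetTree}\,\eqref{item:bowtie}. You replace the bowtie lemma by the connectivity of induced subtrees from \cref{lem:weeping}, and you need only one non-comparability.

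A smaller point: the reflection $x \mapsto n+1-x$ maps each mixed case to itself, so it does not reduce $(c,a),(b,d)$ to the case you treat, and that case is not handled ``identically''. It is, however, immediate. Weeping on $(c,a)$ at $b$ gives $c \less b$, and weeping on $(b,d)$ at $c$ gives $b \less c$, contradicting antisymmetry, as in the paper's case~(ii).
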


\begin{proof}
Assume that there are~$1 \le a < b < c < d \le n$ such that both $\{a,c\}$ and~$\{b,d\}$ are undirected arcs of~$\ww$.
As~$\ww$ is a tree, it is the Hasse diagram of its transitive closure~$\less$.
We distinguish three cases, depending on the orientation of~$\{a,c\}$ and~$\{b,d\}$:
\begin{enumerate}[(i)]
\item If~$(a,c), (b,d) \in \ww$, then $a \less b\less c$ since $\ww$ is weeping, contradicting that~$(a,c) \in \ww$. The situation where~$(c,a), (d,b) \in \ww$ is symmetric.
\item If~$(c,a), (b,d) \in \ww$, then~$c \less b$ and~$b \less c$, contradicting the antisymmetry of~$\less$.
\item If~$(a,c), (d,b) \in \ww$, then $a \less b$ and~$d \less c$. If~$b \less c$ (resp.~$a \less d$), then~$a \less b \less c$ (resp.~$a \less d \less c$) contradicts that~$(a,c) \in \ww$. By symmetry, we obtain that both pairs~$\{a,d\}$ and~$\{b,c\}$ are incomparable in~$\less$. As~$a \less b$, $a \less c$, $d \less b$ and~$d \less c$, and~$\{a,d\}$ and~$\{b,c\}$ are incomparable in~$\less$, we conclude that~$\ww$ is not a tree by \cref{lem:posetTree}\,\eqref{item:bowtie}.
\qedhere
\end{enumerate}
\end{proof}

%
\begin{remark}
\cref{prop:WWTIP} implies that we can represent weeping willows as non-crossing trees using our drawing convention of \cref{rem:drawingTIP} for Tamari interval posets.
\cref{prop:noncrossing} implies that we can actually also represent weeping willows as non-crossing trees with all increasing and decreasing arcs drawn above the horizontal line.
For the sake of compactness, we will prefer the later drawing convention of weeping willows.
\end{remark}


\subsection{Four families of weeping willows}
\label{subsec:examplesWeepingWillows}

We now describe four interesting families of weeping willows, which we illustrate when~$n = 4$.

\begin{example}
\label{exm:orientationPath}
Any orientation of the path graph~$\set{\{i,i+1\}}{i \in [n-1]}$ is a weeping willow. Only~$n$ of them are rooted.
See also \cref{exm:cube}.

\medskip
\centerline{\includegraphics[scale=1]{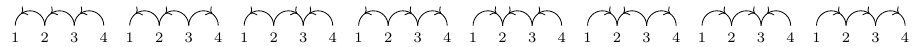}}
\end{example}

\begin{example}
\label{exm:binarySearchTrees}
Any rooted binary tree, with arcs oriented away from its root, and with nodes labeled in inorder (meaning that for each node, we first label its left subtree, then the node, then its right subtree) is a rooted weeping willow.
See also \cref{exm:associahedron}.

\medskip
\centerline{\includegraphics[scale=1]{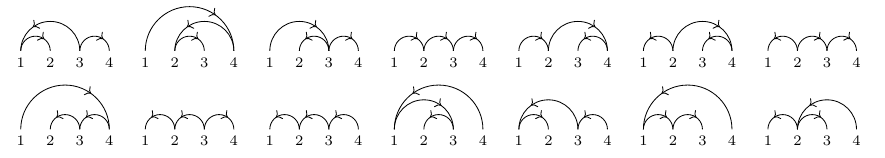}}
\end{example}

\begin{example}
\label{exm:PitmanStanleyTrees}
We inductively define the \defn{Pitman--Stanley trees} of size~$n$ as all trees obtained by adding an arbitrarily oriented arc between~$n$ and the root of a Pitman--Stanley tree of size~$n-1$.
By construction, Pitman--Stanley trees are rooted weeping willows.
Moreover, there are $2^{n-1}$ Pitman--Stanley trees of size~$n$, and there is a natural bijection sending a Pitman--Stanley tree to the subset of integers~$i \in [n-1]$ such that~$i+1$ has an incoming neighbor in~$[i]$.
See also \cref{exm:PitmanStanley}.

\medskip
\centerline{\includegraphics[scale=1]{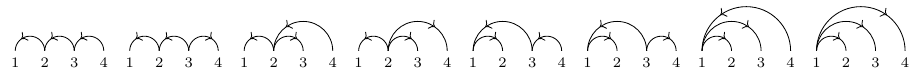}}
\end{example}

\begin{example}
\label{exm:freehedronTrees}
Let us denote $\mathdefn{{\rm PS}^-_n}$ the Pitman--Stanley trees of size $n$ for which the arc incident to $n$ is oriented away from $n$. 
Clearly $|{\rm PS}^-_n|=2^{n-2}$. 
The \defn{freehedron trees} are all  trees we get by taking the concatenation of  $W_i, \overline{W_j}$
where  $W_i \in {\rm PS}^-_i, W_j \in {\rm PS}^-_j$, $i+j=n+1$ and $\overline{W_j}$ is $W_j $ reflected. 
Their total number is $2^{n-2} + \sum_{i = 2}^{n-1} 2^{i-2} 2^{n-i-1} + 2^{n-2} = (n+2)2^{n-3}$ \OEIS{A045623}.
They are rooted weeping willows. 
See also \cref{exm:freehedron}.

\medskip
\centerline{\includegraphics[scale=1]{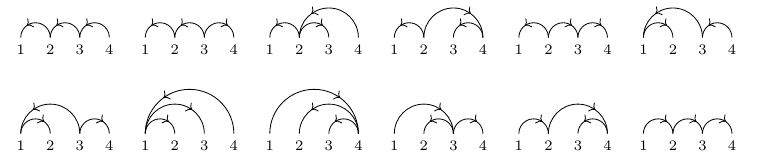}}
\end{example}

\subsection{Counting weeping willows}
\label{subsec:countingWeepingWillows}

We now exploit \cref{prop:noncrossing} to count (rooted) weeping willows.
See also \cref{table:relevantCombinatorialObjects}.

%
%

\begin{proposition}
\label{prop:generatingFunctionsWeepingWillows}
Let
\begin{align*}
f & = 1 + x + 3x^2 + 12x^3 + 55x^4 + 273x^5 + 1428x^6 + 7752x^7 + 43263x^8 + 246675x^9 + \cdots \\
g & = 1 + 2x + 8x^2 + 38x^3 + 196x^4 + 1062x^5 + 5948x^6 + 34120x^7 + 199316x^8 + 1181126x^9 + \cdots \\
h & = 1 + 2x + 7x^2 + 30x^3 + 143x^4 + 728x^5 + 3876x^6 + 21318x^7 + 120175x^8 + 690690x^9 + \cdots
\end{align*}
denote the ordinary generating functions of unoriented non-crossing trees \OEIS{A001764}, of weeping willows \OEIS{A047098}, and of rooted weeping willows \OEIS{A006013} respectively, where~$x$ counts the number of arcs.
Then
\[
f = 1 + x f^3,
\qquad
g = 1 + 2 x g f^2,
\qquad\text{and}\qquad
h = 1 + x f^2 (h+f).
\]
Hence, $f$, $g$, and $h$ are solutions of
\[
x f^3 - f + 1 = 0,
\qquad
(8x-1)g^3-g^2+g+1 = 0,
\qquad\text{and}\qquad
x^2 h^3-2x h^2+h-1 = 0,
\]
and, on $n$ nodes, there are $\frac{1}{2n+1}\binom{3n}{n}$ non-crossing trees, $2\binom{3n}{n}-\sum_{k=0}^n \binom{3n}{k}$ weeping willows, and $\frac{1}{n+1}\binom{3n+1}{n}$ rooted weeping willows.
\end{proposition}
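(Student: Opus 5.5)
Throughout, $x$ counts arcs, so a tree on $n$ nodes has weight $x^{n-1}$. By \cref{prop:noncrossing}, weeping willows are non-crossing spanning trees on $[n]$ with oriented arcs, so everything reduces to the classical root decomposition of non-crossing trees, refined by orientations. For $f$, root the tree at node $1$, let $(1,c)$ be the arc to its largest neighbour $c$, and cut the configuration into three non-crossing trees: the part on $[1,c]$ without this arc and hanging below it, the part on $[c,n]$, and the rest attached at $1$ on $[1,c]$. This gives the standard equation $f = 1 + x f^3$. I would state this decomposition carefully, since the other two equations reuse it with orientation constraints added.

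For $g$, the weeping condition is local. If $(a,c)$ is an arc with $a<c$, every node in $[a,c]$ must be reachable from $a$. By \cref{lem:weeping}, the subtree on $[a,c]$ is rooted at $a$, so all arcs strictly under $(a,c)$ must weep away from it. Decomposing at the arc $(1,c)$ as above:
\begin{itemize}
\item The orientation of $(1,c)$ is free, which gives the factor $2$.
\item The two pieces lying under this arc are forced to be oriented away from their root, so they are counted by unoriented non-crossing trees, which gives $f^2$.
\item The remaining piece on $[c,n]$ is again an arbitrary weeping willow, which gives $g$.
\end{itemize}
Together this yields $g = 1 + 2xgf^2$. The main obstacle is verifying that the weeping condition splits exactly like this. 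Concretely, I must check that gluing a rooted-away configuration under $(1,c)$ to an arbitrary willow on $[c,n]$ always produces a willow. I also must check that the forced orientation is exactly "away from $1$", or "away from $c$", according to the direction of $(1,c)$. I would first try the case analysis directly on the definition, using the fact that the weeping paths for crown arcs stay inside their interval.

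For $h$, rootedness adds one more constraint. The willow on $[c,n]$ must now be compatible with the unique source, and this produces the split $h + f$:
\begin{itemize}
\item If $(1,c)$ points towards $c$, the source lies in $[c,n]$ or is $1$. In this case the remainder must be a rooted willow, counted by $h$.
\item If $(1,c)$ points towards $1$, the whole remaining tree must weep out of $c$. This is counted by $f$.
\end{itemize}
Both cases carry the factor $xf^2$ from the part under the arc, so $h = 1 + xf^2(h+f)$.

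Finally, I would eliminate $f$ by resultants, using $f = 1 + xf^3$, to obtain the stated cubic equations for $g$ and $h$. The explicit coefficient formulas then follow by Lagrange inversion in the parameter $f$. Set $x = (f-1)/f^3$. Then $g = 1/(1 - 2xf^2) = f/(2-f)$, and $h$ is rational in $f$ as well; extracting coefficients gives the binomial expressions. This last step is routine, and I would double-check it against the listed initial terms.
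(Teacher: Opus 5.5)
\textbf{The case analysis for $h$ is reversed.} Your decomposition is the mirror image of the paper's: you cut along the longest arc $\{1,c\}$ at node~$1$, the paper along the longest arc $\{i,n\}$ at node~$n$. Your treatment of $f$ and $g$ matches the paper, and so does the closing algebra: $g = f/(2-f)$, and in fact $h = f/(1-xf^2) = f^2$ since $xf^2 = 1 - 1/f$. The gluing check you flag for $g$ is routine. A node strictly under an arc $\{u,v\}$ can reach nodes outside $[u,v]$ only through $u$ or $v$. So in a piece oriented away from a root sitting at an endpoint of its interval, that node is a descendant of the tail of the arc, and every arc weeps. The real problem is that the two cases for $h$ are assigned the wrong way round.

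Suppose the arc is $1 \to c$. By \cref{lem:weeping} the subtree on $[1,c]$ is rooted at~$1$, and all neighbours of~$1$ lie in $[1,c]$, so node~$1$ is a source of the whole tree. Rootedness then forces $1$ to be \emph{the} source. Every node of $[c,n]$ must then be reached from~$1$, hence from~$c$ inside the right piece. So the piece on $[c,n]$ must be oriented away from~$c$, and this case contributes $xf^2\cdot f$, not $xf^2\cdot h$.

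Suppose instead the arc is $c \to 1$. Every node of $[1,c]$ other than $c$ has an incoming arc and is reachable from~$c$. The only arcs at $c$ outside the right piece are outgoing, so the sources of the whole tree are exactly the sources of the piece on $[c,n]$. Hence the tree is rooted if and only if that piece is a rooted weeping willow with an arbitrary root, and this case contributes $xf^2\cdot h$.

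Already for $n=3$ your rule goes wrong in both directions:
\begin{itemize}
\item it accepts $1 \to 2 \leftarrow 3$, which has two sources;
\item it rejects $3 \to 2 \to 1$, which is rooted at~$3$.
\end{itemize}
The two contributions are simply added, so $h = 1 + xf^2(h+f)$ comes out right, but only by accident. The correspondence you describe is not a bijection onto rooted weeping willows. Swapping the cases back gives exactly the paper's split into ``root at the extreme node'' versus ``root elsewhere''.

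\textbf{Indexing check.} When you extract coefficients, note that the three binomial expressions are the coefficients of $x^n$. With your (correct) convention that a tree on $n$ nodes has weight $x^{n-1}$, they count trees with $n$ arcs, i.e.\ on $n+1$ nodes. So the phrase ``on $n$ nodes'' in the statement is off by one.
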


\begin{figure}[H]
	\centering
	\includegraphics[width=0.999\linewidth]{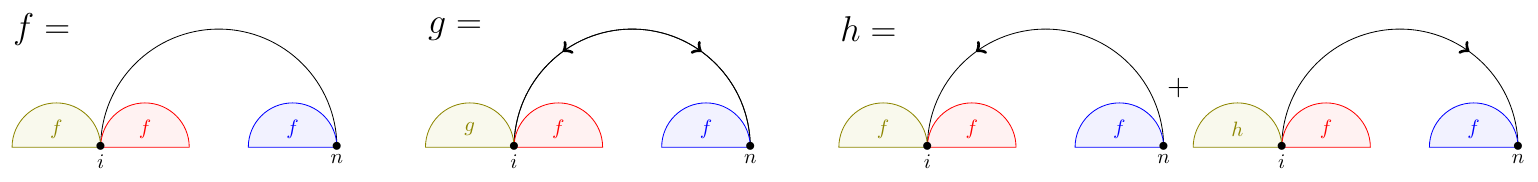}
	\caption{Illustration for the proof of \Cref{prop:generatingFunctionsWeepingWillows}.} 
	\label{fig:GeneratingFunctionsFGH}
\end{figure}

\begin{proof}
We refer the reader to \Cref{fig:GeneratingFunctionsFGH} for an illustration of the following decompositions.
The longest arc~$\{i,n\}$ incident to node~$n$ in a non-crossing tree decomposes it into three non-crossing trees (one on the left of~$i$, one on the right of~$i$, and one on the left of~$n$), which yields the formula~$f = 1 + x f^3$.
The longest arc~$\{i,n\}$ incident to node~$n$ in a weeping willow is oriented arbitrarily and decomposes it into a weeping willow (left of~$i$) and two non-crossing trees by \Cref{lem:weeping} (one on the right of~$i$, and one on the left of~$n$), which yields~$g = 1 + 2 x g f^2$.
The longest arc~$\{i,n\}$ incident to node~$n$ in a rooted weeping willow with root $n$ (resp.~distinct from~$n$) decomposes it into three non-crossing trees by \Cref{lem:weeping} (resp. into a rooted weeping willow and two non-crossing trees), which yields~${h = 1 + x f^2 (h+f)}$.
Observing that~$xf^2 = 1 - 1/f$, we obtain that ${g = 1 / (1-2 x f^2) = f / ( 2 - f )}$ and~$h = f / (1 - x f^2) = f^2$.
The functional equations can be verified by straightforward computations, and the explicit numbers can be recovered by Lagrange inversion (or extracted from their respective OEIS pages).
\end{proof}

\begin{remark}
It follows from \cref{subsec:countingTamariIntervalPosets,prop:generatingFunctionsWeepingWillows} that asymptotically, most Tamari interval posets are not weeping willows.
\end{remark}

%
%

It is also interesting to refine the enumeration of weeping willows with respect to increasing and decreasing arcs.
The proof is similar and left to the reader.

\begin{proposition}
\label{prop:bivariateGeneratingFunctionsWeepingWillows}
Let
\begin{align*}
f & = 1 + x + 2x^2 + xy + 5x^3 + 5x^2y + 2xy^2 + 14x^4 + 21x^3y + 15x^2y^2 + 5xy^3 + \cdots \\ 
g & = 1 + x + y + 2x^2 + 4xy + 2y^2 + 5x^3 + 14x^2y + 14xy^2 + 5y^3 + 14x^4 + 49x^3y + 70x^2y^2 + 49xy^3 + 14y^4 + \cdots \\
h & = 1 + x + y + 2x^2 + 4xy + y^2 + 5x^3 + 14x^2y + 9xy^2 + 2y^3 + 14x^4 + 49x^3y + 50x^2y^2 + 25xy^3 + 5y^4 + \cdots
\end{align*}
denote the ordinary generating functions of non-crossing trees oriented towards~$n$, of weeping willows, and of rooted weeping willows respectively, where~$x$ and~$y$ count the number of increasing and decreasing arcs respectively.
Then
\[
f = 1 + x f^2 \bar f,
\qquad
g = 1 + (x+y) g f \bar f,
\qquad\text{and}\qquad
h = 1 + f \bar f (x h + y f),
\]
where~$\bar f(x,y) = f(y,x)$.
\end{proposition}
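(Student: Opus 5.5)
The plan is to refine the decompositions in the proof of \cref{prop:generatingFunctionsWeepingWillows} by keeping track of the orientation of each arc. Besides~$f$, the building blocks are four kinds of non-crossing trees on an interval, namely those oriented towards (resp.~away from) their leftmost (resp.~rightmost) node. Two symmetries relate them. Reflecting the labels by~$k \mapsto -k$ swaps increasing and decreasing arcs, hence replaces~$f$ by~$\bar f$. Reversing every arc also swaps increasing and decreasing arcs, and exchanges ``towards'' with ``away from''. Composing these, the four kinds have generating functions
\begin{itemize}
\item $f$ for trees oriented towards their rightmost node (the definition),
\item $\bar f$ for trees oriented towards their leftmost node (reflection),
\item $\bar f$ for trees oriented away from their rightmost node (reversal),
\item $f$ for trees oriented away from their leftmost node (both).
\end{itemize}
As a sanity check, the single tree on two nodes oriented towards~$2$ is the increasing arc~$1 \to 2$, contributing~$x$, which matches the coefficient of~$x$ in~$f$.

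For~$f$ itself, I will take the longest arc~$\{i,n\}$ at~$n$. It is oriented~$i \to n$, hence increasing, and contributes~$x$. Removing it splits the tree into the component of~$n$, which lives on some~$[j,n]$, and the component of~$i$, which lives on~$[1,j-1]$. The latter splits at~$i$ into pieces on~$[1,i]$ and on~$[i,j-1]$. All three pieces are oriented towards~$i$ or~$n$, so the piece on~$[j,n]$ contributes~$f$, the piece on~$[1,i]$ contributes~$f$ (oriented towards its rightmost node~$i$), and the piece on~$[i,j-1]$ contributes~$\bar f$ (oriented towards its leftmost node~$i$). This gives~$f = 1 + x f^2 \bar f$. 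I would check the first coefficients ($x$, then~$2x^2 + xy$) to confirm the conventions.

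For~$g$, I will use the same cut along the longest arc~$\{i,n\}$, which now contributes~$x$ or~$y$ according to its orientation. The piece on~$[1,i]$ is an arbitrary weeping willow, contributing~$g$. By \cref{lem:weeping}, the subtree on~$[i,n]$ is rooted at the source of~$\{i,n\}$. Hence the piece on~$[i,j-1]$ is oriented away from its leftmost node~$i$, contributing~$f$, and the piece on~$[j,n]$ is oriented away from its rightmost node~$n$, contributing~$\bar f$. This holds for both orientations of~$\{i,n\}$, which gives~$g = 1 + (x+y)\, g f \bar f$.

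For~$h$, I will separate two cases according to whether the root is~$n$. If the root is not~$n$, then~$\{i,n\}$ is increasing, the piece on~$[1,i]$ is a rooted weeping willow, and we get the term~$x h f \bar f$. If the root is~$n$, then~$\{i,n\}$ is decreasing and the piece on~$[1,i]$ is oriented away from~$i$, and we get the term~$y$ times a product of three~$f$'s and~$\bar f$'s.

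The main obstacle will be the bijectivity of these decompositions. I must show that gluing arbitrary pieces of the prescribed types always produces a weeping willow. For that, I will check the condition of \cref{def:weepingWillow} for arcs inside each piece and for the glued arc~$\{i,n\}$, using that each piece is a non-crossing tree whose arcs all point away from (or towards) the relevant endpoint. I must also settle the orientation bookkeeping in the root-$n$ case of~$h$. There, my convention yields~$y f \bar f^2$, while the displayed formula has~$y f^2 \bar f$. So I would first recount the rooted weeping willows on~$[3]$ by hand. My hand count gives~$2x^2 + 3xy + 2y^2$, which does not match the displayed~$2x^2 + 4xy + y^2$. I would therefore re-examine whether the intended convention for~$\bar f$, or the displayed series, should be adjusted before finalizing.
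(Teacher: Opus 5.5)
Your approach is the paper's. Its proof only says ``similar to'' \cref{prop:generatingFunctionsWeepingWillows}, that is, cut along the longest arc $\{i,n\}$ at $n$. Your derivations of $f = 1 + x f^2 \bar f$ and $g = 1 + (x+y)\, g f \bar f$, including the table of the four endpoint-oriented classes, are correct. For bijectivity you need one more observation. A non-crossing tree oriented away from one of its extreme nodes automatically satisfies the weeping condition. Indeed, the nodes strictly between the endpoints of an arc $a \to c$ are joined to the rest only through $a$ or $c$, and the root is not among them. Since every path from the root to $c$ passes through $a$, all these nodes are descendants of $a$.

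The discrepancy you found for $h$ is genuine, and the error is in the statement. It is neither in your bookkeeping nor in the convention for $\bar f$, which is the one that makes the $f$ and $g$ identities true. In the root-$n$ case the piece on $[1,i]$ is oriented away from its rightmost node, so that case contributes $y\,\bar f \cdot f \cdot \bar f$. The correct identity is therefore
\[
h = 1 + f \bar f \, (x h + y \bar f).
\]
You can settle this without re-examining conventions. The reflection $k \mapsto n+1-k$ preserves rooted weeping willows and exchanges increasing and decreasing arcs, so $h(x,y) = h(y,x)$ under any convention. The displayed $2x^2 + 4xy + y^2$ and $5x^3 + 14x^2y + 9xy^2 + 2y^3$ are not symmetric. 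They are exactly what the misprinted recursion $h = 1 + f\bar f(xh + yf)$ produces. The correct coefficients are $2x^2 + 3xy + 2y^2$ (your hand count) and $5x^3 + 10x^2y + 10xy^2 + 5y^3$.

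Cutting at the root $r$ instead of at $n$ gives a closed form. No arc can pass over $r$, since its weeping condition would require a directed path into the source $r$. So a rooted weeping willow is a tree on $[1,r]$ oriented away from $r$ glued to a tree on $[r,n]$ oriented away from $r$, and hence $h = \bar f f$. This is manifestly symmetric and specializes to $h = f^2$ when $x = y$. It also implies the corrected recursion via $f - 1 = x f^2 \bar f$ and $\bar f - 1 = y f \bar f^2$. The slip is invisible in \cref{prop:generatingFunctionsWeepingWillows}, where $\bar f = f$. So state and prove the corrected identity rather than leaving the question open.
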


\begin{proof}
Similar to the proof of \Cref{prop:generatingFunctionsWeepingWillows}.
\end{proof}


\subsection{Weeping willows and simple interval hypergraphic polytopes} 
\label{subsec:WWSIHP}

We now specialize \cref{subsec:TIPIHP} to prove \cref{prop:weepingWillows}.

\begin{proposition}
\label{prop:weepingWillows1}
The following are equivalent for a polytope~$\poly{P}$:
\begin{enumerate}[(i)]
\item $\poly{P}$ is a simple interval hypergraphic polytope,
\item $\poly{P}$ is a deformed permutahedron whose vertex digraphs are forests of weeping willows.
\item $\poly{P}$ is a simple deformed associahedron.
\end{enumerate}
\end{proposition}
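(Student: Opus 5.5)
The plan is to reduce everything to \cref{prop:TamariIntervalPosets1} together with two facts recalled in the preliminaries and in \cref{subsec:weepingWillows}. First, a vertex of a deformed permutahedron is simple if and only if its vertex digraph is a forest. Second, by \cref{prop:WWTIP}, the weeping willows are exactly the Hasse diagrams of Tamari interval posets that are trees. Conditions (i) and (iii) are directly equivalent: \cref{prop:TamariIntervalPosets1} (i)$\Leftrightarrow$(iii) says that interval hypergraphic polytopes and deformed associahedra are the same class of polytopes, so adding the word ``simple'' to both sides preserves the equivalence. It therefore remains to show (i)$\Leftrightarrow$(ii).

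For (i)$\Rightarrow$(ii), let $\poly{P} = \simplex_\II$ be simple. Every vertex digraph is then a forest, and it is the Hasse diagram of the vertex poset $\less$. By \cref{prop:TamariIntervalPosets1} (i)$\Rightarrow$(ii), $\less$ is a Tamari interval poset. I then restrict $\less$ to each connected component $C$ of the forest. I expect this restriction step to be the only point needing care, and I would handle it as follows. The components of a Tamari interval poset are intervals of $[n]$: if $a<b<c$ with $a,c$ in the same component, a path of cover relations from $a$ to $c$ contains an arc $\{x,y\}$ with $x\le b\le y$, and by \cref{def:TamariIntervalPoset}\,\eqref{item:TIPprop3} $b$ is then comparable to the source of that arc. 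After relabelling $C$ as $[|C|]$, the restriction is again a Tamari interval poset, since condition \eqref{item:TIPprop3} only involves triples inside the interval. Its Hasse diagram is a tree, so by \cref{prop:WWTIP} it is a weeping willow. Hence every vertex digraph is a forest of weeping willows placed on consecutive intervals. Alternatively, one can read ``forest of weeping willows'' directly as a forest satisfying the weeping condition of \cref{def:weepingWillow}, and the argument of \cref{prop:WWTIP} goes through verbatim for forests. I would state the convention explicitly.

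For (ii)$\Rightarrow$(i), suppose every vertex digraph is a forest of weeping willows. Each is then a forest, so $\poly{P}$ is simple. Moreover each vertex poset is the transitive closure of such a forest. Running the first half of the proof of \cref{prop:WWTIP} componentwise shows that this transitive closure satisfies \eqref{item:TIPprop3}: any relation $a\less c$ comes from a directed path inside one tree, and the weeping condition propagates to every intermediate $b$. So every vertex poset is a Tamari interval poset, and \cref{prop:TamariIntervalPosets1} (ii)$\Rightarrow$(i) shows that $\poly{P}$ is an interval hypergraphic polytope, which is simple. The main (mild) obstacle throughout is making the component/forest bookkeeping precise; none of the implications require new polytope theory.
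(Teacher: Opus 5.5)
Your proposal is correct and is essentially the paper's proof, which declares the statement immediate from \cref{prop:TamariIntervalPosets1} and \cref{prop:WWTIP}, using that a vertex is simple exactly when its vertex digraph is a forest. Your component bookkeeping is the detail the paper leaves implicit, and fixing the convention is necessary: if each component were only required to be a weeping willow on its own relabelled vertex set, then (ii)$\Rightarrow$(i) would fail, for instance for the square $\simplex_{\{1,3\}} + \simplex_{\{2,4\}}$, whose vertex digraph $1 \to 3$, $2 \to 4$ has trivially weeping components but whose transitive closure is not a Tamari interval poset.
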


\begin{proof}
Immediate from \cref{prop:TamariIntervalPosets1,prop:WWTIP}.
\end{proof}

\begin{proposition}
\label{prop:weepingWillows2}
Any weeping willow is a vertex tree of some simple interval hypergraphic polytope.
\end{proposition}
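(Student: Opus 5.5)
Given a weeping willow $\ww$ with transitive closure $\less$, \cref{prop:WWTIP} says $\less$ is a Tamari interval poset. \Cref{prop:TamariIntervalPosets2} (equivalently \cref{thm:TamariIntervalPosetInIntervalHypergraphic}) then shows that $\less$ is a vertex poset of $\simplex_\II$ for some interval hypergraph $\II$ in the poset $\c{I}_\less$. The task is to choose $\II \in \c{I}_\less$ so that $\simplex_\II$ is simple everywhere, not only at this vertex. For this I would use the characterization of \cref{thm:characterizationSimpleIntervalHypergraphicPolytopesProof}, since it is the only tool available. The natural candidates are the minimal hypergraph $\II_\less = \set{a^\less}{a \in [n]}$ and the maximal one $\JJ_\less$. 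The plan is to take $\II \eqdef \II_\less$, together with all singletons, and verify conditions (i) and (ii) of \cref{thm:characterizationSimpleIntervalHypergraphicPolytopesProof}.

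The key structural input is \cref{lem:weeping} combined with non-crossingness (\cref{prop:noncrossing}). Because $\ww$ is a tree, the upper sets $a^\less$ are the vertex sets of the subtrees hanging below $a$. Take $a, b$ with $a^\less \cap b^\less \ne \varnothing$. In a tree, a common descendant forces $a$ and $b$ to be comparable: otherwise the paths from $a$ and from $b$ to the common element would meet, creating a node with two incoming arcs, i.e.\ two distinct parents. So if $a \less b$, then $b^\less \subseteq a^\less$. Hence any two intervals of $\II_\less$ are either nested or disjoint, which makes $\II_\less$ a fertilitope-type building set and $\simplex_{\II_\less}$ a nestohedron.

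Concretely, I would check the conditions directly: if $I \cap J \neq \varnothing$ then $I \cup J \in \{I, J\} \subseteq \II_\less$, so (i) holds, and (ii) holds for the same reason. Equivalently, one can invoke that nestohedra are simple (\cref{prop:NestoVertexDiGraphsAreBBforests}). Finally, $\ww$ is a vertex digraph of $\simplex_{\II_\less}$: the vertex poset is $\less$ by \cref{prop:TamariIntervalPosets2}, and since $\ww$ is a tree it equals the Hasse diagram of $\less$.

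The one step needing care is the claim that each $a^\less$ is precisely the set of descendants of $a$ in a tree in which every node has in-degree at most one. This fails if $\ww$ has several sources, as in a general weeping willow: nodes can have two parents when the crown arcs point inward. So the nested-or-disjoint argument must be refined. If $a, b$ are incomparable with a common descendant $c$, then $a^\less \cap b^\less$ is the union of upper sets of the common descendants. I would first try to show, using $a^\less$ being an interval together with \cref{lem:weeping}, that for incomparable $a, b$ with intersecting upper sets, neither $a^\less \cup b^\less$ is contained in another $c^\less$, so (i) is vacuous. For (ii), when $|a^\less \cap b^\less| \ge 2$, I would take $K = c^\less$ for the unique node $c$ with two parents $a', b'$ on the paths from $a$ and $b$: this $K$ contains the intersection but neither upper set. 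If that fails, the fallback is to use $\JJ_\less$ instead, or to add $a^\less \cup b^\less$ to the hypergraph when needed and check via \cref{thm:TamariIntervalPosetInIntervalHypergraphic} that $\less$ remains a vertex poset.
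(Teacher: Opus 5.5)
Your final plan (take $\II_\less$ plus singletons, and check conditions (i) and (ii) of \cref{thm:characterizationSimpleIntervalHypergraphicPolytopesProof}) is exactly the paper's proof. As written, however, the proposal does not yet establish it.

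First, the argument of your second and third paragraphs is false and has to be discarded, not refined. For the weeping willow with arcs $1\to3$, $5\to3$, $3\to2$, $3\to4$, one has $1^\less=[1,4]$ and $5^\less=[2,5]$. These are neither nested nor disjoint, and $[1,5]\notin\II_\less$, so $\II_\less$ is not a building set.

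Second, your last paragraph states the right claims but without the right reasons.
\begin{itemize}
\item For (i), what makes the condition vacuous for incomparable $a,b$ is not the interval property or \cref{lem:weeping}. It is the tree property \cref{lem:posetTree}\,\eqref{item:diamond}. If $a^\less\cup b^\less\subseteq e^\less$ and $d\in a^\less\cap b^\less$, then $e\to\cdots\to a\to\cdots\to d$ and $e\to\cdots\to b\to\cdots\to d$ are both the unique $e$--$d$ path of $\ww$. Hence $a$ and $b$ are comparable.
\item For (ii), you must actually prove that $K=c^\less$ contains $a^\less\cap b^\less$. Take any common descendant $d$. The directed paths from $a$ and from $b$ to $d$ merge at some node $m$, and their initial segments form the unique $a$--$b$ path of $\ww$. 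On that path, $m$ is the only node with both path arcs incoming. Hence $m=c$ independently of $d$, so $a^\less\cap b^\less=c^\less$. Finally, $a,b\notin c^\less$ by antisymmetry.
\end{itemize}

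Third, your fallback of adding $a^\less\cup b^\less$ to the hypergraph would not work. For incomparable $a,b$, no $x$ satisfies $x\in a^\less\cup b^\less\subseteq x^\less$. So \cref{thm:TamariIntervalPosetInIntervalHypergraphic}\,\eqref{cond:belong1} fails and $\less$ is no longer a vertex poset. Fortunately the fallback is not needed: with the two arguments above filled in, your proof is complete.

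Your meeting-node choice of $K$ is in fact the sound version of the paper's step for (ii). The paper invokes \cref{lem:posetTree}\,\eqref{item:bowtie} to claim that $a^\less\cap b^\less$ is a chain, and takes its minimum. In the example above, $1^\less\cap 5^\less=\{2,3,4\}$ is not a chain, so that item of the lemma fails for this non-rooted tree. The minimum of the intersection is nevertheless your node $c=3$. So the paper's choice of $K$ is correct, and your argument is what justifies it.
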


\begin{proof}
It immediately follows from \cref{prop:TamariIntervalPosets2,prop:WWTIP} that any weeping willow is the vertex tree of a simple vertex in some interval hypergraphic polytope.
However, we have to work slightly more to prove that any weeping willow appears as a vertex tree in some \emph{simple} interval hypergraphic polytope.

Consider a weeping willow~$\ww$, its transitive closure~$\less$, and the interval hypergraph~$\II_\less \eqdef \set{a^\less}{a \in [n]}$ defined in \cref{prop:inclusionPosetIntervalHypergraphsContainingTamariIntervalPoset}\, \eqref{item:existsMinimalIntervalHypergraph}. 
We just need to prove that~$\simplex_{\II_\less}$ is simple, or equivalently, that~$\II_\less$ satisfies the conditions of \cref{thm:characterizationSimpleIntervalHypergraphicPolytopes}:
\begin{enumerate}[(i)]
\item If there are~$i, j, k, \ell \in [n]$ such that~$\ell \in i^\less \cap j^\less$ and~$i^\less \cup j^\less \subseteq k^\less$, then we have~$k \less i,j \less \ell$. 
As~$\ww$ is a tree, we obtain by \cref{lem:posetTree}\,\eqref{item:diamond} that~$i$ and~$j$ are comparable in~$\less$.
Hence, we get that~$i^\less$ and~$j^\less$ are nested, so that~$i^\less \cup j^\less \in \{i^\less, j^\less\} \subseteq \c{I}_\less$.
\item If there are~$i, j, k, \ell \in [n]$ such that~$k, \ell \in i^\less \cap j^\less$, then we have~$i, j \less k, \ell$.
As~$\ww$ is a tree, we obtain by \cref{lem:posetTree}\,\eqref{item:bowtie} that either~$i$ and~$j$, or $k$ and~$\ell$ are comparable in~$\less$.
If~$i$ and~$j$ are comparable, then~$i^\less$ and~$j^\less$ are nested, so that~$i^\less \cup j^\less \in \{i^\less, j^\less\} \subseteq \c{I}_\less$.
Otherwise, we obtain that~$i^\less \cap j^\less$ is a chain in~$\less$, and we can assume that~$k$ is its minimal element.
Note that~$i \notin k^\less$ and~$j \notin k^\less$ by antisymmetry of~$\less$.
Hence, we obtained that~$i^\less \cap j^\less \subseteq k^\less$ and~$i^\less \not\subseteq k^\less$ and~$j^\less \not\subseteq k^\less$.
\qedhere
\end{enumerate}
\end{proof}


We can now prove a weaker version of \Cref{conj:nestohedralCharacterization}, in the case of \emph{interval} hypergraphic polytopes.

\begin{proposition}
\label{prop:rootedWeepingWillows1}
A deformed permutahedron has the normal fan of an interval nestohedron if and only if all its vertex digraphs are forests of rooted weeping willows.
\end{proposition}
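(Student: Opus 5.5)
The plan is to prove both directions by reducing to results already established. For the forward direction, take an interval building set~$\BB$ (a deformed permutahedron with the same normal fan has the same vertex digraphs). Since~$\BB$ is union closed, \cref{thm:characterizationSimpleIntervalHypergraphicPolytopesProof} shows that~$\simplex_\BB$ is simple, and \cref{prop:weepingWillows1} then makes every vertex digraph a forest of weeping willows. By \cref{prop:NestoVertexDiGraphsAreBBforests}, every vertex digraph is a $\BB$-forest, so each of its components is rooted. Hence every vertex digraph is a forest of rooted weeping willows.

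For the backward direction, let~$\poly{P}$ be a deformed permutahedron all of whose vertex digraphs are forests of rooted weeping willows. Their transitive closures are Tamari interval posets by \cref{prop:WWTIP} (applied componentwise; disjoint unions of Tamari interval posets whose components occupy intervals are again Tamari interval posets, using non-crossingness from \cref{prop:noncrossing}). \cref{prop:TamariIntervalPosets1} then gives that~$\poly{P}$ is an interval hypergraphic polytope~$\sum \lambda_I \simplex_I$, whose normal fan is that of~$\simplex_\II$ with~$\II = \set{I}{\lambda_I > 0}$. It remains to show that~$\II$ is union closed. Suppose~$I, J \in \II$ with~$I \cap J \ne \varnothing$ and~$I \cup J \notin \II$.

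The natural move is to mimic \cref{lem:forward1,lem:forward2}: choose a permutation~$\sigma$ such that the vertex poset of~$\Or_\sigma$ has a non-rooted component or a cycle. Without loss of generality~$I \not\subseteq J$ and~$J \not\subseteq I$ (otherwise the union is already in~$\II$). Set~$i = \min(I) \in I \ssm J$ and~$j = \max(J) \in J \ssm I$, and take~$\sigma = i j X$ with~$X$ arbitrary. Then~$\Or_\sigma(I) = i$ and~$\Or_\sigma(J) = j$, so~$i \less \ell$ and~$j \less \ell$ for any~$\ell \in I \cap J$. Moreover~$i$ and~$j$ are incomparable unless some~$H \in \II$ contains~$\{i,j\}$, that is, unless~$H \supseteq I \cup J$. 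If no such~$H$ exists, then~$i$ and~$j$ are both minimal and lie in the same connected component (through~$\ell$), so that component has two sources. This contradicts rootedness, or, if the Hasse diagram is not a forest at all, contradicts the hypothesis directly. If such an~$H$ exists, it strictly contains~$I \cup J$, and \cref{lem:forward1} produces a non-simple vertex, again contradicting the hypothesis that all vertex digraphs are forests.

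The main obstacle I expect is the minimality claim for~$i$ and~$j$, since elements placed earlier in~$\sigma$ might lie below them. Placing~$i$ and~$j$ first in~$\sigma$ handles this: nothing precedes them, so each is the oriented element of any hyperedge in which it appears first. Then~$i$ can only be dominated by~$j$, and vice versa, and that happens only through a hyperedge containing both. I would also double-check the componentwise Tamari claim in the second step, or sidestep it by invoking \cref{prop:weepingWillows1}(ii)$\Rightarrow$(i) directly, which already yields an interval hypergraphic polytope.
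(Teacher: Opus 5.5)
Your proof is correct and is essentially the paper's argument: you use the same permutation $\sigma = ijX$, the same observation that rootedness forces some $H \in \II$ to contain $\{i,j\}$ and hence $I \cup J$, and the same simplicity obstruction, namely \cref{lem:forward1} (equivalently, condition~(i) of \cref{thm:characterizationSimpleIntervalHypergraphicPolytopesProof}), to conclude $I \cup J \in \II$. The only difference is cosmetic: you run it as a case split by contradiction, whereas the paper first extracts condition~(i) from simplicity and then derives the arc $(i,j)$ directly.
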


\begin{proof}
By \Cref{prop:NestoVertexDiGraphsAreBBforests}, vertex digraphs of nestohedra are $\BB$-forests, notably rooted forests.
Thus, the vertex digraphs of interval nestohedra are forests of rooted weeping willows.

We now prove the converse.
Fix a deformed permutahedron $\pol$ whose vertex digraphs are forests of rooted weeping willows.
Thus, by \Cref{prop:TamariIntervalPosets1,prop:WWTIP}, there exists an interval hypergraph $\II$ such that $\pol$ is normally equivalent to $\simplex_{\II}$.
Moreover, since weeping willows are trees, the polytope $\simplex_{\II}$ is simple, so $\II$ satisfies the conditions of \Cref{thm:characterizationSimpleIntervalHypergraphicPolytopes}.
Consider $I, J\in \II$ with $I\cap J\ne \varnothing$. 
Without loss of generality, we assume that $i\eqdef \min I < \max J \defeq j$.
Construct the permutation $\sigma \eqdef ij X$ where $X$ is the word formed by $[n]\ssm\{i, j\}$ written in an arbitrary order, and let $\ww$ be the associated forest of weeping willows (\ie the vertex digraph of the vertex of $\simplex_{\II}$ whose normal cone contains~$\poly{C}_{\sigma}$).
Then,~$i$ is a source of $\ww$ (because $i$ is the first letter of $\sigma$).
Since $\ww$ is a vertex digraph of $\simplex_\II$, it is rooted, and node $i$ is necessarily its root.
Moreover, as $I\cap J\ne\emptyset$, the node $j$ is in the same rooted tree of the forest $\ww$ as $i$ is.
Hence, $j$ is not a source of $\ww$, and $j$ precedes every $[n] \setminus \{i,j\}$ in $\sigma$.
Thus, the couple $(i,j)$ must be an arc of $\ww$.
It follows that there exists $K \in\II$ with $\{i, j\}\subseteq K$, implying $[i, j] = (I \cup J) \subseteq K$.
By \Cref{thm:characterizationSimpleIntervalHypergraphicPolytopes}~\eqref{cond:simple1}, we get $I\cup J\in \II$.
Therefore, $\II$ is a building set, and $\pol$ is normally equivalent to the interval nestohedron~$\simplex_{\II}$.
\end{proof}

\begin{proposition}
\label{prop:rootedWeepingWillows2}
Any rooted weeping willow is a vertex tree of some interval nestohedron.
\end{proposition}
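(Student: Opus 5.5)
We propose to take the same hypergraph as in the proof of \cref{prop:weepingWillows2}. Fix a rooted weeping willow~$\ww$ with root~$r$, let~$\less$ be its transitive closure, and consider~$\II_\less \eqdef \set{a^\less}{a \in [n]}$. By \cref{def:TamariIntervalPoset}\,\eqref{item:TIPprop4} and \cref{prop:WWTIP}, each~$a^\less$ is an interval. By \cref{prop:inclusionPosetIntervalHypergraphsContainingTamariIntervalPoset}, $\less$ is a vertex poset of~$\simplex_{\II_\less}$, so~$\ww$ is a vertex tree of~$\simplex_{\II_\less}$. It remains to show that~$\II_\less$ is an interval building set. Then~$\simplex_{\II_\less}$ is an interval nestohedron, and no extra simplicity check is needed.

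To check the building set property, take~$a, b \in [n]$ with~$a^\less \cap b^\less \ne \varnothing$, and pick~$\ell$ in the intersection. Since~$\ww$ is rooted at~$r$, we have~$r \less a$ and~$r \less b$ (allowing equality), and also~$a \less \ell$ and~$b \less \ell$. This gives a diamond~$r \less a, b \less \ell$, possibly degenerate. If~$a$ or~$b$ equals~$r$ or~$\ell$, then they are directly comparable. Otherwise \cref{lem:posetTree}\,\eqref{item:diamond} applies because~$\less$ is the transitive closure of a tree, and again~$a$ and~$b$ are comparable. In every case the principal upper sets are nested, so~$a^\less \cup b^\less \in \{a^\less, b^\less\} \subseteq \II_\less$. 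Hence~$\II_\less$ is closed under unions of intersecting hyperedges. In fact it is a family of nested or disjoint intervals, as in the proof of \cref{prop:NestoVertexDiGraphsAreBBforests}, so~$\simplex_{\II_\less}$ is even a fertilitope.

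The only point needing care is the degenerate diamond, which is handled by the case distinction above. One could even bypass this argument entirely: \cref{prop:NestoVertexDiGraphsAreBBforests} already shows that~$\ww$ is a $\BB$-tree for~$\BB = \set{a^\less}{a\in[n]}$, and this~$\BB$ consists of intervals. Either way, the main (mild) obstacle is making sure that the vertex of~$\simplex_{\II_\less}$ has vertex digraph exactly~$\ww$, not merely transitive closure~$\less$. This holds because a vertex digraph is the Hasse diagram of the vertex poset, and~$\ww$ is the Hasse diagram of~$\less$ since it is a tree.
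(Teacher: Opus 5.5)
Your proposal is correct and follows essentially the paper's proof. Both take the hypergraph $\II_\less=\set{a^\less}{a\in[n]}$ and show it is a nested-or-disjoint family of intervals, hence an interval building set. The only difference is how nestedness is justified: you apply the diamond of \cref{lem:posetTree} below the root, while the paper reads it off from whether the tree path between $a$ and $b$ is directed.
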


\begin{proof}
Consider a rooted weeping willow~$\ww$, its transitive closure~$\less$, and the interval hypergraph~$\II_\less \eqdef \set{a^\less}{a \in [n]}$ defined in the proof of \cref{prop:inclusionPosetIntervalHypergraphsContainingTamariIntervalPoset}\, \eqref{item:existsMinimalIntervalHypergraph}.
As~$\ww$ is rooted, for any~$a, b \in [n]$, the sets~$a^\less$ and~$b^\less$ are nested (resp.~disjoint) if the unique path between~$a$ and~$b$ in~$\ww$ is directed (resp.~not directed).
Hence,~$\II_\less$ is an interval building set and~$\ww$ is indeed a vertex tree of the interval nestohedron~$\simplex_{\II_\less}$.
\end{proof}


\subsection{A finer description}
\label{subsec:finerDescriptionWeeping}

Specializing \cref{thm:TamariIntervalPosetInIntervalHypergraphic}, we know which weeping willow appear in which simple interval hypergraphic polytope.
This yields the following examples (in each example, by convention, our interval hypergraphs contain no singleton).

\begin{example}
\label{exm:cube}
For~$\II = \set{[i,i+1]}{1 \le i < n}$, the hypergraphic polytope~$\simplex_\II$ is the cube (graphical zonotope of the path-graph), and its weeping willows (\ie vertex trees) are all orientations of the path graph considered in \cref{exm:orientationPath}.
The coordinate of the vertex associated to a given weeping willow is its out-degree vector.

\medskip
\centerline{\includegraphics[scale=1]{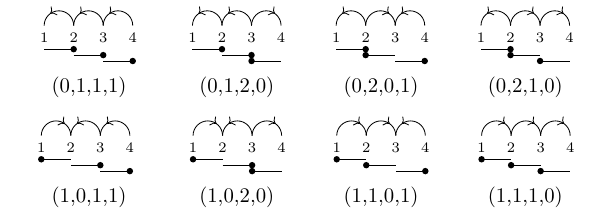}}
\end{example}

\begin{example}
\label{exm:associahedron}
For~$\II = \set{[i,j]}{1 \le i < j \le n}$, the hypergraphic polytope~$\simplex_\II$ is the associahedron~\cite{ShniderSternberg,Loday}, and its vertex trees are the rooted binary trees oriented and labeled as in \cref{exm:binarySearchTrees}.
The coordinates of the vertex associated to each rooted binary tree is the product of the number of left leaves times the number of right leaves, see \cite[Section~1]{Loday} or \Cref{rmk:coordinatesIlessJless}.

\medskip
\centerline{\includegraphics[scale=1]{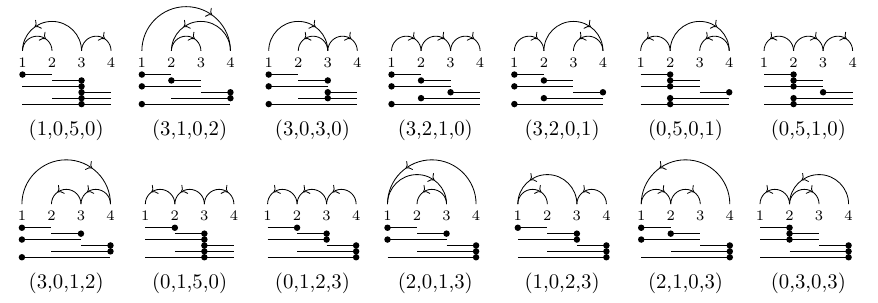}}
\end{example}

\begin{example}
\label{exm:PitmanStanley}
For~$\II = \set{[1,i]}{1 < i \le n}$, the hypergraphic polytope~$\simplex_\II$ is the Pitman--Stanley polytope~\cite{PitmanStanley}, and its vertex trees are the Pitman--Stanley trees of \cref{exm:PitmanStanleyTrees}.
The vertex~$\b v_{\ww}$ of $\simplex_\II$ corresponding to a Pitman-Stanley tree~$\ww$ is the out-degree sequence of~$\ww$. 
Through the bijection discussed in \cref{exm:PitmanStanleyTrees}, the vertex~$\b v_{\b{a}} = (v_1, \dots, v_n)$ of~$\simplex_\II$ corresponding to a binary sequence~$\b{a} \eqdef (a_1, a_2, \dots, a_{n-1}) \in \{0, 1\}^n$ is given by~$v_i = k + \ell$ where~$k$ (resp.~$\ell$) is maximal such that~$a_{i-k} = \dots = a_{i-1} = 0$ (resp.~$a_i = \dots = a_{i+\ell-1} = 1$).

\medskip
\centerline{\includegraphics[scale=1]{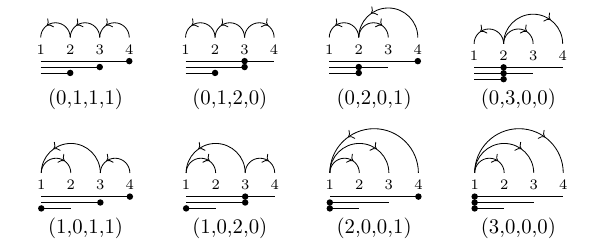}}
\end{example}

\pagebreak

\begin{example}
\label{exm:freehedron}
For~$\II = \set{[1,i],[n]\setminus[i]}{1 \le i \le n}$, the hypergraphic polytope~$\simplex_\II$ is the freehedron~\cite{Saneblidze-freehedron}, and its vertex trees are the freehedron trees of \cref{exm:freehedronTrees}.
The vertex of~$\simplex_\II$, namely~$\b v_{\ww} = (v_1, \ldots, v_n)$, corresponding to a freehedron tree~$\ww$ is given by
\[
v_i = \begin{cases}
        \text{out-degree}(i)&  \text{ if } i \text{ is not the root}, \\
        n +1 &  \text{ if } i \text{ is  the root  and } 1< i  < n,  \\
        n  &  \text{ if } i \text{ is  the root  and } i \in \{1,n\}.
\end{cases}
\]

\medskip
\centerline{\includegraphics[scale=1]{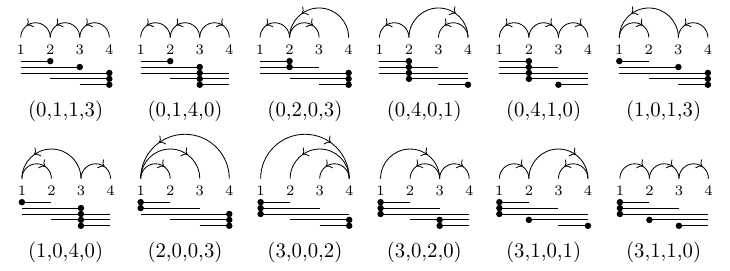}}
\end{example}

Moreover, we exploit \cref{thm:TamariIntervalPosetInIntervalHypergraphic} to give an analogue of \cref{prop:inclusionPosetIntervalHypergraphsContainingTamariIntervalPoset} for simple interval hypergraphic polytopes.
See \cref{fig:LatticeOfSlessInIless}.
Recall from \cref{thm:TamariIntervalPosetInIntervalHypergraphic} that, for a Tamari interval poset~$\less$, we defined the interval hypergraphs~$\mathdefn{\II_\less} \eqdef \set{a^\less}{a \in [n]}$ and~$\mathdefn{\JJ_\less} \eqdef \bigcup_{a \in [n]} \set{I}{a \in I \subseteq a^\less}$.
See \Cref{fig:poset_of_intervals} for illustrations of~$\II_\less$ and~$\JJ_\less$.
Recall that we call \defn{crown arcs} of~$\ww$ the arcs of~$\ww$ which are maximal for the order given by~$(a, b) \preceq (c, d)$ 
for~$\min(c,d) \le \min(a,b)$ and~$\max(a,b) \le \max(c,d)$.

\begin{proposition}
\label{prop:inclusionPosetIntervalHypergraphsContainingWeepingWillow}
We denote by~$\mathdefn{\c{S}}$ the inclusion poset of interval hypergraphs~$\II$ on~$[n]$ such that~$\simplex_\II$ is simple.
For a given weeping willow~$\ww$ with transitive closure~$\less$, denote by~$\mathdefn{\c{S}_\less}\eqdef \c I_\less \cap \c S$ the subposet of~$\c{S}$ induced by interval hypergraphs~$\II$ on~$[n]$ such that~$\ww$ is a vertex tree of~$\simplex_\II$.
Then
\begin{enumerate}[(i)]
\item $\c{S}_\less$ is order convex in~$\c{S}$.
\item The interval hypergraph~$\II_\less$ is a minimal element of~$\c{S}_\less$. Moreover, it is the unique minimal element of~$\c{S}_\less$ if and only if there are no~$1 \le a < b < c \le n$ such that~$a \lessdot b \lessdot c$, or~$a \moredot b \moredot c$, or $a \moredot b \lessdot c$, where at least one of the two cover relations is a crown arc of~$\ww$.\label{item:existsMinimalSimpleIntervalHypergraph}
\item The interval hypergraph~$\JJ_\less$ is the unique maximal element of~$\c{S}_\less$.
\end{enumerate}
\end{proposition}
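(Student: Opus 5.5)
The plan is to treat the three items separately. Items (i) and (iii) follow quickly from \cref{prop:inclusionPosetIntervalHypergraphsContainingTamariIntervalPoset} combined with \cref{thm:characterizationSimpleIntervalHypergraphicPolytopes}. Item (ii) needs a refinement of the uniqueness argument given there.

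For (i), take $\II \subseteq \JJ \subseteq \KK$ in $\c{S}$ with $\II, \KK \in \c{S}_\less$. By \cref{prop:inclusionPosetIntervalHypergraphsContainingTamariIntervalPoset}\,(i), $\c{I}_\less$ is order convex in $\c{I}$, so $\JJ \in \c{I}_\less$. By hypothesis $\JJ \in \c{S}$, hence $\JJ \in \c{S}_\less$.

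For (iii), since $\JJ_\less$ is the maximum of $\c{I}_\less \supseteq \c{S}_\less$, it suffices to check that $\simplex_{\JJ_\less}$ is simple, using \cref{thm:characterizationSimpleIntervalHypergraphicPolytopes}. Take $I, J \in \JJ_\less$ with witnesses $a \in I \subseteq a^\less$ and $b \in J \subseteq b^\less$.
\begin{itemize}
\item For condition (i), let $\ell \in I \cap J$ and let $K \supseteq I \cup J$ with witness $c \in K \subseteq c^\less$. Then $c \less a, b \less \ell$, so \cref{lem:posetTree}\,\eqref{item:diamond} makes $a$ and $b$ comparable, say $a \less b$. Then $I \cup J$ is an interval containing $a$ and contained in $a^\less \cup b^\less = a^\less$, hence $I \cup J \in \JJ_\less$.
\item For condition (ii), if $a$ and $b$ are comparable we conclude as above. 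Otherwise, \cref{lem:posetTree}\,\eqref{item:bowtie} applied to any two elements of $I \cap J$ shows that $I \cap J$ is a chain. Its minimum $m$ then satisfies $m \in I \cap J \subseteq m^\less$, so $K \eqdef I \cap J \in \JJ_\less$. Moreover $a \notin J$, since otherwise $b \less a$; hence $I \not\subseteq K$, and symmetrically $J \not\subseteq K$.
\end{itemize}

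For (ii), minimality of $\II_\less$ follows because it is minimal in $\c{I}_\less$ and was shown to lie in $\c{S}$ in the proof of \cref{prop:weepingWillows2}. For uniqueness I proceed in two directions.

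Assume first that there is no pattern $a \lessdot b \lessdot c$, $a \moredot b \moredot c$, or $a \moredot b \lessdot c$ using a crown arc. Take any simple $\II \in \c{S}_\less$; I show $a^\less \in \II$ for every $a$. Condition (ii) of \cref{thm:TamariIntervalPosetInIntervalHypergraphic} provides intervals $I \ni a$ inside $a^\less$ covering each cover relation out of $a$. By \cref{thm:TamariIntervalPosetInIntervalHypergraphic}\,(i), any $I \in \II$ with $a \in I \subseteq a^\less$ and $\max(I) < \max(a^\less)$ can be combined with an interval of $\II$ attached to the next cover along the $U(\less)$-path towards $\max(a^\less)$. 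Since the relevant cover relations are not crown arcs, \cref{lem:weeping} provides an interval $K \in \II$ attached to an arc strictly containing them, with $K \supseteq I \cup J$. Condition (i) of \cref{thm:characterizationSimpleIntervalHypergraphicPolytopes} then forces $I \cup J \in \II$. Iterating on both sides yields $a^\less \in \II$.

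Conversely, if such a pattern exists with a crown arc, I use the same replacement hypergraph as in \cref{prop:inclusionPosetIntervalHypergraphsContainingTamariIntervalPoset}, for instance $\II_\less \ssm \{a^\less\} \cup \{[\min(a^\less), b]\}$. It lies in $\c{I}_\less$, and I must verify that it is simple. Because the involved arc is a crown arc, no interval of the hypergraph contains the union of the two pieces, so condition (i) is vacuous there. Condition (ii) should follow from the chain argument of \cref{prop:weepingWillows2}.

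I expect the main obstacle to be this last step, namely checking simplicity of the modified hypergraph in all three pattern cases. The forward induction, which identifies exactly where the enclosing interval $K$ comes from, is the next most delicate point.
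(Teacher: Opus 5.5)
Items (i) and (iii), and the minimality of $\II_\less$, are correct and follow the paper. Using $K = I \cap J$ instead of $k_\circ^\less$ for condition (ii) in (iii) is a valid variant. The genuine gap is the converse in (ii): the hypergraph $\II_\less \ssm \{a^\less\} \cup \{[\min(a^\less), b]\}$ need not be simple, and both of your justifications fail.

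Condition (i) is not vacuous merely because $a \lessdot b$ is a crown arc. Take the directed path $1 \to 2 \to 3 \to 4$ (all its arcs are crown arcs) and the pattern $2 \lessdot 3 \lessdot 4$. You get $\{[1,4],[2,3],[3,4]\}$, which violates (i) with $K=[1,4]$. This can be repaired by first sliding the pattern left until $a$ is a source, as the paper does.

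Condition (ii), however, fails even then. The set $[\min(a^\less), b] \cap b^\less = [\min(b^\less), b]$ is not an up-set of $\less$ as soon as $b$ has a child between $a$ and $b$, so the chain argument of \cref{prop:weepingWillows2} does not transfer. Concretely, take $\ww = \{1 \to 3,\, 3 \to 2,\, 3 \to 4\}$. Here $1 \lessdot 3$ is a crown arc, $1$ is a source, and $1 \lessdot 3 \lessdot 4$. Your hypergraph is $\{[1,3],[2,4]\}$ (plus singletons), which is not simple by \cref{lem:forward2}: the vertex given by $\sigma = 1423$ has a $4$-cycle as Hasse diagram. The replacement for the other pattern $2 \moredot 3 \lessdot 4$ gives $\{[1,4],[2,3],[3,4]\}$, which is also non-simple. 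The paper's proof uses the same hypergraph. Its justification of (ii), that $H \cap j^\less$ is always some $k^\less$, breaks at $H=[1,3]$, $j=3$. So this direction needs a different witness.

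One witness that works is to cut the tree at $b$.
\begin{itemize}
\item In every pattern one of the two arcs is a crown arc with endpoint $b$. Hence no arc of $\ww$ passes over $b$, since it would cross or strictly contain that crown arc.
\item Therefore tree paths between nodes of $[1,b]$ (resp.~$[b,n]$) stay inside it. The restrictions of $\ww$ to these two intervals are weeping willows, with transitive closures $\less_{[1,b]}$ and $\less_{[b,n]}$.
\item Set $\II' \eqdef \II_{\less_{[1,b]}} \cup \II_{\less_{[b,n]}}$. Each half satisfies both conditions by the proof of \cref{prop:weepingWillows2}. Intervals from different halves share at most $b$, and no interval of $\II'$ has points on both sides of $b$, so $\II'$ is simple.
\item $\II'$ lies in $\c{I}_\less$: each interval $y^\less \cap [1,b]$ or $y^\less \cap [b,n]$ has witness $y$, and every arc of $\ww$ lies in one half.
\item $\II'$ misses the up-set of $\II_\less$ that straddles $b$, namely $a^\less$, $b^\less$ or $c^\less$ depending on the pattern.
\end{itemize}
In the example this gives $\{[1,3],[2,3],[3,4]\}$.

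In the forward direction, \cref{lem:weeping} is not what provides $K$. Moreover, an arc just above the two covers being merged need not contain $I \cup J$, because the intervals from \cref{thm:TamariIntervalPosetInIntervalHypergraphic}\,(ii) may extend past their arcs' endpoints. What you need is a single arc whose span contains all of $a^\less$; its interval then serves as $K$ at every step. Such an arc exists because each non-crown arc lies under a unique crown arc. If two consecutive arcs on the paths from $a$ to $\min(a^\less)$ and $\max(a^\less)$ lay under different crown arcs, those crown arcs would meet at the shared node and form a forbidden pattern containing a crown arc.
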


\begin{example}
\cref{prop:inclusionPosetIntervalHypergraphsContainingWeepingWillow} is illustrated in \cref{fig:LatticeOfSlessInIless}.
Recall that the set~$\c{S}$ appears in \textcolor{blue}{blue} and \textcolor{green}{green} and is not order convex inside the boolean lattice~$\c{I}$.
The set~$\c{S}_\less$ contains a unique maximal element~$\JJ_\less =$ \!\drawSubset[green]{2,3,4,5,6}\!\! but three minimal elements $\II_\less =$ \!\drawSubset[green]{3}\!\!, and also \!\drawSubset[blue]{2,6}\!\! and \!\drawSubset[blue]{2,4,5}\!\!. The latter is not a minimal element of $\c I_\less$.
\end{example}

%
%

\begin{proof}[Proof of \Cref{prop:inclusionPosetIntervalHypergraphsContainingWeepingWillow}]
The order convexity of~$\c{S}_\less$ is argued as in the proof of \cref{prop:inclusionPosetIntervalHypergraphsContainingTamariIntervalPoset}.

We know that~$\II_\less \in \c{S}_\less$ from the proof of \cref{prop:weepingWillows2}.
With a very similar approach, we now prove that~$\JJ_\less$ satisfies the conditions of \cref{thm:characterizationSimpleIntervalHypergraphicPolytopes}.
Consider~$I,J \in \JJ_\less$ such that~$I \cap J \ne \varnothing$, and let~$i, j \in [n]$ be such that~$i \in I \subseteq i^\less$ and~$j \in J \subseteq j^\less$.
Then
\begin{enumerate}[(i)]
\item Assume that~$I \cap J \ne \varnothing$ and there is~$K \in \JJ_\less$ with~$I \cup J \subseteq K$. Let~$\ell \in I \cap J$ and~$k \in [n]$ be such that~$k \in K \subseteq k^\less$. 
Then we have~$k \less i, j \less \ell$. 
As~$\ww$ is a tree, we obtain by \cref{lem:posetTree}\,\eqref{item:diamond} that~$i$ and~$j$ are comparable in~$\less$, say for instance that~$i \less j$.
Then~$j^\less \subseteq i^\less$, so that~$i \in I \cup J \subseteq i^\less$, which implies that~$I \cup J \in \JJ_\less$ by construction of $\JJ_\less$.
\item Assume that~$|I \cap J| \ge 2$.
If~$i$ and~$j$ are comparable, then~$i^\less$ and~$j^\less$ are nested, and we conclude as above that~$I \cup J \in \JJ_\less$.
Otherwise, consider $k,\ell \in I \cap J$.
As $I \cap J \subseteq i^\less \cap j^\less$, there are paths from both~$i$ and~$j$ to both~$k$ and~$\ell$.
If~$k$ and~$\ell$ are incomparable, then these four paths contradict that~$\ww$ is non-crossing.
Hence, $I \cap J$ is a chain in~$\less$, with minimal element~$k_\circ$.
Note that~$i \notin k_\circ^\less$ and~$j \notin k_\circ^\less$ by antisymmetry of~$\less$.
Hence, we obtained that~$K = k_\circ^\less \in \JJ_\less$ satisfies~$I \cap J \subseteq K$ and~$I \not\subseteq K$ and~$J \not\subseteq K$.
\end{enumerate}

As~$\c{S}_\less = \c{I}_\less\cap \c S$, note that~$\II_\less$ (resp.~$\JJ_\less$) is still an inclusion minimal (resp. the \emph{unique} inclusion maximal) element of~$\c{S}_\less$ as it was already an inclusion minimal (resp.~the unique inclusion maximal) element of~$\c{I}_\less$.
We thus just need to discuss when is~$\II_\less$ inclusion-minimally unique.

Assume first that there are no~$1 \le a < b < c \le n$ such that~$a \lessdot b \lessdot c$, or $a \moredot b \moredot c$, or $a \moredot b \lessdot c$, where at least one of the two cover relations is a crown arc of~$\ww$, see \Cref{fig:GeneratingFunctionOrientedK}.
For any~$a \in [n]$, consider a path~$\pi_{\min}$ (resp.~$\pi_{\max}$) in~$\ww$ from~$a$ to~$\min(a^\less)$ (resp.~to~$\max(a^\less)$).
If~$\pi_{\min} \cup \pi_{\max}$ contains at most one arc, then any interval hypergraph containing all singletons and satisfying \cref{thm:TamariIntervalPosetInIntervalHypergraphic}\,\eqref{cond:belong2} must contain~$a^\less$.
Otherwise, either~$\pi_{\min}$ contains at least two consecutive arcs, or~$\pi_{\max}$ does, or there are two arcs of~$\ww$ incoming at~$a$.
In all three cases, our assumption ensures that no arc of~$\pi_{\min} \cup \pi_{\max}$ is a crown arc of~$\ww$.
Hence, there is a crown arc~$(k, k')$ passing above~$\pi_{\min} \cup \pi_{\max}$.
In particular, there is an interval~$K\in \II$ such that $K\supseteq[k, k']\supseteq a^\less$.
Besides, any interval hypergraph~$\II$ such that~$\simplex_\II$ is simple and admits~$\ww$ as a vertex tree must contain an interval containing the endpoints of each arc of~$\pi_{\min} \cup \pi_{\max}$ by \cref{thm:TamariIntervalPosetInIntervalHypergraphic}\,\eqref{cond:belong2}.
Thus, by \cref{thm:characterizationSimpleIntervalHypergraphicPolytopes}\,\eqref{cond:simple1}, the existence of $K$ implies that the union $X$ of these intervals, belongs to $\II$.
On the one hand, $X\supseteq[\min(a^\less),\, \max(a^\less)] = a^\less$, by construction.
On the other hand, all elements of $X$ belong to intervals whose minimum according to $\less$ is in the path $\pi_{\min}\cup\pi_{\max}$, hence all elements in $X$ are bigger than $a$, \ie $X\subseteq a^\less$.
Thus, $X = a^\less$, yielding $a^\less\in \II$.
We conclude that any such interval hypergraph~$\II$ must contain~$\II_\less$: it is the \emph{unique} inclusion minimal element of~$\c S_\less$.

Conversely, assume that there are~$1 \le a < b < c \le n$ such that~$a \lessdot b \lessdot c$ where~$a \lessdot b$ is a crown arc of~$\ww$.
Without loss of generality, we can consider that $a$ is a root of $\ww$.
Consider the interval hypergraph $\II \eqdef \II_\less \ssm \{a^\less\} \cup \{[\min(a^\less), b]\}$.
We have seen at the end of the proof of \Cref{prop:inclusionPosetIntervalHypergraphsContainingTamariIntervalPoset} that $\ww$ is a vertex tree of $\simplex_\II$, it remains to prove that $\II$ satisfies the conditions of \Cref{thm:characterizationSimpleIntervalHypergraphicPolytopes}.
First, note that $a^\less$ is inclusion maximal in $\II_\less$, and $[\min(a^\less),\, b]$ is inclusion maximal in $\II$.
As $[\min(a^\less),\, b]\subseteq a^\less$, this ensures that \Cref{thm:characterizationSimpleIntervalHypergraphicPolytopes}\,\eqref{cond:simple1} is satisfied for $\II$.
Besides, we have that for all $H\in \II$ and $j\in [n]$, there exists $k\in [n]$ such that~$H \cap j^\less = k^\less$.
Hence, $\II$ still satisfies \Cref{thm:characterizationSimpleIntervalHypergraphicPolytopes}\,\eqref{cond:simple2}.
The proof is similar if~$a \lessdot b \lessdot c$ and~$b \lessdot c$ is a crown arc (resp.~if~$a \moredot b \moredot c$ and~$b \moredot c$ is a crown arc, resp.~if~$a \moredot b \moredot c$ and~$a \moredot b$ is a crown arc, resp.~$a \moredot b \lessdot c$ and one of them is a crown arc), considering the interval hypergraph~$\II \eqdef \II_\less \ssm \{a^\less\} \cup \{[b, \max(a^\less)]\}$ (resp.~$\II_\less \ssm \{c^\less\} \cup \{[b, \max(c^\less)]\}$, resp.~$\II_\less \ssm \{c^\less\} \cup \{[\min(c^\less), b]\}$, resp.~$\II_\less \ssm \{b^\less\} \cup \{[\min(b^\less),b], [b, \max(b^\less)]\}$).
\end{proof}

\begin{remark}\label{rmk:GeneratingFunctionSpecialWW}
Let
\begin{align*}
f & = 1 + x + 3x^2 + 12x^3 + 55x^4 + 273x^5 + 1428x^6 + 7752x^7 + 43263x^8 + 246675x^9 + \cdots \\
k & = 1 + 2x + 5x^2 + 16x^3 + 67x^4 + 316x^5 + 1599x^6 + 8480x^7 + 46512x^8 + 261668x^9 + \cdots
\end{align*}
denote the ordinary generating functions of unoriented non-crossing trees and of the weeping willows described in \cref{prop:inclusionPosetIntervalHypergraphsContainingWeepingWillow}\,\eqref{item:existsMinimalSimpleIntervalHypergraph}.
Then
\[
k = 1 + (2x+x^2)f^2,
\]
and there are $\frac{2}{2n-1}\binom{3n-2}{n} + \frac{1}{2n-3}\binom{3n-5}{n-1}$ such weeping willows on $n\geq 1$ nodes.
See \cref{table:relevantCombinatorialObjects} for the first values.
The proof is left to the reader, after contemplation of \Cref{fig:GeneratingFunctionOrientedK}.
\begin{figure}[H]
	\centering
	\includegraphics[width=0.8\linewidth]{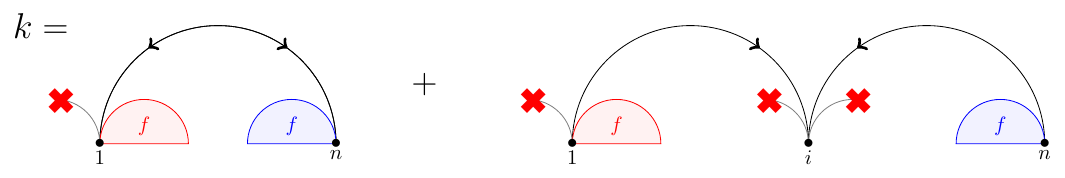}
	\caption{The two possible decomposition of the weeping willows described in \cref{prop:inclusionPosetIntervalHypergraphsContainingWeepingWillow}\,\eqref{item:existsMinimalSimpleIntervalHypergraph}.
	Red crosses \textcolor{red}{\ding{54}} indicate forbidden arcs.}
	\label{fig:GeneratingFunctionOrientedK}
\end{figure}
\end{remark}


\begin{remark}
For the transitive closure~$\less$ of a weeping willow, the condition of \cref{prop:inclusionPosetIntervalHypergraphsContainingWeepingWillow}\,\eqref{item:existsMinimalSimpleIntervalHypergraph} clearly implies the condition of \cref{prop:inclusionPosetIntervalHypergraphsContainingTamariIntervalPoset}\,\eqref{item:existsMinimalIntervalHypergraph}.
The converse however is false.
For instance, for the total order~$\less$ given by~$4 \lessdot 1 \lessdot 2 \lessdot 3$, the interval hypergraph~$\II_\less$ is not the unique minimal element of~$\c{I}_\less$, while it is the unique minimal element of~$\c{S}_\less$.
\end{remark}

\begin{proposition}
\label{prop:inclusionPosetIntervalBuildingSetsContainingRootedWeepingWillow}
We denote by~$\mathdefn{\c{N}}$ the inclusion poset of interval building sets on~$[n]$ (\ie interval hypergraphs~$\II$ containing all singletons, and such that~$I, J \in \II$ and~$I \cap J \ne \varnothing$ implies~$I \cup J \in \II$).
For a given rooted weeping willow~$\ww$ with transitive closure~$\less$, denote by~$\mathdefn{\c{N}_\less} \eqdef \c I_\less \cap \c N$ the subposet of~$\c{N}$ induced by interval building sets~$\II$ on~$[n]$ such that~$\ww$ is a vertex tree of~$\simplex_\II$.
Then~$\c{N}_\less$ is the interval between~$\II_\less$ and~$\JJ_\less$.
\end{proposition}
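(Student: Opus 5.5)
We need to show that an interval building set $\II$ has $\ww$ as a vertex tree exactly when $\II_\less \subseteq \II \subseteq \JJ_\less$, and that both endpoints are building sets. The plan has three steps: show the endpoints lie in $\c{N}_\less$, show every element of $\c{N}_\less$ lies between them, and show everything in between lies in $\c{N}_\less$.

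\textbf{Endpoints.} The proof of \cref{prop:rootedWeepingWillows2} already shows that $\II_\less$ is an interval building set. It contains $\ww$ as a vertex tree by \cref{thm:TamariIntervalPosetInIntervalHypergraphic}, so $\II_\less \in \c{N}_\less$. For $\JJ_\less$, take $I, J \in \JJ_\less$ with $i \in I \subseteq i^\less$, $j \in J \subseteq j^\less$, and $I \cap J \ne \varnothing$. Then $i^\less \cap j^\less \ne \varnothing$. Since $\ww$ is rooted, the principal upper sets are nested or disjoint, so $i$ and $j$ are comparable, say $i \less j$. This gives $i \in I \cup J \subseteq i^\less$, hence $I \cup J \in \JJ_\less$. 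The maximality part of \cref{prop:inclusionPosetIntervalHypergraphsContainingTamariIntervalPoset} already gives $\c{N}_\less \subseteq \c{I}_\less$, so every $\II \in \c{N}_\less$ satisfies $\II \subseteq \JJ_\less$.

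\textbf{Lower bound.} This is the main step: every $\II \in \c{N}_\less$ contains $\II_\less$. Fix $a \in [n]$ with $a^\less \ne \{a\}$. By \cref{thm:TamariIntervalPosetInIntervalHypergraphic}\,\eqref{cond:belong2}, each arc $(x,y)$ of $\ww$ with $x, y \in a^\less$ has some $I_{xy} \in \II$ with $\{x,y\} \subseteq I_{xy} \subseteq x^\less \subseteq a^\less$. The subtree of $\ww$ on $a^\less$ is connected, being the set of descendants of $a$. Hence the intervals $I_{xy}$ over all its arcs form a connected family under intersection (adjacent arcs share an endpoint), and together with the singleton $\{a\}$ they cover $a^\less$. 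Since $\II$ is closed under unions of intersecting members, iterated unions along the tree give $X \eqdef \bigcup I_{xy} \in \II$. By construction $a^\less \subseteq X \subseteq a^\less$, so $a^\less \in \II$. Thus $\II_\less \subseteq \II$.

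\textbf{Upper part of the interval.} Finally, take an interval building set $\II$ with $\II_\less \subseteq \II \subseteq \JJ_\less$. It suffices to check that $\II \in \c{I}_\less$. By order convexity of $\c{I}_\less$ (\cref{prop:inclusionPosetIntervalHypergraphsContainingTamariIntervalPoset}), since $\II_\less, \JJ_\less \in \c{I}_\less$, we get $\II \in \c{I}_\less$, hence $\II \in \c{N}_\less$. This shows that $\c{N}_\less$ is exactly the set of interval building sets between $\II_\less$ and $\JJ_\less$.

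The only genuinely new argument is the connectedness in the lower bound. There one must check that the union taken along the tree consists of intersecting intervals at each step, which holds because consecutive arcs share a node.
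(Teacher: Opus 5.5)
Your proof is correct and follows essentially the same route as the paper: both endpoints are building sets because principal upper sets in a rooted weeping willow are nested or disjoint. The lower bound comes from uniting the intervals that witness the arcs below $a$, and membership of everything in between follows from order convexity. Your lower-bound step is slightly more careful than the paper's. You make explicit that each witnessing interval lies in $x^\less \subseteq a^\less$, and this is what forces the union to equal $a^\less$ exactly.
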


\begin{example}
\cref{prop:inclusionPosetIntervalBuildingSetsContainingRootedWeepingWillow} is illustrated in \cref{fig:LatticeOfSlessInIless}.
Recall that the set~$\c{N}$ appears in \textcolor{green}{green} and is not order convex inside the boolean lattice~$\c{I}$.
The set~$\c{N}_\less$ is the interval of~$\c{N}$ between~\mbox{$\II_\less =$ \!\drawSubset[green]{3}\!\!} and $\JJ_\less = $ \!\drawSubset[green]{2,3,4,5,6}\!\!.
\end{example}

%

\begin{proof}[Proof of \cref{prop:inclusionPosetIntervalBuildingSetsContainingRootedWeepingWillow}]
The order convexity of~$\c{N}_\less$ is argued as in the proof of \cref{prop:inclusionPosetIntervalHypergraphsContainingTamariIntervalPoset}.

We know that~$\II_\less$ is an interval building set from the proof of \cref{prop:rootedWeepingWillows2}.
Moreover, if~$\II$ is an interval building set satisfying \cref{thm:TamariIntervalPosetInIntervalHypergraphic}\,\eqref{cond:belong2}, then it contains an interval containing~$[\min(a,b), \max(a,b)]$ for any arc~$(a,b)$ of~$\ww$ by \cref{thm:TamariIntervalPosetInIntervalHypergraphic}\,\eqref{cond:belong2}, hence an interval containing~$[\min(a,b), \max(a,b)]$ for any directed path from~$a$ to~$b$ by the building set property.
It immediately follows that it contains~$\II_\less$.

We now prove that~$\JJ_\less$ is also an interval building set.
Consider~$I,J \in \JJ_\less$ such that~$I \cap J \ne \varnothing$, let~$i, j \in [n]$ be such that~$i \in I \subseteq i^\less$ and $j \in J \subseteq j^\less$.
As~$i^\less$ and~$j^\less$ intersect, they are nested (since $\ww$ is rooted), say~$i^\less \subseteq j^\less$.
We conclude that~$j \in I \cup J \subseteq i^\less \cup j^\less = j^\less$, so that~$I \cup J \in \JJ_\less$.
(An alternative proof is given by \cref{coro:fullHyperedgePresent}.)
As~$\c{N}_\less \subseteq \c{I}_\less$ and~$\JJ_\less$ is the unique maximal element of~$\c{I}_\less$, we conclude that~$\JJ_\less$ is also the unique inclusion maximal element of~$\c{N}_\less$.
\end{proof}


\section{Tamari interval preposets and faces of interval hypergraphic polytopes}
\label{sec:TamariIntervalPreposets}

In this section, we consider the face preposets of the interval hypergraphic polytopes.
They naturally generalize the Tamari interval posets of \cref{sec:TamariIntervalPosets}.


\subsection{Tamari interval preposets}
\label{subsec:TamariIntervalPreposets}

We first generalize the Tamari interval posets to preposets.

\begin{definition}
\label{def:TamariIntervalPreposet}
A \defn{Tamari interval preposet} is a preposet~$\bless$ on~$[n]$ with the following equivalent properties:
\begin{enumerate}[(i)]
\item for~$1 \le a < b < c \le n$, one has ${a \bless c \implies a \bless b}$ and~$a \bmore c \implies b \bmore c$, \label{item:TamIntPreposDef1}
\item for any~$a \in [n]$, the principal upper set~$\mathdefn{a^\bless} \eqdef \set{b \in [n]}{a \bless b}$ of~$a$ is an interval of~$[n]$. \label{item:TamIntPreposDef2}
\end{enumerate}
\end{definition}

\begin{remark}
The Tamari interval posets of \cref{def:TamariIntervalPoset} are precisely the Tamari interval preposets \cref{def:TamariIntervalPreposet} which are antisymmetric.
\end{remark}

\begin{remark}
Note that the Tamari interval preposets could also be defined by properties similar to~\eqref{item:TIPprop1} and~\eqref{item:TIPprop2} of \cref{def:TamariIntervalPoset}, but using the facial Tamari lattice and the facial weak order~\cite{DermenjianHohlwegPilaud}.
We skip these definitions here as they are slightly technical and not needed for our purposes.
\end{remark}

We next introduce a way to draw a Tamari interval preposet as a non-crossing diagram.

\begin{definition}
\label{def:canonicalRelation}
For~$A = \{a_1 < \dots < a_k\} \subseteq [n]$, define~$\mathdefn{\tour(A)} \eqdef \{(a_1,a_2), \dots, (a_{k-1},a_k), (a_k,a_1)\}$.
For two non-crossing subsets~$A, B\subseteq[n]$, define the \defn{canonical relation $\canoArc$} by
\begin{itemize}
\item $\canoArc \eqdef (\max A, \, \min B)$ if $A$ lies entirely to the left of $B$, \ie if~$\max A < \min B$,
\item $\canoArc \eqdef (a, \, \min B)$ if $B$ lies inside a gap of $A$, \ie if there are~$a < a'$ in~$A$ such that~${B \subseteq {]a,a'[} \subseteq [n] \ssm A}$,
\item otherwise, $\canoArc \eqdef (a,b)$ where $(b,a) \eqdef \canoArc[B][A]$.
\end{itemize}
The \defn{canonical arc} of~$(A,B)$ is the oriented arc~$a \to b$ where~$(a,b) \eqdef \canoArc$.
\end{definition}

\begin{remark}
\label{rem:drawingTIPP}
We then represent a Tamari interval preposet~$\bless$ by the union of the tours of its nodes and the canonical arcs of its cover relations, drawn above or below the horizontal axis depending on whether they are increasing or decreasing relations (as in \cref{rem:drawingTIP}).
To alleviate the figures, we draw the arcs of the tour above the horizontal axis, but the reader can imagine them drawn both above and below: drawing both still yields a non-crossing diagram.
A typical example of Tamari interval poset is illustrated in \cref{fig:TamariIntervalPreposet}.
\begin{figure}[h]
	\centerline{\includegraphics[scale=1]{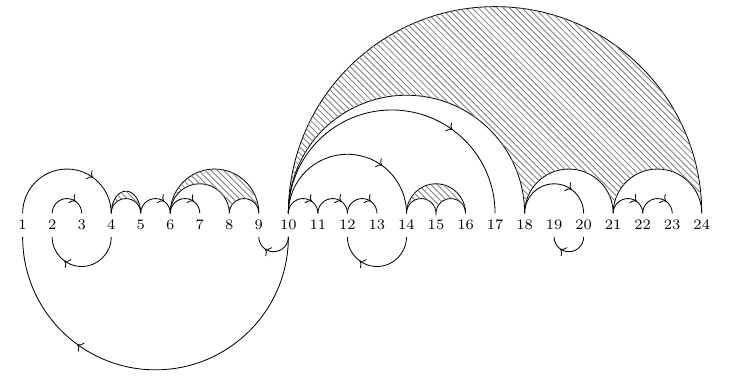}}
	\caption{A typical Tamari interval preposet.}
	\label{fig:TamariIntervalPreposet}
\end{figure}
\end{remark}


\subsection{Inclusion versus refinement}
\label{subsec:inclusionRefinementTamariIntervalPreposets}

Denote by~$\mathdefn{\poly{C}_\bless} \eqdef \set{\b{x} \in \R^n}{x_i \ge x_j \text{ if } i \bless j}$ the cone of a preposet~$\bless$ on~$[n]$.
One can compare two preposets~$\bless$ and~$\bless'$ on~$[n]$ in two natural ways:
\begin{itemize}
\item either by \defn{inclusion}: $\poly{C}_\bless \subseteq \poly{C}_{\bless'}$, that is, $i \bless' j$ implies $i \bless j$,
\item or by \defn{refinement}: $\poly{C}_\bless$ is a face of~$\poly{C}_{\bless'}$.
\end{itemize}
Note that refinement implies inclusion.
We are mostly interested in the refinement order on Tamari interval preposets, but it is unfortunately not a lattice, as illustrated in our next example.

\begin{example}
\label{exm:inclusionLatticeTamariIntervalPreposets}
Consider the four Tamari interval preposets
\[
{\bless_1} \eqdef \;\raisebox{.25cm}{\TamariIntervalPreposet{4}{4/1,4/3,1/2,3/2}{}{1}} \qquad
{\bless_2} \eqdef \;\raisebox{.25cm}{\TamariIntervalPreposet{4}{4/3,1/2}{{1,3}}{1}} \qquad
{\bless_3} \eqdef \;\raisebox{.25cm}{\TamariIntervalPreposet{4}{4/3}{{1,2,3}}{1}} \qquad
{\bless_4} \eqdef \;\raisebox{.25cm}{\TamariIntervalPreposet{4}{1/2}{{1,3,4}}{1}}
\]
Then~$\bless_1$ and~$\bless_2$ do not refine each other, and $\bless_3$ and~$\bless_4$ do not refine each other.
Moreover, $\bless_1$ and~$\bless_2$ refine $\bless_3$ and~$\bless_4$, and the relation between $\bless_2$ and $\bless_3$ is a cover relation.
Hence, $\bless_1$ and~$\bless_2$ have no join, while $\bless_3$ and~$\bless_4$ have no meet.
\end{example}

In contrast, although it is less meaningful geometrically,  the inclusion poset is a lattice

\begin{proposition}
\label{prop:inclusionLatticeTamariIntervalPreposets}
The inclusion poset of Tamari interval preposets is a lattice.
\end{proposition}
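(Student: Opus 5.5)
The plan is to exhibit the meet explicitly and to note that the inclusion poset has a top element. Recall that $\bless$ is below $\bless'$ when $\poly{C}_\bless \subseteq \poly{C}_{\bless'}$, that is, when ${\bless'} \subseteq {\bless}$ as sets of pairs. So the inclusion poset is the collection of Tamari interval preposets ordered by reverse inclusion of relations.

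A finite poset with a top element in which every pair has a meet is a lattice, because the join of two elements is the meet of all their common upper bounds. The top element is the trivial preposet (only reflexive pairs). Its upper sets $a^\bless = \{a\}$ are intervals, so it is a Tamari interval preposet by \cref{def:TamariIntervalPreposet}\,\eqref{item:TamIntPreposDef2}.

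For the meet of $\bless$ and $\bless'$, I will take $\bless''$ to be the transitive closure of ${\bless} \cup {\bless'}$. This is the smallest preposet containing both, so its cone is $\poly{C}_\bless \cap \poly{C}_{\bless'}$, the largest cone of a preposet contained in both. It remains to show that $\bless''$ is again a Tamari interval preposet; this is the only real step.

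To do this I will use \cref{def:TamariIntervalPreposet}\,\eqref{item:TamIntPreposDef1}, mimicking the proof of \cref{lem:avoidingPatterns}\,\eqref{item:ap2} and \cref{prop:WWTIP}. Suppose $a \bless'' c$ with $a < b < c$. Then there is a chain $a = x_0, x_1, \dots, x_k = c$ in which each step $x_{i-1} \to x_i$ is a relation of $\bless$ or of $\bless'$. Since $x_0 < b < x_k$, some consecutive pair satisfies $x_{i-1} \le b \le x_i$.

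1. If $b$ equals $x_{i-1}$ or $x_i$, then $a \bless'' b$ follows by truncating the chain.
2. Otherwise $x_{i-1} < b < x_i$, and $x_{i-1} \bless x_i$ (say). Property \eqref{item:TamIntPreposDef1} for $\bless$ gives $x_{i-1} \bless b$, so $a \bless'' x_{i-1} \bless'' b$.

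The symmetric statement, that $a \bmore'' c$ implies $b \bmore'' c$, is proved the same way: in a chain from $c$ down to $a$, pick a step crossing $b$ and apply the second half of \eqref{item:TamIntPreposDef1} to that step. Alternatively, this whole step can be seen as a union of intervals: $a^{\bless''}$ is obtained by iteratively adding the upper sets $x^{\bless}$ and $x^{\bless'}$ of its elements. Each of these is an interval containing $x$, so the result is a connected union of intervals, hence an interval.

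No genuine obstacle is expected. The only care needed is the orientation convention: with meet equal to the union of relations, the trivial preposet is the top, so it is the meet (not the join) that must be constructed explicitly. An alternative I would also mention: the full relation $[n]\times[n]$ is a Tamari interval preposet and serves as the bottom. The join is then the meet of common upper bounds, which is a finite operation because there are only finitely many preposets.
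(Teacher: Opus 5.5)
Your proof is correct, but it constructs the opposite lattice operation from the one the paper uses.

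The paper only observes that the intersection ${\bless} \cap {\bless'}$ of two Tamari interval preposets is again one. This is immediate: $a^{\bless \cap \bless'} = a^\bless \cap a^{\bless'}$ is an intersection of intervals, or equivalently condition~(i) of the definition passes to intersections. The paper then concludes from boundedness by the trivial and the full preposet. Under the cone ordering $\poly{C}_\bless \subseteq \poly{C}_{\bless'}$, this intersection is really the join, since its cone is the smallest preposet cone containing both. Your transitive closure of ${\bless} \cup {\bless'}$ is the meet, with cone exactly $\poly{C}_\bless \cap \poly{C}_{\bless'}$.

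The trade-off is as follows.
\begin{itemize}
\item The paper's closure step is a one-liner.
\item Yours needs the chain-crossing argument (or the connected-union-of-intervals argument), which you carry out correctly.
\item In exchange, your operation is the geometrically natural one, namely intersection of cones.
\item Combining the two arguments gives more than either alone. Tamari interval preposets are closed under both intersection and transitive closure of union, so they form a sublattice of the lattice of all preposets on $[n]$, not merely a lattice in their own right.
\end{itemize}
One small wording slip: in your last paragraph, ``meet equal to the union of relations'' should read ``the transitive closure of the union'', as you defined it earlier.
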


\begin{proof}
The intersection of two Tamari interval preposets is a Tamari interval preposet.
The inclusion poset of Tamari interval preposets thus admits a meet, thus is a lattice as it is bounded by the empty preposet and the full preposet.
\end{proof}


\subsection{Two families of Tamari interval preposets}
\label{subsec:examplesTamariIntervalPreposets}

Similarly as in \cref{subsec:examplesTamariIntervalPosets}, we now describe two interesting families of Tamari interval preposets, which will later appear as the face preposets of certain families of hypergraphic polytopes (see also \cref{subsec:examplesSchroderWeepingWillows} for other families where all Hasse diagrams are trees).


\subsubsection{Capped unit interval preposets}
\label{subsubsec:cappedSUnitIntervalPosets}

Continuing \cref{subsubsec:cappedUnitIntervalPosets}, we first consider the following preposets, which will take their significance in \cref{exm:cappedUnitIntervalHypergraphicPolytopePreposets}.
Recall that for a preposet~$\bless$, we denote by~$\less$ the poset on the equivalence classes of~$\bless$, and by~$\lessdot$ its cover relations.

\begin{definition}
\label{def:cappedUnitIntervalPreposets}
We call \defn{capped unit interval preposets} the preposets~$\bless$ on~$[n]$ such that
\begin{itemize}
\item $\less$ admits a unique minimum~$M$,
\item for any cover relation~$A \lessdot B$, either~$A = M$ or there is~$a \in A$ and~$b \in B$ with~$|a-b| = 1$,
\pagebreak
\item for any relation~$A \less B$, 
	\begin{itemize}
	\item $A \cap [\min(B), \max(B)] = \varnothing$,
	\item either~$A = M$ or~$[\min(A), \max(A)] \cap B = \varnothing$.
	\end{itemize}
\item for any~$A, B$ incomparable in~$\less$, we have~$[\min(A), \max(A)] \cap [\min(B), \max(B)] = \varnothing$.
\end{itemize}
\end{definition}

\begin{example}
For instance, the capped unit interval preposets for $n = 3$ are the following

\medskip
\centerline{\includegraphics[scale=1]{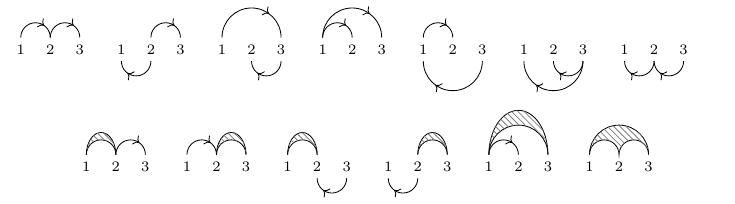}}
\end{example}

\begin{remark}
The numbers of capped unit interval preposets on~$[n]$ with~$k$ equivalence classes are given in \cref{table:cappedIntervalPreposets}.
\begin{table}[h]
	\[
	\begin{tabular}{c|ccccccc|c}
	$n \backslash k$ & $1$ & $2$ & $3$ & $4$ & $5$ & $6$ & $7$ & $\Sigma$ \\
	\hline
	$1$ & $1$ &&&&&&& $2$ \\
	$2$ & $1$ & $2$ &&&&&& $3$ \\
	$3$ & $1$ & $5$ & $7$ &&&&& $13$ \\
	$4$ & $1$ & $9$ & $26$ & $24$ &&&& $60$ \\
	$5$ & $1$ & $14$ & $63$ & $118$ & $81$ &&& $277$ \\
	$6$ & $1$ & $20$ & $125$ & $358$ & $495$ & $270$ && $1\,269$ \\
	$7$ & $1$ & $27$ & $220$ & $859$ & $1\,801$ & $1\,971$ & $891$ & $5\,770$
	\end{tabular}
	\]
	\caption{The numbers of capped unit interval preposets on $n$ with $k$ equivalence classes.}
	\label{table:cappedIntervalPreposets}
\end{table}
\end{remark}

Following \cref{def:cappedSUnitIntervalPosets}, we now consider subfamilies of capped unit interval preposets indexed by subsets~$S$ of~$[n-1]$.
Again, the significance will become clear in \cref{exm:cappedUnitIntervalHypergraphicPolytopePreposets}.

\begin{definition}
\label{def:cappedSUnitIntervalPreposets}
For~$S \subseteq [n-1]$, a \defn{capped $S$-unit interval preposet} is a capped unit interval preposet~$\bless$ such that
\begin{itemize}
\item $i$ and $i+1$ are comparable in~$\bless$ for any~$i \in S$,
\item if $i \in X$ and $i+1 \in Y$, and $X$ and~$Y$ form a cover relation in~$\less$, then $i \in S$ or~$M = X$ or~$M = Y$,
\item if $i$ and $i+1$ are equivalent in~$\bless$, then $i \in S$ or $\{i,i+1\} \subseteq M$.
\end{itemize}
\end{definition}


\subsubsection{Uniform interval preposets}
\label{subsubsec:uniformIntervalPreposets}

Continuing \cref{subsubsec:uniformIntervalPosets}, we now consider the following preposets, which will take their significance in \cref{exm:uniformIntervalHypergraphicPolytopePreposets}.

\begin{definition}
\label{def:uniformIntervalPreposets}
For $k \ge 1$, a \defn{$k$-uniform interval preposet} on~$[n]$ is a preposets~$\bless$ such that
\begin{enumerate}[(i)]
\item for each~$i \in [n-k+1]$, there is~$i \le a \le i+k-1$ such that~$a \bless j$ for all~$i \le j \le i+k-1$, and
\item for any equivalence class of~$\bless$ given as $\{a_1 < \dots < a_q\}$, and any~$1 \le p < q$, there exists $i \in [n-k+1]$ such that~$i \le a_p, a_{p+1} \le i+k-1$ and~$a_p \bless j$ for all~$i \le j \le i+k-1$,
\item for any cover relation~$X \lessdot Y$, there is~$a \in X$, $b \in Y$, and~$i \in [n-k+1]$ with~$i \le a,b \le i+k-1$ and~$a \bless j$ for all~$i \le j \le i+k-1$.
\end{enumerate}
\end{definition}

\pagebreak

\begin{example}
\label{exm:uniformIntervalPreposets}
The $k$-uniform interval preposets on~$[3]$ for~$k = 2, 3$ are the following

\medskip
\centerline{\includegraphics[scale=1]{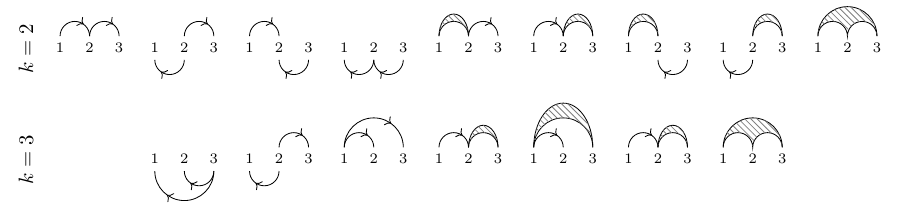}}
\end{example}


\subsection{Counting Tamari interval preposets}
\label{subsec:countingTamariIntervalPreposets}

The numbers of Tamari interval preposets and connected Tamari interval preposets on~$[n]$ with $k$ nodes are gathered in \cref{table:TamariIntervalPreposets}.
The former were enumerated in and already appeared as \OEIS{A234268}.
In contrast to~\cref{subsec:countingTamariIntervalPosets}, no simple formulas appear to exist for these numbers.

\begin{table}[h]
	\centerline{
	\begin{tabular}{c|cccccc|c}
	$n \backslash k$ & $1$ & $2$ & $3$ & $4$ & $5$ & $6$ & $\Sigma$ \\
	\hline
	$1$ & $1$ & & & & & & $1$ \\
	$2$ & $1$ & $3$ & & & & & $4$ \\
	$3$ & $1$ & $7$ & $13$  & & & & $21$ \\
	$4$ & $1$ & $12$ & $48$ & $68$ & & & $129$ \\
	$5$ & $1$ & $18$ & $116$ & $342$ & $399$ & & $876$ \\
	$6$ & $1$ & $25$ & $230$ & $1\,060$ & $2\,530$ & $2\,530$ & $6\,376$
	\end{tabular}
	\qquad
	\begin{tabular}{c|cccccc|c}
	$n \backslash k$ & $1$ & $2$ & $3$ & $4$ & $5$ & $6$ & $\Sigma$ \\
	\hline
	$1$ & $1$ & & & & & & $1$ \\
	$2$ & $1$ & $2$ & & & & & $3$ \\
	$3$ & $1$ & $5$ & $8$ & & & & $14$ \\
	$4$ & $1$ & $9$ & $31$ & $41$ & & & $82$ \\
	$5$ & $1$ & $14$ & $78$ & $213$ & $240$ & & $546$ \\
	$6$ & $1$ & $20$ & $160$ & $680$ & $1\,556$ & $1\,528$ & $3\,945$ 
	\end{tabular}
	}
	\caption{The number of Tamari interval preposets (left) and connected Tamari interval preposets (right) on~$[n]$ with $k$ nodes.}
	\label{table:TamariIntervalPreposets}
\end{table}


\subsection{Tamari interval preposets and interval hypergraphic polytopes}
\label{subsec:TIPPIHP}

Extending \cref{subsec:TIPIHP}, we connect Tamari interval preposets to interval hypergraphic polytopes.

\begin{proposition}
\label{prop:TamariIntervalPreposets}
The face preposets of an interval hypergraphic polytope are Tamari interval preposets.
Conversely, any Tamari interval preposet is a face poset of an interval hypergraphic polytope.
\end{proposition}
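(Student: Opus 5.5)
The plan mirrors the proofs of \cref{lem:avoidingPatterns}\,\eqref{item:ap2} and \cref{prop:TamariIntervalPosets2}, with acyclic preorientations in place of acyclic orientations.

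\emph{First direction.} Let $\poly{F}$ be a face of $\simplex_\II$. By \cref{prop:acyclicPreorientations}, $\poly{F}$ corresponds to an acyclic preorientation $A$ of $\II$, and its face preposet $\bless_A$ is the transitive closure of the relations $a \le h$ for $h \in H \in \II$ and $a \in A(H)$. We verify condition~\eqref{item:TamIntPreposDef1} of \cref{def:TamariIntervalPreposet}. Suppose $a \bless c$ with $a < b < c$, where $a \ne c$. By definition of the transitive closure there is a chain $a = x_0, x_1, \dots, x_k = c$ together with intervals $I_1, \dots, I_k \in \II$ such that $x_{i-1} \in A(I_i)$ and $x_i \in I_i$.
\begin{itemize}
\item The union $\bigcup_i I_i$ is connected, since consecutive intervals share the element $x_i$. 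It is therefore an interval containing $a$ and $c$, so it contains $b$.
\item Let $j$ be the smallest index with $b \in I_j$. The chain $x_0, \dots, x_{j-1}$ followed by $b$ then witnesses $a \bless b$.
\end{itemize}
The symmetric statement ($a \bmore c$ implies $b \bmore c$) follows by the same argument. Reflexivity is built into the definition, so nothing more is needed. The only point requiring care here is that the chain may move back and forth inside the equivalence classes; this does no harm, because the argument only uses that the union of the intervals along the chain is an interval.

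\emph{Converse.} Given a Tamari interval preposet $\bless$, each principal upper set $a^\bless$ is an interval by \cref{def:TamariIntervalPreposet}\,\eqref{item:TamIntPreposDef2}. Take the interval hypergraph $\II_\bless \eqdef \set{a^\bless}{a \in [n]}$; note that equivalent elements give the same interval. Define the preorientation
\[
A(a^\bless) \eqdef \set{b \in [n]}{b^\bless = a^\bless},
\]
which is the equivalence class of $a$ and is contained in $a^\bless$. We then check two things.
\begin{itemize}
\item \emph{Acyclicity.} We use \cref{prop:prelength2cycles}. If $A(I) \cap J \ne \varnothing$ and $A(J) \cap (I \ssm A(I)) \ne \varnothing$, with $I = a^\bless$ and $J = b^\bless$, then $b \bless a' \sim a$ and $a \bless b' \sim b$. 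Hence $a$ and $b$ are equivalent, so $I = J$. But then $A(J) \cap (I \ssm A(I)) = A(I) \cap (I \ssm A(I)) = \varnothing$, a contradiction.
\item \emph{Recovering the preposet.} By construction $\bless_A$ consists of the relations $a' \le h$ for $a'$ equivalent to $a$ and $h \in a^\bless$. These are exactly the relations of $\bless$, and $\bless$ is already transitive.
\end{itemize}
By \cref{prop:acyclicPreorientations}, $\bless$ is then the face preposet of the face $\sum_{I} \simplex_{A(I)}$ of $\simplex_{\II_\bless}$.

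The main obstacle I expect is purely definitional. \cref{def:acyclicPreorientation} is stated with the slightly odd expression $H_k \ssm \{O(H_k)\}$. I will read it as $H_k \ssm O(H_k)$, consistently with \cref{prop:prelength2cycles}, and confirm that the face correspondence of \cref{prop:acyclicPreorientations} identifies $\bless_A$ exactly, including its equivalence classes.
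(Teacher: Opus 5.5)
Your proposal is correct and follows essentially the same route as the paper. For the forward direction, both use the chain of intervals whose union is an interval, mirroring \cref{lem:avoidingPatterns}\,\eqref{item:ap2}. For the converse, both use the hypergraph $\II_\bless$ of principal upper sets, with each $a^\bless$ preoriented to the equivalence class of $a$. The only difference is that you spell out the acyclicity check (via \cref{prop:prelength2cycles}) and the identity $\bless_{A} = \bless$, which the paper simply declares to be clear.
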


\begin{proof}
The following proof mimics the proof of \cref{lem:avoidingPatterns}\,\eqref{item:ap2} and \Cref{prop:TamariIntervalPosets2}.

By \Cref{prop:acyclicPreorientations}, the faces of an interval hypergraphic polytope $\simplex_\II$ are in bijection with acyclic preorientations $A$ of $\II$.
Let $\bless_A$ be the preposet of an acyclic preorientation of $\II$, fix integers~$1 \le a < b < c \le n$, and assume that~$a \bless_{A} c$.
By \Cref{prop:acyclicPreorientations}, there are~$I_1, \dots, I_k \in \II$ such that~$a \in A(I_1)$, $A(I_{i+1}) \cap I_i \neq \emptyset$ for all~$i \in [k-1]$, and~$c \in I_k$.
As $\bigcup_{i \in [k]} I_i$ is an interval containing~$a$ and~$c$ and~$a < b < c$, it also contains~$b$.
Hence, there is~$i \in [k]$ such that~$b \in I_i$, and the sequence~$I_1, \dots, I_i$ proves that~$a \bless_{A} b$.
Similarly, $a \bmore_{A} c$ implies $b \bmore_A c$.
We conclude that~$\bless_A$ is indeed a Tamari interval preposet by \Cref{def:TamariIntervalPreposet}.

Consider a Tamari interval proposet $\bless$. 
By \Cref{def:TamariIntervalPreposet}\,\eqref{item:TamIntPreposDef2}, the set $a^\bless$ is an interval for all $a \in [n]$.
Consider the orientation~$A_\bless$ of the interval hypergraph~$\II_\bless \eqdef \set{a^\bless}{a \in [n]}$ defined by~$A_\bless(a^\bless) = \set{b}{a \bless b \text{ and } b \bless a}$ for all~$a \in [n]$.
It is clear that~$A_\bless$ is acyclic, and that~$\bless_{A_\bless} =\, \bless$. 
Hence, $\bless$ is a face preposet of the interval hypergraphic polytope~$\simplex_{\II_\bless}$.
\end{proof}


\subsection{A finer description}
\label{subsec:finerDescriptionTamariIntervalPreposet}

\enlargethispage{.2cm}
Extending \cref{subsec:finerDescriptionTamariIntervalPoset}, we now refine \cref{prop:TamariIntervalPreposets} to describe which Tamari interval preposet appears as a face preposet in which interval hypergraphic polytope.

\begin{theorem}
\label{thm:TamariIntervalPreposetInIntervalHypergraphic}
A preposet~$\bless$ is a face preposet of an interval hypergraphic polytope~$\simplex_\II$ if and only if
\begin{enumerate}[(i)]
\item for any~$I \in \II$, there is~$a \in [n]$ such that~$a \in I \subseteq a^\bless$, \label{cond:belong1preposet}
\item for any equivalence class of~$\bless$ given as $\{a_1 < \dots < a_q\}$, and any~$1 \le p < q$, there is~$I \in \II$ such that~$\{a_p,a_{p+1}\} \subseteq I \subseteq a_p^\bless = a_{p+1}^\bless$, \label{cond:belong2preposet}
\item for any cover relation~$X \lessdot Y$, there is~$a \in X$, $b \in Y$, and~$I \in \II$ such that~${\{a,b\} \subseteq I \subseteq a^\bless}$. \label{cond:belong3preposet}
\end{enumerate}
\end{theorem}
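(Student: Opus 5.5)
The plan is to mimic the proof of \cref{thm:TamariIntervalPosetInIntervalHypergraphic}, replacing acyclic orientations by acyclic preorientations and using \cref{prop:acyclicPreorientations} and \cref{prop:prelength2cycles}.

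\emph{Necessity.} Assume ${\bless} = {\bless_A}$ for an acyclic preorientation~$A$ of~$\II$. For~\eqref{cond:belong1preposet}, pick any~$a \in A(I)$. By definition of~$\bless_A$ we have~$a \bless h$ for all~$h \in I$, so~$a \in I \subseteq a^\bless$.

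For~\eqref{cond:belong2preposet} and~\eqref{cond:belong3preposet}, the key observation is that~$\bless_A$ is the transitive closure of the relations~$a \to h$ with~$a \in A(I)$ and~$h \in I$. So every relation is witnessed by a chain of such elementary steps, and each step~$a \to h$ comes with~$\{a,h\} \subseteq I \subseteq a^\bless$.

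For a cover relation~$X \lessdot Y$, take~$x \in X$ and~$y \in Y$ and a witnessing chain from~$x$ to~$y$. Its elements all lie in classes between~$X$ and~$Y$, hence in~$X$ or~$Y$. Some step goes from an element~$a \in X$ to an element~$b \in Y$, and this step gives~\eqref{cond:belong3preposet}.

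For an equivalence class~$\{a_1 < \dots < a_q\}$, consider the graph on the class whose edges are the pairs~$\{a,h\}$ with~$a \in A(I)$ and~$h \in I \cap$ class. Chains inside a class only use such steps, so this graph is connected. The main obstacle is to upgrade connectivity to consecutive pairs~$\{a_p,a_{p+1}\}$. The plan is to use that each edge comes with an interval~$I$ containing both ends and with~$I \subseteq a^\bless$, which is the common upper set of the class. Connectivity gives an edge~$\{a,h\}$ crossing the gap between~$a_p$ and~$a_{p+1}$, with~$a \le a_p < a_{p+1} \le h$ or symmetrically. Its interval~$I$ then contains~$[a_p,a_{p+1}]$, and~$I \subseteq a^\bless = a_p^\bless$. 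This is exactly~\eqref{cond:belong2preposet}.

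\emph{Sufficiency.} Given~\eqref{cond:belong1preposet}--\eqref{cond:belong3preposet}, define~$A(I) \eqdef \set{a \in I}{I \subseteq a^\bless}$, which is nonempty by~\eqref{cond:belong1preposet}. The elements of~$A(I)$ are exactly the elements of~$I$ in the minimal class meeting~$I$. Indeed, if~$I \subseteq a^\bless$ and~$I \subseteq b^\bless$ with~$a,b \in I$, then~$a$ and~$b$ are equivalent; and any~$b \in I$ equivalent to~$a$ has~$b^\bless = a^\bless$, so~$b \in A(I)$.

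For acyclicity, use \cref{prop:prelength2cycles}. Suppose~$x \in A(I) \cap J$ and~$y \in A(J) \cap (I \ssm A(I))$. Then~$x \in J \subseteq y^\bless$ and~$y \in I \subseteq x^\bless$, so~$x$ and~$y$ are equivalent. This forces~$y \in A(I)$, a contradiction.

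Finally I would check that~${\bless_A} = {\bless}$.
\begin{itemize}
\item[$\subseteq$:] Every generating relation~$a \to h$ with~$a \in A(I)$ and~$h \in I$ satisfies~$h \in I \subseteq a^\bless$.
\item[$\supseteq$:] Consecutive class elements are linked by~\eqref{cond:belong2preposet}. The interval~$I$ there satisfies~$a_p \in I \subseteq a_p^\bless$, so~$a_p,a_{p+1} \in A(I)$ and the two are mutually related in~$\bless_A$. Every cover~$X \lessdot Y$ is realized by~\eqref{cond:belong3preposet}, since~$a \in A(I)$ and~$b \in I$. Transitivity then recovers all of~$\bless$.
\end{itemize}
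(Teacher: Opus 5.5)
Your proof is correct and follows essentially the same route as the paper: the same preorientation $A(I)=\{a\in I \mid I\subseteq a^\bless\}$, acyclicity via \cref{prop:prelength2cycles}, and the same two inclusions to get ${\bless_A}={\bless}$. The one local difference is the necessity of (ii). There your gap-crossing connectivity argument, which needs no acyclicity, is more careful than the paper's: the paper simply asserts a direct witness $I$ with $a_p\in A(I)$ and $a_{p+1}\in I$, and then invokes acyclicity to put both in $A(I)$.
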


\begin{proof}
Assume first that ${\bless} = {\bless_A}$ is the face preposet of the hypergraphic polytope $\simplex_\II$ corresponding to the acyclic preorientation $A$ of $\II$. Then, by \Cref{prop:acyclicPreorientations}:
\begin{enumerate}[(i)]
\item For $I \in \II$, and any~$a \in A(I) \subseteq I$, we have~$a \in I \subseteq a^\bless$.
\item For any $p\in [q-1]$, as $a_p \bless a_{p+1}$ is a cover relation, there exists $I \in \II$ such that $a_p \in A(I)$ and $a_{p+1} \in I$; similarly, as $a_p \bmore a_{p+1}$ is a cover relation, there exists $J \in \II$ such that $a_p \in J$ and $a_{p+1} \in A(J)$. As $A$ is acyclic, we deduce without loss of generality that $\{a_p, a_{p+1}\}\subseteq A(I)$, and by the previous~\eqref{cond:belong1preposet} that $A(I)\subseteq a_p^{\bless} = a_{p+1}^{\bless}$.
\item For any cover relation $X\lessdot Y$, there is $a \in X$, $b \in Y$ and $I\in \II$ such that $a\in A(I)$ and $b\in I$. In particular, $\{a,b\}\subseteq I\subseteq a^\bless$
\end{enumerate}
Hence, $\bless$ and $\II$ satisfy~\eqref{cond:belong1preposet}, \eqref{cond:belong2preposet} and~\eqref{cond:belong3preposet}.

Conversely, consider a preposet $\bless$ and an interval hypergraph $\II$ that satisfy~\eqref{cond:belong1preposet}, \eqref{cond:belong2preposet} and~\eqref{cond:belong3preposet}. We now construct an acyclic preorientation $A$ of $\II$ such that ${\bless} = {\bless_A}$.
For all~$I\in \II$, we define $A(I) = \set{a \in [n]}{a\in I \subseteq a^\bless}$.
Note that~$A(I)$ is non-empty by~\eqref{cond:belong1preposet}.
We claim that
\begin{itemize}
\item $A$ is acyclic. Otherwise, there are $I,J\in \II$ with $a\in A(I)\cap (J \ssm A(J))$ and $b\in A(J)\cap (I\ssm A(I)) $ by \Cref{prop:prelength2cycles}. By definition of~$A$, this would imply that~$a \in J \subseteq b^\bless$ and~$b \in I \subseteq a^\bless$, hence~$a^\bless = b^\bless$, so that~$a \in A(J)$ and~$b \in A(I)$: a contradiction.
\item ${\bless} = {\bless_A}$.
First, for any $a,b$ consecutive in an equivalence class of $\bless$, there exists $I\in \II$ such that $\{a, b\}\subseteq I \subseteq a^\bless = b^\bless$ by $\eqref{cond:belong2preposet}$. 
Hence, we get that $\{a, b\} \in A(I)$, and thus~$a \bless_A b$ and~$b \bless_A a$.
We conclude by transitivity that the equivalence classes of~$\bless$ and~$\bless_A$ coincide.
Second, for any cover relation~$X \lessdot Y$, there exists $a$ in $X$, $b$ in $Y$ and $I\in \II$ such that $\{a,b\} \subseteq I \subseteq a^\bless$ by~\eqref{cond:belong3preposet}. 
Hence, $a \in A(I)$ and $b \in I$ so that we get~$X \lessdot_A Y$.
\end{itemize}
We conclude that~${\bless} = {\bless_A}$ is indeed the face poset of the hypergraphic polytope~$\simplex_\II$ corresponding to the acyclic preorientation~$A$ of~$\II$.   
\end{proof}

\begin{example}
\label{exm:cappedUnitIntervalHypergraphicPolytopePreposets}
For~$S \subseteq [n-1]$, the face preposets of the hypergraphic polytope~$\simplex_{\II_S}$ of \linebreak ${\II_S \eqdef \set{[i,i+1]}{i \in S} \cup \{[n]\}}$ are precisely the capped $S$-unit interval preposets of~\cref{def:cappedSUnitIntervalPreposets}.
\end{example}


\begin{example}
\label{exm:uniformIntervalHypergraphicPolytopePreposets}
For the (complete) $k$-uniform interval hypergraph~$\II = \set{[i,i+k-1]}{i \in [n-k]}$, the face preposets of the hypergraphic polytope~$\simplex_\II$ are precisely the $k$-uniform interval preposets of~\cref{def:uniformIntervalPreposets}.
\end{example}

Finally, \cref{prop:inclusionPosetIntervalHypergraphsContainingTamariIntervalPoset} extends to face preposets as follows.
For a Tamari interval preposet~$\bless$, we denote by~\defn{$C(\bless)$} the relations of~$\bless$ given by:
\begin{itemize}
\item all equivalence relations of~$\bless$, \ie all~$a \bless b$ such that~$a \bmore b$,
\item the canonical relations~$\canoArc$ (see \cref{def:canonicalRelation}) for all cover relations~$A \lessdot B$ of the poset~$\less$ of~$\bless$.
\end{itemize}
We denote by~\defn{$U(\bless)$} the maximal relations of $C(\bless)$ for the order given by~$(a \bless b) \prec (a \bless c)$ for~$1 \le a < b < c \le n$ or~$1 \le c < b < a \le n$.

\begin{proposition}
\label{prop:inclusionPosetIntervalHypergraphsContainingTamariIntervalPreposet}
We still denote by~$\mathdefn{\c{I}}$ the inclusion poset of interval hypergraphs~$\II$ on~$[n]$ containing all singletons.
For a given Tamari interval preposet~$\bless$, denote by~\defn{$\c{I}_\bless$} the subposet of~$\c{I}$ induced by interval hypergraphs~$\II$ on~$[n]$ such that~$\bless$ is a face preposet of~$\simplex_\II$.
Then
\begin{enumerate}[(i)]
\item $\c{I}_\bless$ is order convex in~$\c{I}$.
\item The interval hypergraph~$\mathdefn{\II_\bless} \eqdef \set{a^\bless}{a \in [n]}$ is a minimal element of~$\c{I}_\bless$. Moreover, it is the unique minimal element of~$\c{I}_\bless$ if and only if there are no~$1 \le a < b < c \le n$ such that~$a \bless b \bless c$, or $a \bmore b \bmore c$, or $a \bmore b \bless c$ in~$U(\bless)$.
\item The interval hypergraph~$\mathdefn{\JJ_\bless} \eqdef \bigcup_{a \in [n]} \set{I}{a \in I \subseteq a^\bless}$ is the unique maximal element of~$\c{I}_\bless$.
\end{enumerate}
\end{proposition}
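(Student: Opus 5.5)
The plan is to mirror the proof of \cref{prop:inclusionPosetIntervalHypergraphsContainingTamariIntervalPoset}, replacing \cref{thm:TamariIntervalPosetInIntervalHypergraphic} by \cref{thm:TamariIntervalPreposetInIntervalHypergraphic}.

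\emph{Order convexity.} We reuse the normal-fan argument verbatim. Indeed, $\II \in \c{I}_\bless$ exactly when $\poly{C}_\bless$ is a (not necessarily maximal) cone of the normal fan of $\simplex_\II$. If $\poly{C}$ is a cone of the fans of both $\poly{P}$ and $\poly{P}+\poly{Q}+\poly{R}$, then it is a cone of the fan of $\poly{P}+\poly{Q}$; this holds because the face of $\poly{P}+\poly{Q}+\poly{R}$ with normal cone $\poly{C}$ is the sum of the faces minimizing any interior direction of $\poly{C}$. Hence $\II \subseteq \JJ \subseteq \KK$ with $\II,\KK \in \c{I}_\bless$ forces $\JJ \in \c{I}_\bless$.

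\emph{Extremal elements.} First, $\II_\bless$ and $\JJ_\bless$ satisfy conditions (i)--(iii) of \cref{thm:TamariIntervalPreposetInIntervalHypergraphic}. For (i), each $I \in \JJ_\bless$ comes with its witness $a$. For (ii), consecutive $a_p,a_{p+1}$ in a class lie in $a_p^\bless = a_{p+1}^\bless \in \II_\bless$. For (iii), we take $I = a^\bless$ for any $a \in X$ and $b \in Y$. Condition (i) alone already shows that every $\II \in \c{I}_\bless$ is contained in $\JJ_\bless$, so $\JJ_\bless$ is the unique maximum. For minimality of $\II_\bless$, observe that the only interval $I$ with $a \in I \subseteq a^\bless$ that also contains the far endpoints required by (ii)--(iii) for the relations leaving $a$'s class is $a^\bless$ itself. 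When the relations of $C(\bless)$ starting at $a$ reach $\min(a^\bless)$ and $\max(a^\bless)$ directly, removing $a^\bless$ breaks (ii) or (iii).

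\emph{Uniqueness of the minimum.} This is the main work.

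For the forward direction, assume no pattern $a\bless b\bless c$, $a\bmore b\bmore c$, or $a\bmore b\bless c$ occurs in $U(\bless)$. We show that every $\II$ satisfying (ii)--(iii) contains each $a^\bless$. Since the principal upper sets are intervals, any path in $C(\bless)$ from $a$ to $\min(a^\bless)$ or to $\max(a^\bless)$ can be taken inside $U(\bless)$, exactly as in the poset case. The forbidden patterns then force $a^\bless$ to be spanned by a single relation of $U(\bless)$ (an equivalence between consecutive class elements, or a canonical arc). The interval demanded by (ii) or (iii) for that relation must then be $a^\bless$.

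The delicate point is that condition (iii) only asks for \emph{some} pair $a \in X$, $b \in Y$. The main obstacle is to check that the canonical arc $\canoArc[X][Y]$ is in fact forced: any admissible witness interval $I$ with $\{a,b\} \subseteq I \subseteq a^\bless$ must contain the endpoints of the canonical arc. I would verify this case by case on the three clauses of \cref{def:canonicalRelation}, using that $I$ is an interval together with the non-crossing structure of the tours and classes.

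For the converse, suppose a pattern occurs, say $a\bless b\bless c$ in $U(\bless)$. We replace $a^\bless$ by $[\min(a^\bless),b]$, and in the other cases we use the analogous replacements from the poset proof. The new hypergraph still satisfies (i)--(iii), because the relation $b \to c$ is witnessed by $b^\bless$. It does not contain $\II_\bless$, so $\II_\bless$ is not the unique minimal element.
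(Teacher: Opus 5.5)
Your plan is the one the paper intends: it leaves this proof to the reader as a transcription of the poset case, and your arguments for order convexity and for $\JJ_\bless$ are fine. The uniqueness part, however, has two steps that fail as written. Both sit exactly where preposets differ from posets: maximality in $U(\bless)$ is per source element, whereas conditions (ii) and (iii) of \cref{thm:TamariIntervalPreposetInIntervalHypergraphic} are per class.

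First, the lemma you single out as the main obstacle is false. Take the classes $X=\{1,3\}\lessdot Y=\{2\}$ on $[3]$, so $1^\bless=3^\bless=[1,3]$ and $2^\bless=\{2\}$. The canonical arc is $(1,2)$, yet $3\in X$, $2\in Y$ and $I=[2,3]\subseteq 3^\bless$ form an admissible witness for (iii) that avoids $1$. So the case check on the gap clause of \cref{def:canonicalRelation} cannot succeed. What you need instead is to let the absence of patterns pin down the classes:
\begin{itemize}
\item A class $\{a_1<\dots<a_q\}$ with $q\ge 3$ yields a pattern $x\bmore a_2\bless y$ with $x\le a_1$ and $y\ge a_q$.
\item For a class $\{a_1<a_2\}$, a cover towards a class lying left of $a_1$ (resp.~right of $a_2$) has its canonical arc starting at $a_1$ (resp.~$a_2$). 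It therefore yields a pattern $x\bmore a_1\bless y$ (resp.~$x\bmore a_2\bless y$). Hence all covers out of the class lie in its gap, $a^\bless=[a_1,a_2]$, and (ii) alone forces it.
\item For a singleton class $\{a\}$, any witness of a cover $\{a\}\lessdot Y$ contains $a$ and a point of $Y$, hence automatically the canonical arc.
\end{itemize}

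Second, your converse construction breaks when $a$ and $b$ are equivalent (and symmetrically for $a\bmore b\bmore c$ with $b$ and $c$ equivalent). For the classes $\{1,2\}\lessdot\{3\}$ one has $1\bless 2\bless 3$ in $U(\bless)$. But replacing $1^\bless=[1,3]$ by $[1,2]$ also deletes $2^\bless=1^\bless$, and (iii) then fails for the cover. So the appeal to $b^\bless$ is void. In that case you must switch to a pattern $x\bmore b\bless c$ at $b$. Such a pattern always exists, because $b\bless a$ is a left-going relation of $C(\bless)$. Its two-interval replacement does work; here it gives $\{[1,2],[2,3]\}$.

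When $a$ and $b$ are not equivalent, what must be checked is not that $b\to c$ stays witnessed. Rather, every relation leaving the class $A$ of $a$ must be witnessed by $[\min(a^\bless),b]$. This uses that a right-going canonical arc of gap type from $a$ is dominated by the equivalence $a\bless a'$, so it never lies in $U(\bless)$. Hence $a=\max A$, all of $A$ lies left of $b$, and every cover $A\lessdot Y$ with $Y$ right of $A$ has $\min Y\le b$, because its canonical arc starts at $a$.

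Finally, minimality of $\II_\bless$ holds unconditionally and does not need relations reaching the endpoints of $a^\bless$ directly. The interval $a^\bless$ is the only non-singleton member of $\II_\bless$ containing $a$ and contained in $a^\bless$. Deleting it therefore breaks (ii) if the class of $a$ has at least two elements, and (iii) otherwise.
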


\begin{proof}
The proof is similar to that of \cref{prop:inclusionPosetIntervalHypergraphsContainingTamariIntervalPoset} and left to the reader.
\end{proof}


\pagebreak
\section{Schröder weeping willows and faces of simple interval hypergraphic polytopes}
\label{sec:SchroderWeepingWillows}

In this section, we consider the face trees of the simple interval hypergraphic polytopes.


\subsection{Schröder weeping willows}

Following \cref{def:weepingWillow,prop:WWTIP}, we now consider the following trees.
A typical example is illustrated in \cref{fig:SchroderWeepingWillow}, and four interesting families will be presented in \cref{subsec:examplesSchroderWeepingWillows}.

\begin{figure}[H]
	\centering
    \includegraphics[scale=1]{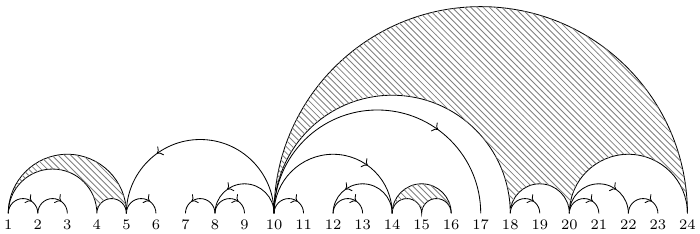}
	\caption{A typical Schröder weeping willow.}
	\label{fig:SchroderWeepingWillow}
\end{figure}

\begin{definition}
\label{def:SchroderWeepingWillow}
A \defn{Schröder weeping willow} on $[n]$ is a directed tree~$\sww$ on a partition of~$[n]$ whose associated preposet is a Tamari interval preposet.
\end{definition}

\begin{remark}
The weeping willows of \cref{def:weepingWillow} are essentially the Schröder weeping willows of \cref{def:SchroderWeepingWillow} where each node is a singleton.
\end{remark}

\begin{remark}
\label{rem:drawingSWW}
We represent the Schröder weeping willows as in \cref{rem:drawingTIPP}, except that we now gather all increasing and decreasing arcs above the horizontal axis.
It follows from \cref{prop:noncrossing} that the resulting drawing is non-crossing.
See \cref{fig:SchroderWeepingWillow}.
\end{remark}

%
%


\subsection{Inclusion versus refinement}
\label{subsec:inclusionRefinementSchroderWeepingWillows}

Recall from \cref{subsec:inclusionRefinementTamariIntervalPreposets} that one can order Tamari interval preposets either by inclusion or by refinement.
Restricting to weeping willows, the refinement poset is also the \defn{contraction poset}, \ie the transitive closure of the edge contraction on Schröder weeping willows.

Note that in contrast to \cref{prop:inclusionLatticeTamariIntervalPreposets}, the inclusion poset on Schröder weeping willow is not a lattice, as illustrated in our next example.

\begin{example}
Consider the four Schröder weeping willows
\[
{\sww_1} \eqdef \;\weepingWillow{4}{1/2,1/4,2/3}{1} \quad\;
{\sww_2} \eqdef \;\weepingWillow{4}{1/2,1/4,4/3}{1} \quad\;
{\sww_3} \eqdef \;\weepingWillow{4}{1/2,2/4,4/3}{1} \quad\;
{\sww_4} \eqdef \;\weepingWillow{4}{1/4,4/2,2/3}{1}
\]
and their corresponding Tamari interval preposets~$\bless_1, \bless_2, \bless_3$ and~$\bless_4$.
Then~$\bless_1$ and~$\bless_2$ are not included in each other, and $\bless_3$ and~$\bless_4$ are not included in each other.
Moreover, 
\[
{\bless_1} \cup {\bless_2} = \TamariIntervalPoset{4}{1/2,1/4,4/3,2/3}{1} = {\bless_3} \cap {\bless_4}
\]
which is not a Schröder weeping willow.
Hence, in the inclusion poset on Schröder weeping willows, $\sww_1$ and~$\sww_2$ have no join, while $\sww_3$ and~$\sww_4$ have no meet.
\end{example}

But now, contrarily to \cref{exm:inclusionLatticeTamariIntervalPreposets}, the contraction poset on Schröder weeping willows behaves nicely.
See \Cref{fig:SchroderWWLattice}.

\begin{figure}
    \centerline{\includegraphics[scale=1]{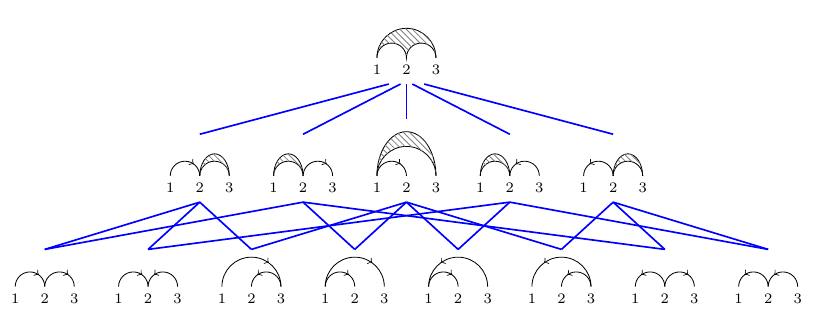}}
    \caption{The lattice of Schröder weeping willows on $3$ nodes (trimmed of its artificial minimum) admits 14 elements.}
    \label{fig:SchroderWWLattice}
\end{figure}

\begin{proposition}
The contraction poset on Schröder weeping willows, augmented by an artificial minimal element, is a lattice (that we call \defn{contraction lattice}).
\end{proposition}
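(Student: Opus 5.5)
The approach is to show that any two Schröder weeping willows admit a least common contraction, i.e.\ a join. A finite poset with a maximum (the one-node tree on the single class~$[n]$) in which any two elements have a join is a join-semilattice. Once the artificial minimum is adjoined it is a finite bounded join-semilattice, hence a lattice. So the work is to build joins. The artificial minimum is needed because the minimal elements, the weeping willows, are pairwise incomparable, so meets of distinct minimal elements cannot exist otherwise.

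\emph{Step 1: contractions stay in the class.} If~$\sww$ is a Schröder weeping willow and~$E$ is a set of its arcs, the contracted tree~$\sww/E$ is again a Schröder weeping willow. Its preposet is the transitive closure of~$\bless_{\sww}$ together with the reversals of the arcs of~$E$. In this closure, the principal upper set of~$a$ is the union of the principal upper sets~$b^{\bless_\sww}$ over all~$b$ reached from~$a$ by alternating forward paths and reversed contracted arcs. Each such set is an interval by \cref{def:TamariIntervalPreposet}\,\eqref{item:TamIntPreposDef2}. Consecutive sets in such a chain overlap, since a contracted arc~$u \to v$ gives~$v \in u^{\bless_\sww}$. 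So the union is an interval, and \cref{def:TamariIntervalPreposet}\,\eqref{item:TamIntPreposDef2} holds.

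\emph{Step 2: a path criterion.} Fix a tree~$\sww$. For $x,y$ in~$[n]$, the relation $x \bless y$ holds in the preposet of~$\sww/E$ if and only if every arc of the unique path of~$\sww$ from (the node of)~$x$ to (the node of)~$y$ that is \emph{not} in~$E$ is oriented forward along the path. Consequently, for two arc sets~$E,E'$,
\[
{\bless_{\sww/(E\cap E')}} = {\bless_{\sww/E}} \cap {\bless_{\sww/E'}} .
\]
Indeed, the arcs of the path outside~$E \cap E'$ are exactly those outside~$E$ or outside~$E'$, so the two sides impose the same orientation conditions. Taking $x,y$ in the same class of~$\sww$ shows this also recovers the partition: the blocks of~$\sww/(E\cap E')$ are the intersections of the blocks of~$\sww/E$ and~$\sww/E'$. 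This uses that in a tree, connected components for~$E\cap E'$ are intersections of components, by uniqueness of paths.

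\emph{Step 3: joins.} Let~$\sww_1,\sww_2$ be two Schröder weeping willows, and let~$U$ be their set of common contractions. This set is nonempty since it contains the one-node tree. Take~$T,T' \in U$ and write $T = \sww_1/E_1 = \sww_2/E_2$ and $T' = \sww_1/E_1' = \sww_2/E_2'$. By Step~2, the preposets of $\sww_1/(E_1\cap E_1')$ and $\sww_2/(E_2\cap E_2')$ both equal~${\bless_T}\cap{\bless_{T'}}$. Since a directed tree on a partition is determined by its preposet (it is the Hasse diagram of the quotient poset), these two trees coincide. The resulting tree is a common contraction of~$\sww_1$ and~$\sww_2$, so it lies in~$U$, and it contracts to both~$T$ and~$T'$ because $E_1 \cap E_1' \subseteq E_1, E_1'$. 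Hence~$U$ is a finite nonempty set in which any two elements have a common lower bound inside~$U$, so it has a least element, which is the join~$\sww_1 \vee \sww_2$.

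The step I expect to be the main obstacle is Step~2. One must check carefully that contraction of a tree corresponds exactly to relaxing the forward-orientation requirement on the contracted arcs, including the identification of equivalence classes. One must also check that the contraction order agrees with the refinement order of \cref{subsec:inclusionRefinementSchroderWeepingWillows}, so that the join constructed here is the join for the intended order.
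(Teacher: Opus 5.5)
Your proof is correct. It has the same skeleton as the paper's, but justifies the key step differently. Both arguments show that the common contractions of two Schröder weeping willows contain a least element (the maximal common face, in cone language). Both then conclude because a finite join-semilattice with a top and an adjoined bottom is a lattice.

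The paper works geometrically with the cones $\poly{C}_\bless$. A Schröder weeping willow is a tree, so its cone is simplicial modulo lineality. Its faces correspond to subsets of rays, i.e.\ to contractions. The cone spanned by all common faces of $\poly{C}$ and $\poly{C}'$ is therefore again a common face, and it is the maximal one. The paper calls this a meet, meaning the meet for inclusion of cones; since larger cones sit lower in the contraction order, it is exactly your join.

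You replace this with a combinatorial argument. Contracting $E_1\cap E_1'$ is the counterpart of taking the face spanned by the rays of two faces. Your path criterion ${\bless_{\sww/(E\cap E')}} = {\bless_{\sww/E}}\cap{\bless_{\sww/E'}}$, together with the fact that a directed tree on a partition is recovered from its preposet, replaces the simplicial-cone argument that this face is common to both cones.

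Your route is more self-contained. In particular, Step~1 actually proves that contractions of Schröder weeping willows are again Schröder weeping willows. The paper uses this silently when it says the common face ``is the cone associated to a certain Schröder weeping willow'', and only asserts it elsewhere. The paper's route is shorter and makes the link with face lattices of simple polytopes visible.

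The check you flag at the end is unnecessary, because the statement concerns the contraction poset itself. In any case it is immediate: the faces of the simplicial cone of a tree are exactly the cones of its contractions.
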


\begin{proof}
As a Schr\"oder weeping willow is a tree, the cone $\poly{C}_{\bless}$ associated to some Schr\"oder weeping willow $\sww$ is a simplicial cone.
Consider two such simplicial cones $\poly{C}, \poly{C}'$, and let $\poly{D}_1, \dots, \poly{D}_s$ be the faces of both $\poly{C}$ and $\poly{C}'$ contained in the intersection $\poly{C}\cap\poly{C}'$.
As $\poly{C}$ is simplicial, the convex hull $\conv(\poly{D}_1, \dots, \poly{D}_s)$ is also a face of both $\poly{C}$ and $\poly{C}'$, and is the unique inclusion-wise maximal common face.
In particular $\conv(\poly{D}_1, \dots, \poly{D}_s)$ is the cone associated to a certain Schr\"oder weeping willow.
Consequently, any two elements of the contraction poset on Schr\"oder weeping willows admit a meet.
As this poset has a unique minimum and a unique maximum, it is a lattice.
\end{proof}

\begin{remark}
In the join semilattice on Schröder weeping willows, the atoms are the weeping willows, and the maximal element is the Schröder weeping willow with a single node~$[n]$.
\end{remark}


\subsection{Four families of Schröder weeping willows}
\label{subsec:examplesSchroderWeepingWillows}

Similarly as in \cref{subsec:examplesWeepingWillows}, we now describe four interesting families of Schröder weeping willows, which we illustrate when~$n = 3$.

\begin{example}
\label{exm:faceLatticeCube}
The weeping willows of \cref{exm:orientationPath} are the minimal elements of the contraction poset on partial orientations of a line.
For instance, when $n = 3$, we get the following (which is the face lattice of a square, trimmed of its minimum):

\centerline{\includegraphics[scale=1]{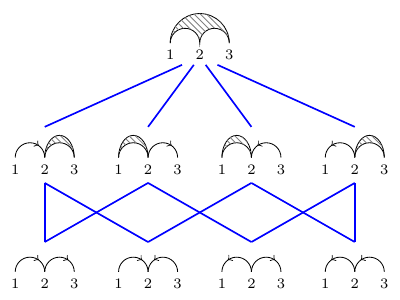}}
\end{example}

\begin{example}
\label{exm:faceLatticeAssociahedron}
The weeping willows of \cref{exm:binarySearchTrees} are the minimal elements of the contraction poset on the classical Schr\"oder trees.
For instance, when $n = 3$, we get the following (which is the face lattice of a pentagon, trimmed of its minimum):

\centerline{\includegraphics[scale=1]{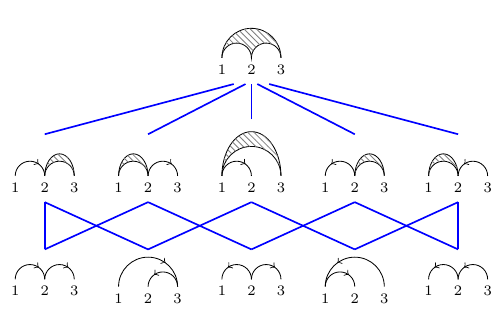}}
\end{example}

\begin{example}
\label{exm:faceLatticePitmanStanley}
The weeping willows of \cref{exm:PitmanStanleyTrees} are the minimal elements of the contraction poset on Schröder Pitman-Stanley trees.
For instance, when $n = 3$, we get the following (which is the face lattice of a square, trimmed of its minimum):

\centerline{\includegraphics[scale=1]{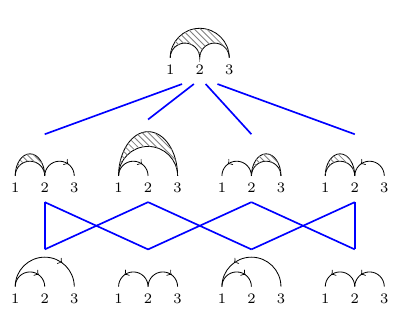}}
\end{example}

\begin{example}
\label{exm:faceLatticeFreehedron}
The weeping willows of \cref{exm:freehedronTrees} are the minimal elements of the contraction poset on Schröder freehedron trees.
For instance, when $n = 3$, we get the same contraction poset as in \cref{exm:faceLatticeAssociahedron}.
\end{example}


\subsection{Counting Schröder weeping willows}
\label{subsec:countingSchroderWeepingWillows}

\enlargethispage{.3cm}
We now exploit \cref{rem:drawingSWW} to count (rooted) Schröder weeping willows.

%
%





\begin{proposition}
\label{prop:generatingFunctionsSchroderWeepingWillows}
Let
\begin{align*}
E & = 1 + x + 4x^2 + 20x^3 + 113x^4 + 688x^5 + 4404x^6 + 29219x^7 + 199140x^8 + 1385904x^9 + \cdots \\
F & = 1 + 2x + 7x^2 + 33x^3 + 181x^4 + 1083x^5 + 6854x^6 + 45111x^7 + 305629x^8 + 2117283x^9 + \cdots \\
G & = 1 + 3x + 14x^2 + 79x^3 + 489x^4 + 3195x^5 + 21635x^6 + 150296x^7 + 1064427x^8 + 7653367x^9 + \cdots \\
H & = 1 + 3x + 13x^2 + 68x^3 + 395x^4 + 2450x^5 + 15892x^6 + 106489x^7 + 731379x^8 + 5121392x^9 + \cdots
\end{align*}
denote the ordinary generating functions of unoriented non-crossing partition trees where $n$ is in a singleton block \OEIS{A108447}, of unoriented non-crossing partition trees \OEIS{A054727}, of Schröder weeping willows, and of rooted Schröder weeping willows \OEIS{A200757} respectively, where~$x$ counts the size of the ground set minus one.
Then,
\begin{align*}
E & = 1 + x F E^2,
&&&
F & = \frac{E}{1 - x E},
\\
G & = \frac{1 + 2 x G E^2}{1 - x E},
& \text{and} &&
H & = \frac{1 + x (H + F) E^2}{1 - x E}.
\end{align*}
Hence, $E$, $F$, $G$ and $H$ are solutions of
\begin{align*}
x E^3 + x E^2 - (x+1) E + 1 & = 0,
&&&
x F^3 + (x^2-x) F^2 + (2x-1) F + 1 & = 0,
\\
(10 x - 1) G^3 + (5 x^2 - 3 x - 1) G^2 + (5x + 1) G + 1 & = 0,
&&&
x^2 H^3 + x (x - 2) H^2 + (1 - x) H - 1 & = 0.
\end{align*}
\end{proposition}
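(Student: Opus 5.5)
We will argue exactly as in the proof of \cref{prop:generatingFunctionsWeepingWillows}, working with the non-crossing drawing of \cref{rem:drawingSWW}. Each block~$\{a_1<\dots<a_k\}$ is drawn as its tour, and each cover relation as its canonical arc of \cref{def:canonicalRelation}, all above the axis. Since $x$ marks the size of the ground set minus one, gluing two structures along a shared endpoint, or merging that endpoint into one more element of a block, contributes one factor~$x$. The plan has three steps. First we get the two equations for~$E$ and~$F$, which are purely about unoriented non-crossing partition trees. Then we get the equations for~$G$ and~$H$ by adding orientations and using a preposet analogue of \cref{lem:weeping}. Finally we eliminate to obtain the algebraic equations.

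For~$F$, we look at the block~$\{b_1<\dots<b_k=n\}$ containing~$n$. The gaps~$]b_{p},b_{p+1}[$ and the initial segment~$[1,b_1[$ are separated by the tour, because the drawing is non-crossing. Each of the $k$ pieces~$[1,b_1]$ and~$]b_p,b_{p+1}]$ is therefore a non-crossing partition tree whose rightmost element is a singleton in that piece, which is counted by~$E$. Each of the $k-1$ identifications costs~$x$, so $F=\sum_{k\ge1}x^{k-1}E^k = E/(1-xE)$. For~$E$, where $n$ is a singleton, we assume $n\ge2$ and take the longest arc incident to~$n$. By the canonical arc convention its other endpoint~$i$ is an element of some block~$B$. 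The arc cuts the diagram into three regions: the part of the tree hanging off~$B$ to the left of and including~$i$, the part strictly between~$i$ and~$n$ attached below the arc to~$i$'s side, and the part attached to~$n$ under the arc. Mirroring $f=1+xf^3$, one region is an arbitrary partition tree~$F$ and the other two have a distinguished singleton endpoint~$E$, which gives $E=1+xFE^2$. The delicate point is to decide which region inherits the block~$B$ of~$i$ (and so is counted by~$F$ rather than~$E$). We will fix this by always cutting at the leftmost element~$i$ of~$B$ reached by the canonical arc, and check against the initial coefficients $1,1,4,20$.

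For~$G$, we first peel off the block of~$n$ as for~$F$, which gives the factor~$1/(1-xE)$, and then treat the longest arc leaving that block to the left. It can be oriented in either direction, giving the factor~$2$. Everything to the left of its far endpoint is again an arbitrary Schröder weeping willow, counted by~$G$. The two regions under the arc have no orientation freedom, and are counted by~$E^2$, by the preposet analogue of \cref{lem:weeping}. That analogue says: if $A\lessdot B$ is a cover relation, then every block meeting~$[\min(A\cup B),\max(A\cup B)]$ lies in the upper set of~$A$, so all arcs under a canonical arc point away from it. We will prove it from \cref{def:TamariIntervalPreposet}\,\eqref{item:TamIntPreposDef2} exactly as in \cref{lem:weeping}. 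This gives $G=(1+2xGE^2)/(1-xE)$. For~$H$ we split according to whether the root block contains~$n$ or not. If it does, all three regions are forced, giving the term~$F$. If it does not, the left region is a rooted Schröder weeping willow, giving the term~$H$. This yields $H=(1+x(H+F)E^2)/(1-xE)$.

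Finally, substituting $F=E/(1-xE)$ into $E=1+xFE^2$ and clearing denominators gives $xE^3+xE^2-(x+1)E+1=0$. We solve for~$E$ in terms of~$F$ (namely $E=F/(1+xF)$) to get the cubic for~$F$. We express $G$ and~$H$ rationally in terms of~$E$ and take resultants with the cubic for~$E$ to get the remaining two equations. This is a routine elimination, which we will confirm by comparing the first coefficients with the listed series. We expect the main obstacle to be the block-level bookkeeping in the first and second steps: verifying that the canonical-arc decomposition is a genuine bijection, so that no partition tree is counted twice when a block spans several regions, and proving the preposet version of \cref{lem:weeping} that forces the orientations under non-crown arcs.
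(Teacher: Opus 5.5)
Your proposal follows the paper's proof: it uses the same decompositions, namely peeling off the block of $n$ to get the factor $1/(1-xE)$, then the longest arc with two forced $E$-regions beneath it and an $F$-, $G$- or $H$-region to its left, with the factor $2$ and the rooted case split as you describe, followed by the same elimination. The point you flag as delicate in the $E$ step is not a choice: the canonical arc from $B$ to the singleton $\{n\}$ is $(\max B, n)$, so $i$ is the rightmost (not leftmost) element of $B$ and $B$ lies entirely in the left region, which is why that region is counted by $F$.
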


\begin{proof}
The four formulas immediately follow from the decompositions illustrated in \Cref{fig:GeneratingFunctionsSchroderEFGH} (recall that if $A$ is the generating function for a combinatorial family, then $\frac{1}{1-A}$ is the generating function for sequences of objects in this family).
\begin{figure}[h]
	\centering
	\includegraphics[width=0.999\linewidth]{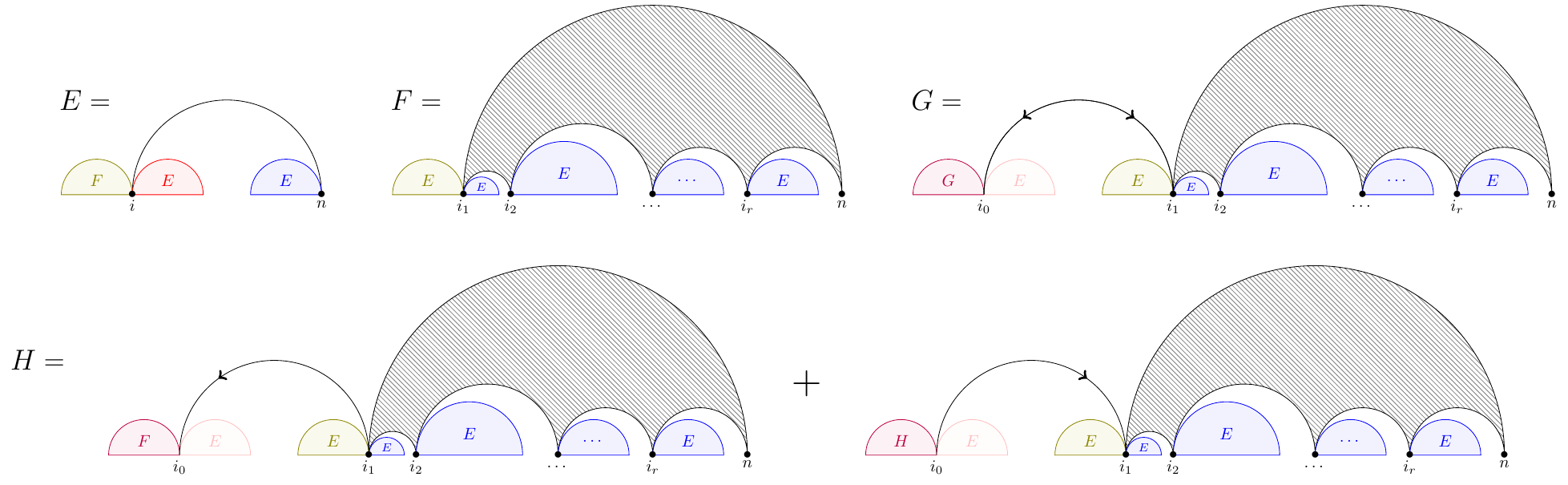}
	\caption{Decomposition for~$E$ (top left), for~$F$ (top middle), for~$G$ (top right), and for~$H$ (bottom).}
	\label{fig:GeneratingFunctionsSchroderEFGH}
\end{figure}

Substituting the formula expressing $F$ in the formula for $E$ gives the claimed annihilating polynomial for $E$.
Once this is established, one can express $F$, $G$ and $H$ as functions of $E$, then check that their respective annihilating polynomials yields multiples of the one of $E$ (alternatively, each annihilating polynomial can be obtained with the resultant method).
\end{proof}

\begin{proposition}
\label{prop:bivariateGeneratingFunctionsSchroderWeepingWillows}
Let
\begin{align*}
E & = 1 + xz + x^2z + x^3z + 3x^2z^2 + x^4z + 7x^3z^2 + x^5z + 12x^4z^2 + 12x^3z^3 + \cdots \\
F & = z + xz + x^2z + xz^2 + x^3z + 3x^2z^2 + x^4z + 6x^3z^2 + 3x^2z^3 + x^5z + 10x^4z^2 + 14x^3z^3 + \cdots \\
G & = z + x z + x^2 z + 2 x z^2 + x^3 z + 5 x^2 z^2 + 9 x^3 z^2 + 8 x^2 z^3 + x^4 z + \cdots \\
H & = z + x z + x^2 z + 2 x z^2 + x^3 z + 5 x^2 z^2 + 9 x^3 z^2 + 7 x^2 z^3 + x^4 z + \cdots
\end{align*}
denote the ordinary generating functions of unoriented non-crossing Schröder trees where $n$ is in a singleton block \OEIS{A108447}, of unoriented non-crossing Schröder trees \OEIS{A054727}, of Schröder weeping willows, and of rooted Schröder weeping willows \OEIS{A200757} respectively, where~$x$ counts the size of the ground set minus one, and $z$ counts the number of nodes (\ie the number of parts in the partition of $[x+1]$).
Then,
\begin{align*}
E & = 1 + x F E^2,
&&&
F & = \frac{z E}{1 - x E}, \\
G & = \frac{z \bigl(1 + 2 x G E^2 \bigr)}{1 - x E},
& \text{and} &&
H & = \frac{z \bigl(1 + x (H + F) E^2 \bigr)}{1 - x E}.
\end{align*}
\end{proposition}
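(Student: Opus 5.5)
The plan is to refine each of the four decompositions behind \cref{prop:generatingFunctionsSchroderWeepingWillows} (illustrated in \cref{fig:GeneratingFunctionsSchroderEFGH}) so that every piece also records its number of blocks. The univariate proof uses $x$ for the ground-set size minus one. To refine it, I attach a factor $z$ to every block of the underlying non-crossing partition. The only extra care is at the places where two pieces are glued: there I must check whether a block is created, merged or shared, so that $z$ is counted exactly once per block. Setting $z=1$ must then give back the univariate equations, which serves as a consistency check.

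First, $E$ and $F$. In $F$, I decompose an unoriented non-crossing Schröder tree according to the block containing $n$. Going from left to right, the elements of this block split the remaining ground set into gaps. The first step is to read $F = zE/(1-xE)$ as follows. The block of $n$ contributes a single $z$. It is built as a sequence: each additional element of the block contributes $x$ together with one $E$-structure, namely the structure hanging in the gap just before it. The final $E$ accounts for the piece to the left of the block.

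The second step is $E = 1 + xFE^2$. In an $E$-structure, the element $n$ forms a singleton block. I consider its longest arc, to a block $B$. This arc contributes $x$, since it adds one ground-set element. It then splits the structure into an $F$-structure containing $B$ together with what lies to its left, and two $E$-structures, one nested inside the arc on each side, exactly as in \cref{prop:generatingFunctionsWeepingWillows}. The point to verify is that the singleton $\{n\}$ has no separate $z$ factor. This is because the decomposition absorbs it into the $z$ of the next recursive layer: the empty-arc base case is $1$, whereas $F$ starts with $z$. I would check the conventions against the stated initial terms ($F = z + xz + xz^2 + \cdots$) rather than argue abstractly.

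Next come $G$ and $H$. I follow \cref{prop:generatingFunctionsWeepingWillows}. For $G$, the block of $n$ is taken as before, giving the factor $z/(1-xE)$. If that block has no crown arc going left, the contribution is $1$. Otherwise its longest arc can be oriented in $2$ ways, and it splits off a Schröder weeping willow on the left ($G$) and two non-crossing pieces ($E^2$), which are forced to weep away by \cref{lem:weeping}. This gives $G = z(1+2xGE^2)/(1-xE)$. For $H$, the orientation of the longest arc is decided by rootedness. If the root lies to the left, the left piece is rooted ($H$). If the root is the block of $n$, the left piece is an unrooted non-crossing tree with its arcs weeping ($F$). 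This yields $H = z(1+x(H+F)E^2)/(1-xE)$.

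The main obstacle is the bookkeeping of $z$ at the junctions. In particular, the tour arcs of a block and the canonical arcs of \cref{def:canonicalRelation} could in principle hide a block that gets double counted. I would resolve this by fixing the convention that each recursive piece carries the $z$ of its own leftmost or distinguished block only, and then confirming it against the low-order coefficients listed in the statement (for example the coefficient $2xz^2$ in $G$ and $H$).
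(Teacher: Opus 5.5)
Your proposal is correct and follows the paper's own argument, which refines the four decompositions of \cref{fig:GeneratingFunctionsSchroderEFGH} by one factor $z$ per block, with $E$ counting only the blocks other than the singleton $\{n\}$: first the block of $n$ and its gaps, then the longest arc leaving that block to the left. The steps you only gesture at are routine: by \cref{def:TamariIntervalPreposet}\,\eqref{item:TamIntPreposDef1}, since consecutive elements of a block are equivalent, the gap pieces are forced to be oriented away from the block of $n$ (so they still contribute $xE$ in $G$ and $H$), and the term $1$ in $G$ and $H$ arises exactly when the block of $n$ contains $1$, which is clearer than your ``no crown arc going left''.
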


\begin{proof}
The proof is similar as the one of \Cref{prop:generatingFunctionsSchroderWeepingWillows}, using again \Cref{fig:GeneratingFunctionsSchroderEFGH}.
\end{proof}

\begin{remark}
In particular, the $f$-vector of the contraction poset on Schröder weeping willow, \ie the number of Schröder weeping willows with ground set~$[n]$ counted according to their number of nodes, is formed by the coefficients of the polynomial (in the variable $z$) which is in factor of $x^{n-1}$ inside $G$.
For instance, for $n = 3$, the $f$-vector is $(8, 5, 1)$, as can be read off from the size of each row in \Cref{fig:SchroderWWLattice}; meanwhile, the polynomial in factor of $x^{3-1}$ inside $G$ is $8z^3 + 5z^2 + z$.
\end{remark}

\begin{remark}
One can recover $f$ (resp. $g$, resp. $h$) from \Cref{prop:generatingFunctionsWeepingWillows} by looking at the coefficients in front of $x^k z^{k+1}$ in $F$ (resp. $G$, resp. $H$) in \Cref{prop:bivariateGeneratingFunctionsSchroderWeepingWillows}.
Indeed, a weeping willow is a Schöder weeping willow with the same number of nodes and parts.
\end{remark}


\subsection{Schröder weeping willows and simple interval hypergraphic polytopes} 
\label{subsec:SWWSIHP}

Following \cref{subsec:WWSIHP}, we now specialize \cref{subsec:TIPPIHP} to connect Schröder weeping willows with face trees of simple interval hypergraphic polytopes.
The next statement extends \cref{prop:weepingWillows1,prop:weepingWillows2}.

\begin{proposition}
\label{prop:SchroderWeepingWillows}
The face preposets of a simple interval hypergraphic polytope are Schröder weeping willows.
In fact, the face lattice of a simple interval hypergraphic polytope is a graded sublattice of the contraction lattice on Schröder weeping willows.
Conversely, any Schröder weeping willow is a face poset of a simple interval hypergraphic polytope.
\end{proposition}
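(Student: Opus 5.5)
The proof has three parts, matching the three sentences of the statement.

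\emph{Face preposets are Schröder weeping willows.} Let $\simplex_\II$ be simple and let $\poly{F}$ be a face with face preposet~$\bless$. By \cref{prop:TamariIntervalPreposets}, $\bless$ is a Tamari interval preposet. It remains to check that the Hasse diagram of its associated poset~$\less$ on equivalence classes is a forest (a tree if $\simplex_\II$ is full-dimensional). Pick a vertex~$\b v$ of~$\poly{F}$. Since $\simplex_\II$ is simple, the vertex digraph of~$\b v$ is a forest by \cref{prop:weepingWillows1}. Moreover, as recalled in \cref{subsec:deformedPermutahedra}, the face digraphs of the faces containing a simple vertex are exactly the forests obtained from its vertex digraph by iterated edge contractions. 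Contracting arcs of a forest again gives a forest on a partition of~$[n]$, so the face digraph of~$\poly{F}$ is a forest. By \cref{def:SchroderWeepingWillow}, this is a Schröder weeping willow, or a disjoint union of them if $\simplex_\II$ is not full-dimensional, in which case one treats each component separately.

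\emph{Graded sublattice.} The face poset of~$\simplex_\II$ (without its empty face) is ordered by inclusion of faces, equivalently by the face relation on normal cones. This is exactly the refinement order, which on Schröder weeping willows is the contraction order (\cref{subsec:inclusionRefinementSchroderWeepingWillows}). So $\poly{F} \mapsto \bless_\poly{F}$ embeds the face lattice into the contraction poset, sending the empty face to the artificial minimum.

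To see that it is a sublattice, note that joins of faces in a simple polytope are computed inside the simplicial normal cone at a common vertex. Meets are intersections of normal cones, taken as the largest common face, which is exactly the construction used in the proof that the contraction poset is a lattice; when two faces share no common vertex, the meet is the artificial minimum in both lattices. I expect the main obstacle to be the join: showing that the smallest Schröder weeping willow contracted from both given ones, computed in the contraction lattice, is already a face of~$\simplex_\II$. The route I would try first is to use the fan structure. The normal cones of~$\simplex_\II$ form a complete simplicial fan, so the cone common to both is a face of each, and it is the same cone as the one produced by the meet construction on cones in the proof of the contraction lattice. Gradedness is immediate, with rank given by dimension, equivalently the number of nodes of the Schröder weeping willow.

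\emph{Converse.} Given a Schröder weeping willow~$\sww$ with Tamari interval preposet~$\bless$, take $\II_\bless = \set{a^\bless}{a\in[n]}$ as in the proof of \cref{prop:TamariIntervalPreposets}. There $\bless$ is a face preposet of~$\simplex_{\II_\bless}$, but this polytope need not be simple. So I would refine $\bless$ to a weeping willow instead. Within each equivalence class $\{a_1<\dots<a_q\}$, expand the block into a path, oriented so that the resulting tree is still weeping; for instance, orient $a_p\to a_{p+1}$ from the endpoint of the canonical arc entering the block. Then check that the resulting $\ww$ is a weeping willow whose contraction is~$\sww$. By \cref{prop:weepingWillows2}, $\simplex_{\II_{\less_\ww}}$ is simple and has $\ww$ as a vertex tree. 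Because the vertex is simple, contracting the arcs inside the blocks gives a face of this polytope whose face preposet is~$\bless$. The delicate point here is choosing the orientations of the expanded blocks so that the weeping condition holds, using the canonical arcs of \cref{def:canonicalRelation}.
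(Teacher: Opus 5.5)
Your first and third parts follow the paper's route. In the first, face trees of a simple polytope are contractions of its vertex trees, with \cref{prop:TamariIntervalPreposets} giving the Tamari condition. In the third, you expand a Schröder weeping willow into a weeping willow and apply \cref{prop:weepingWillows2}. The paper treats ``Schröder weeping willows are exactly the contractions of weeping willows'' as immediate; your expansion (a path on consecutive elements of each block, with canonical arcs between blocks) is a reasonable way to make that step explicit.

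The genuine gap is in the ``graded sublattice'' part: you use the word meet for two different operations. For faces $F,G$ with normal cones $N(F),N(G)$, the cone $N(F)\cap N(G)$ is the normal cone of the \emph{join} $F\vee G$, the smallest face containing both. Your fan argument correctly shows that this cone is spanned by the common rays. It is therefore the least common contraction, which is the join in the contraction lattice. (The paper's lattice proof calls this operation a meet, but with weeping willows as atoms it is the join.) So joins and ranks are preserved. The actual meet $F\wedge G=F\cap G$ corresponds to the greatest lower bound, i.e.\ a Schröder weeping willow that contracts to both face trees. Your claim that two faces with no common vertex have the artificial minimum as meet in both lattices is false. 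A common lower bound in the contraction lattice can be a weeping willow whose cone is not in the normal fan of $\simplex_\II$.

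Here is a counterexample. Take $\II=\{[1,2],[2,3],[1,3]\}$. Its polytope is the pentagon with vertex trees $1\to2\to3$, $1\leftarrow2\to3$, $1\leftarrow2\leftarrow3$, $3\to1\to2$, $1\to3\to2$, in cyclic order.
\begin{itemize}
\item The edge between the third and fourth vertices has face tree $3\to\{1,2\}$.
\item The edge between the fifth and first vertices has face tree $1\to\{2,3\}$.
\item These two edges are disjoint, so their meet in the face lattice is $\varnothing$.
\item The weeping willow $1\to2\leftarrow3$ (a vertex tree of the square $\simplex_{\{[1,2],[2,3]\}}$) contracts to both face trees. So their meet in the contraction lattice is this atom, not the artificial minimum.
\end{itemize}
Hence the face lattice embeds only as a rank-preserving, join-closed subposet of the contraction lattice, not as a sublattice. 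No argument can establish meet preservation.

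The paper's own proof does not address this point either: it only identifies the face lattice with the contraction order on the polytope's own face trees. What your argument can legitimately prove is the order embedding, gradedness, and preservation of joins. The meet claim should be dropped, or ``sublattice'' read as ``join-subsemilattice''.
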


\begin{proof}
This immediately follows from three immediate facts:
\begin{itemize}
\item the face forests of a simple deformed permutahedron are precisely all contractions of its vertex forests, and its face lattice is isomorphic to the contraction lattice on its face forests,
\item the Schröder weeping willows are precisely the contractions of the weeping willows.
\qedhere
\end{itemize}
\end{proof}

The same arguments yield the following extension of \cref{prop:rootedWeepingWillows1,prop:rootedWeepingWillows2}.

\begin{proposition}
\label{prop:RootedSchroderWeepingWillows}
The face preposets of an interval nestohedron are rooted Schröder weeping willows.
Conversely, any rooted Schröder weeping willow is a face poset of an interval nestohedron.
\end{proposition}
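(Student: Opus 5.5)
The plan mirrors the proof of \cref{prop:SchroderWeepingWillows}, with \cref{prop:rootedWeepingWillows1,prop:rootedWeepingWillows2} in place of \cref{prop:weepingWillows1,prop:weepingWillows2}. The basic tool is the standard fact for simple deformed permutahedra: the face forests are exactly the iterated edge contractions of the vertex forests. I take a Schröder weeping willow to be \emph{rooted} when its tree has a unique source.

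For the forward direction, let $\simplex_\BB$ be an interval nestohedron. It is simple, and by \cref{prop:NestoVertexDiGraphsAreBBforests} (or \cref{prop:rootedWeepingWillows1}) its vertex digraphs are forests of rooted weeping willows. A face $\poly{F}$ of $\simplex_\BB$ contains some vertex $\b v$. Since $\b v$ is simple, the face digraph of $\poly{F}$ is obtained from the vertex forest of $\b v$ by contracting edges. Contracting an edge $(a,b)$ of a rooted tree gives again a rooted tree: the merged node inherits the in-arc of $a$ (if any), and no new source appears. So each face forest has a unique source in each component. The associated preposet is a Tamari interval preposet by \cref{prop:TamariIntervalPreposets}. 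Hence the face forest is a (forest of) rooted Schröder weeping willow(s), exactly as in \cref{prop:SchroderWeepingWillows}.

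For the converse, fix a rooted Schröder weeping willow $\sww$ with preposet $\bless$. The simplest approach is direct and repeats the proof of \cref{prop:rootedWeepingWillows2}. Take $\II_\bless \eqdef \set{a^\bless}{a \in [n]}$ with the acyclic preorientation $A_\bless$ from the proof of \cref{prop:TamariIntervalPreposets}; that proof gives $\bless_{A_\bless} = \bless$. It remains to check that $\II_\bless$ is a building set. Since $\sww$ is a rooted tree on the blocks, for two blocks $X, Y$ the up-sets $X^\less$ and $Y^\less$ are nested if the tree path between $X$ and $Y$ is directed. Otherwise the two up-sets are disjoint, because a common descendant would create two distinct paths in a tree. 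Since $a^\bless$ is the union of the blocks in $X^\less$ for the block $X \ni a$, any two intersecting members of $\II_\bless$ are nested, so their union lies in $\II_\bless$. Thus $\simplex_{\II_\bless}$ is an interval nestohedron (in fact a fertilitope) having $\bless$ as a face preposet.

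An alternative route for the converse is to uncontract $\sww$ to a rooted weeping willow $\ww$ (expanding each block into a suitable rooted weeping willow on its elements) and then apply \cref{prop:rootedWeepingWillows2}. I avoid it because it requires showing that such an expansion always exists, which is the only genuinely delicate step. The direct construction sidesteps this, so the main care point is the nested-or-disjoint claim for up-sets in the preposet setting. There one must work on the quotient poset $\less$ on blocks and translate back through the blocks, which are themselves intervals' unions compatible with \cref{def:TamariIntervalPreposet}.
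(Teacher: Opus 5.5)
Your proof is correct. The forward direction is the paper's: contract the vertex forests of the simple polytope, note that contracting an arc of an arborescence keeps a unique source, and get the Tamari interval property from \cref{prop:TamariIntervalPreposets}.

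For the converse you take a genuinely different route. The paper says ``the same arguments'' as \cref{prop:SchroderWeepingWillows}. That is, it writes the rooted Schröder weeping willow as a contraction of a rooted weeping willow, realizes the latter as a vertex tree of an interval nestohedron via \cref{prop:rootedWeepingWillows2}, and reads the contraction off as a face tree. You instead lift the proof of \cref{prop:rootedWeepingWillows2} to the preposet level. You use the preorientation $A_\bless$ of $\II_\bless$ from \cref{prop:TamariIntervalPreposets}, and you check that the up-sets $a^\bless$ are nested or disjoint because the block tree is an arborescence.

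The paper's route is shorter and keeps the face lattice visibly a contraction lattice. However, it rests on the uncontraction fact (every rooted Schröder weeping willow is a contraction of some rooted weeping willow), which the paper calls immediate without proof. Your route is self-contained and produces the explicit fertilitope $\simplex_{\II_\bless}$, the bottom element in \cref{prop:inclusionPosetIntervalBuildingSetsContainingRootedSchroderWeepingWillow}. Since that polytope is simple, any vertex of the face you construct has a rooted weeping willow as vertex tree, and this tree contracts to $\sww$. So your argument actually proves the step you chose to avoid.

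Two minor remarks. When the building set is disconnected, the statement has to be read for forests, as you do. Your closing phrase describing blocks as ``intervals' unions'' is garbled and unnecessary. Blocks need not be intervals (e.g.\ the block $\{1,3\}$ lying above the block $\{2\}$), and your argument never uses any property of them beyond the preposet structure.
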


\begin{example}
For instance, the contraction posets of \cref{exm:faceLatticeCube,exm:faceLatticeAssociahedron,exm:faceLatticePitmanStanley,exm:faceLatticeFreehedron} are isomorphic to the face lattices of the cube, associahedron, Pitman--Stanley polytope, and freehedron respectively.
\end{example}


\subsection{A finer description}
\label{subsec:finerDescriptionSchroderWeeping}

Finally, we extend \cref{prop:inclusionPosetIntervalHypergraphsContainingWeepingWillow,prop:inclusionPosetIntervalBuildingSetsContainingRootedWeepingWillow} to all Schröder weeping willows.
Recall from \cref{prop:inclusionPosetIntervalHypergraphsContainingTamariIntervalPreposet} that, for a Tamari interval preposet~$\bless$, we considered the interval hypergraphs $\mathdefn{\II_\bless} \eqdef \set{a^\bless}{a \in [n]}$ and~$\mathdefn{\JJ_\bless} \eqdef \bigcup_{a \in [n]} \set{I}{a \in I \subseteq a^\bless}$.
For a preposet~$\bless$, recall that we denote by~\defn{$C(\bless)$} the relations of~$\bless$ given by:
\begin{itemize}
\item all equivalence relations of~$\bless$, \ie all~$a \bless b$ such that~$a \bmore b$,
\item the canonical relations~$\canoArc$ (see \cref{def:canonicalRelation}) for all cover relations~$A \lessdot B$ of the poset~$\less$ of~$\bless$.
\end{itemize}
We denote by~\defn{$K(\bless)$} the maximal relations of~$C(\bless)$ for the order given by~$(a, b) \prec (c, d)$ if~$a,b,c,d$ are not all equivalent for~$\bless$ and~$\min(c,d) \le \min(a,b)$ and~$\max(a,b) \le \max(c,d)$,

\begin{proposition}
\label{prop:inclusionPosetIntervalHypergraphsContainingSchroderWeepingWillow}
We denote by~$\mathdefn{\c{S}}$ the inclusion poset of interval hypergraphs~$\II$ on~$[n]$ such that~$\simplex_\II$ is simple.
For a given Schröder weeping willow~$\sww$ with transitive closure~$\bless$, denote by~$\mathdefn{\c{S}_\bless} \eqdef \c I_\bless \cap \c S$ the subposet of~$\c{S}$ induced by interval hypergraphs~$\II$ on~$[n]$ such that~$\sww$ is a face tree of~$\simplex_\II$.
Then
\begin{enumerate}[(i)]
\item $\c{S}_\bless$ is order convex in~$\c{S}$.
\item The interval hypergraph~$\II_\bless$ is a minimal element of~$\c{S}_\bless$. Moreover, it is the unique minimal element of~$\c{S}_\bless$ if and only if there are no~$1 \le a < b < c \le n$ such that~$a \bless b \bless c$, or~$a \bmore b \bmore c$, or $a \bmore b \bless c$, where both relations are in~$C(\bless)$ and at least one of them is in~$K(\bless)$.
\item The interval hypergraph~$\JJ_\bless$ is the unique maximal element of~$\c{S}_\bless$.
\end{enumerate}
\end{proposition}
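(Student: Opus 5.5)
The plan is to mimic the proof of \cref{prop:inclusionPosetIntervalHypergraphsContainingWeepingWillow}, replacing \cref{thm:TamariIntervalPosetInIntervalHypergraphic} by \cref{thm:TamariIntervalPreposetInIntervalHypergraphic}. Point (i) is immediate from the normal fan argument of \cref{prop:inclusionPosetIntervalHypergraphsContainingTamariIntervalPoset}. Indeed, if $\II \subseteq \JJ \subseteq \KK$ with $\II, \KK \in \c{S}_\bless$ and $\JJ \in \c{S}$, then the cone $\poly{C}_\bless$ is a cone of the normal fans of $\simplex_\II$ and $\simplex_\KK$, hence also of $\simplex_\JJ$.

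For (ii) and (iii), we first check that $\II_\bless$ and $\JJ_\bless$ lie in $\c{S}_\bless$. They satisfy conditions (i)--(iii) of \cref{thm:TamariIntervalPreposetInIntervalHypergraphic} directly: for condition (ii) use $a_p^\bless$, and for condition (iii) use $a^\bless$ with $(a,b)$ a canonical relation. It remains to check the two conditions of \cref{thm:characterizationSimpleIntervalHypergraphicPolytopes}. Here we contract: since $\sww$ is a tree, its preposet is the transitive closure of a tree on the equivalence classes. The arguments of \cref{prop:weepingWillows2} and of the $\JJ_\less$ part of \cref{prop:inclusionPosetIntervalHypergraphsContainingWeepingWillow} then go through verbatim. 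We apply \cref{lem:posetTree} to the poset $\less$ on classes, and use that $a^\bless$ depends only on the class of $a$.

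Minimality of $\II_\bless$ and maximality of $\JJ_\bless$ in $\c{S}_\bless$ follow as before from \cref{prop:inclusionPosetIntervalHypergraphsContainingTamariIntervalPreposet}. The argument is that $\c{S}_\bless \subseteq \c{I}_\bless$ and both hypergraphs belong to $\c{S}_\bless$.

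The remaining work is the uniqueness criterion in (ii), which we split into two directions.

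\emph{Sufficiency.} Assume the forbidden patterns are absent, and take any $\II \in \c{S}_\bless$ and any $a$. We join $a$ to $\min(a^\bless)$ and to $\max(a^\bless)$ by paths of relations in $C(\bless)$: steps inside the equivalence class of $a$, followed by canonical arcs. If these paths use at most one relation, then condition (ii) or (iii) of \cref{thm:TamariIntervalPreposetInIntervalHypergraphic} forces an interval $I$ with $a^\bless \subseteq I \subseteq a^\bless$, so $a^\bless \in \II$. Otherwise, the hypothesis says none of the relations used is in $K(\bless)$. Hence some relation of $K(\bless)$ passes above them and provides $K \in \II$ containing $a^\bless$. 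Condition (i) of \cref{thm:characterizationSimpleIntervalHypergraphicPolytopes} then shows that the union $X$ of the intervals witnessing the path relations lies in $\II$, and $X = a^\bless$ as in the poset case.

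\emph{Necessity.} Given a forbidden pattern, we replace one interval of $\II_\bless$ by a smaller one. For instance, for $a \bless b \bless c$ with $(a,b) \in K(\bless)$, replace $a^\bless$ by $[\min(a^\bless), b]$; the other patterns are analogous, as in the poset case. We then verify that the result still satisfies \cref{thm:TamariIntervalPreposetInIntervalHypergraphic} and \cref{thm:characterizationSimpleIntervalHypergraphicPolytopes}, using that the modified interval is inclusion maximal and that the intersections $H \cap j^\bless$ are again of the form $k^\bless$.

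The main obstacle is the sufficiency direction in the preposet setting. Equivalence relations inside a class may form such a path, and the definition of $K(\bless)$ deliberately excludes comparisons among relations whose endpoints are all equivalent. So we must check carefully that a path of length at least two inside a single class is still covered, by an equivalence relation $(a_p, a_{p+1})$ or by a crown canonical arc. This is needed so that condition (i) of \cref{thm:characterizationSimpleIntervalHypergraphicPolytopes} applies; otherwise we must argue directly that condition (ii) of \cref{thm:TamariIntervalPreposetInIntervalHypergraphic} forces $a^\bless$. I would first treat the case where $\bless$ has a single class, via the tour of \cref{def:canonicalRelation}, and then reduce the general case to it.
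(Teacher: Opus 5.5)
Your route is the one the paper intends: its proof simply defers to \cref{prop:inclusionPosetIntervalHypergraphsContainingWeepingWillow}. Your treatment of order convexity, of $\II_\bless,\JJ_\bless\in\c{S}_\bless$ via the tree on classes, and of minimality and maximality is fine. The genuine gap is the necessity half of (ii): replacing a single interval of $\II_\bless$ by a smaller one does not work.

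First, the preposet-specific failure. If $a\equiv b$ in a pattern $a\bless b\bless c$, then $a^\bless=b^\bless$ is the only interval of $\II_\bless$ witnessing the covers leaving the class of $b$. Shrinking it to $[\min(a^\bless),b]$ therefore breaks condition (iii) of \cref{thm:TamariIntervalPreposetInIntervalHypergraphic}. Concretely, for the classes $\{1,2\}\lessdot\{3\}$, the relations $(1,2),(2,1),(2,3)$ of $C(\bless)$ all lie in $K(\bless)$. Your recipe produces $\{[1,2]\}$ (plus singletons), of which $\bless$ is not a face preposet. The valid alternative $\{[1,2],[2,3]\}$ comes from reading the configuration as $1\bmore 2\bless 3$ and splitting $2^\bless$.

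Second, even with singleton classes a single replacement need not give a simple polytope, and the property you plan to use (that $H\cap j^\bless$ is always some $k^\bless$) is false. Take the weeping willow with arcs $1\to3$, $3\to2$, $3\to4$. Here $(1,3),(3,4)\in K(\bless)$ form the pattern $1\bless 3\bless 4$.
\begin{itemize}
\item Replacing $1^\bless=[1,4]$ by $[1,3]$ gives $\{[1,3],[2,4]\}$. Then $[1,3]\cap 3^\bless=[2,3]$ is not an upper set, and condition (ii) of \cref{thm:characterizationSimpleIntervalHypergraphicPolytopesProof} fails. Indeed the permutation $1423$ yields the vertex digraph $1\to2\leftarrow4\to3\leftarrow1$.
\item Splitting $3^\bless$ instead also fails, since $[1,4]\supseteq[2,3]\cup[3,4]$.
\item Every element of $\c{S}_\bless$ avoiding $[1,4]$, for example $\{[1,3],[2,3],[3,4]\}$, changes several intervals of $\II_\bless$.
\end{itemize}
So ``as in the poset case'' does not settle this step. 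You need a construction that also repairs the upper sets lying below the shrunk interval.

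For sufficiency, the obstacle you flag is left open, and the proposed reduction to a single class is vacuous. If $\bless$ has the single class $[n]$ with $n\ge 3$, then $(1,2),(2,3)\in K(\bless)$ and the forbidden pattern is always present, so there is nothing to prove there.

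The real preposet-specific point is your claim that a relation of $K(\bless)$ above the paths ``provides $K\in\II$ containing $a^\bless$''. This is not automatic:
\begin{itemize}
\item Condition (iii) of \cref{thm:TamariIntervalPreposetInIntervalHypergraphic} only gives an interval through some pair of $X\times Y$, not through the canonical pair. For $\{1,4\}\lessdot\{2,3\}$ the canonical arc is $(1,2)$, yet $[3,4]$ is a valid witness.
\item Condition (ii) only covers consecutive elements of a class.
\end{itemize}
You must show that, under the hypothesis, every relation of $K(\bless)$ is either a consecutive pair of a two-element class or the canonical arc of two side-by-side classes. Gap-type arcs lie below the equivalence pair bounding their gap. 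Pairs inside classes of size at least three are excluded through the patterns formed by consecutive elements. For such relations every witness contains the span.

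The union argument then goes through once you also show that a single such relation lies above both paths. Note that the paths' class steps must join consecutive elements in every class visited, not only in the class of $a$.
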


\begin{proof}
The proof is similar to that of \cref{prop:inclusionPosetIntervalHypergraphsContainingWeepingWillow} and left to the reader.\end{proof}

\begin{proposition}
\label{prop:inclusionPosetIntervalBuildingSetsContainingRootedSchroderWeepingWillow}
We denote by~$\mathdefn{\c{N}}$ the inclusion poset of interval building sets on~$[n]$ (\ie interval hypergraphs~$\II$ containing all singletons, and such that~$I, J \in \II$ and~$I \cap J \ne \varnothing$ implies~${I \cup J \in \II}$).
For a given rooted Schröder weeping willow~$\sww$ with transitive closure~$\bless$, denote by ${\mathdefn{\c{N}_\bless} \eqdef \c I_\bless \cap \c N}$ the subposet of~$\c{N}$ induced by interval building sets~$\II$ on~$[n]$ such that~$\sww$ is a face tree of~$\simplex_\II$.
Then~$\c{N}_\bless$ is the interval between~$\II_\bless$ and~$\JJ_\bless$.
\end{proposition}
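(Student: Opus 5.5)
We follow the proof of \cref{prop:inclusionPosetIntervalBuildingSetsContainingRootedWeepingWillow}, replacing \cref{thm:TamariIntervalPosetInIntervalHypergraphic} by \cref{thm:TamariIntervalPreposetInIntervalHypergraphic}. The argument has three steps: order convexity of $\c{N}_\bless$, membership of $\II_\bless$ and $\JJ_\bless$ in $\c{N}_\bless$, and the bounds $\II_\bless \subseteq \II \subseteq \JJ_\bless$ for every $\II \in \c{N}_\bless$.

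\emph{Order convexity.} This follows exactly as in \cref{prop:inclusionPosetIntervalHypergraphsContainingTamariIntervalPoset}, applied to the normal cone $\poly{C}_\bless$. If $\poly{C}_\bless$ is a cone of the normal fans of $\simplex_\II$ and $\simplex_\KK$ with $\II \subseteq \KK$, then it is a cone of the normal fan of $\simplex_\JJ$ for any $\II \subseteq \JJ \subseteq \KK$.

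\emph{Upper bound and membership of $\JJ_\bless$.} Any $\II \in \c{I}_\bless$ satisfies \cref{thm:TamariIntervalPreposetInIntervalHypergraphic}\,\eqref{cond:belong1preposet}, hence $\II \subseteq \JJ_\bless$. Conversely, $\JJ_\bless$ satisfies conditions \eqref{cond:belong1preposet}--\eqref{cond:belong3preposet}: for \eqref{cond:belong2preposet} and \eqref{cond:belong3preposet} one takes the interval $a^\bless$ itself. To see that $\JJ_\bless$ is a building set, take $I, J \in \JJ_\bless$ with $I \subseteq a^\bless$, $J \subseteq b^\bless$, $a \in I$, $b \in J$ and $I \cap J \ne \varnothing$. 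Since $\sww$ is rooted, the path between the classes of $a$ and $b$ in the tree is directed whenever $a^\bless \cap b^\bless \ne \varnothing$, so $a^\bless$ and $b^\bless$ are nested, say $a^\bless \subseteq b^\bless$. Then $b \in I \cup J \subseteq b^\bless$, and $I \cup J$ is an interval because $I$ and $J$ intersect, so $I \cup J \in \JJ_\bless$.

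\emph{Lower bound and membership of $\II_\bless$.} By \cref{prop:TamariIntervalPreposets}, $\bless$ is a face preposet of $\simplex_{\II_\bless}$. The rootedness argument of \cref{prop:rootedWeepingWillows2} shows that any two sets $a^\bless$, $b^\bless$ are nested or disjoint, so $\II_\bless$ is a building set. For minimality, let $\II \in \c{N}_\bless$ and fix $a$; we must show $a^\bless \in \II$. Every $b \in a^\bless$ is reached from $a$ by a chain of consecutive elements within equivalence classes and cover relations $X \lessdot Y$ of $\less$. Conditions \eqref{cond:belong2preposet} and \eqref{cond:belong3preposet} give, for each step, an interval of $\II$ containing both endpoints and contained in the upper set of the source, hence in $a^\bless$. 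Collect these intervals over all steps of all chains starting from $a$. Each is contained in $a^\bless$, and together they cover $a^\bless$. Their union is connected because consecutive intervals along a chain share an element and all chains start at the class of $a$. Repeatedly applying the building set property to intersecting members shows that the union lies in $\II$, and this union is exactly $a^\bless$.

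The main obstacle is this last step. We must check that the intervals really form a connected family, so that iterated unions of intersecting members are legitimate, and that their union is all of $a^\bless$, including the elements of the class of $a$ itself. Condition \eqref{cond:belong2preposet} is what handles the latter, which is why it must be invoked alongside \eqref{cond:belong3preposet}.
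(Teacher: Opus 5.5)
Your proposal is correct and is the adaptation the paper intends: its proof of this statement is left to the reader as a copy of the proof of \cref{prop:inclusionPosetIntervalBuildingSetsContainingRootedWeepingWillow}. Your argument has the same ingredients: order convexity of the normal-cone condition, nestedness of the upper sets $a^\bless$ coming from rootedness (giving both that $\II_\bless$ and $\JJ_\bless$ are building sets), and gluing the witness intervals of conditions (ii) and (iii) of \cref{thm:TamariIntervalPreposetInIntervalHypergraphic} via the building set property to force $a^\bless \in \II$. The only detail left implicit is the degenerate case $a^\bless = \{a\}$, where your family of intervals is empty and you simply use that $\II$ contains all singletons.
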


\begin{proof}
The proof is similar to that of \cref{prop:inclusionPosetIntervalBuildingSetsContainingRootedWeepingWillow} and left to the reader.
\end{proof}


\section*{Acknowledgements}

Most of our working group came together during the January 2025 Banff workshop ``Lattice Theory''.
We are grateful to the organizers (Emily Barnard, Cesar Ceballos, Colin Defant, Osamu Iyama, and Nathan Williams) and to all participants for the friendly and inspiring atmosphere.
We also thank Gabe Udell for the suggestion to consider hypergraphs containing~$[n]$.

\bibliographystyle{alpha}
\bibliography{simpleIntervalHypergraphicPolytopes}

@article {Postnikov,
	AUTHOR = {Postnikov, Alexander},
	TITLE = {Permutohedra, associahedra, and beyond},
	JOURNAL = {Int. Math. Res. Not. IMRN},
	FJOURNAL = {International Mathematics Research Notices. IMRN},
	YEAR = {2009},
	NUMBER = {6},
	PAGES = {1026--1106},
}

@article {PostnikovReinerWilliams,
	AUTHOR = {Postnikov, Alexander and Reiner, Victor and Williams, Lauren K.},
	TITLE = {Faces of generalized permutohedra},
	JOURNAL = {Doc.~Math.},
	FJOURNAL = {Documenta Mathematica},
	VOLUME = {13},
	YEAR = {2008},
	PAGES = {207--273},
}

@inproceedings {Edmonds,
	AUTHOR = {Edmonds, Jack},
	TITLE = {Submodular functions, matroids, and certain polyhedra},
	BOOKTITLE = {Combinatorial {S}tructures and their {A}pplications ({P}roc. {C}algary {I}nternat. {C}onf., {C}algary, {A}lta., 1969)},
	PAGES = {69--87},
	PUBLISHER = {Gordon and Breach, New York},
	YEAR = {1970},
}

@article {FeichtnerSturmfels,
	AUTHOR = {Feichtner, Eva~Maria and Sturmfels, Bernd},
	TITLE = {Matroid polytopes, nested sets and {B}ergman fans},
	JOURNAL = {Port. Math. (N.S.)},
	FJOURNAL = {Portugaliae Mathematica. Nova S\'erie},
	VOLUME = {62},
	YEAR = {2005},
	NUMBER = {4},
	PAGES = {437--468},
}

@article {BenedettiBergeronMachacek,
   	 AUTHOR = {Benedetti, Carolina and Bergeron, Nantel and Machacek, John},
     	TITLE = {Hypergraphic polytopes: combinatorial properties and antipode},
   	JOURNAL = {J. Comb.},
  	FJOURNAL = {Journal of Combinatorics},
   	 VOLUME = {10},
      	YEAR = {2019},
    	NUMBER = {3},
     	PAGES = {515--544},
}

@article {BergeronPilaud,
	AUTHOR = {Bergeron, Nantel and Pilaud, Vincent},
	TITLE = {Interval hypergraphic lattices},
	JOURNAL = {European J. Combin.},
	FJOURNAL = {European Journal of Combinatorics},
	VOLUME = {132},
	YEAR = {2026},
	PAGES = {Paper No. 104285},
}

@article {PilaudPoullot2025PivotPolytope,
	AUTHOR = {Pilaud, Vincent and Poullot, Germain},
	TITLE = {Pivot Polytopes of Products of Simplices and Shuffles of Associahedra},
	JOURNAL = {Discrete Comput. Geom.},
	FJOURNAL = {Discrete \& Computational Geometry},
	YEAR = {2025},
}

@article {CarrDevadoss,
	AUTHOR = {Carr, Michael~P. and Devadoss, Satyan~L.},
	TITLE = {Coxeter complexes and graph-associahedra},
	JOURNAL = {Topology Appl.},
	FJOURNAL = {Topology and its Applications},
	VOLUME = {153},
	YEAR = {2006},
	NUMBER = {12},
	PAGES = {2155--2168},
}

@article {PitmanStanley,
	AUTHOR = {Stanley, Richard P. and Pitman, Jim},
	TITLE = {A polytope related to empirical distributions, plane trees, parking functions, and the associahedron},
	JOURNAL = {Discrete Comput. Geom.},
	FJOURNAL = {Discrete \& Computational Geometry. An International Journal of Mathematics and Computer Science},
	VOLUME = {27},
	YEAR = {2002},
	NUMBER = {4},
	PAGES = {\mbox{pp. 603--634}},
}

@article {Saneblidze-freehedron,
	AUTHOR = {Saneblidze, Samson},
	TITLE = {The bitwisted {C}artesian model for the free loop fibration},
	JOURNAL = {Topology Appl.},
	FJOURNAL = {Topology and its Applications},
	VOLUME = {156},
	YEAR = {2009},
	NUMBER = {5},
	PAGES = {897--910},
}

@article {Defant-fertilitopes,
	AUTHOR = {Defant, Colin},
	TITLE = {Fertilitopes},
	JOURNAL = {Discrete Comput. Geom.},
	FJOURNAL = {Discrete \& Computational Geometry. An International Journal of Mathematics and Computer Science},
	VOLUME = {70},
	YEAR = {2023},
	NUMBER = {3},
	PAGES = {713--752},
}

@article {Pilaud-acyclicReorientationLattices,
	AUTHOR = {Pilaud, Vincent},
	TITLE = {Acyclic reorientation lattices and their lattice quotients},
	JOURNAL = {Ann. Comb.},
	FJOURNAL = {Annals of Combinatorics},
	NOTE = {Online first},
	YEAR = {2024},
}

@article {Kim,
	AUTHOR = {Kim, Sangwook},
	TITLE = {Shellable complexes and topology of diagonal arrangements},
	JOURNAL = {Discrete Comput. Geom.},
	FJOURNAL = {Discrete \& Computational Geometry. An International Journal of Mathematics and Computer Science},
	VOLUME = {40},
	YEAR = {2008},
	NUMBER = {2},
	PAGES = {190--213},
}

@article{BjornerWachs,
	AUTHOR = {Bj{\"o}rner, Anders and Wachs, Michelle L.},
	FJOURNAL = {Journal of Combinatorial Theory. Series A},
	JOURNAL = {J. Combin. Theory Ser. A},
	NUMBER = {1},
	PAGES = {85--114},
	TITLE = {Permutation statistics and linear extensions of posets},
	VOLUME = {58},
	YEAR = {1991}
}

@article {ChatelPilaudPons,
	AUTHOR = {Chatel, Gr\'egory and Pilaud, Vincent and Pons, Viviane},
	TITLE = {The weak order on integer posets},
	JOURNAL = {Algebraic Combinatorics},
	PUBLISHER = {MathOA foundation},
	VOLUME = {2},
	NUMBER = {1},
	YEAR = {2019},
	PAGES = {1--48},
}

@article{ChatelPons,
	Author = {Ch{\^a}tel, Gr{\'e}gory and Pons, Viviane},
	Fjournal = {Journal of Combinatorial Theory. Series A},
	Journal = {J. Combin. Theory Ser. A},
	Pages = {58--97},
	Title = {Counting smaller elements in the {T}amari and {$m$}-{T}amari lattices},
	Volume = {134},
	Year = {2015}}

@article {Chapoton1,
	AUTHOR = {Chapoton, Fr\'{e}d\'{e}ric},
	TITLE = {Sur le nombre d'intervalles dans les treillis de {T}amari},
	JOURNAL = {S\'{e}m. Lothar. Combin.},
	FJOURNAL = {S\'{e}minaire Lotharingien de Combinatoire},
	VOLUME = {55},
	YEAR = {2005/07},
	PAGES = {Art. B55f, 18},
}

@article {Chapoton2,
	AUTHOR = {Chapoton, Fr\'{e}d\'{e}ric},
	TITLE = {Une note sur les intervalles de {T}amari},
	JOURNAL = {Ann. Math. Blaise Pascal},
	FJOURNAL = {Annales Math\'{e}matiques Blaise Pascal},
	VOLUME = {25},
	YEAR = {2018},
	NUMBER = {2},
	PAGES = {299--314},
}

@article {BernardiBonichon,
	AUTHOR = {Bernardi, Olivier and Bonichon, Nicolas},
	TITLE = {Intervals in {C}atalan lattices and realizers of triangulations},
	JOURNAL = {J. Combin. Theory Ser. A},
	FJOURNAL = {Journal of Combinatorial Theory. Series A},
	VOLUME = {116},
	YEAR = {2009},
	NUMBER = {1},
	PAGES = {55--75},
}

@article {FangFusyNadeau,
	AUTHOR = {Fang, Wenjie and Fusy, \'{E}ric and Nadeau, Philippe},
	TITLE = {Tamari intervals and blossoming trees},
	JOURNAL = {Comb. Theory},
	FJOURNAL = {Combinatorial Theory},
	VOLUME = {5},
	YEAR = {2025},
	NUMBER = {1},
	PAGES = {Paper No. 4, 41},
}

@book {Schoute,
 	AUTHOR = {Schoute, Pieter Hendrik},
	TITLE = {Analytical treatment of the polytopes regularly derived from the regular polytopes. {S}ection {I}: {T}he simplex.},
 	JOURNAL = {{Amst. Akad. Verh. Sect. I}},
 	FJOURNAL = {{Verhandelingen der Koninklijke (Nederlandse) Akademie van Wetenschappen, Afdeeling Natuurkunde, Sectie 1}},
 	VOLUME = {11},
	NUMBER = {3},
 	PAGES = {82 s.},
 	YEAR = {1911},
}

@article {Rado,
	AUTHOR = {Rado, Richard},
	TITLE = {An inequality},
	JOURNAL = {J. London Math. Soc.},
	FJOURNAL = {The Journal of the London Mathematical Society},
	VOLUME = {27},
	YEAR = {1952},
	PAGES = {1--6},
}

@phdthesis{Tamari,
	AUTHOR = {Tamari, Dov},
	TITLE = {Monoides pr\'eordonn\'es et cha\^ines de {M}alcev},
	SCHOOL = {Universit\'e Paris Sorbonne},
	YEAR = {1951},
}

@article {Loday,
	AUTHOR = {Loday, Jean-Louis},
	TITLE = {Realization of the {S}tasheff polytope},
	JOURNAL = {Arch.~Math.~(Basel)},
	FJOURNAL = {Archiv der Mathematik},
	VOLUME = {83},
	YEAR = {2004},
	NUMBER = {3},
	PAGES = {267--278},
}

@book {ShniderSternberg,
	AUTHOR = {Shnider, Steve and Sternberg, Shlomo},
	TITLE = {Quantum groups: From coalgebras to {D}rinfeld algebras},
	SERIES = {Series in Mathematical Physics},
	PUBLISHER = {International Press},
	ADDRESS = {Cambridge, MA},
	YEAR = {1993},
	PAGES = {592},
}

@article {PilaudSantos-quotientopes,
	AUTHOR = {Pilaud, Vincent and Santos, Francisco},
	TITLE = {Quotientopes},
	JOURNAL = {Bull. Lond. Math. Soc.},
	FJOURNAL = {Bulletin of the London Mathematical Society},
	YEAR = {2019},
	VOLUME = {51},
	NUMBER = {3},
	PAGES = {406--420}
}

@article {PadrolPilaudRitter,
	AUTHOR = {Padrol, Arnau and Pilaud, Vincent and Ritter, Julian},
	TITLE = {Shard polytopes},
	JOURNAL = {Int. Math. Res. Not. IMRN},
	FJOURNAL = {International Mathematics Research Notices. IMRN},
	YEAR = {2023},
	NUMBER = {9},
	PAGES = {7686--7796},
}

@article {PilaudSantos-brickPolytope,
	AUTHOR = {Pilaud, Vincent and Santos, Francisco},
	TITLE = {The brick polytope of a sorting network},
	JOURNAL = {European~J.~Combin.},
	FJOURNAL = {European Journal of Combinatorics},
	VOLUME = {33},
	YEAR = {2012},
	NUMBER = {4},
	PAGES = {632--662},
}

@article {PadrolPaluPilaudPlamondon,
	AUTHOR = {Padrol, Arnau and Palu, Yann and Pilaud, Vincent and Plamondon, Pierre-Guy},
	TITLE = {Associahedra for finite type cluster algebras and minimal relations between $\b{g}$-vectors},
	JOURNAL = {Proc. London Math. Soc.},
	FJOURNAL = {Proceedings of the London Mathematical Society},
	VOLUME = {127},
	YEAR = {2023},
	NUMBER = {3},
	PAGES = {513--588},
}

@article {PadrolPilaudPoullot-deformedNestohedra,
	AUTHOR = {Padrol, Arnau and Pilaud, Vincent and Poullot, Germain},
	TITLE = {Deformation cones of graph associahedra and nestohedra},
	JOURNAL = {European J. Combin.},
	FJOURNAL = {European Journal of Combinatorics},
	VOLUME = {107},
	YEAR = {2023},
	PAGES = {103594},
}

@article {BazierMatteChapelierLaguetDouvilleMousavandThomasYildirim,
	AUTHOR = {Bazier-Matte, V\'eronique and Chapelier-Laguet, Nathan and Douville, Guillaume and Mousavand, Kaveh and Thomas, Hugh and Y\i{}ld\i{}r\i{}m, Emine},
	TITLE = {{ABHY} {A}ssociahedra and {N}ewton polytopes of ${F}$-polynomials for finite type cluster algebras of simply laced finite type},
	JOURNAL = {J. Lond. Math. Soc. (2)},
	FJOURNAL = {Journal of the London Mathematical Society. Second Series},
	YEAR = {2023},
	DOI = {10.1112/jlms.12817},
}

@book {Stasheff-HSpaces,
	AUTHOR = {Stasheff, James},
	TITLE = {{$H$}-spaces from a homotopy point of view},
	SERIES = {Lecture Notes in Mathematics, Vol. 161},
	PUBLISHER = {Springer-Verlag, Berlin-New York},
	YEAR = {1970},
	PAGES = {v+95},
}

@article {Forcey-multiplihedra,
	AUTHOR = {Forcey, Stefan},
	TITLE = {Convex hull realizations of the multiplihedra},
	JOURNAL = {Topology Appl.},
	FJOURNAL = {Topology and its Applications},
	VOLUME = {156},
	YEAR = {2008},
	NUMBER = {2},
	PAGES = {326--347},
}

@article {SaneblidzeUmble-diagonals,
	AUTHOR = {Saneblidze, Samson and Umble, Ronald},
	TITLE = {Diagonals on the permutahedra, multiplihedra and associahedra},
	JOURNAL = {Homology Homotopy Appl.},
	FJOURNAL = {Homology, Homotopy and Applications},
	VOLUME = {6},
	YEAR = {2004},
	NUMBER = {1},
	PAGES = {363--411},
}

@article {ArdilaDoker,
	AUTHOR = {Ardila, Federico and Doker, Jeffrey},
	TITLE = {Lifted generalized permutahedra and composition polynomials},
	JOURNAL = {Adv. in Appl. Math.},
	FJOURNAL = {Advances in Applied Mathematics},
	VOLUME = {50},
	YEAR = {2013},
	NUMBER = {4},
	PAGES = {607--633},
}

@article {ChapotonPilaud,
	AUTHOR = {Chapoton, Fr\'{e}d\'{e}ric and Pilaud, Vincent},
	TITLE = {Shuffles of deformed permutahedra, multiplihedra, constrainahedra, and biassociahedra},
	JOURNAL = {Ann. H. Lebesgue},
	FJOURNAL = {Annales Henri Lebesgue},
	VOLUME = {7},
	YEAR = {2024},
	PAGES = {1535--1601},
}

@unpublished {BottmanPoliakova,
	AUTHOR = {Bottman, Nathaniel and Poliakova, Daria},
	TITLE = {Constrainahedra},
	NOTE = {Preprint, \href{https://arxiv.org/abs/2208.14529}{\texttt{arXiv:2208.14529}}},
	YEAR = {2022},
}

@article {AguiarArdila,
	AUTHOR = {Aguiar, Marcelo and Ardila, Federico},
	TITLE = {Hopf monoids and generalized permutahedra},
	JOURNAL = {Mem. Amer. Math. Soc.},
	FJOURNAL = {Memoirs of the American Mathematical Society},
	VOLUME = {289},
	YEAR = {2023},
	NUMBER = {1437},
}

@article {PadrolPilaudPoullot-deformedGraphicalZonotopes,
	AUTHOR = {Padrol, Arnau and Pilaud, Vincent and Poullot, Germain},
	TITLE = {Deformed graphical zonotopes},
	JOURNAL = {Discrete Comput. Geom.},
	FJOURNAL = {Discrete \& Computational Geometry. An International Journal of Mathematics and Computer Science},
	VOLUME = {73},
	YEAR = {2025},
	NUMBER = {2},
	PAGES = {447--465},
}

@article {PadrolPilaudPoullot-deformationConesHypergraphicPolytopes,
	AUTHOR = {Padrol, Arnau and Pilaud, Vincent and Poullot, Germain},
	TITLE = {Deformation cones of hypergraphic polytopes},
	JOURNAL = {S\'{e}m. Lothar. Combin.},
	FJOURNAL = {S\'{e}minaire Lotharingien de Combinatoire},
	VOLUME = {86B},
	YEAR = {2022},
	PAGES = {Art. \#72, 12 pp.},
}

@article {Rehberg,
	AUTHOR = {Rehberg, Sophie},
	TITLE = {Pruned inside-out polytopes, combinatorial reciprocity theorems and generalized permutahedra},
	JOURNAL = {Electron. J. Combin.},
	FJOURNAL = {Electronic Journal of Combinatorics},
	VOLUME = {29},
	YEAR = {2022},
	NUMBER = {4},
	PAGES = {Paper No. 4.36, 31},
}

@article {CardinalHoangMerinoMickaMutze,
	AUTHOR = {Cardinal, Jean and Hoang, Hung P. and Merino, Arturo and Mi\v{c}ka, Ond\v{r}ej and M\"{u}tze, Torsten},
	TITLE = {Combinatorial generation via permutation languages. {V}. {A}cyclic orientations},
	JOURNAL = {SIAM J. Discrete Math.},
	FJOURNAL = {SIAM Journal on Discrete Mathematics},
	VOLUME = {37},
	YEAR = {2023},
	NUMBER = {3},
	PAGES = {1509--1547},
}

@unpublished {CardinalSteiner,
	AUTHOR = {Cardinal, Jean and Steiner, Raphael},
	TITLE = {Shortest paths on polymatroids and hypergraphic polytopes},
	JOURNAL = {Comb. Theory},
	FJOURNAL = {Combinatorial Theory},
	VOLUME = {5},
	YEAR = {2025},
	NUMBER = {3},
	PAGES = {Paper No. 3, 29},
}

@article {AgnarssonMorris,
	AUTHOR = {Agnarsson, Geir and Morris, Walter D.},
	TITLE = {On {M}inkowski sums of simplices},
	JOURNAL = {Ann. Comb.},
	FJOURNAL = {Annals of Combinatorics},
	VOLUME = {13},
	YEAR = {2009},
	NUMBER = {3},
	PAGES = {271--287},
}

@article {Agnarsson,
	AUTHOR = {Agnarsson, Geir},
	TITLE = {On a special class of hyper-permutahedra},
	JOURNAL = {Electron. J. Combin.},
	FJOURNAL = {Electronic Journal of Combinatorics},
	VOLUME = {24},
	YEAR = {2017},
	NUMBER = {3},
	PAGES = {Paper No. 3.46, 25},
}

@unpublished {ABGPS,
	AUTHOR = {Abram, Antoine and Bastidas, Jose and G{\'e}linas, F{\'e}lix and Pilaud, Vincent and Sack, Andrew},
	TITLE = {Ornamentation lattices and intreeval hypergraphic lattices},
 	NOTE = {Preprint, \href{http://arxiv.org/abs/2508.01606}{\texttt{arXiv:2508.01606}}}, 
	YEAR = {2025},
}

@inproceedings {Greene,
	AUTHOR = {Greene, Curtis},
	TITLE = {Acyclic orientations},
	BOOKTITLE = {Proceedings of the NATO Advanced Study Institute held in Berlin (West Germany)},
	SERIES = {Nato Science Series C:},
	VOLUME = {31},
	PAGES = {65--68},
	PUBLISHER = {Springer Netherlands},
	YEAR = {1977},
}

@article {GreeneZaslavsky,
	AUTHOR = {Greene, Curtis and Zaslavsky, Thomas},
	TITLE = {On the interpretation of {W}hitney numbers through arrangements of hyperplanes, zonotopes, non-{R}adon partitions, and orientations of graphs},
	JOURNAL = {Trans. Amer. Math. Soc.},
	FJOURNAL = {Transactions of the American Mathematical Society},
	VOLUME = {280},
	YEAR = {1983},
	NUMBER = {1},
	PAGES = {97--126},
}

@article {Stanley-acyclicOrientations,
	AUTHOR = {Stanley, Richard P.},
	TITLE = {Acyclic orientations of graphs},
	JOURNAL = {Discrete Math.},
	FJOURNAL = {Discrete Mathematics},
	VOLUME = {5},
	YEAR = {1973},
	PAGES = {171--178},
}

@article {DeConciniProcesi,
	AUTHOR = {De Concini, Conrado and Procesi, Claudio},
	TITLE = {Wonderful models of subspace arrangements},
	JOURNAL = {Selecta Math. (N.S.)},
	FJOURNAL = {Selecta Mathematica. New Series},
	VOLUME = {1},
	YEAR = {1995},
	NUMBER = {3},
	PAGES = {459--494},
}

@article {DosenPetric,
	AUTHOR = {Do\v{s}en, Kosta and Petri\'{c}, Zoran},
	TITLE = {Hypergraph polytopes},
	JOURNAL = {Topology Appl.},
	FJOURNAL = {Topology and its Applications},
	VOLUME = {158},
	YEAR = {2011},
	NUMBER = {12},
	PAGES = {1405--1444},
}

@unpublished {Pilaud-MarioLuigi,
	AUTHOR = {Pilaud, Vincent},
	TITLE = {Plumbing bijections},
 	NOTE = {Preprint, \href{http://arxiv.org/abs/2512.05001}{\texttt{arXiv:2512.05001}}}, 
	YEAR = {2025},
}

@article {KnutsonMiller-subwordComplex,
	AUTHOR = {Knutson, Allen and Miller, Ezra},
	TITLE = {Subword complexes in {C}oxeter groups},
	JOURNAL = {Adv.~Math.},
	FJOURNAL = {Advances in Mathematics},
	VOLUME = {184},
	YEAR = {2004},
	NUMBER = {1},
	PAGES = {161--176}
}

@article {KnutsonMiller-GroebnerGeometry,
	AUTHOR = {Knutson, Allen and Miller, Ezra},
	TITLE = {Gr\"obner geometry of {S}chubert polynomials},
	JOURNAL = {Ann. of Math. (2)},
	FJOURNAL = {Annals of Mathematics. Second Series},
	VOLUME = {161},
	YEAR = {2005},
	NUMBER = {3},
	PAGES = {1245--1318}
}

@incollection {PilaudStump-ELlabeling,
	AUTHOR = {Pilaud, Vincent and Stump, Christian},
	TITLE = {{EL}-labelings and canonical spanning trees for subword complexes},
	BOOKTITLE = {Discrete Geometry and Optimization},
	PUBLISHER = {Springer},
	SERIES = {Fields Inst. Comm. Series},
	YEAR = {2013},
	PAGES = {213--248},
}

@unpublished {Pilaud-greedyFlipTree,
	AUTHOR = {Pilaud, Vincent},
	TITLE = {The greedy flip tree of a subword complex},
 	NOTE = {Preprint, \href{http://arxiv.org/abs/1203.2323}{\texttt{arXiv:1203.2323}}}, 
	YEAR = {2012},
}

@article {FroeseRenken,
	AUTHOR = {Froese, Vincent and Renken, Malte},
	TITLE = {Terrain-like graphs and the median {G}enocchi numbers},
	JOURNAL = {European J. Combin.},
	FJOURNAL = {European Journal of Combinatorics},
	VOLUME = {115},
	YEAR = {2024},
	PAGES = {Paper No. 103780, 8},
}

@article {DumontRandrianarivony,
	AUTHOR = {Dumont, Dominique and Randrianarivony, Arthur},
	TITLE = {D\'{e}rangements et nombres de {G}enocchi},
	JOURNAL = {Discrete Math.},
	FJOURNAL = {Discrete Mathematics},
	VOLUME = {132},
	YEAR = {1994},
	NUMBER = {1-3},
	PAGES = {37--49},
}

@article {JelinekTopfer,
	AUTHOR = {Jel\'{\i}nek, V\'{\i}t and T\"{o}pfer, Martin},
	TITLE = {On grounded {L}-graphs and their relatives},
	JOURNAL = {Electron. J. Combin.},
	FJOURNAL = {Electronic Journal of Combinatorics},
	VOLUME = {26},
	YEAR = {2019},
	NUMBER = {3},
	PAGES = {Paper No. 3.17, 13},
}

@inproceedings {AshurFiltserSababn,
	AUTHOR = {Ashur, Stav and Filtser, Omrit and Sababn, Rachel},
	TITLE = {Terrain-like and non-jumping graphs},
	BOOKTITLE = {Proceedings of the 35th European Workshop on Computational Geometry (EuroCG)},
	PAGES = {51},
	YEAR = {2019},
}

@article {FroeseRenken-algo,
	AUTHOR = {Froese, Vincent and Renken, Malte},
	TITLE = {A fast shortest path algorithm on terrain-like graphs},
	JOURNAL = {Discrete Comput. Geom.},
	FJOURNAL = {Discrete \& Computational Geometry. An International Journal of Mathematics and Computer Science},
	VOLUME = {66},
	YEAR = {2021},
	NUMBER = {2},
	PAGES = {737--750},
}

@mastersthesis {Hixon,
	AUTHOR = {Hixon, Thomas Stuart},
	TITLE = {Hook graphs and more : Some contributions to geometric graph theory},
	SCHOOL = {Technische Universitat Berlin},
	YEAR = {2013},
}

@article {SotoCaro,
	AUTHOR = {Soto, Mauricio and Thraves Caro, Christopher},
	TITLE = {{$p$}-box: a new graph model},
	JOURNAL = {Discrete Math. Theor. Comput. Sci.},
	FJOURNAL = {Discrete Mathematics \& Theoretical Computer Science. DMTCS.},
	VOLUME = {17},
	YEAR = {2015},
	NUMBER = {1},
	PAGES = {169--186},
}

@article {CatanzaroChaplickFelsnerHalldorssonHalldorssonHixonStacho,
	AUTHOR = {Catanzaro, Daniele and Chaplick, Steven and Felsner, Stefan and Halld\'{o}rsson, Bjarni V. and Halld\'{o}rsson, Magn\'{u}s M. and Hixon, Thomas and Stacho, Juraj},
	TITLE = {Max point-tolerance graphs},
	JOURNAL = {Discrete Appl. Math.},
	FJOURNAL = {Discrete Applied Mathematics. The Journal of Combinatorial Algorithms, Informatics and Computational Sciences},
	VOLUME = {216},
	YEAR = {2017},
	NUMBER = {part 1},
	PAGES = {84--97},
}

@unpublished {OEIS,
  	KEY = {{OEIS}},
	TITLE = {The {O}n-{L}ine {E}ncyclopedia of {I}nteger {S}equences},
	NOTE = {Published electronically at \url{http://oeis.org}},
	YEAR = {2010},
}

@misc{PadrolPoullot2025IndecomposabilityAndBeyond,
title={Indecomposability and beyond via the graph of edge dependencies},
author={Arnau Padrol and Germain Poullot},
year={2026},
eprint={2512.05307},
archivePrefix={arXiv},
primaryClass={math.CO},
url={https://arxiv.org/abs/2512.05307},
}

@misc{Poullot2025RaysDeformationConesGraphical,
title={Rays of the deformation cones of graphical zonotopes},
author={Germain Poullot},
year={2025},
eprint={2404.02669},
archivePrefix={arXiv},
primaryClass={math.CO},
url={https://arxiv.org/abs/2404.02669},
}

@article {bostan2023refined,
    AUTHOR = {Bostan, Alin and Chyzak, Fr\'ed\'eric and Pilaud, Vincent},
     TITLE = {Refined {P}roduct {F}ormulas for {T}amari {I}ntervals},
   JOURNAL = {Electron. J. Combin.},
  FJOURNAL = {Electronic Journal of Combinatorics},
    VOLUME = {33},
      YEAR = {2026},
    NUMBER = {1},
     PAGES = {P1.62},
      ISSN = {1077-8926},
   MRCLASS = {05A15 (05A19 06)},
  MRNUMBER = {5051152},
       DOI = {10.37236/14666},
       URL = {https://doi.org/10.37236/14666},
}

@article {DermenjianHohlwegPilaud,
    AUTHOR = {Dermenjian, Aram and Hohlweg, Christophe and Pilaud, Vincent},
     TITLE = {The facial weak order and its lattice quotients},
   JOURNAL = {Trans. Amer. Math. Soc.},
  FJOURNAL = {Transactions of the American Mathematical Society},
    VOLUME = {370},
      YEAR = {2018},
    NUMBER = {2},
     PAGES = {1469--1507},
      ISSN = {0002-9947,1088-6850},
   MRCLASS = {05E10 (03G10 06B99 20F55)},
  MRNUMBER = {3729508},
MRREVIEWER = {Sa\'ul\ A.\ Blanco},
       DOI = {10.1090/tran/7307},
       URL = {https://doi.org/10.1090/tran/7307},
}

@misc{CortesFrost2026DyckPathsConfigurationSpaces,
      title={Dyck Paths, Configuration Spaces and Polytopes For Linear Nakayama algebras}, 
      author={Veronica Calvo Cortes and Hadleigh Frost},
      year={2026},
      eprint={2602.04571},
      archivePrefix={arXiv},
      primaryClass={math.CO},
      url={https://arxiv.org/abs/2602.04571}, 
}
\label{sec:biblio}


\end{document}